\newtheorem{theorem}{Theorem}[section]
\newtheorem{problem}[theorem]{Problem}
\newtheorem{proposition}[theorem]{Proposition}
\newtheorem{lemma}[theorem]{Lemma}
\newtheorem{claim}[theorem]{Claim}
\newtheorem{corollary}[theorem]{Corollary}
\theoremstyle{definition}
\newtheorem{remark}[theorem]{Remark}
\newtheorem{example}[theorem]{Example}
\newtheorem{definition}[theorem]{Definition}
\newcommand{\odim}{\overline{\dim}}
\newcommand{\udim}{\underline{\dim}}
\newcommand{\Set}{\mathbf{Set}}
\newcommand{\Mon}{\mathbf{Mon}}
\newcommand{\Metr}{\mathbf{Metr}}
\newcommand{\DMetr}{\mathbf{Dist}}
\newcommand{\Dist}{\mathbf{Dist}}
\newcommand{\w}{\omega}
\newcommand{\supp}{\mathrm{supp}}
\newcommand{\IN}{\mathbb N}
\newcommand{\IZ}{\mathbb Z}
\newcommand{\IE}{\mathbb E}
\newcommand{\Lip}{\mathrm{Lip}}
\newcommand{\e}{\varepsilon}
\newcommand{\dom}{\mathrm{dom}}
\newcommand{\IR}{\mathbb R}
\newcommand{\diam}{\mathrm{diam}}
\newcommand{\id}{\mathrm{id}}
\newcommand{\ji}{{\ddot\imath}}
\newcommand{\length}{\ell}
\newcommand{\ud}{\underline{d}}
\newcommand{\Ra}{\Rightarrow}
\newcommand{\U}{\,\mathcal U}
\newcommand{\V}{\mathcal V}
\newcommand{\W}{\mathcal W}
\newcommand{\cov}{\mathrm{cov}}
\newcommand{\F}{\mathcal F}
\newcommand{\mesh}{\mathrm{mesh}}
\begin{document}

\title{The $\ell^p$-metrization of functors with finite supports}
\author{T.~Banakh,  V.~Brydun, L.~Karchevska, M.~Zarichnyi}
\keywords{functor, distance, monoid, Hausdorff distance, finite support, dimension}
\subjclass{54B30; 54E35; 54F45}

\address{Ivan Franko National University of Lviv  and 
Jan Kochanowski University in Kielce}
\email{t.o.banakh@gmail.com}

\address{Ivan Franko National University of Lviv, 
Universytetska 1, Lviv, 79000, Ukraine}
\email{v\_frider@yahoo.com}

\address{Ivan Franko National University of Lviv,
Universytetska 1, Lviv, 79000, Ukraine}
\email{lesia.karchevska@gmail.com}

\address{Ivan Franko National University of Lviv  and 
Uniwersytet Rzeszowski}
\email{zarichnyi@yahoo.com}

\begin{abstract}
 Let $p\in[1,\infty]$ and $F:\Set\to\Set$ be a functor with finite supports in the category $\Set$ of sets. Given a non-empty metric space $(X,d_X)$, we introduce the distance $d^p_{FX}$ on the functor-space $FX$ as the largest distance such that for every $n\in\IN$ and $a\in Fn$ the map $X^n\to FX$, $f\mapsto Ff(a)$, is non-expanding with respect to the $\ell^p$-metric $d^p_{X^n}$ on $X^n$. We prove that the distance $d^p_{FX}$ is a pseudometric if and only if the functor $F$ preserves singletons; $d^p_{FX}$ is a metric if $F$ preserves singletons and one of the following conditions holds: (1) the metric space $(X,d_X)$ is Lipschitz disconnected, (2) $p=1$, (3) the functor $F$ has finite degree, (4) $F$ preserves supports. We prove that for any Lipschitz map $f:(X,d_X)\to (Y,d_Y)$ between metric spaces the map $Ff:(FX,d^p_{FX})\to (FY,d^p_{FY})$ is Lipschitz with Lipschitz constant $\Lip(Ff)\le \Lip(f)$. If the functor $F$ is finitary, has finite degree (and preserves supports), then $F$ preserves uniformly continuous function, coarse functions, coarse equivalences, asymptotically Lipschitz functions, quasi-isometries (and continuous functions). For many dimension functions  we  prove the formula $\dim F^pX\le\deg(F)\cdot\dim X$. Using injective envelopes, we introduce a modification $\check d^p_{FX}$ of the distance $d^p_{FX}$ and prove that the functor $\check F^p:\DMetr\to\DMetr$, $\check F^p:(X,d_X)\mapsto (FX,\check d^p_{FX})$, in the category $\DMetr$ of distance spaces preserves Lipschitz maps and isometries between metric spaces.
\end{abstract}
\maketitle

\newpage 
\tableofcontents
\newpage

{
\section{Introduction}
\baselineskip14pt

Let $\Set$ be the category whose objects are sets and morphisms are functions between sets, and $\Metr$ be the category whose objects are metric spaces and morphisms are arbitrary maps between metric spaces. The category $\Metr$ is a subcategory of the category $\DMetr$ whose objects are distance spaces and morphisms are arbitrary maps between distance spaces.

By a {\em distance space} we understand a pair $(X,d_X)$ consisting of a set $X$ and a {\em distance} $d_X$ on $X$, which is a function $d_X:X\times X\to[0,\infty]$ satisfying three well-known axioms:
\begin{itemize}
\item $d_X(x,x)=0$,
\item $d_X(x,y)=d_X(y,x)$,
\item $d_X(x,z)\le d_X(x,y)+d_X(y,z)$,
\end{itemize}
holding for any points $x,y,z\in X$. Here we assume that $\infty+x=\infty=x+\infty$ for any $x\in[0,\infty]$.

An alternative term for a distance is an $\infty$-pseudometric. A distance $d_X$ on a set $X$ is called an {\em $\infty$-metric} if $d_X(x,y)>0$ for any distinct points $x,y\in X$.
A distance $d_X$ is a {\em pseudometric} if $d_X(x,y)<\infty$ for any points $x,y\in X$.

In this paper, for every functor $F:\Set\to\Set$ with finite supports, every distance space $(X,d_X)$ and every $p\in[1,\infty]$, we introduce a distance $d^p_{FX}$ on the functor-space $FX$ and study its properties. The distance $d^p_{FX}$ is defined as the largest distance on $FX$ such that for every $n\in\IN$ and $a\in Fn$ the map $\xi^a_X:X^n\to FX$, $\xi^a_X:f\mapsto Ff(a)$, is non-expanding with respect to the  $\ell^p$-distance on the finite power  $X^n$ of $X$. 
The correspondence $F^p:\DMetr\to\DMetr$, $F^p:(X,d_X)\mapsto(FX,d^p_{FX})$, determines a functor such that $U\circ F^p=F\circ U$ where $U:\DMetr\to\Set$ is the (forgetful) functor assigning to each distance space $(X,d)$ its underlying set $X$.
So, $F^p$ is a lifting of the functor $F$ to the category $\DMetr$, called the {\em $\ell^p$-metrization} of the functor $F$. In the paper we establish some properties of the functor $F^p$; in particular, we prove that this functor preserves Lipschitz functions between distance spaces. In Theorem~\ref{t:main} we prove that for any metric space $(X,d_X)$ the distance $d^p_{FX}$ is a metric if the functor $F$ preserves singletons and one of the following conditions holds: (1) the distance space $(X,d_X)$ is Lipschitz disconnected, (2) $p=1$, (3) $F$ has finite degree, (4) $F$ preserves supports. If the functor $F$ is finitary, has finite degree (and preserves supports), then $F$ preserves uniformly continuous functions, coarse functions, coarse equivalences, asymptotically Lipschitz functions, quas-isometries (and continuous functions). For many dimension functions $\dim$ we  prove the formula $\dim F^pX\le\deg(F)\cdot\dim X$. Using the injective envelopes, we also define a modification $\check d^p_{FX}$ of the distance $d^p_{FX}$, which has some additional properties, for example, preserves isometries between distance spaces. The properties of the functors $F^p$ and $\check F^p$ are summed up in Theorems~\ref{t:main}, \ref{t:main-fdeg}, \ref{t:mainU}.

In the last three sections we will study the $\ell^p$-metrizations of some concrete functors with finite supports: of $n$th power, of free abelian (Boolean) group, and of free semilattice.

The results obtained in this paper can be considered as a development of the results of Shukel \cite{Shukel} and Radul \cite{RS} who studied the $\ell^p$-metrizations of normal functors for $p=\infty$. An axiomatic approach to metrizability of functors was developed by Fedorchuk \cite{Fed}, see \cite[\S3.9]{TZ}. 
}

\section{Some notations and conventions}

We denote by $\w=\{0,1,2,\dots\}$ the set of all finite ordinals and by $\mathbb N=\w\setminus\{0\}$ the set of positive integer numbers. 
Each finite ordinal $n\in\w$ is identified with the set $n:=\{0,\dots,n-1\}$ of smaller ordinals. Cardinals are identified with the smallest ordinals of given cardinality.

For a set $X$ and a cardinal $\kappa$ we use the following notations:
\begin{itemize}
\item $\kappa^+$ is the smallest cardinal, which is larger than $\kappa$;
\item $|X|$ is the cardinality of $X$;
\item $[X]^\kappa:=\{A\subseteq X:|A|=\kappa\}$;
\item $[X]^{<\kappa}:=\{A\subseteq X:|A|<\kappa\}$;
\item $X^\kappa$ is the family of functions from $\kappa$ to $X$;
\item $X^{<\w}=\bigcup_{n\in\w}X^n$ the family of all finite sequences of elements of $X$. 
\end{itemize} 

For a function $f:X\to Y$ between sets and subsets $A\subseteq X$, $B\subseteq Y$ by $f{\restriction}_A$ we denote the restriction of $f$ to $A$, by $f[A]:=\{f(a):a\in A\}\subseteq Y$ the image of the set $A$ under the function $f$, and by $f^{-1}[B]=\{x\in X:f(x)\in B\}$ the preimage of $B$. Such notations are convenient for handling functions on natural numbers $n:=\{0,\dots,n-1\}$. In this case for a function $f:n\to Y$ and element $i\in n$ we have that $f(i)$ is an element of $Y$ but $f[i]$ is the subset $\{f(x):x\in i\}$ of $Y$.  For two sets $A,B$ we denote by $A\triangle B$ their symetric difference $(A\cup B)\setminus (A\cap B)$.

We extend the basic arithmetic operations  to the extended half line $[0,\infty]$ letting $x+\infty=\infty+x=\infty$ and $\infty\cdot 0=0=x-x$ for all $x\in[0,\infty]$, and $$x\cdot\infty=\infty\cdot x=\infty^x=\infty-x=|x-\infty|=|\infty-x|=\infty$$ for any $x\in(0,\infty]$.

\section{Distance spaces}\label{s:distance}

In this section we present some basic definitions related to distance spaces.

We recall that a {\em distance space} is a pair $(X,d_X)$ consisting of a set $X$ and a {\em distance} $d_X$ on $X$, which is a function $d_X:X\times X\to[0,\infty]$ such that
\begin{itemize}
\item $d_X(x,x)=0$,
\item $d_X(x,y)=d_X(y,x)$,
\item $d_X(x,z)\le d_X(x,y)+d_X(y,z)$,
\end{itemize}
for any points $x,y,z\in X$. 

 A distance $d_X$ on a set $X$ is called 
 \begin{itemize}
 \item an {\em $\infty$-metric} if $d_X(x,y)>0$ for any distinct points $x,y\in X$;
 \item  a {\em pseudometric} if $d_X(x,y)<\infty$ for any points $x,y\in X$.
\end{itemize}

Each set $X$ carries the {\em $\{0,\infty\}$-valued $\infty$-metric} $d:X\times X\to\{0,\infty\}$ defined by
$$d(x,y)=\begin{cases}0,&\mbox{$x=y$};\\
\infty,&\mbox{$x\ne y$}.
\end{cases}
$$

An important example of a distance space is the extended real line $\bar\IR=[-\infty,+\infty]$, endowed with the $\infty$-metric
$$d_{\bar\IR}(x,y)=\begin{cases}
|x-y|,&\mbox{if $x,y<\infty$};\\
0,&\mbox{if $x=y$};\\
\infty,&\mbox{otherwise}.
\end{cases}
$$

For every set $X$ the distance $d_{\bar\IR}$ induces the $\sup$-distance 
$$d_{\bar\IR^X}(f,g)=\sup_{x\in X}d_{\bar\IR}(f(x),g(x))$$on the set $\bar\IR^X$ of all functions from $X$ to $\bar \IR$. 




Every distance $d_X$ on a set $X$ induces the {\em Hausdorff distance}  
$$d_{HX}(A,B)=
\begin{cases}
\max\{\sup\limits_{a\in A}\inf\limits_{b\in B}d_X(a,b),\sup\limits_{b\in B}\inf\limits_{a\in A}d_X(a,b)\},&\mbox{if $A\ne \emptyset\ne B$},\\
0,&\mbox{if $A=B$},\\
\infty,&\mbox{otherwise},
\end{cases}
$$on the set $HX$ of all subsets of $X$. 
It is a standard exercise to check that $d_{HX}$ satisfies the three axioms of distance.

For a point $x\in X$ of a distance space $(X,d_X)$ and a number $r\in[0,\infty]$ let
$$O(x;r):=\{y\in X:d_X(x,y)<r\}\mbox{ and }O[x;r]:=\{y\in X:d_X(x,y)\le r\}$$be the open and closed balls of radius $r$ around $x$, respectively. 

For a subset $A\subseteq X$ let 
$$O[A;r)=\bigcup_{a\in A}O(a,r)$$be the {\em $\e$-neighborhood} of $A$ in $(X,d_X)$.

The family $\{O(x;\infty):x\in X\}$ is the partition of the distance space $(X,d_X)$ into {\em pseudometric components} (observe that the restriction of the distance $d_X$ to any pseudometric component $O(x;\infty)$ is a pseudometric). 

For a subset $A$ of a distance space $(X,d_X)$, the numbers
$$
\begin{aligned}
\diam(A)&:=\sup(\{0\}\cup\{d_X(x,y):x,y\in A\}),\\
\overline{d}_X(A)&:=\sup(\{0\}\cup\{d_X(x,y):x,y\in A,\;d_X(x,y)<\infty\})\mbox{ and }\\
\ud_X(A)&:=\inf(\{\infty\}\cup\{d_X(x,y):x,y\in A,\;x\ne y\})
\end{aligned}
$$
are called the {\em diameter}, the {\em real diameter} and the {\em separatedness number} of $A$.

A subset $A$ of a distance space $X$ is {\em bounded} if $\diam(A)<\infty$.

A distance space $(X,d_X)$ is called {\em separated} if $\ud_X(X)>0$.
\smallskip

Also for two subsets $A,B\subseteq X$ we put $$\ud_X(A,B)=\inf(\{\infty\}\cup \{d_X(a,b):a\in A,\;b\in B\}).$$
It is easy to see that $\ud_X(A,B)\le d_{HX}(A,B)$ for any nonempty sets $A,B\subseteq X$.

For a family $\U$ of subsets of a distance space $X$ we put 
$$
\begin{aligned}
&\mesh(\U):=\sup\big(\{0\}\cup\{\diam(U):U\in\U\}\big)\mbox{ \ and \ }\\
&\ud_X(\U)=\inf\big(\{\infty\}\cup\{\ud_X(U,V):U,V\in\U,\;\;U\ne V\}\big).
\end{aligned}
$$

\section{Functions between distance spaces}

In this section we introduce some classes of functions bewteen distance spaces.


A function $f:X\to Y$ between distance spaces $(X,d_X),(Y,d_Y)$ is called
\begin{itemize}
\item {\em an isometry} if $d_Y(f(x),f(y))=f_X(x,y)$ for every $x,y\in X$;
\item {\em non-expanding} if $d_Y(f(x),f(y))\le d_X(x,y)$ for every $x,y\in X$;
\item {\em Lipschitz} if there exists a real number $L$ such that $d_Y(f(x),f(y))\le L\cdot d_X(x,y)$ for all $x,y\in X$; the smallest such number $L$ is denoted by $\Lip(f)$ and called the {\em Lipschitz constant} of $f$;
\item {\em asymptotically Lipschitz} if there exist positive real numbers $L,C$ such that\newline $d_Y(f(x),f(y))\le L\cdot d_X(x,y)+C$ for any $x,y\in X$;
\item {\em continuous} if for any $x\in X$ and $\e\in(0,\infty]$ there exists $\delta\in(0,\infty]$ such that for any $y\in X$ with $d_X(x,y)\le\delta$ we have  $d_X(f(x),f(y))\le\e$;
\item {\em uniformly continuous} (or else {\em microform}) if for any $\e\in(0,\infty]$ there exists $\delta\in(0,\infty]$ such that for any $x,y\in X$ with $d_X(x,y)\le\delta$ we have  $d_X(f(x),f(y))\le\e$;
\item {\em coarse} (or else {\em macroform}) if for every $\delta\in[0,\infty)$ there exists $\e\in[0,\infty)$ such that for any $x,y\in X$ with $d_X(x,y)\le\delta$ we have  $d_X(f(x),f(y))\le\e$;
\item {\em duoform} if $f$ is both microform and macroform;
\item {\em bornologous} if for any bounded set $B$ in $X$ the image $f[B]$ is bounded in $Y$;
\item a {\em coarse equivalence} if $f$ is coarse  and there exists a coarse map $g:Y\to X$ such that $\sup_{x\in X}d_X(x,g\circ f(x))<\infty$ and $\sup_{y\in Y}d_Y(y,f\circ g(y))<\infty$;
\item a {\em quasi-isometry} if $f$ is an asymptotically Lipschitz map and there exists an asymptotically Lipschitz map $g:Y\to X$ such that $\sup_{x\in X}d_X(x,f\circ g(x))<\infty$ and $\sup_{y\in Y}d_Y(y,f\circ g(y))<\infty$.
\end{itemize}

The implications between these properties are described in the following diagram.
{
$$
\xymatrix
{
&\mbox{uniformly}\atop\mbox{continuous}\ar@{=}[r]&\mbox{microform}\ar@{=>}[r]&\mbox{continuous}\\
\mbox{non-expanding}\ar@{=>}[r]&\mbox{Lipschitz}\ar@{=>}[r]\ar@{=>}[d]&\mbox{duoform}\ar@{=>}[u]\ar@{=>}[d]\\
\mbox{isometry}\ar@{=>}[u]\ar@{=>}[r]&\mbox{asymptotically}\atop\mbox{Lipschitz}\ar@{=>}[r]&\mbox{macroform}\ar@{=}[r]&\mbox{coarse}\ar@{=>}[d]\\
\mbox{bijective isometry}\ar@{=>}[r]\ar@{=>}[u]&\mbox{quasi-isometry}\ar@{=>}[r]\ar@{=>}[u]&\mbox{coarse}\atop\mbox{equivalence}\ar@{=>}[u]\ar@{=>}[r]&\mbox{bornologous}
}
$$
}

\begin{remark} An isometry between distance spaces needs not be  injective. On the other hand, an isometry between $\infty$-metric spaces is necessarily injective.
\end{remark}


For a function $f:X\to Y$ between distance spaces, its {\em continuity modulus} is the function $\w_f:(0,\infty]\to[0,\infty]$ defined by $$\w_f(\e):=\sup\{d_Y(f(x),f(y)):x,y\in X\;\wedge\;d_X(x,y)<\e\} \mbox{  \ for $\e\in(0,\infty]$}.$$ 
The definition implies that the continuity modulus $\w_f$ is a non-decreasing function from $(0,\infty]$ to $[0,\infty]$.

Many properties of functions can be expressed via the continuity modulus.

In particular, a function $f:X\to Y$ between distance spaces is
\begin{itemize} 
\item non-expanding iff $\w_f(\e)\le\e$ for all $\e\in(0,\infty)$;
\item Lipschitz iff there exits a real number $L$ such that\newline $\w_f(\e)\le L\cdot\e$ for all $\e\in(0,\infty)$;
\item asymptotically Lipschitz iff there exist real numbers $L,C$ such that\newline $\w_f(\w)\le L\cdot\e+C$ for all $\e\in(0,\infty)$;
\item microform iff for any $\e\in(0,\infty)$ there exists $\delta\in(0,\infty)$ such that $\w_f(\delta)\le\e$;
\item macroform iff for any $\delta\in(0,\infty)$ there exists $\e\in(0,\infty)$ such that $\w_f(\delta)\le\e$.
\end{itemize}

\smallskip

The following two propositions are well-known for metric spaces, see \cite[Lemma 1.1]{BL}.

\begin{proposition}\label{p:emb} For any distance space $(X,d_X)$, the map 
$$i:X\to \bar\IR^X,\;\;i:x\mapsto d_X(x,\cdot),$$
is an isometry.
\end{proposition}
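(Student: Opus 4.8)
The plan is to unfold the definition of the $\sup$-distance on $\bar\IR^X$ and show that
$$d_{\bar\IR^X}(i(x),i(y))=\sup_{z\in X}d_{\bar\IR}\big(d_X(x,z),d_X(y,z)\big)$$
equals $d_X(x,y)$ for every pair $x,y\in X$, by establishing the two inequalities $\le$ and $\ge$ separately. First I would record the trivial but necessary observation that $d_X(x,\cdot)$ takes values in $[0,\infty]\subseteq\bar\IR$, so that $i$ really does map $X$ into $\bar\IR^X$ and the displayed expression is meaningful.

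For the lower bound I would simply evaluate the supremum at the single coordinate $z=y$. Since $d_X(y,y)=0$, the corresponding term is $d_{\bar\IR}\big(d_X(x,y),0\big)$; by the definition of $d_{\bar\IR}$ this equals $|d_X(x,y)-0|=d_X(x,y)$ when $d_X(x,y)<\infty$, and falls into the ``otherwise'' branch giving $\infty=d_X(x,y)$ when $d_X(x,y)=\infty$. In either case the term equals $d_X(x,y)$, so $d_{\bar\IR^X}(i(x),i(y))\ge d_X(x,y)$.

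For the upper bound I would fix an arbitrary $z\in X$ and show $d_{\bar\IR}\big(d_X(x,z),d_X(y,z)\big)\le d_X(x,y)$, organizing the argument according to the finiteness of the two values. If both $d_X(x,z)$ and $d_X(y,z)$ are finite, the triangle inequality for $d_X$ gives $d_X(x,z)-d_X(y,z)\le d_X(x,y)$ and, symmetrically, $d_X(y,z)-d_X(x,z)\le d_X(x,y)$, hence $|d_X(x,z)-d_X(y,z)|\le d_X(x,y)$, which is exactly the value of $d_{\bar\IR}$ in this branch. If both values are infinite, then $d_{\bar\IR}(\infty,\infty)=0\le d_X(x,y)$. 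The only remaining possibility is that exactly one of the two is infinite: I would rule this out when $d_X(x,y)<\infty$, since a finite $d_X(y,z)$ together with a finite $d_X(x,y)$ forces $d_X(x,z)\le d_X(x,y)+d_X(y,z)<\infty$ by the triangle inequality (and symmetrically); while when $d_X(x,y)=\infty$ the bound is automatic because every value of $d_{\bar\IR}$ is $\le\infty$. Taking the supremum over $z\in X$ then yields $d_{\bar\IR^X}(i(x),i(y))\le d_X(x,y)$, and combining with the lower bound proves that $i$ is an isometry.

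The computation is elementary once the cases are laid out; the only point demanding genuine care is the bookkeeping with the value $\infty$, namely checking that the ``otherwise'' branch of $d_{\bar\IR}$ is compatible with the triangle inequality of $d_X$. In particular, the subtle step I expect to be the main obstacle is verifying that a mixed finite/infinite pair $\big(d_X(x,z),d_X(y,z)\big)$ cannot occur while $d_X(x,y)$ is finite, as this is precisely what guarantees the upper bound holds without exception.
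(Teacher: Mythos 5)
Your proof is correct and follows essentially the same route as the paper's: the lower bound by evaluating the supremum at a single coordinate, and the upper bound from the triangle inequality for $d_X$. The only difference is that you spell out the case analysis for infinite values of the distance, which the paper's proof leaves implicit; this is a legitimate elaboration rather than a new idea.
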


\begin{proof} Given any point $x\in X$ let $d_x:X\to\bar\IR$ be the function defined by $d_x(y)=d_X(x,y)$ for $y\in X$. For any $x,y,z\in X$, the triangle inequality for the distance $d_X$ implies $d_{\bar\IR}(d_x(z),d_y(z))\le d_X(x,y)$ and then $d_{\bar\IR^X}(d_x,d_y)\le d_X(x,y)$. On the other hand, $d_{\bar\IR^X}(d_x,d_y)\ge d_{\bar\IR}(d_x(x),d_x(y))=d_X(x,y)$. Therefore, $d_{\bar \IR^X}(d_x,d_y)=d_X(x,y)$, and the map $i:X\to\bar\IR^X$, $i:x\mapsto d_x$, is an isometry.
\end{proof}

\begin{proposition}\label{p:Lip} For any isometry $i:X\to Y$ between distance spaces and any Lipschitz function $f:X\to \bar\IR$, there exists a Lipschitz function  $\bar f:Y\to\bar\IR$ such that $\bar f\circ i=f$ and $\Lip(\bar f)=\Lip(f)$.
\end{proposition}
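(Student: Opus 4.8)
The plan is to reduce the statement to the classical McShane--Whitney Lipschitz extension theorem for metric spaces (the reference \cite[Lemma 1.1]{BL}), applied separately on the pseudometric components of $Y$, while handling the two features that distinguish distance spaces from genuine metric spaces: the isometry $i$ need not be injective, and both $Y$ and the target $\bar\IR$ may contain points lying at infinite distance.

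First I would set $L:=\Lip(f)$, a non-negative real by definition. If $L=0$, then $d_{\bar\IR}(f(x),f(y))\le 0$ forces $f$ to be constant, and we may take $\bar f$ to be the same constant; so I assume $L>0$. Next I transport $f$ to the image $A:=i[X]\subseteq Y$: for $a=i(x)\in A$ I put $g(a):=f(x)$. This is well defined, since $i(x)=i(x')$ gives $d_X(x,x')=d_Y(i(x),i(x'))=0$ and hence $f(x)=f(x')$ by $L$-Lipschitzness; and because $i$ is an isometry, $g:(A,d_Y)\to\bar\IR$ is $L$-Lipschitz. Thus it suffices to extend $g$ to an $L$-Lipschitz function $\bar f$ on $Y$, for then $\bar f\circ i=f$, and the isometry relation $d_{\bar\IR}(f(x),f(y))=d_{\bar\IR}(\bar f(i(x)),\bar f(i(y)))\le\Lip(\bar f)\,d_X(x,y)$ yields $\Lip(f)\le\Lip(\bar f)\le L=\Lip(f)$, i.e. $\Lip(\bar f)=\Lip(f)$.

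To construct $\bar f$ I would partition $Y$ into its pseudometric components $\{O(y;\infty):y\in Y\}$ and extend $g$ on each component $C$ independently. This is legitimate because points in distinct components lie at distance $\infty$, so, as $L>0$, the Lipschitz inequality across components holds automatically ($d_{\bar\IR}(\bar f(y),\bar f(y'))\le\infty=L\cdot\infty$). The key observation is that $g$ respects components: if $a,a'$ lie in one component of $Y$, then $d_Y(a,a')<\infty$, whence $d_{\bar\IR}(g(a),g(a'))\le L\,d_Y(a,a')<\infty$, forcing $g(a),g(a')$ into a single pseudometric component of $\bar\IR$. Hence on each $C$ the set $g[A\cap C]$ is entirely contained in $\IR$, or equals $\{+\infty\}$, or equals $\{-\infty\}$. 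On a component with $A\cap C=\emptyset$ I set $\bar f{\restriction}_C\equiv 0$; on one with $g[A\cap C]=\{\pm\infty\}$ I set $\bar f{\restriction}_C\equiv\pm\infty$; and on one with $g[A\cap C]\subseteq\IR$ (a genuine metric situation) I use the McShane formula $\bar f(y):=\inf_{a\in A\cap C}\bigl(g(a)+L\,d_Y(y,a)\bigr)$.

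It then remains routine to check, on each component, that $\bar f$ is $L$-Lipschitz and agrees with $g$ on $A\cap C$: for the constant pieces this is immediate, and for the McShane piece one verifies finiteness (the infimum is bounded below by $g(a_0)-L\,d_Y(y,a_0)$ via the triangle inequality), the extension property $\bar f(a)=g(a)$ for $a\in A\cap C$, and the estimate $|\bar f(y)-\bar f(y')|\le L\,d_Y(y,y')$ exactly as in the classical proof. The one genuine subtlety --- and the place where the distance-space setting departs from \cite[Lemma 1.1]{BL} --- is the bookkeeping of infinite values: the naive single formula $\inf_{a\in A}(g(a)+L\,d_Y(y,a))$ produces the undefined expression $-\infty+\infty$ precisely when $g(a)=-\infty$ and $d_Y(y,a)=\infty$, which is exactly why I organize the argument component-by-component with the three-way case split above.
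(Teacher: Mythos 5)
Your proof is correct and follows essentially the same route as the paper's: decompose $Y$ into its pseudometric components, apply the McShane infimum formula $\inf_{x}\big(f(x)+\Lip(f)\cdot d_Y(i(x),y)\big)$ on the components where the transported values are real, and use constant maps on the remaining components. You are in fact slightly more careful than the paper on two minor points --- the well-definedness of the transported function on $i[X]$ when $i$ is not injective, and the degenerate case $\Lip(f)=0$ --- but the substance of the argument is identical.
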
 

\begin{proof} For every $y\in Y$, consider the set $O(y;\infty)=\{z\in Y:d_Y(z,y)<\infty\}$ and observe that $\mathcal O=\{O(y;\infty):y\in Y\}$ is a partition of $Y$ into pseudometric subspaces (called the {\em pseudometric components} of $Y$). For every $B\in\mathcal O$ we will define a Lipschitz map $\bar f_B:B\to\bar\IR$ with Lipschitz constant $\Lip(f_B)\le \Lip(f)$ such that $\bar f_B\circ i{\restriction}_B=f{\restriction}_B$.

The Lipschitz property of $f$ implies that $f[i^{-1}[B]]$ is contained in one of the sets: $\IR$, $\{{-}\infty\}$ or $\{{+}\infty\}$. If $f[i^{-1}[B]]\subseteq\{{-}\infty,{+}\infty\}$, then let $f_B:B\to\{{-}\infty,{+}\infty\}$ be any constant map. It remains to consider the case $\emptyset\ne f[i^{-1}[B]]\subseteq\IR$. In this case we can define a map $\bar f_B:B\to\IR$ by the explicit formula $$\bar f_B(y)=\inf_{x\in i^{-1}[B]}\big(f(x)+\Lip(f)\cdot d_Y(i(x),y)\big)\mbox{ \ for any $y\in B$}.$$
It is easy to see that $\bar f_B(i(x))=f(x)$ for any $x\in i^{-1}[B]$. Repeating the argument of the proof of Lemma 1.1 in \cite{BL}, we can show that the function $\bar f_B:B\to\IR$ is well-defined, Lipschitz, and has Lipschitz constant $\Lip(\bar f_B)\le\Lip(f)$. Finally, define the function $\bar f:Y\to\bar\IR$ letting $\bar f{\restriction}_B=\bar f_B$ for every $B\in\mathcal O$, and observe that $\bar f$ has the desired property.
\end{proof}



\section{The injective envelope of a distance space}\label{s:E(X)}

In this section we extend the classical Isbell construction \cite{Isbell} (see also \cite{Dress}, \cite{Her}, \cite[\S3]{Lang}) of the injective envelope of a metric space  to the category of distance spaces.  Injective envelopes will be applied in Section~\ref{s:stable} for defining the $\check\ell^p$-metrization of functors.



For a distance space $X$ with distance $d_X$, let $\Delta X$ be the set of all functions $f:X\to\bar\IR$ such that $$f(x)+f(y)\ge d_X(x,y)\mbox{ \  for all \ $x,y\in X$}.
$$ For $x=y$ we have $0=d_X(x,x)\le 2{\cdot}f(x)$, which means that functions $f\in\Delta X$ take their values in the extended half-line $\bar \IR_+=[0,\infty]$.

 The space $\Delta X$ is endowed with the $\infty$-metric $d_{\Delta X}$  inherited from $\bar\IR^X$:
 $$d_{\Delta X}(f,g)=\sup_{x\in X}d_{\bar\IR}(f(x),g(x))\mbox{ \  for \ }f,g\in\Delta X.$$
The set $\Delta X$ also carries a natural partial order $\le$ defined by $f\le g$ iff $f(x)\le g(x)$ for all $x\in X$. For two function $f,g\in\Delta X$ we write $f<g$ if $f\le g$ and $f\ne g$.

A function $f\in\Delta X$ is called {\em extremal} if $f$ is a minimal element of the partially ordered set $(\Delta X,\le)$. Let $EX$ be the distance subspace of $\Delta X$ consisting of extremal functions. By $d_{E X}$ we denote the $\sup$-distance of the space $E X$. The distance $d_{EX}$ is an $\infty$-metric, being a restriction of the $\infty$-metric $d_{\bar\IR^X}$.

The distance space $E X$ is called the {\em injective envelope} of the distance space $X$. Now we establish some important properties of injective envelopes, generalizing well-known properties of injective envelopes of metric spaces.

\begin{lemma}\label{l:extr1} For every $z\in X$, the function $$d_z:X\to[0,\infty],\;\;d_z:x\mapsto d_X(x,z),$$ is extremal.
\end{lemma}

\begin{proof}  The triangle inequality for the distance $d_X$ guarantees that the function $d_z$ belongs to $\Delta X$. Assuming that $d_z$ is not extremal, we can find a function $f\le d_z$ in $\Delta X$ such that $f(x)<d_z(x)$ for some $x\in X$.  Then the number $f(x)$ is finite, $f(z)\le d_z(z)=0$ and hence $$d_z(x)=d_X(x,z)\le f(x)+f(z)=f(x)+0=f(x)<d_z(x),$$
which is a desired contradiction.
\end{proof}

\begin{lemma}\label{l:embE} The function $\mathsf e_X:X\to E X$, $\mathsf e_X:x\mapsto d_x$, is a well-defined isometry between the distance spaces $X$ and $E X$.
\end{lemma}

\begin{proof} By Lemma~\ref{l:extr1}, the function $\mathsf e_X$ is well-defined. For any points  $x,y,z\in X$, by the triangle inequality, we have $$d_x(z)=d_X(x,z)\le d_X(x,y)+d_X(y,z)=d_X(x,y)+d_y(z),$$ which implies that $d_{\bar\IR}(d_x(z),d_y(z))\le d_X(x,y)$. Taking the supremum by $z\in X$, we obtain
$$d_{E X}(d_x,d_y)=\sup_{z\in X}d_{\bar\IR}(d_x(z),d_y(z))\le d_X(x,y),$$
which means that the map $\mathsf e_X:X\to E X$ is non-expanding.

On the other hand, $$d_{X}(x,y)=d_{\bar\IR}(d_x(x),d_y(x))\le \sup_{z\in X}d_{\bar\IR}(d_x(z),d_y(z))=d_{E X}(d_x,d_y)$$witnessing that the map $\mathsf e_X:X\to E X$, is an isometry.
\end{proof} 



\begin{lemma}\label{l:main}  There exists a non-expanding map $p:\Delta X\to E X$ such that $p(f)\le f$ for every $f\in\Delta X$.
\end{lemma}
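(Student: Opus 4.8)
The plan is to realize $p$ as the limit of an explicit decreasing iteration built from a single non-expanding operator. First I would introduce the operator $q:\bar\IR^X\to\bar\IR^X$ defined by $q(f)(x)=\sup_{y\in X}\big(d_X(x,y)-f(y)\big)$. Three facts about $q$ drive everything: (i) $q$ is non-expanding for the $\sup$-distance $d_{\bar\IR^X}$, since $|q(f)(x)-q(g)(x)|\le\sup_{y}|f(y)-g(y)|$ for each $x$; (ii) $q$ is order-reversing; and (iii) for $f\in\Delta X$ one has $q(f)\le f$, because the inequality $f(x)\ge d_X(x,y)-f(y)$ for all $y$ is exactly the defining condition of $\Delta X$. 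I would then record the characterization that a function $f\in\Delta X$ is extremal if and only if $q(f)=f$; the nontrivial direction follows from the averaging operator introduced next.

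Next I would set $A(f):=\tfrac12\big(f+q(f)\big)$ and check that $A$ maps $\Delta X$ into itself: using $q(f)(x)\ge d_X(x,y)-f(y)$ together with the symmetric inequality one gets $A(f)(x)+A(f)(y)\ge d_X(x,y)$, so $A(f)\in\Delta X$; moreover $A(f)\le f$ by (iii), and $A$ is non-expanding as the average of the non-expanding maps $\id$ and $q$. This also supplies the missing implication in the extremality criterion: if $q(f)\ne f$, then $A(f)<f$ is a strictly smaller member of $\Delta X$, so $f$ is not minimal. Now for $f\in\Delta X$ I would iterate, setting $g_0=f$ and $g_{n+1}=A(g_n)$. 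The sequence $(g_n)$ is pointwise non-increasing and bounded below by $0$, hence converges pointwise to $p(f):=\inf_n g_n$. Since $\Delta X$ is closed under pointwise infima of decreasing sequences, $p(f)\in\Delta X$; clearly $p(f)\le f$; and $p$ is non-expanding, because each $A^n$ is non-expanding and non-expansiveness passes to pointwise limits.

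It remains to show that $p(f)$ is extremal, i.e.\ $q(p(f))=p(f)$; this is the step I expect to be the main obstacle, precisely because the iteration converges only pointwise, not uniformly, so the continuity of $q$ cannot be invoked directly. I would circumvent this using the order structure: since $p(f)=\inf_n g_n$ and $q$ converts a decreasing infimum into an increasing supremum, interchanging the two suprema gives $q(p(f))(x)=\sup_{y}\big(d_X(x,y)-\inf_n g_n(y)\big)=\sup_n q(g_n)(x)$. From $g_{n+1}=\tfrac12(g_n+q(g_n))$ we have $q(g_n)(x)=2g_{n+1}(x)-g_n(x)$, which tends to $p(f)(x)$ as $n\to\infty$ at every point where $p(f)(x)<\infty$; hence $q(p(f))(x)\ge p(f)(x)$ at such points. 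Combined with $q(p(f))\le p(f)$, valid since $p(f)\in\Delta X$, this yields $q(p(f))=p(f)$.

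The points where $p(f)(x)=\infty$, together with the book-keeping of $\infty$-valued terms in these identities, are handled routinely using the arithmetic conventions on $[0,\infty]$ and by working separately inside each pseudometric component of $X$. Finally, $q(p(f))=p(f)$ means $p(f)\in EX$, while $p(f)\le f$ holds by construction, so $p:\Delta X\to EX$ is the desired non-expanding map.
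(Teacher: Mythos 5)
Your averaging iteration is exactly the engine of the paper's proof (Claim~\ref{cl:pB}, following Dress and Lang), and your argument that the pointwise limit is a fixed point of $q$ --- interchanging the supremum over $y$ with the supremum over $n$ and using $q(g_n)=2g_{n+1}-g_n$ --- is a legitimate variant of the paper's estimate $0\le p_B(f)-p_B(f)^*\le 2(q^n(f)-q^{n+1}(f))$. But there is a genuine gap in the reduction you dismiss as routine bookkeeping. The operator $A(f)=\tfrac12(f+q(f))$ never decreases an infinite value: if $f(x)=\infty$ then $A(f)(x)=\tfrac12(\infty+q(f)(x))=\infty$ no matter what $q(f)(x)$ is. Hence your claimed implication that $q(f)\ne f$ forces $A(f)<f$ fails at such points, and the iteration can stall at a non-extremal function. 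Concretely, take $X=\{a,b\}$ with $d_X(a,b)=1$ and $f(a)=0$, $f(b)=\infty$; then $f\in\Delta X$ and $q(f)=d_a<f$, yet $A(f)=f$, so your $p(f)=f\notin EX$. The same happens for the constant function $\infty\in\Delta X$, which is not extremal when $X\ne\emptyset$ but is fixed by your iteration.

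What is missing is the paper's preliminary step (Claim~\ref{cl:qA}): before iterating, one must replace $f$ by a genuinely smaller function that is finite on the \emph{entire} pseudometric component $B$ containing the finiteness set of $f$, e.g.\ by setting $g(x)=f(a)+d_X(a,x)$ at the points $x\in B$ where $f(x)=\infty$, and one must make an arbitrary choice of target for the constant function $\infty$. This step involves choosing a base point, requires its own verification that the assignment is non-expanding and lands in $\Delta X$, and is glued across the decomposition $\Delta X=\bigcup_A\Delta_AX$ using the fact that distinct pieces lie at mutual distance $\infty$. None of this follows from the arithmetic conventions on $[0,\infty]$ or from restricting to pseudometric components; it is an additional construction, not bookkeeping.
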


\begin{proof} A subset $B\subseteq X$ is called {\em $\infty$-bounded} if $d_X(a,b)<\infty$ for any points $a,b\in B$. Denote by $\mathcal B_\infty(X)$ the family of all $\infty$-bounded sets in $X$.

For any $\infty$-bounded set $A\in \mathcal B_\infty(X)$, consider the subset $$\Delta_AX=\big\{f\in\Delta X:\infty\notin f[A],\;\;f[X\setminus A]\subseteq\{\infty\}\big\}.$$ 

\begin{claim} $\Delta X=\bigcup\limits_{A\in\mathcal B_\infty(X)}\Delta_AX$.
\end{claim}

\begin{proof} Given any function $f\in\Delta X$, consider the set $A=f^{-1}[\IR]$. We claim that $A\in\mathcal B_\infty(X)$. In the opposite case, we would find two points $a,b\in A$ with $d_X(a,b)=\infty$ and obtain the contradiction:
$$\infty>f(a)+f(b)\ge d_X(a,b)=\infty.$$
The definition of the set $A$ ensures that $f\in\Delta_AX$.
\end{proof}

It is easy to see that for any distinct sets $A,B\in \mathcal B_\infty(X)$ and any functions $f\in \Delta_AX$, $g\in \Delta_BX$, we have $d_{\Delta X}(f,g)=\infty$. Therefore, it suffices for any set $A\in\mathcal B_\infty(X)$ to construct a non-expanding function $p_A:\Delta_AX\to E X$ such that $p_A(f)\le f$ for every $f\in\Delta_AX$.

If the set $A$ is empty, then $\Delta_AX$ is a singleton and any map $p_A:\Delta_AX\to EX$ is non-expanding and has $p_A(f)\le f$ for the unique element $f\in \{\infty\}^X=\Delta_AX$.

So, we assume that $A$ is not empty. In this case we can fix a point $a\in A$ and conclude that $A\subseteq B$ where $B=\{x\in X:d_X(x,a)<\infty\}$.

\begin{claim} \label{cl:qA}
There exists a non-expanding map $q_A:\Delta_AX\to\Delta_BX$ such that $q_A(f)\le f$ for any $f\in \Delta_AX$.
\end{claim}

\begin{proof}  Consider the map $q_A:\Delta_AX\to \Delta_BX$ assigning to each function $f\in\Delta_AX$ the function $g:X\to \bar\IR$ defined by 
$$g(x)=\begin{cases}
f(a)+d_X(a,x)&\mbox{if $x\in B\setminus A$};\\
f(x)&\mbox{otherwise}.
\end{cases}
$$
We claim that $g\in \Delta X$. Given any points $x,y\in X$, we should check that $g(x)+g(y)\ge d_X(x,y)$. If $x$ or $y$ do not belong to $B$, then $g(x)+g(y)=\infty\ge d_X(x,y)$ and we are done. So, assume that $x,y\in B$. 
If $x,y\notin B\setminus A$, then $g(x)+g(y)=f(x)+f(y)\ge d_X(x,y)$. If $x,y\in B\setminus A$, then 
$g(x)+g(y)=2f(a)+d_X(a,x)+d_X(a,y)\ge d_X(x,y)$. 
If $x\in B\setminus A$ and $y\in A$, then $g(x)+g(y)=d_X(a,x)+f(a)+f(y)\ge d_X(a,x)+d_X(a,y)\ge d_X(x,y)$. By analogy we can treat the case $x\in A$ and $y\in B\setminus A$. Therefore, $g\in\Delta X$. The definition of $g$ guarantees that $g\in\Delta_BX$ and $g\le f$.

It remains to show that the map $q_A$ is non-expanding. Choose arbitrary functions $f,f'\in\Delta_AX$, and consider the functions $g=q_A(f)$ and $g'=q_A(f')$.
If $x\notin B\setminus A$, then $d_{\bar \IR}(g(x),g'(x))=d_{\bar \IR}(f(x),f'(x))\le d_{\Delta X}(f,f')$. If $x\in B\setminus A$, 
then $$d_{\bar\IR}(g(x),g'(x))=d_{\bar\IR}(f(a)+d_X(a,x),f'(a)+d_X(a,x))= d_{\bar\IR}(f(a),f'(a))\le d_{\Delta X}(f,f').$$
\end{proof}

 Let $E_BX:=E X\cap\Delta_BX$.
 
 \begin{claim}\label{cl:EB} A function $f:X\to \bar\IR$ belongs to $E_BX$ if and only if $f[X\setminus B]\subseteq\{\infty\}$ and $f(x)=\sup_{y\in B}(d_X(x,y)-f(y))<\infty$
 for every $x\in B$.
 \end{claim}
 
 \begin{proof} To prove the ``only if'' part, assume that $f\in E_BX$. Then $f\in \Delta_BX$ and hence $f[X\setminus B]\subseteq\{\infty\}$ and 
 $\infty>f(x)\ge \sup_{y\in B}(d_X(x,y)-f(y))$ for all $x\in B$. Assuming that $f(x)\ne 
  \sup_{y\in B}(d_X(x,y)-f(y))$ for some $x\in B$, we conclude that $f(x)>s$ where $s=\sup_{y\in B}(d_X(x,y)-f(y))$.  Since $s+f(y)\ge d_X(x,y)$ for every $y\in X$, the function $g: X \to \bar\IR$ given by $g{\restriction}_{X{\setminus}\{x\}}=f{\restriction}_{X{\setminus}\{x\}}$ and $g(x)=\frac12(f(x) +  s)$ belongs to $\Delta_BX$. Since $g < f$, the function $f$ is not extremal, which contradicts the choice of $f$. This contradiction shows that  $f(x)=\sup_{y\in B}(d_X(x,y)-f(y))$ for all $x\in B$. 
 \smallskip
  
Next, we prove the ``if'' part. Choose any function $f:X\to\bar\IR$ such that $f[X\setminus B]\subseteq\{\infty\}$ and $f(x)=\sup_{y\in B}(d_X(x,y)-f(y))<\infty$ for all $x\in B$. Then $f(x)+f(y)\ge d_X(x,y)$ for all $x,y\in X$ and hence $f\in\Delta_BX$. Assuming that $f\notin E X$, we can find a function $g\le f$ in $\Delta X$ such that $g(x)<f(x)$ for some $x\in X$. Since $\infty>f(a)+g(x)\ge g(a)+g(x)\ge d_X(a,x)$, the point $x$ belongs to the pseudometric component $B$ of $X$. The inclusion $g\in\Delta X$ and the inequality $g\le f$ imply
 $$\sup_{y\in B}(d_X(x,y)-f(y))\le\sup_{y\in B}(d_X(x,y)-g(y))\le g(x)<f(x),$$which contradicts our assumption. This contradiction shows that $f\in E_BX$.
 \end{proof}

\begin{claim}\label{cl:pB} There exists a non-expanding map $p_B:\Delta_BX\to E_BX$ such that $p_B(f)\le f$ for any $f\in \Delta_BX$.
\end{claim}

\begin{proof} We modify the argument of Dress \cite[\S1.9]{Dress}, following the lines of the proof of Proposition 2.1 in \cite{Lang}. For every $f\in\Delta_BX$, define the function $f^*:X\to\bar\IR$ by letting $f^*(x)=\sup_{z\in B}(d_X(x,z)-f(z))$ for $x\in B$ and $f^*(x)=\infty$ for  $x\in X\setminus B$. The inclusion $f\in\Delta_BX$ implies $f^*\le f$.

For every pair of points $x,y\in X$, the definition of $f^*$ implies $f(x)+f^*(y)\ge d_X(x,y)$ and $f^*(x)+f(y)\ge d_X(x,y)$. Then the function $q(f)=\frac12(f+f^*)$ belongs to $\Delta_BX$ and $q(f)\le f$.

For any functions $f,g\in\Delta_BX$ and $x\in B$, we have 
$$g^*(x)=\sup_{z\in B}(d_X(x,z)-f(z)+f(z)-g(z))\le f^*(x)+d_{\Delta X}(f,g)$$and hence $d_{\bar\IR^X}(f^*,g^*)\le d_{\Delta X}(f,g)$ and finally
$$d_{\Delta X}(q(f),q(g))\le \tfrac12d_{\Delta X}(f,g)+\tfrac12d_{\Delta X}(f^*,g^*)\le d_{\Delta X}(f,g).$$

Iterating the map $q$, we obtain for every $f\in\Delta_BX$ a decreasing function sequence $g(f)\ge q^2(f)\ge\dots $ in $\Delta_BX$, then define $p_B(f)$ as the pointwise limit of the function sequence $\big(q^n(f)\big){}_{n\in\IN}$.  Clearly, $p_B(f)\in\Delta_BX$, $p_B(f)\le f$ and the map $p_B:\Delta_BX\to\Delta_BX$ is non-expanding. For all $n\ge 1$, $p_B(f)\le q^n(f)$ and hence $p_B(f)^*\ge q^n(f)^*$, so 
$$0\le p_B(f)-p_B(f)^*\le q^n(f)-q^n(f)^*=2(q^n(f)-q^{n+1}(f)).$$
As $n\to\infty$, the last term converges pointwise to $0$, thus $p_B(f)^*=p(f)$ and therefore, $p_B(f)\in E_B(X)$ by Claim~\ref{cl:EB}.
\end{proof}
 
Finally, define a non-expanding map $p_A:\Delta_AX\to E_BX$ as $p_A=p_B\circ q_A$, where $q_A$, $p_B$ are the non-expanding maps from Claims~\ref{cl:qA} and \ref{cl:pB}, respectively. The properties of these maps ensure that $p_A(f)=p_B(q_A(f))\le q_A(f)\le f$ for every $f\in \Delta_AX$.
\end{proof}

An isometry $i:X\to Y$ between distance spaces is called {\em Isbell-injective} if for any isometry $j:X\to Z$ to a distance space $Z$, there exists a non-expanding map $f:Z\to Y$ such that $i=f\circ j$.

\begin{lemma}\label{l:eD} For any distance space $X$ the isometry  $\mathsf e_X:X\to \Delta X$ is Isbell-injective.
\end{lemma}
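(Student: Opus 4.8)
The plan is to produce the required non-expanding factorization by an explicit formula that mirrors the definition of $\mathsf e_X$ itself; notably, this does not need the retraction machinery of Lemma~\ref{l:main} or the characterization of extremal functions, since the target $\Delta X$ is large enough to accommodate a direct construction. Given an arbitrary isometry $j:X\to Z$, I would define a map $f:Z\to\Delta X$ by the rule
$$f(z):X\to\bar\IR,\qquad f(z)(x)=d_Z(z,j(x)),$$
so that $f(z)$ records the distance profile of the point $z$ measured against the image $j[X]$. There are then three things to check: (i) $f(z)$ actually lands in $\Delta X$; (ii) $f\circ j=\mathsf e_X$; and (iii) $f$ is non-expanding for the sup-distance $d_{\Delta X}$.

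Claims (i) and (ii) are immediate from the hypothesis that $j$ is an isometry. For (i), the triangle inequality in $Z$ gives
$$f(z)(x)+f(z)(y)=d_Z(z,j(x))+d_Z(z,j(y))\ge d_Z(j(x),j(y))=d_X(x,y)$$
for all $x,y\in X$, which is exactly the defining inequality of $\Delta X$; the values are automatically in $\bar\IR_+$. For (ii), for each $x\in X$ we compute $f(j(x))(y)=d_Z(j(x),j(y))=d_X(x,y)=d_x(y)$, whence $f(j(x))=d_x=\mathsf e_X(x)$.

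The remaining point (iii) is where the (still elementary) care is needed, and it is the step I expect to be the main obstacle, because of the $\infty$-values encoded in $d_{\bar\IR}$. I would establish, for each fixed $x\in X$, the pointwise estimate $d_{\bar\IR}\big(f(z)(x),f(z')(x)\big)\le d_Z(z,z')$ and then take the supremum over $x$ to conclude $d_{\Delta X}(f(z),f(z'))\le d_Z(z,z')$. When both $d_Z(z,j(x))$ and $d_Z(z',j(x))$ are finite, the pointwise bound is just the reverse triangle inequality. The only subtlety is the case analysis on infinities: if exactly one of the two distances is infinite, then $d_Z(z,z')=\infty$ by the triangle inequality in $Z$, so the bound holds trivially; and if both are infinite, then $d_{\bar\IR}$ returns $0$, again giving the bound. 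Once $f$ is shown to be non-expanding, it satisfies the definition of Isbell-injectivity for the isometry $\mathsf e_X:X\to\Delta X$ (with $Y=\Delta X$), completing the proof.
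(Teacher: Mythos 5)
Your proposal is correct and follows essentially the same route as the paper: the same explicit map $f(z)=d_Z(z,j(\cdot))$, the same verification that it lands in $\Delta X$ and restricts to $\mathsf e_X$ on $j[X]$, and the same pointwise reverse-triangle estimate followed by a supremum to get the non-expanding property. Your extra case analysis on the $\infty$-values is a welcome bit of care that the paper leaves implicit in the definition of $d_{\bar\IR}$.
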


\begin{proof} Given any isometry $j:X\to Z$ to a distance space $Z$, consider the map $f:Z\to \Delta X$ assigning to each point $z\in Z$ the function $f_z:X\to\overline\IR$ defined by $f_z(x)=d_{Z}(z,j(x))$ for $x\in X$. Observe that for every $x,y\in X$ we have
$$f_z(x)+f_z(y)=d_{Z}(z,j(x))+d_{Z}(z,j(y))\ge d_{Z}(j(x),j(y))=d_{X}(x,y),$$ and hence $f_z\in\Delta X$.

For any $x\in X$, the function $f_{j(x)}$ coincides with the function $d_x$. Indeed, for any $y\in X$ we have $f_{j(x)}(y)=d_{Z}(j(x),j(y))=d_{X}(x,y)=d_x(y)$.
Consequently, $f\circ j=\mathsf e_X:X\to EX$.

For any points $y,z\in Z$ and any $x\in X$ the triangle inequality for the distance $d_{Z}$ ensures that
$$d_{\bar\IR}(f_y(x),f_z(x))=|d_{Z}(y,j(x))-d_{Z}(z,j(x))|\le d_{Z}(y,z),$$ and hence 
$$d_{\bar\IR^X}(f_y,f_z)=\sup_{x\in X}d_{\bar \IR}(f_y(x),f_z(x))\le d_{Z}(y,z),$$which means that the map $f:Z\to\Delta X$ is non-expanding.
\end{proof}

\begin{lemma}\label{l:eE} For any distance space $X$, the isometry  $\mathsf e_X:X\to E X$ is Isbell-injective.
\end{lemma}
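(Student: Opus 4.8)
The plan is to combine the Isbell-injectivity of the isometry $\mathsf e_X:X\to\Delta X$ (Lemma~\ref{l:eD}) with the non-expanding retraction $p:\Delta X\to E X$ provided by Lemma~\ref{l:main}, using the observation that such a retraction must fix each extremal function $d_x$.

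More precisely, let $j:X\to Z$ be an arbitrary isometry into a distance space $Z$. First I would apply Lemma~\ref{l:eD} to obtain a non-expanding map $f:Z\to\Delta X$ satisfying $f\circ j=\mathsf e_X$, where here $\mathsf e_X:x\mapsto d_x$ is regarded as a map into $\Delta X$ (recall that $d_x\in E X\subseteq\Delta X$ by Lemma~\ref{l:extr1}). Next I would take the non-expanding map $p:\Delta X\to E X$ from Lemma~\ref{l:main}, which satisfies $p(\varphi)\le\varphi$ for every $\varphi\in\Delta X$. Then the composition $g:=p\circ f:Z\to E X$ is non-expanding, being a composition of non-expanding maps.

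It remains to verify that $g\circ j=\mathsf e_X:X\to E X$. Since $f\circ j=\mathsf e_X$, we have $g\circ j=p\circ\mathsf e_X$, so the required identity reduces to showing that $p(d_x)=d_x$ for every $x\in X$. This is the only genuine point of the argument: by Lemma~\ref{l:extr1} the function $d_x$ is extremal, hence a minimal element of the partially ordered set $(\Delta X,\le)$; on the other hand $p(d_x)\in E X\subseteq\Delta X$ and $p(d_x)\le d_x$ by the defining property of $p$. The minimality of $d_x$ then forces $p(d_x)=d_x$, and consequently $g\circ j=\mathsf e_X$, so $g$ witnesses the Isbell-injectivity of $\mathsf e_X:X\to E X$.

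I do not anticipate any serious obstacle here, since all the analytic content—the extension into $\Delta X$ and the construction of the retraction onto $E X$—has already been established in Lemmas~\ref{l:eD} and~\ref{l:main}. The proof amounts to gluing these two facts together by means of the elementary remark that an order-decreasing non-expanding retraction necessarily fixes the minimal elements lying in its image.
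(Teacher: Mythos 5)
Your proof is correct and follows essentially the same route as the paper: compose the non-expanding extension $f:Z\to\Delta X$ from Lemma~\ref{l:eD} with the retraction $p:\Delta X\to EX$ from Lemma~\ref{l:main}, and use the minimality of the extremal functions $d_x$ to see that $p$ fixes them, hence $p\circ\mathsf e_X=\mathsf e_X$. The only difference is cosmetic: you spell out the argument that $p(d_x)=d_x$, which the paper states in the compressed form ``$p(f)=f$ for any $f\in EX$'' without comment.
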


\begin{proof} By Lemma~\ref{l:main}, there exists a non-expanding map $p:\Delta X\to E X$ such that $p(f)=f$ for any $f\in E X$. Consequently, $p\circ \mathsf e_X=\mathsf e_X$.

 To show that the isometry $\mathsf e_X:X\to E X$ is Isbell-injective, take any isometry $j:X\to Z$ to a distance space $Z$.
By Lemma~\ref{l:eD}, there exists a non-expanding map $f:Z\to \Delta X$ such that $f\circ j=\mathsf e_X$. Then $g=p\circ f:Z\to E X$ is a non-expanding map such that $g\circ j=p\circ f\circ j=p\circ \mathsf e_X=\mathsf e_X$.
\end{proof}

\begin{lemma}\label{l:id} Let $X$ be a distance space and $f:E X\to E X$ be a non-expanding map such that $f\circ \mathsf e_X=\mathsf e_X$. Then $f$ is the identity map of $E X$.
\end{lemma}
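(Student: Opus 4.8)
The plan is to show that $f$ fixes every extremal function $g\in EX$, and the whole argument rests on a single computation: that the $\sup$-distance from an extremal function to a point-function recovers the value of that function. Concretely, I would first prove that for every $g\in EX$ and every $x\in X$ one has
\[
d_{EX}(g,d_x)=g(x).
\]
Writing $B$ for the pseudometric component on which $g$ is finite, note that $d_x$ is finite exactly on $x$'s own component, so both sides are $\infty$ unless $x\in B$, in which case I only need to evaluate $\sup_{z\in B}|g(z)-d_X(x,z)|$. The estimate $d_X(x,z)-g(z)\le g(x)$, with equality attained in the supremum, is precisely the extremality characterization of Claim~\ref{cl:EB}, namely $g(x)=\sup_{y\in B}(d_X(x,y)-g(y))$. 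The reverse bound $g(z)-g(x)\le d_X(x,z)$ — that extremal functions are $1$-Lipschitz on $B$ — follows from the same characterization by inserting the triangle inequality $d_X(x,y)\le d_X(x,z)+d_X(z,y)$ and using $d_X(z,y)-g(y)\le g(z)$, which yields $g(x)\le d_X(x,z)+g(z)$ and hence, by symmetry, $|g(x)-g(z)|\le d_X(x,z)$. Evaluating at $z=x$ shows the supremum equals $g(x)$, giving the displayed identity.

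With this identity in hand the rest is short. Given any $g\in EX$, set $h=f(g)\in EX$. The hypothesis $f\circ\mathsf e_X=\mathsf e_X$ means $f(d_x)=d_x$ for every $x\in X$, so non-expansiveness of $f$ gives
\[
h(x)=d_{EX}(h,d_x)=d_{EX}\big(f(g),f(d_x)\big)\le d_{EX}(g,d_x)=g(x)
\]
for every $x\in X$, where the outer equalities are the identity above applied to the extremal functions $h$ and $g$. Thus $h\le g$ pointwise. But $h\in\Delta X$ and $g$ is, by definition, a minimal element of $(\Delta X,\le)$; hence $h<g$ is impossible and we must have $h=g$. Since $g\in EX$ was arbitrary, $f=\id_{EX}$.

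I expect the only genuine obstacle to be the identity $d_{EX}(g,d_x)=g(x)$, and within it the $1$-Lipschitz bound $g(z)-g(x)\le d_X(x,z)$; everything after that is formal. Some care is needed with the extended-real bookkeeping — points $z$ outside $B$ contribute $d_{\bar\IR}(\infty,\infty)=0$ to the supremum and so do not interfere — but this is routine given the explicit description of $E_BX$ in Claim~\ref{cl:EB}. Once the identity is available, the passage from ``$f$ fixes all point-functions $d_x$'' to ``$f$ fixes all extremal functions'' is exactly the minimality of extremal functions, which is the defining feature of the injective envelope.
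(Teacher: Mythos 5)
Your proof is correct and follows essentially the same route as the paper's: both arguments hinge on the identity $d_{EX}(g,d_x)=g(x)$ (derived from Claim~\ref{cl:EB} together with the $1$-Lipschitz bound for extremal functions), use $f(d_x)=d_x$ and non-expansiveness to conclude $f(g)\le g$, and finish by minimality of $g$ in $(\Delta X,\le)$. The only difference is presentational: you state the identity uniformly for all $x\in X$ (with both sides $\infty$ off the finite component), whereas the paper first checks separately that the pseudometric components of $g$ and $f(g)$ coincide.
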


\begin{proof} Given any function $\alpha\in E X$ consider the function $\beta=f(\alpha)\in E X$. Consider the partition $\{O(x;\infty):x\in X\}$ of $X$ into pseudometric components. Claim~\ref{cl:qA} implies that there are pseudometric components $A,B$ of $X$ such that $\infty\notin \alpha[A]\cup\beta[B]$ and $\alpha[X\setminus A]=\beta[X\setminus B]=\{\infty\}$. We claim that $A=B$. Take any point $a\in A$. By Claim~\ref{cl:EB}, 
$$\infty>\alpha(a)=\sup_{x\in A}(d_a(x)-\alpha(x))=d_{E X}(d_a,\alpha)\ge d_{E X}(f(d_a),f(\alpha))=d_{E X}(d_a,\beta)\ge \beta(a)$$
and hence $a\in B$ and $A\subseteq B$. By analogy, we can prove that $B\subseteq A$ and hence $A=B$. Then $\alpha{\restriction}_{X\setminus A}=\beta{\restriction}_{X\setminus A}$. On the other hand, for every $a\in A$, by Claim~\ref{cl:EB}, 
$$\beta(a)=\sup_{x\in A}(d_a(x)-\beta(x))=d_{E X}(d_a,\beta)=d_{E X}(f(d_a),f(\alpha))\le d_{E X}(d_a,\alpha)\le \alpha(a),$$so $\beta=\alpha$ by the minimality of $\alpha\in E X$.
\end{proof}

An isometry $i:X\to Y$ between distance spaces is called {\em tight} if any non-expanding map $f:Y\to Z$ to a distance space $Z$ is an isometry iff $f\circ i$ is an isometry.

\begin{lemma}\label{l:t} For any distance space $X$, the isometry  $\mathsf e_X:X\to E X$ is tight.
\end{lemma}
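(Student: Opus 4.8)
The plan is to unwind the definition of tightness and verify both implications for an arbitrary non-expanding map $f:EX\to Z$. One implication is immediate: if $f$ is an isometry, then, since $\mathsf e_X:X\to EX$ is an isometry by Lemma~\ref{l:embE} and a composition of isometries is an isometry, the map $f\circ\mathsf e_X$ is an isometry. So the whole content lies in the converse: assuming that $f$ is non-expanding and that $j:=f\circ\mathsf e_X$ is an isometry, I must show that $f$ itself is an isometry.

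To handle the converse, I would embed the target $Z$ into its own injective envelope and exploit the rigidity of $EX$. Concretely, applying Lemma~\ref{l:embE} to $Z$ gives an isometry $\mathsf e_Z:Z\to EZ$, so $\mathsf e_Z\circ j:X\to EZ$ is again an isometry (a composition of isometries). Now I invoke the Isbell-injectivity of $\mathsf e_X:X\to EX$ (Lemma~\ref{l:eE}) with respect to this isometry: it produces a non-expanding map $h:EZ\to EX$ with $h\circ(\mathsf e_Z\circ j)=\mathsf e_X$, that is, $h\circ\mathsf e_Z\circ f\circ\mathsf e_X=\mathsf e_X$.

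The next step is to recognize $g:=h\circ\mathsf e_Z\circ f:EX\to EX$ as a self-map that fixes the image of $X$. It is non-expanding, being a composition of the non-expanding maps $f$, $\mathsf e_Z$ and $h$, and by the previous identity $g\circ\mathsf e_X=\mathsf e_X$. Lemma~\ref{l:id} then forces $g=\mathrm{id}_{EX}$. Finally, for arbitrary $\alpha,\beta\in EX$ I would sandwich the distances, using in turn $g=\mathrm{id}_{EX}$, the non-expandingness of $h$, the isometry property of $\mathsf e_Z$, and the non-expandingness of $f$:
$$d_{EX}(\alpha,\beta)=d_{EX}(g(\alpha),g(\beta))\le d_Z(f(\alpha),f(\beta))\le d_{EX}(\alpha,\beta),$$
whence $d_Z(f(\alpha),f(\beta))=d_{EX}(\alpha,\beta)$ and $f$ is an isometry.

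The only genuinely delicate point is the correct choice of auxiliary diagram: one must pass through $EZ$ rather than staying inside $Z$, so that the Isbell-injectivity of $\mathsf e_X$ applies to a map landing in the injective space $EX$, and the fixed-point identity $g\circ\mathsf e_X=\mathsf e_X$ can be fed into the rigidity Lemma~\ref{l:id}. Once that diagram is in place, everything reduces to the displayed chain of inequalities, which collapses precisely because $g$ is the identity.
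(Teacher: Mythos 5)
Your proof is correct and follows essentially the same strategy as the paper: use the Isbell-injectivity of $\mathsf e_X$ to produce a non-expanding left inverse up to $\mathsf e_X$, apply Lemma~\ref{l:id} to conclude that the resulting self-map of $EX$ is the identity, and then sandwich the distances. The one difference is your detour through $EZ$, and the ``genuinely delicate point'' you flag is in fact a misreading of Lemma~\ref{l:eE}: Isbell-injectivity of $\mathsf e_X$ asserts that for \emph{any} isometry $j:X\to Z$ into an arbitrary distance space $Z$ there is a non-expanding map $g:Z\to EX$ with $g\circ j=\mathsf e_X$; the target $Z$ need not be injective, since the map produced always lands in $EX$. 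The paper therefore applies Lemma~\ref{l:eE} directly to $j=f\circ\mathsf e_X:X\to Z$, obtaining $g:Z\to EX$ with $g\circ f\circ\mathsf e_X=\mathsf e_X$, whence $g\circ f=\id_{EX}$ by Lemma~\ref{l:id} and the same chain of inequalities finishes the argument. Your composite $h\circ\mathsf e_Z:Z\to EX$ plays exactly the role of this $g$, so nothing is lost, but the passage through $EZ$ buys you nothing and can be excised.
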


\begin{proof} Let $f:E X\to Z$ be a non-expanding map to a distance space $Z$ such that $f\circ\mathsf e_X$ is an isometry. By Lemma~\ref{l:eE}, there exists a non-expanding map $g:Z\to E X$ such that $g\circ f\circ \mathsf e_X=\mathsf e_X$.
By Lemma~\ref{l:id}, $g\circ f$ is the identity map of $E X$. Then for any $\alpha,\beta\in E X$ we have
$$d_{E X}(\alpha,\beta)=d_{E X}\big(g(f(\alpha)),g(f(\beta))\big)\le d_Z(f(\alpha),f(\beta))\le d_{E X}(\alpha,\beta)$$and hence
$d_Z(f(\alpha),f(\beta))=d_{E X}(\alpha,\beta)$, which means that $f$ is an isometry.
\end{proof}

An isometry $i:X\to Y$ between two distance spaces is called an {\em injective envelope} of $X$ if it is tight and Isbell-injective. 

Lemmas~\ref{l:eE} and \ref{l:t} imply the following main result of this section.

\begin{theorem} For any distance space $X$ the map $\mathsf e_X:X\to E X$ is an injective envelope of $X$. 
\end{theorem}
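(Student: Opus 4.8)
The plan is to unwind the definition of \emph{injective envelope} and observe that every defining property has already been verified in the preceding lemmas. By definition, an isometry $i:X\to Y$ between distance spaces is an injective envelope of $X$ precisely when it is both tight and Isbell-injective. So it suffices to check these three attributes---isometry, Isbell-injectivity, tightness---for the specific map $\mathsf e_X:X\to EX$, $\mathsf e_X:x\mapsto d_x$.

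First I would recall that $\mathsf e_X$ is a well-defined isometry: this is exactly the content of Lemma~\ref{l:embE}, whose proof uses Lemma~\ref{l:extr1} to guarantee that each function $d_x$ is extremal (hence lies in $EX$) and uses the triangle inequality together with the evaluation $d_{\bar\IR}(d_x(x),d_y(x))=d_X(x,y)$ to establish both the non-expanding and the distance-preserving inequalities. Next I would invoke Lemma~\ref{l:eE}, which asserts that $\mathsf e_X:X\to EX$ is Isbell-injective; its proof factors through Lemma~\ref{l:eD} (Isbell-injectivity of $\mathsf e_X:X\to\Delta X$) and the non-expanding retraction $p:\Delta X\to EX$ supplied by Lemma~\ref{l:main}. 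Finally I would cite Lemma~\ref{l:t}, which gives tightness of $\mathsf e_X:X\to EX$; this in turn rests on Lemma~\ref{l:eE} and on the rigidity statement of Lemma~\ref{l:id} that any non-expanding self-map of $EX$ fixing $\mathsf e_X$ is the identity.

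Assembling these, the map $\mathsf e_X$ is a tight, Isbell-injective isometry, which is by definition an injective envelope of $X$, completing the proof.

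There is essentially no obstacle remaining at this stage: all the genuine work has been carried out in the earlier lemmas, and the theorem is a one-line synthesis. The only point requiring care is purely bookkeeping---making sure that the notion of injective envelope invoked here is literally the conjunction ``tight $+$ Isbell-injective'' stated just above, so that no further verification (such as uniqueness up to isometry) is implicitly demanded by the statement.
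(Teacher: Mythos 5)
Your proposal is correct and follows exactly the paper's route: the paper derives the theorem directly from Lemmas~\ref{l:eE} and \ref{l:t} (with the isometry property coming from Lemma~\ref{l:embE}), just as you do. Your synthesis is accurate and nothing further is required.
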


The following proposition says that the injective envelope is unique up to an isometry.

\begin{proposition} For any injective envelope $i:X\to Y$ of a distance space $X$,  there exists a surjective isometry $f:Y\to E X$ such that $f\circ i=\mathsf e_X$. 
\end{proposition}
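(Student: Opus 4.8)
The plan is to realize $EX$ and $Y$ as mutually non-expanding retracts over $X$ and then exploit the rigidity of self-maps of the injective envelope. First I would apply the Isbell-injectivity of $\mathsf e_X:X\to EX$ (Lemma~\ref{l:eE}) to the isometry $i:X\to Y$, obtaining a non-expanding map $f:Y\to EX$ with $f\circ i=\mathsf e_X$; this $f$ is the candidate surjective isometry. Symmetrically, since $i:X\to Y$ is an injective envelope, it is in particular Isbell-injective, so applying its defining property to the isometry $\mathsf e_X:X\to EX$ yields a non-expanding map $h:EX\to Y$ with $h\circ\mathsf e_X=i$.

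Next I would deduce surjectivity of $f$ from the rigidity lemma. The composite $f\circ h:EX\to EX$ is non-expanding and satisfies $(f\circ h)\circ\mathsf e_X=f\circ(h\circ\mathsf e_X)=f\circ i=\mathsf e_X$, so Lemma~\ref{l:id} forces $f\circ h=\mathrm{id}_{EX}$. In particular every $\alpha\in EX$ equals $f(h(\alpha))$, whence $f$ is onto. Finally, to see that $f$ is an isometry I would invoke the tightness of $i$ (which holds because $i$ is an injective envelope): since $f:Y\to EX$ is non-expanding and $f\circ i=\mathsf e_X$ is an isometry by Lemma~\ref{l:embE}, the very definition of tightness forces $f$ itself to be an isometry. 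Combined with surjectivity and the already-established identity $f\circ i=\mathsf e_X$, this produces the required surjective isometry.

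The one delicate point is the bookkeeping of directions. Isbell-injectivity has to be invoked twice, once for the envelope $\mathsf e_X$ of $X$ and once for the envelope $i$ of $X$, in order to manufacture the two comparison maps $f$ and $h$. The rigidity Lemma~\ref{l:id} applies only to self-maps of $EX$, so the composite forming a self-map must be taken in the order $f\circ h$ (a map $EX\to EX$) rather than $h\circ f$ (a map $Y\to Y$, for which no rigidity statement is available). Surjectivity of $f$ is then an automatic byproduct of this composite, while its isometry property is supplied by tightness instead of a second rigidity argument, so no separate verification that $h\circ f=\mathrm{id}_Y$ is needed.
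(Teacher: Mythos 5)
Your proof is correct and follows essentially the same route as the paper: both proofs obtain $f:Y\to EX$ and a map $EX\to Y$ from the two Isbell-injectivity hypotheses, apply the rigidity Lemma~\ref{l:id} to the composite $EX\to EX$ to get surjectivity of $f$, and use tightness of $i$ to upgrade $f$ from non-expanding to an isometry. No gaps.
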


\begin{proof} Since both isometries $\mathsf e_X:X\to E X$ and $i:X\to Y$ are Isbell-injective, there are nonexpanding maps $f:Y\to E X$ and $g:E X\to Y$ such that 
$f\circ i=\mathsf e_X$ and $g\circ \mathsf e_X=i$. Since the isometries $i$ and $\mathsf e_X$ are tight, the non-expanding maps $f,g$ are isometries. Observe that $f\circ g\circ \mathsf e_X=f\circ i=\mathsf e_X$. By Lemma~\ref{l:id}, $f\circ g$ is the identity map of $E X$. Then the isometry map $f$ is surjective and $g$ is injective.
\end{proof}

\begin{proposition}\label{p:isoext} For any isometry $i:X\to Y$ between distance spaces, there exists an isometry $f:E X\to EY$ such that $f\circ\mathsf e_X=\mathsf e_Y\circ i$.
\end{proposition}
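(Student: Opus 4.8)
The plan is to realize the required map $f$ as an extension of the isometry $\mathsf e_Y\circ i:X\to EY$ along $\mathsf e_X$, exploiting the Isbell-injectivity of $\mathsf e_Y$ and the tightness of $\mathsf e_X$. First I note that $\mathsf e_Y\circ i:X\to EY$ is an isometry, being the composition of the isometry $i$ with the isometry $\mathsf e_Y:Y\to EY$ provided by Lemma~\ref{l:embE}. Since the Isbell-injectivity of $\mathsf e_Y$ produces non-expanding maps \emph{into} $EY$ from isometric \emph{over-spaces of} $Y$, the first task is to manufacture an over-space of $Y$ that also receives $EX$ isometrically and agrees with $\mathsf e_X$ over $X$.

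To this end I would form the amalgam $Z=(Y\sqcup EX)/{\sim}$ of the two isometries $i:X\to Y$ and $\mathsf e_X:X\to EX$, identifying $i(x)$ with $\mathsf e_X(x)$ for every $x\in X$, and equip $Z$ with the glued distance that keeps $d_Y$ on $Y$, keeps $d_{EX}$ on $EX$, and sets
$$d_Z(y,\eta)=\inf_{x\in X}\big(d_Y(y,i(x))+d_{EX}(\mathsf e_X(x),\eta)\big)$$
for $y\in Y$ and $\eta\in EX$ (with $d_Z(\eta,y)=d_Z(y,\eta)$). Denoting by $j_Y:Y\to Z$ and $j_E:EX\to Z$ the canonical maps, I would verify that $d_Z$ satisfies the three axioms of a distance and that $j_Y,j_E$ are isometries with $j_Y\circ i=j_E\circ\mathsf e_X$. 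The only nontrivial point is that a detour through $EX$ cannot shorten a within-$Y$ distance (and symmetrically), which follows from the triangle inequality together with the fact that $i$ and $\mathsf e_X$ are isometric embeddings of $X$: from $d_{EX}(\mathsf e_X(x),\eta)+d_{EX}(\mathsf e_X(x'),\eta)\ge d_X(x,x')=d_Y(i(x),i(x'))$ one recovers $d_Z(y,\eta)+d_Z(\eta,y')\ge d_Y(y,y')$.

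With $Z$ in hand the construction is purely formal. Applying the Isbell-injectivity of $\mathsf e_Y:Y\to EY$ (Lemma~\ref{l:eE}) to the isometry $j_Y:Y\to Z$, I would obtain a non-expanding map $h:Z\to EY$ with $h\circ j_Y=\mathsf e_Y$. Setting $f:=h\circ j_E:EX\to EY$ then gives a non-expanding map satisfying
$$f\circ\mathsf e_X=h\circ j_E\circ\mathsf e_X=h\circ j_Y\circ i=\mathsf e_Y\circ i,$$
which is the desired commutativity. Finally, since $f\circ\mathsf e_X=\mathsf e_Y\circ i$ is an isometry and $\mathsf e_X:X\to EX$ is tight (Lemma~\ref{l:t}), the non-expanding map $f$ is forced to be an isometry, completing the argument.

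I expect the main obstacle to be the amalgamation step: checking that the glued function $d_Z$ is a genuine distance and that both legs $j_Y,j_E$ are isometric, i.e.\ that the gluing contracts distances neither inside $Y$ nor inside $EX$. An alternative that bypasses the amalgam is to write the extension down explicitly, sending an extremal $\alpha\in EX$ to $p(\beta_\alpha)\in EY$, where $\beta_\alpha(y)=\inf_{x\in X}\big(\alpha(x)+d_Y(y,i(x))\big)$ and $p:\Delta Y\to EY$ is the non-expanding retraction of Lemma~\ref{l:main}; this trades the amalgam verification for the (equally routine) checks that $\beta_\alpha\in\Delta Y$, that $\alpha\mapsto\beta_\alpha$ is non-expanding, and that $f\circ\mathsf e_X=\mathsf e_Y\circ i$, after which tightness again upgrades $f$ to an isometry.
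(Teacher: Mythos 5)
Your proof is correct, and it reaches the same endpoint as the paper by the same two-stage strategy (first produce a non-expanding $f:EX\to EY$ with $f\circ\mathsf e_X=\mathsf e_Y\circ i$, then use rigidity of the envelope to force $f$ to be an isometry), but the two stages are executed differently. For the extension stage the paper simply cites Lemma~\ref{l:eE}; read literally, that lemma only produces non-expanding maps \emph{into} $EX$ from isometric over-spaces of $X$, so it does not by itself yield a map out of $EX$ into $EY$ -- one needs either your amalgam $Z=(Y\sqcup EX)/{\sim}$ (after which Isbell-injectivity of $\mathsf e_Y$ applied to $j_Y:Y\to Z$ does the job), or the route the paper itself uses in Proposition~\ref{p:ELip}: embed $Y$ isometrically into $\bar\IR^Y$ via Proposition~\ref{p:emb}, extend $X\to\bar\IR^Y$ along $\mathsf e_X$ by Proposition~\ref{p:Lip}, and compose with the non-expanding map $\bar\IR^Y\to EY$ from Lemma~\ref{l:eE}. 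Your explicit second alternative ($\alpha\mapsto p(\beta_\alpha)$ with $p$ from Lemma~\ref{l:main}) is essentially that route written in coordinates. So your amalgamation actually supplies a justification that the paper's one-line citation leaves implicit; your verification of the glued triangle inequality is the right one, and the only fussy points (well-definedness on the identified points $i(x)\sim\mathsf e_X(x)$, and the empty-$X$ case where the infimum is $\infty$) all check out. For the upgrade stage the paper constructs a non-expanding $g:EY\to EX$ with $g\circ\mathsf e_Y\circ i=\mathsf e_X$ (here Lemma~\ref{l:eE} \emph{does} apply verbatim) and invokes Lemma~\ref{l:id} to get $g\circ f=\id_{EX}$, whereas you invoke tightness (Lemma~\ref{l:t}) directly; since Lemma~\ref{l:t} is itself deduced from Lemmas~\ref{l:eE} and \ref{l:id}, the two finishes are equivalent, and yours is marginally shorter.
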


\begin{proof}  By Lemma~\ref{l:eE}, there exists a non-expanding map $f:E X\to EY$ such that  $f\circ \mathsf e_X=\mathsf e_Y\circ i$. By Lemma~\ref{l:eE}, there exists a non-expanding map $g:EY\to E X$ such that $g\circ \mathsf e_Y\circ i=\mathsf e_X$. Since $g\circ f\circ\mathsf e_X=g\circ \mathsf e_Y\circ i=\mathsf e_X$, we can apply Lemma~\ref{l:id} and conclude that $g\circ f$ is the identity map of $E X$. Then the non-expanding map $f$ is an isometry.
\end{proof}

A distance space $X$ is defined to be {\em injective} if for any isometry $i:X\to Y$ to a distance space $Y$ there exists a non-expanding map $f:Y\to X$ such that $f\circ i$ is the identity map of $X$. 

\begin{proposition}\label{p:pi} A distance space $X$ is injective if and only if there exists a non-expanding map $r:E X\to X$ such that $r\circ\mathsf e_X$ is the identity map of $X$.
\end{proposition}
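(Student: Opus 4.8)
The plan is to play off the two defining properties of the injective envelope established above: the map $\mathsf e_X:X\to E X$ is both an isometry (Lemma~\ref{l:embE}) and Isbell-injective (Lemma~\ref{l:eE}). Injectivity of $X$ is a statement about all isometries out of $X$, and the injective envelope is precisely the canonical isometry through which every other one factors non-expandingly, so the equivalence should reduce to a short diagram chase in each direction.

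For the forward implication I would simply instantiate the definition of injectivity at the particular isometry $i=\mathsf e_X:X\to E X$, which is legitimate since $\mathsf e_X$ is an isometry by Lemma~\ref{l:embE}. As $X$ is injective, there is a non-expanding map $r:E X\to X$ with $r\circ\mathsf e_X$ equal to the identity map of $X$, which is exactly the retraction asserted.

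For the converse, suppose such a non-expanding retraction $r:E X\to X$ exists and let $i:X\to Y$ be an arbitrary isometry into a distance space $Y$. Because $\mathsf e_X$ is Isbell-injective, I would apply Lemma~\ref{l:eE} to the isometry $j=i$ to obtain a non-expanding map $g:Y\to E X$ satisfying $g\circ i=\mathsf e_X$. Setting $f:=r\circ g:Y\to X$ then yields a composition of non-expanding maps, hence a non-expanding map, and $f\circ i=r\circ g\circ i=r\circ\mathsf e_X$ is the identity map of $X$. Since $i$ was an arbitrary isometry out of $X$, this shows $X$ is injective.

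I do not expect a real obstacle here beyond selecting the correct universal property at each step: the forward direction is a direct specialization of the definition of injectivity, and the only substantive move in the converse is recognizing that Isbell-injectivity of $\mathsf e_X$ allows one to factor $\mathsf e_X$ itself through the given isometry $i$, after which post-composing with the retraction $r$ immediately closes the argument.
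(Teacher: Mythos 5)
Your argument is correct and coincides with the paper's proof: the forward direction specializes the definition of injectivity to the isometry $\mathsf e_X$ (Lemma~\ref{l:embE}), and the converse uses Lemma~\ref{l:eE} to factor $\mathsf e_X$ through an arbitrary isometry $i:X\to Y$ via a non-expanding $g$ and then post-composes with $r$. No gaps.
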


\begin{proof} The ``only if'' part follows from the definition of an injective distance space and Lemma~\ref{l:embE}.

To prove the ``if'' part, assume that $X$ admits a non-expanding map $r:E X\to X$ such that $r\circ \mathsf e_X$ is the identity map of $X$. Given any isometry $i:X\to Y$, apply Lemma~\ref{l:eE} and find a non-expanding map $g:Y\to E X$ such that $g\circ i=\mathsf e_X$. Then the map $f=r\circ g:Y\to X$ is non-expanding and for every $x\in X$ 
$$
f\circ i(x)=r\circ g\circ i(x)=r\circ \mathsf e_X(x)=x.
$$
\end{proof}

\begin{corollary}\label{c:Einj} For any distance space $X$ the $\infty$-metric space $EX$ is injective.
\end{corollary}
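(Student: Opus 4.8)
The plan is to reduce the statement to the injectivity criterion already isolated in Proposition~\ref{p:pi}. Applied to the distance space $EX$ in place of $X$, that proposition says that $EX$ is injective if and only if there exists a non-expanding map $r:E(EX)\to EX$ such that $r\circ\mathsf e_{EX}$ is the identity map of $EX$. So the entire problem collapses to constructing such a retraction $r$, and everything else becomes formal.

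To build $r$, I would consider the composite map $\mathsf e_{EX}\circ\mathsf e_X:X\to E(EX)$. Since $\mathsf e_X:X\to EX$ and $\mathsf e_{EX}:EX\to E(EX)$ are isometries by Lemma~\ref{l:embE}, and a composition of isometries between distance spaces is again an isometry, this composite is an isometry $X\to E(EX)$. Now I would invoke the Isbell-injectivity of $\mathsf e_X:X\to EX$ from Lemma~\ref{l:eE}, applied to the isometry $j=\mathsf e_{EX}\circ\mathsf e_X$: it produces a non-expanding map $r:E(EX)\to EX$ with $\mathsf e_X=r\circ(\mathsf e_{EX}\circ\mathsf e_X)=(r\circ\mathsf e_{EX})\circ\mathsf e_X$.

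The last identity is precisely the hypothesis of the rigidity Lemma~\ref{l:id}: the map $r\circ\mathsf e_{EX}:EX\to EX$ is non-expanding, being the composition of the isometry $\mathsf e_{EX}$ with the non-expanding map $r$, and it satisfies $(r\circ\mathsf e_{EX})\circ\mathsf e_X=\mathsf e_X$. Hence Lemma~\ref{l:id} forces $r\circ\mathsf e_{EX}=\mathrm{id}_{EX}$, and Proposition~\ref{p:pi} then yields at once that $EX$ is injective.

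Because the whole argument is purely diagrammatic, there is no genuine analytic obstacle; the only thing to get right is the bookkeeping of which map is an isometry and which is merely non-expanding, so that the hypotheses of Lemma~\ref{l:id} are satisfied. The single point deserving a second glance is the claim that a composition of isometries between distance spaces is an isometry, but this is immediate from the defining chain $d_Z(g f(x),g f(y))=d_Y(f(x),f(y))=d_X(x,y)$ and uses no injectivity — which matters here, since isometries of distance spaces need not be injective.
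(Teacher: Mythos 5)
Your proof is correct and follows exactly the paper's own argument: apply the Isbell-injectivity of $\mathsf e_X$ (Lemma~\ref{l:eE}) to the composite isometry $\mathsf e_{EX}\circ\mathsf e_X$, use the rigidity Lemma~\ref{l:id} to see that the resulting non-expanding map composed with $\mathsf e_{EX}$ is the identity of $EX$, and conclude via Proposition~\ref{p:pi}. There is nothing to add.
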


\begin{proof} By Lemma~\ref{l:embE}, the maps $\mathsf e_X:X\to EX$ and $\mathsf e_{EX}:EX\to EEX$ are isometries. By Lemma~\ref{l:eE}, there exists a non-expanding map $f:EEX\to EX$ such that $f\circ \mathsf e_{EX}\circ\mathsf e_X=\mathsf e_X$. By Lemma~\ref{l:id}, the composition $f\circ \mathsf e_{EX}$ is the identity map of $EX$.  By Proposition~\ref{p:pi}, the distance space $EX$ is injective.
\end{proof}

\begin{proposition}\label{p:ELip} For any Lipschitz map $f:X\to Y$ between distance spaces, there exists a Lipschitz map $\bar f:EX\to EY$ between their injective envelopes such that $\bar f\circ \mathsf e_X=\mathsf e_Y\circ f$ and $\Lip(\bar f)\le\Lip(f)$.
\end{proposition}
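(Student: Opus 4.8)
The plan is to reduce the Lipschitz extension problem to the extension of a \emph{non-expanding} map, and then to exploit the injectivity of the target envelope $EY$. First handle the degenerate case: if $\Lip(f)=0$, then $d_Y(f(x),f(x'))\le 0$ for all $x,x'\in X$, so all the points $\mathsf e_Y(f(x))$ lie at $d_{EY}$-distance $0$ from one another; since $EY$ is an $\infty$-metric space, $\mathsf e_Y\circ f$ is constant, and the corresponding constant map $\bar f:EX\to EY$ clearly satisfies $\bar f\circ\mathsf e_X=\mathsf e_Y\circ f$ and $\Lip(\bar f)=0$. So assume $L:=\Lip(f)>0$ and equip the underlying set of $X$ with the rescaled distance $L\cdot d_X$, denoting the resulting space by $X_L$. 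Then $f:X_L\to Y$ is non-expanding. Rescaling the distance by $L$ rescales the whole Isbell construction: $\alpha\in EX$ iff $L\alpha\in EX_L$, so $S:EX\to EX_L$, $S:\alpha\mapsto L\alpha$, is a bijection with $d_{EX_L}(S\alpha,S\beta)=L\cdot d_{EX}(\alpha,\beta)$ and $S\circ\mathsf e_X=\mathsf e_{X_L}$ (as $\mathsf e_{X_L}(x)=L\cdot d_x=S(\mathsf e_X(x))$).

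It then remains to extend the non-expanding map $\mathsf e_Y\circ f:X_L\to EY$ along the isometric embedding $\mathsf e_{X_L}:X_L\to EX_L$ to a non-expanding map $g:EX_L\to EY$ with $g\circ\mathsf e_{X_L}=\mathsf e_Y\circ f$. This is exactly the assertion that the injective $\infty$-metric space $EY$ has the extension property for non-expanding maps, and it is the main obstacle, since the Isbell-injectivity lemmas established above only extend \emph{isometries}, whereas $\mathsf e_Y\circ f$ is merely non-expanding. To establish it I would form the amalgam $W$ of $B:=EX_L$ and $Q:=EY$ over $A:=X_L$: on the disjoint union $B\sqcup Q$ take the largest distance $d_W$ bounded by $d_{EX_L}$ on $B\times B$, by $d_{EY}$ on $Q\times Q$, and by $\rho(b,q):=\inf_{a\in X_L}\big(d_{EX_L}(b,\mathsf e_{X_L}(a))+d_{EY}(\mathsf e_Y(f(a)),q)\big)$ across the two pieces (equivalently, the path distance generated by these values). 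The key verification is that, because $\mathsf e_Y\circ f$ is non-expanding, any excursion $q\rightsquigarrow b\rightsquigarrow q'$ through $B$ has length at least $d_{EY}(q,q')$, so $d_W$ restricts to $d_{EY}$ on $Q$ and $Q$ embeds isometrically into $W$; the embedding of $B$ is trivially non-expanding.

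With $W$ in hand the rest is routine. Since $EY$ is injective (Corollary~\ref{c:Einj}) and $Q=EY\hookrightarrow W$ is an isometric embedding, there is a non-expanding retraction $r:W\to EY$ onto $Q$; set $g:=r\circ j_B$, where $j_B:EX_L\to W$ is the (non-expanding) inclusion, so $g$ is non-expanding. For $a\in X_L$ one has $d_W(\mathsf e_{X_L}(a),\mathsf e_Y(f(a)))\le\rho(\mathsf e_{X_L}(a),\mathsf e_Y(f(a)))=0$, and since $r$ is non-expanding and $EY$ is an $\infty$-metric space this forces $g(\mathsf e_{X_L}(a))=\mathsf e_Y(f(a))$, i.e. $g\circ\mathsf e_{X_L}=\mathsf e_Y\circ f$. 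Finally define $\bar f:=g\circ S:EX\to EY$. Then $\bar f\circ\mathsf e_X=g\circ S\circ\mathsf e_X=g\circ\mathsf e_{X_L}=\mathsf e_Y\circ f$, and for all $\alpha,\beta\in EX$ we get $d_{EY}(\bar f\alpha,\bar f\beta)\le d_{EX_L}(S\alpha,S\beta)=L\cdot d_{EX}(\alpha,\beta)$, whence $\Lip(\bar f)\le L=\Lip(f)$. The only genuinely delicate point in the whole argument is the amalgam computation showing that $d_W$ does not shortcut distances inside $EY$; everything else is a diagram chase.
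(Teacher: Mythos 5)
Your proof is correct, but it takes a genuinely different route from the paper's. The paper factors everything through the Kuratowski embedding $i:Y\to\bar\IR^Y$ of Proposition~\ref{p:emb}: it extends the Lipschitz map $i\circ f$ over the isometry $\mathsf e_X:X\to EX$ by the explicit McShane-type formula of Proposition~\ref{p:Lip} (applied coordinatewise to the target $\bar\IR^Y$), obtaining $g:EX\to\bar\IR^Y$ with $\Lip(g)\le\Lip(f)$, and then pushes down to $EY$ via the non-expanding map $h:\bar\IR^Y\to EY$ with $h\circ i=\mathsf e_Y$ supplied by Isbell-injectivity (Lemma~\ref{l:eE}); the desired map is $\bar f=h\circ g$. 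You instead rescale $d_X$ by $L=\Lip(f)$ to reduce to the non-expanding case (correctly noting that $\alpha\mapsto L\alpha$ is an order-isomorphism $\Delta X\to\Delta X_L$, hence identifies $EX$ with $EX_L$ and multiplies the sup-distance by $L$), and then upgrade the retract-style injectivity of $EY$ from Corollary~\ref{c:Einj} to the full extension property for non-expanding maps by amalgamating $EX_L$ and $EY$ over $X_L$. You are right that the amalgam is the only delicate point, and your verification is the correct one: since $\mathsf e_Y\circ f:X_L\to EY$ is non-expanding while $\mathsf e_{X_L}$ is an isometry, every chain between two points of $EY$ that detours through $EX_L$ has length at least their $d_{EY}$-distance, so $EY$ embeds isometrically in $W$, the non-expanding retraction $r:W\to EY$ exists, and the zero cross-distance $\rho(\mathsf e_{X_L}(a),\mathsf e_Y(f(a)))=0$ together with the $\infty$-metric property of $EY$ forces the required commutation. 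What the paper's route buys is brevity, since all the work is prepackaged in Propositions~\ref{p:emb}, \ref{p:Lip} and Lemma~\ref{l:eE}; what your route buys is a reusable statement of independent interest (the paper's retract notion of injectivity implies the extension property for arbitrary non-expanding maps) plus the clean rescaling trick for general Lipschitz constants. One minor caveat: your degenerate case $\Lip(f)=0$ tacitly uses the convention $0\cdot\infty=0$ and the attainment of the Lipschitz constant, but the paper's own proof rests on the same tacit assumptions, so this is not a gap specific to your argument.
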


\begin{proof} By Proposition~\ref{p:emb}, there exists an isometry $i:Y\to\bar\IR^Y$. By Proposition~\ref{p:Lip}, for the Lipschitz map $i\circ f:X\to\bar\IR^Y$ and the isometry $\mathsf e_X:X\to EX$, there exists a Lipschitz map $g:EX\to\bar\IR^Y$ such that $g\circ \mathsf e_X=i\circ f$ and $\Lip(g)\le \Lip(i\circ f)=\Lip(f)$. By Lemma~\ref{l:eE}, for the isometry $i:Y\to\bar\IR^Y$, there exists a non-expanding map $h:\bar\IR^Y\to EY$ such that $h\circ i=\mathsf e_Y$. Then $\bar f=h\circ g:EX\to EY$ is a Lipschitz map with Lipschitz constant $\Lip(\bar f)\le \Lip(h)\cdot\Lip(g)\le\Lip(g)\le\Lip(f)$ such that $$\bar f\circ \mathsf e_X=h\circ g\circ\mathsf e_X=h\circ i\circ f=\mathsf e_Y\circ f.$$
\end{proof}

\section{Preliminaries on functors}\label{s:defs}

All functors considered in this paper are covariant. 

We say that a functor $F:\Set\to\Set$ has {\em finite supports} (or else, $F$ is {\em finitely supported\/}) if for any set $X$ and element $a\in FX$ there exists a finite subset $A\subseteq X$ such that $a\in F[A;X]$ where $F[A;X]:=Fi_{A,X}[FA]\subseteq FX$ and $i_{A,X}:A\to X$ is the identity embedding.


For a set $X$ and an element $a\in FX$ its {\em support} is defined as
$$\supp(a)=\bigcap\{A\subseteq X:a\in F[A;X]\}.$$

The following helpful result is proved in \cite{BMZ}.

\begin{proposition}\label{p:BMZ} If $\supp(a)$ is not empty, then $a\in F[\supp(a);X]$. If $\supp(a)$ is empty, then $a\in F[A;X]$ for any non-empty subset $A\subseteq X$.
\end{proposition}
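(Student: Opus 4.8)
The plan is to isolate a single combinatorial lemma about supporting sets and then derive both clauses from it. Call a subset $S\subseteq X$ a \emph{supporting set} of $a$ if $a\in F[S;X]$, so that $\supp(a)$ is by definition the intersection of all supporting sets of $a$. The basic observation driving everything is that if a map $\alpha:X\to X$ fixes a supporting set $S$ pointwise (that is, $\alpha\circ i_{S,X}=i_{S,X}$), then $F\alpha(a)=a$: writing $a=Fi_{S,X}(b)$ with $b\in FS$ and using functoriality, $F\alpha(a)=F(\alpha\circ i_{S,X})(b)=Fi_{S,X}(b)=a$. I will use this repeatedly.

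The key lemma I would prove first is: if $A,B$ are supporting sets of $a$ with $A\cap B\ne\emptyset$, then $A\cap B$ is a supporting set of $a$. Fix a point $c_0\in A\cap B$ and define retractions $r_A,r_B:X\to X$ by $r_A|_A=\id$, $r_A[X\setminus A]=\{c_0\}$ and $r_B|_B=\id$, $r_B[X\setminus B]=\{c_0\}$. Since $r_A$ fixes $A$ and $r_B$ fixes $B$, the observation gives $Fr_A(a)=a=Fr_B(a)$. The point is that the composite $r_B\circ r_A$ maps $X$ into $A\cap B$ (as $r_A$ lands in $A$, and $r_B$ then sends $A$ into $A\cap B$ using $c_0\in A\cap B$), while $F(r_B\circ r_A)(a)=Fr_B(Fr_A(a))=a$. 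Factoring $r_B\circ r_A$ through the inclusion $i_{A\cap B,X}$ then exhibits $a\in F[A\cap B;X]$.

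For the first clause, suppose $\supp(a)\ne\emptyset$. Because $F$ has finite supports, $a$ admits a finite supporting set $A_0$, and $\supp(a)\subseteq A_0$. For each $x\in A_0\setminus\supp(a)$ choose a supporting set $S_x$ with $x\notin S_x$; then $\supp(a)=A_0\cap\bigcap_{x\in A_0\setminus\supp(a)}S_x$ is a finite intersection of supporting sets, each containing the nonempty set $\supp(a)$. Intersecting them one at a time, every partial intersection still contains $\supp(a)$ and is therefore nonempty, so the key lemma applies at each step and yields $a\in F[\supp(a);X]$.

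The delicate part --- and the main obstacle --- is the empty-support clause, where the key lemma cannot simply close the induction, because a partial intersection may drop to $\emptyset$ and $F[\emptyset;X]$ behaves irregularly. Suppose $\supp(a)=\emptyset$. If some supporting set is empty we are immediately done, since $F[\emptyset;X]\subseteq F[A;X]$ for every $A$. Otherwise, running the same intersection process over a finite supporting set $A_0$ and applying the key lemma until the running intersection first becomes empty, I obtain two \emph{disjoint} nonempty supporting sets $P,Q$. Now for an arbitrary nonempty $A$ with a chosen point $a_0\in A$, define $r:X\to X$ by $r[P]=\{a_0\}$ and $r|_{X\setminus P}=\id$. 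Since $P\cap Q=\emptyset$, the map $r$ fixes $Q$, so $Fr(a)=a$; and since $r\circ i_{P,X}$ is the constant map onto $\{a_0\}$, applying $Fr$ to $a=Fi_{P,X}(b)$ factors through $\{a_0\}$ and gives $a\in F[\{a_0\};X]\subseteq F[A;X]$. This uses both supports simultaneously --- $Q$ to certify that $a$ is unchanged and $P$ to collapse it onto a single point of $A$ --- which is exactly what makes the empty case go through.
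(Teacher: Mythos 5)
Your argument is correct and complete: the retraction trick (collapsing the complement of a supporting set to a chosen point) gives both the closure of supporting sets under nonempty pairwise intersection and, via two disjoint nonempty supports, the collapse onto an arbitrary singleton in the empty-support case. Note that the paper itself gives no proof of Proposition~\ref{p:BMZ} --- it is quoted from \cite{BMZ} --- and your proof is essentially the standard argument from that reference, so there is nothing further to reconcile.
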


\begin{corollary}\label{c:supp} Let $f:X\to Y$ be a function between sets and $a\in FX$. If $|f[X]|>1$ \textup{(}and $f{\restriction}_{\supp(a)}$ is injective\textup{)}, then $\supp(Ff(a))\subseteq f[\supp(a)]$ \textup{(}and $\supp(Ff(a))= f[\supp(a)]$\textup{)}. 
\end{corollary}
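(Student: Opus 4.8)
Write $S=\supp(a)$ and $T=\supp(Ff(a))$, and recall that by definition $T=\bigcap\{B\subseteq Y: Ff(a)\in F[B;Y]\}$, so to prove an inclusion $T\subseteq C$ it suffices to exhibit $Ff(a)\in F[C;Y]$. The plan is to prove the first claim $T\subseteq f[S]$, and then to bootstrap it to obtain the equality in the parenthetical version.

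For the inclusion $\supp(Ff(a))\subseteq f[\supp(a)]$ I would distinguish whether $S$ is empty. If $S\ne\emptyset$, Proposition~\ref{p:BMZ} gives $a=Fi_{S,X}(b)$ for some $b\in FS$; writing $f\circ i_{S,X}$ as the composition of the corestriction $S\to f[S]$ with the inclusion $i_{f[S],Y}$ and using functoriality, one reads off $Ff(a)\in F[f[S];Y]$, whence $T\subseteq f[S]$. (This subcase does not use $|f[X]|>1$.) If $S=\emptyset$, Proposition~\ref{p:BMZ} gives $a\in F[\{x\};X]$ for every $x\in X$; applying the same factoring to a singleton shows $T\subseteq\{f(x)\}$ for every $x$. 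Choosing two points with $f(x_1)\ne f(x_2)$—which is exactly where the hypothesis $|f[X]|>1$ enters—forces $T\subseteq\{f(x_1)\}\cap\{f(x_2)\}=\emptyset$, so again $T\subseteq f[S]=\emptyset$.

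For the equality under the additional hypothesis that $f{\restriction}_S$ is injective, the inclusion $T\subseteq f[S]$ is already in hand, so only $f[S]\subseteq T$ remains; the case $S=\emptyset$ is trivial, so assume $S\ne\emptyset$. Here I would build a left inverse and reapply the first part. Since $f{\restriction}_S:S\to f[S]$ is then a bijection, I would define $r:Y\to X$ to equal $(f{\restriction}_S)^{-1}$ on $f[S]$ and to take suitable values on $Y\setminus f[S]$; then $r\circ f\circ i_{S,X}=i_{S,X}$, so functoriality together with $a=Fi_{S,X}(b)$ yields $F(r\circ f)(a)=a$, i.e.\ $Fr(Ff(a))=a$. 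Applying the already-proved inclusion to the map $r$ and the element $Ff(a)$ gives $S=\supp(Fr(Ff(a)))\subseteq r[T]$. On the other hand $T\subseteq f[S]$ and $r{\restriction}_{f[S]}$ is a bijection onto $S$, so $r[T]\subseteq S$; hence $r[T]=S$, and by injectivity of $r$ on $f[S]$ together with $T\subseteq f[S]$ we conclude $T=f[S]$.

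The main obstacle is the side condition needed to reinvoke the first part for $r$: that reapplication requires $|r[Y]|>1$. When $|S|\ge 2$ this is automatic since $r[Y]\supseteq S$, but when $S=\{x_0\}$ is a singleton the naive choice makes $r$ constant and the argument collapses. Here I would use $|f[X]|>1$ a second time: it guarantees both a point in $Y\setminus f[S]$ and a point $x_1\in X\setminus\{x_0\}$, so setting $r\equiv x_1$ on $Y\setminus f[S]$ makes $r[Y]=\{x_0,x_1\}$ while keeping $r\circ f{\restriction}_S=\id_S$. This degenerate singleton case is the only delicate point; everything else is bookkeeping with Proposition~\ref{p:BMZ} and functoriality.
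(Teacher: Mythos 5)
Your proof is correct and follows essentially the same route as the paper: the inclusion is obtained by factoring $f\circ i_{S,X}$ through $f[S]$ via Proposition~\ref{p:BMZ} and functoriality (with the $|f[X]|>1$ hypothesis used only to kill the empty-support case), and the equality is obtained by building a map $Y\to X$ inverting $f$ on $\supp(a)$ and reapplying the inclusion to it. The only cosmetic difference is that the paper chooses its retraction $g$ with $g(y)\in f^{-1}(y)$ for all $y\in f[X]$, which makes $|g[Y]|>1$ automatic, whereas your $r$ needs the separate adjustment you describe in the singleton-support case.
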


\begin{proof} First assume that $\supp(a)=\emptyset$. Since $|f[X]|>1$, we can find two non-empty sets $A,B\subseteq X$ with $f[A]\cap f[B]=\emptyset$. By Proposition~\ref{p:BMZ}, $a\in F[A;X]\cap F[B;X]$, which implies $Ff(a)\in F[f[A];Y]\cap F[f[B];Y]$. Then $\supp(Ff(a))\subseteq f[A]\cap f[B]=\emptyset=f[\supp(a)]$ and hence $\supp(Ff(a))=f[\supp(a)]$.

Now assume that the set $S:=\supp(a)$ is not empty. By Proposition~\ref{p:BMZ}, $a\in F[S;X]$ and there exists $a'\in FS$ such that $a=Fi_{S,X}(a')$. Then $Ff(a)=Ff\circ Fi_{S,X}(a')=F(f\circ i_{S,X})(a')\in F[f[S];X]$ and hence $\supp(Ff(a))\subseteq f[S]=f[\supp(a)]$.

If $f{\restriction}_S$ is injective, then we can find a function $g:Y\to X$ such that $g(y)\in f^{-1}(y)$ for every $y\in f[X]$ and $g\circ f(x)=x$ for every $x\in S$. Then $g\circ f\circ i_{S,X}=i_{S,X}$ and hence
$$Fg(Ff(a))=Fg\circ Ff\circ Fi_{S,X}(a')=Fi_{S,X}(a')=a.$$ Taking into account that $|g[Y]|=|f[X]|>1$, we conclude that 
$$\supp(a)=\supp(Fg(Ff(a)))\subseteq g[\supp(Ff(a))]\subseteq g[f[\supp(a)]]=\supp(a)$$and hence $g[\supp(Ff(a))]=\supp(a)$. Taking into account that $f\circ g{\restriction}_{f[X]}$ is the identity map of $f[X]$ and $\supp(Ff(a))\subseteq f[\supp(a)]\subseteq f[X]$, we conclude that $$\supp(Ff(a))=f\circ g[\supp(Ff(a))]=f[\supp(a)].$$
\end{proof}

For a set $X$ and a cardinal $n\le\w$, put $F_nX=\{a\in FX:|\supp(a)|\le n\}$. If $FX=F_nX$ for any set $X$, then we say that the functor $F$ has degree $\le n$. The smallest number $n$ with this property is called {\em the degree of the functor $F$} and is denoted by $\deg(F)$. If the cardinal $\deg(F)$ is finite, then $F$ is called a {\em functor of finite degree}. 
 More information on functors (of finite degree) can be found in \cite{TZ}.
 
\begin{proposition}\label{p:BMZ2} If $F:\Set\to\Set$ is a functor with finite support $n=\deg(F)$, then for any set $X$ and element $a\in FX$ with nonempty support, there exist a function $f:n\to X$ and element $b\in Fn$ such that $a=Ff(b)$ and $f[n]=\supp(a)$.
\end{proposition}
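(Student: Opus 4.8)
The plan is to reduce $a$ to the value of $F$ on its own support, and then transport the situation to the standard finite ordinal $n$ by means of a surjection $n\to\supp(a)$ together with one of its sections.

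First I would set $S:=\supp(a)$. By hypothesis $S\neq\emptyset$, and since $\deg(F)=n$ we have $|S|\le n$, so that $1\le|S|\le n$. Because $S$ is nonempty, Proposition~\ref{p:BMZ} gives $a\in F[S;X]$, which means precisely that there is an element $a'\in FS$ with $a=Fi_{S,X}(a')$, where $i_{S,X}:S\to X$ is the identity embedding. This is the only place the nonemptiness of the support enters the reduction.

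Next, using that $S$ is a nonempty finite set with $|S|\le n$, I would choose a surjection $g:n\to S$ (for instance, map an initial segment of $n$ bijectively onto $S$ and send the remaining ordinals to a fixed point of $S$). Since $g$ is surjective it admits a section $h:S\to n$ with $g\circ h=\id_S$; as $S$ is finite this requires no appeal to choice. The key observation is now purely functorial: applying $F$ to $g\circ h=\id_S$ yields $Fg\circ Fh=\id_{FS}$, so the element $b:=Fh(a')\in Fn$ satisfies $Fg(b)=a'$. In other words, the representing element $a'$ on $S$ lifts along the surjection $g$ to an element living on $n$.

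Finally I would set $f:=i_{S,X}\circ g:n\to X$. Then $f[n]=i_{S,X}[g[n]]=i_{S,X}[S]=S=\supp(a)$, which is the required equality of images, and by functoriality
\[
Ff(b)=Fi_{S,X}\big(Fg(b)\big)=Fi_{S,X}(a')=a,
\]
completing the construction. There is no serious obstacle here: the only points needing a moment of care are the existence of the surjection $g$ (where the nonemptiness of $S$ and the bound $|S|\le n=\deg(F)$ are both used) and the lifting step $b=Fh(a')$, which is exactly what allows the representing element to be placed on $n$ rather than merely on $S$. Everything else is a direct computation with functoriality and with the identification of $f[n]$ with $S$.
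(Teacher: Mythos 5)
Your proof is correct and follows essentially the same route as the paper: reduce to the support via Proposition~\ref{p:BMZ}, then transport the representing element to $Fn$ and map back down with an $f:n\to X$ whose image is $\supp(a)$. The only (cosmetic) difference is that you lift along a section $h$ of a surjection $g:n\to S$ using $Fg\circ Fh=\id_{FS}$, whereas the paper factors through a bijection $|S|\to S$ and the inclusion $|S|\subseteq n$; both computations are equivalent.
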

 
\begin{proof} By Proposition~\ref{p:BMZ}, $a\in F[S;X]=Fi_{S,X}[FS]$, where $S=\supp(a)$ and $i_{S,X}:S\to X$ is the identity inclusion. Fix any bijective function $h:|S|\to S$ and observe that the function $Fh:F|S|\to FS$ is bijective, too. The definition of the degree $n=\deg(F)$ ensures that $|S|=|\supp(a)|\le n$ and hence $|S|\subseteq n$. Let $f:n\to X$ be any function such that $f[n]=S$ and $f(k)=h(k)$ for every $k\in|S|$. Then $f\circ i_{|S|,n}=i_{S,X}\circ h$ and hence $Ff\circ Fi_{|S|,n}=Fi_{S,X}\circ Fh$. Since $a\in F[S;X]=Fi_{S,X}[FS]=Fi_{S,X}[Fh[F(|S|)]]$, there exists an element $c\in F(|S|)$ such that $a=F{i_{S,X}}\circ Fh(c)$. Let $b=Fi_{|S|,n}(c)\in Fn$ and observe that
$$Ff(b)=Ff\circ Fi_{|S|,n}(c)=Fi_{S,X}\circ Fh(c)=a.$$
\end{proof}

We will say that a functor $F:\Set\to\Set$
\begin{itemize}
\item {\em preserves supports} if for any function $f:X\to Y$ and any element $a\in FX$ we get $\supp(Ff(a))=f[\supp(a)]$;
\item is {\em finitary} if for any finite set $X$ the set $FX$ is finite;
\item {\em preserves singletons} if for the singleton $1=\{0\}$ the set $F(1)$ is a singleton.
\end{itemize}
\smallskip

\section{Introducing the distance $d^p_{FX}$}

Let $F:\Set\to\Set$ be a functor with finite supports.
Fix a number $p\in[1,\infty]$.

For each distance $d_X$ on a set $X$ we define the distance $d^p_{FX}$ on $FX$ as follows. Let $X^{<\w}=\bigcup_{n\in\w}X^n$. The elements of $X^{<\w}$ are functions $f:n\to X$ defined on finite ordinals $n=\{0,\dots,n-1\}$. On each finite power $X^n$ we consider the $\ell^p$-distance $d^p_{X^{\!n}}$ defined by the formula
$$d^p_{X^{\!n}}(f,g)=\begin{cases}
\big(\sum_{i\in n}d_X(f(i),g(i))^p\big)^{\frac1p}&\mbox{if $p<\infty$;}\\
\;\;\max_{i\in n}d_X(f(i),g(i))&\mbox{if $p=\infty$}.
\end{cases}
$$

The distances $d^p_{X^{\!n}}$, $n\in\w$, determine the distance $d^p_{X^{\!<\!\w}}$ on the set $X^{<\w}=\bigcup_{n\in\w}X^n$, defined by the formula
$$d^p_{X^{\!<\!\w}}(f,g)=\begin{cases}
d^p_{X^n}(f,g)&\mbox{if $f,g\in X^n$ for some $n\in\w$},\\
\;\infty,&\mbox{otherwise}.
\end{cases}
$$
\smallskip

For two elements $a,b\in FX$ let
$L_{FX}(a,b)$ be the set of all finite sequences\newline $\big((a_0,f_0,g_0),\dots,(a_l,f_l,g_l)\big)$ of triples $$(a_0,f_0,g_0),\dots,(a_l,f_l,g_l)\in \bigcup_{n\in\w} Fn\times X^n\times X^n$$ such that $a=Ff_0(a_0)$, $b=Fg_l(a_l)$ and $Fg_i(a_i)=Ff_{i+1}(a_{i+1})$ for all $0\le i<l$. Elements of the set $L_{FX}(a,b)$ are called {\em $(a,b)$-linking chains}.

For an $(a,b)$-linking chain $s=\big((a_i,f_i,g_i)\big){}_{i=0}^l\in L_{FX}(a,b)$ put
$$\Sigma d^p_{X}(s):=\sum_{i=0}^ld^p_{X^{\!<\!\w}}(f_i,g_i).$$

Define the distance $d^p_{FX}:FX\times FX\to[0,\infty]$ letting $$d^p_{FX}(a,b)=\inf\big(\{\infty\}\cup\{\Sigma d^p_X(s):s\in L_{FX}(a,b)\}\big).$$



The correspondence $F^p:(X,d_X)\mapsto (FX,d^p_{FX})$ determines a functor $F^p:\DMetr\to\DMetr$ in the category $\DMetr$ of distance spaces, which is a lifting of the functor $F$ with respect to the forgetful functor $\DMetr\to\Set$. This lifting will be called the {\em $\ell^p$-metrization} of the functor $F$. In the remaining part of this paper we study the properties of the functor $F^p$ and its modification $\check F^p$.

From now on, we assume that $F:\Set\to\Set$ is a functor with finite supports, $\mathsf X=(X,d_X)$ is a distance space, and $p\in[1,\infty]$.

\section{The distance $d^p_{FX}$ between elements with small supports}

In this section we evaluate the distance between elements of $FX$ supported by the empty set.

\begin{theorem}\label{t:small}  For any elements $a,b\in FX$ with $|\supp(a)\cup\supp(b)|\le 1$, we get
$$d^p_{FX}(a,b)=\begin{cases}0,&\mbox{if $a=b$},\\
\infty,&\mbox{if $a\ne b$}.
\end{cases}
$$
\end{theorem}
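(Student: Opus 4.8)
The plan is to reduce the whole computation to the trivial metric geometry of a one-point subspace of $X$. Two cases arise. If $a=b$, then, since $F$ has finite supports, we may write $a=Ff_0(a_0)$ for some $n\in\IN$, $a_0\in Fn$ and $f_0\in X^n$; the single-term chain $\big((a_0,f_0,f_0)\big)$ then lies in $L_{FX}(a,a)$ and has weight $d^p_{X^n}(f_0,f_0)=0$, so $d^p_{FX}(a,b)=0$. The substantive case is $a\ne b$, where I would show that $L_{FX}(a,b)=\emptyset$, which immediately gives $d^p_{FX}(a,b)=\inf\{\infty\}=\infty$.

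To set this up, I would use the hypothesis $|\supp(a)\cup\supp(b)|\le 1$ to pick a single point $s\in X$ with $\supp(a)\cup\supp(b)\subseteq\{s\}$: if the union is a singleton, take its unique point, and if it is empty, take any $s\in X$ (here the standing assumption $X\ne\emptyset$ is used). By Proposition~\ref{p:BMZ}, both $a$ and $b$ belong to $F[\{s\};X]=Fi[F\{s\}]$, where $i=i_{\{s\},X}\colon\{s\}\hookrightarrow X$; thus $a=Fi(a')$ and $b=Fi(b')$ for some $a',b'\in F\{s\}$. Since $Fi(a')=a\ne b=Fi(b')$, we must have $a'\ne b'$.

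The key idea is that the unique (constant) map $r\colon X\to\{s\}$ collapses all of the metric structure of $X$ yet still separates $a$ from $b$. Indeed $r\circ i=\id_{\{s\}}$, so by functoriality $Fr(a)=a'$ and $Fr(b)=b'$. Suppose, for contradiction, that there were a chain $\big((a_j,f_j,g_j)\big)_{j=0}^l\in L_{FX}(a,b)$, with $f_j,g_j\in X^{n_j}$. Applying $r$ coordinatewise, I would set $\tilde f_j=r\circ f_j$ and $\tilde g_j=r\circ g_j$ in $\{s\}^{n_j}$. Using $F(r\circ f)=Fr\circ Ff$ together with the matching conditions $Fg_j(a_j)=Ff_{j+1}(a_{j+1})$ of the original chain, the tuple $\big((a_j,\tilde f_j,\tilde g_j)\big)_{j=0}^l$ becomes a chain in $L_{F\{s\}}(Fr(a),Fr(b))=L_{F\{s\}}(a',b')$.

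Finally I would invoke the triviality of powers of a one-point set: each $\{s\}^{n}$ has exactly one element, so $\tilde f_j=\tilde g_j$ for every $j$, and the linking conditions collapse to the single chain of equalities $a'=F\tilde f_0(a_0)=F\tilde g_0(a_0)=F\tilde f_1(a_1)=\cdots=F\tilde g_l(a_l)=b'$. This forces $a'=b'$, contradicting the previous paragraph. Hence no $(a,b)$-linking chain exists, $L_{FX}(a,b)=\emptyset$, and therefore $d^p_{FX}(a,b)=\infty$, as claimed. The only step requiring care is checking that $Fr$ carries a linking chain over $X$ to a genuine linking chain over $\{s\}$ — that is, that the endpoint and consecutive-matching conditions survive — which is a direct consequence of functoriality; everything else is formal.
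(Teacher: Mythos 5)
Your argument for nonempty $X$ is essentially the paper's own: collapse everything onto a singleton $\{s\}\supseteq\supp(a)\cup\supp(b)$ via the constant retraction $r$, push any $(a,b)$-linking chain forward by functoriality, and observe that over a one-point space all the $\tilde f_j=\tilde g_j$, forcing $a'=b'$ and hence $a=b$. That part is correct, and your formulation ($L_{FX}(a,b)=\emptyset$ outright) is if anything slightly cleaner than the paper's, which assumes a chain of finite weight and derives the same contradiction without ever using the finiteness.

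The one genuine gap is the case $X=\emptyset$. There is no standing nonemptiness assumption in force for this theorem: the blanket convention introduced before this section only says that $(X,d_X)$ is a distance space, and the paper's own proof treats $X=\emptyset$ as a separate case precisely because the statement is not vacuous there. Indeed $F\emptyset$ can have two or more elements (e.g.\ for a constant functor, or for $F=(\,\cdot\,)\sqcup 2$), all of them with empty support, so the hypothesis $|\supp(a)\cup\supp(b)|\le 1$ is automatic and the claim $d^p_{FX}(a,b)=\infty$ for $a\ne b$ still needs an argument; your choice of $s\in X$ is then impossible. The repair is short and in the same spirit: for $X=\emptyset$ every $f_i,g_i$ in a linking chain lies in $X^{n_i}=\emptyset^{n_i}$, which is nonempty only for $n_i=0$, whence $f_i=g_i$ for all $i$ and the chain again collapses to $a=b$. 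Relatedly, your $a=b$ case writes $a=Ff_0(a_0)$ with $n\in\IN$, which also presumes $X\ne\emptyset$; for $X=\emptyset$ one takes $n=0$ and the empty function. With these two boundary adjustments the proof is complete.
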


\begin{proof} It is clear that $d^p_{FX}(a,b)=0$ if $a=b$. So, we assume that $a\ne b$. In this case we prove that $d^p_{FX}(a,b)=\infty$. Separately we consider the cases of empty and non-empty set $X$.

First we consider the case of $X=\emptyset$. Assuming that $d^p_{FX}(a,b)<\infty$, we can find an $(a,b)$-linking chain $s=\big((a_i,f_i,g_i)\big){}_{i=0}^l\in L_{FX}(a,b)$ such that $\Sigma d^p_X(s)<\infty$. For every $i\in\{0,\dots,l\}$ find a finite ordinal $n_i$ such that $f_i,g_i\in X^{n_i}$. Then $X=\emptyset\ne X^{n_i}$ implies that $n_i=0$ for all $i\in\{0,\dots,k\}$ and hence $f_i=g_i$ (as the set $X^0$ is a singleton containing the unique function $\emptyset\to \emptyset=X$).
Then $$a=Ff_0(a_0)=Fg_0(a_0)=Ff_1(a_1)=Fg_1(a_1)=\dots=Ff_l(a_l)=Fg_l(a_l)=b,$$which is a desired contradiction.
\smallskip

Now we assume that the set $X$ is not empty. In this case we can find a singleton $S\subseteq X$ containing the union $\supp(a)\cup\supp(b)$.
By Proposition~\ref{p:BMZ}, $a,b\in F[S;X]$. So, we can find (distinct) elements $a',b'\in FS$ such that $a=Fi_{S,X}(a')$ and $b=Fi_{S,X}(b')$. Here by $i_{S,X}:S\to X$ we denote the identity embedding of $S$ into $X$.
Let $f:X\to S$ be the unique constant map and observe that $f\circ i_{S,X}$ is the identity map of $S$. Consequently, $a'=Ff\circ Fi_{S,X}(a')=Ff(a)$ and $b'=Ff(b)$.

Assuming that $d^p_{FX}(a,b)<\infty$, we can find an $(a,b)$-linking chain $s{=}\big((a_i,f_i,g_i)\big){\!\,}_{i=0}^l\in L_{FX}(a,b)$ such that $\Sigma d^p_X(s)<\infty$. Applying to this chain the map $f:X\to S$, we obtain the $(Ff(a),Ff(b))$-linking chain $\big((a_i,f\circ f_i,f\circ g_i)\big){}_{i=0}^l\in L_{FS}(Ff(a),Ff(b))$. Since $S$ is a singleton, $f\circ f_i=f\circ g_i$ for all $i\in\{0,\dots,l\}$ and hence
$$a'=Ff(a)=Ff\circ f_0(a_0)=Ff\circ g_0(a_0)=Ff\circ f_1(a_1)=\cdots=Ff\circ g_l(a_l)=Ff(b)=b'.$$
Then $a=Fi_{S,X}(a')=Fi_{S,X}(b')=b$, which contradicts our assumption.
\end{proof}

\begin{corollary}\label{c:small} If $|X|\le 1$, then for any elements $a,b\in FX$ we get
$$d^p_{FX}(a,b)=\begin{cases}0,&\mbox{if \ $a=b$},\\
\infty,&\mbox{if \ $a\ne b$}.
\end{cases}
$$
\end{corollary}

\section{Short linking chains}

Let $a,b\in FX$ be two elements. An $(a,b)$-linking chain $\big((a_i,f_i,g_i)\big){}_{i=0}^l\in L_{FX}(a,b)$ is called 
\begin{itemize}
\item {\em short} if for any distinct numbers $i,j\in\{0,\dots,l\}$ with $\dom(f_i)=\dom(f_j)$ we have $a_i\ne a_j$;
\item {\em very short} if it is short and $\dom(f_i)=\dom(g_i)=\max\{1,\deg(F)\}$ for every $i\in\{0,\dots,l\}$.
\end{itemize}
Observe that the length of any very short $(a,b)$-linking sequence does not exceed $|Fn|$ where $n=\max\{1,\deg(F)\}$.

By $L^s_{FX}(a,b)$ (resp. $L^{vs}_{FX}(a,b)$) we  denote the subset of $L_{FX}(a,b)$ consisting of (very) short $(a,b)$-linking chains.

\begin{theorem}\label{t:short} For any $a,b\in FX$ the following equality holds:
$$d^p_{FX}(a,b)=\inf\big(\{\infty\}\cup \{\Sigma d^p_X(w):w\in  L^s_{FX}(a,b)\}\big).$$
\end{theorem}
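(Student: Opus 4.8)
The inequality $d^p_{FX}(a,b)\le\inf\big(\{\infty\}\cup\{\Sigma d^p_X(w):w\in L^s_{FX}(a,b)\}\big)$ is immediate, since every short linking chain is a linking chain and the infimum defining $d^p_{FX}(a,b)$ is taken over the larger set $L_{FX}(a,b)\supseteq L^s_{FX}(a,b)$. So the whole content lies in the reverse inequality, and (treating the case of an empty $L_{FX}(a,b)$, where both sides equal $\infty$, separately) it suffices to show that every finite-cost linking chain $s=\big((a_i,f_i,g_i)\big){}_{i=0}^l\in L_{FX}(a,b)$ admits a short linking chain $w\in L^s_{FX}(a,b)$ with $\Sigma d^p_X(w)\le\Sigma d^p_X(s)$. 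I would prove this by induction on the number $l+1$ of triples, the base case being trivial because a one-triple chain is vacuously short; the inductive step is a single shortening operation removing at least one triple without increasing $\Sigma d^p_X$.

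\textbf{The shortening operation.} If $s$ is not short, there are indices $i<j$ with $\dom(f_i)=\dom(f_j)=n$ and $a_i=a_j=c\in Fn$. The plan is to excise the whole block of triples $(a_i,f_i,g_i),\dots,(a_j,f_j,g_j)$ and replace it by a single triple $(c,f_i,h)$, where $h\in X^n$ is a function, still to be constructed, satisfying $Fh(c)=Fg_j(c)$. Because $Ff_i(c)=Ff_i(a_i)$ matches the incoming link (or equals $a$ when $i=0$) and $Fh(c)=Fg_j(c)=Fg_j(a_j)$ matches the outgoing link (or equals $b$ when $j=l$), the resulting sequence is again an $(a,b)$-linking chain, and it is strictly shorter, so the induction hypothesis applies to it. Since each such operation removes $j-i\ge 1$ triples, finitely many steps leave no repeated pair $(a_k,\dom(f_k))$, i.e.\ produce a short chain.

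\textbf{The cost estimate (the crux).} It remains to construct $h$ with $Fh(c)=Fg_j(c)$ and
$$
d^p_{X^{<\w}}(f_i,h)\ \le\ \sum_{k=i}^{j}d^p_{X^{<\w}}(f_k,g_k).
$$
Since $Ff(c)$ depends only on $f{\restriction}_{\supp(c)}$, it is harmless to take $h$ equal to $f_i$ off $\supp(c)$, so that $d^p_{X^{<\w}}(f_i,h)$ is the $\ell^p$-norm of the vector $\big(d_X(f_i(m),h(m))\big)_{m\in\supp(c)}$. The idea is to fix $h{\restriction}_{\supp(c)}$ by tracking the motion of the support points of the functor-images along the excised block. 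By Proposition~\ref{p:BMZ} and Corollary~\ref{c:supp}, each triple $(a_k,f_k,g_k)$ carries the support of its image from a subset of $f_k[\supp(a_k)]$ to a subset of $g_k[\supp(a_k)]$, and the displacement of each support point is controlled by the coordinate differences $d_X(f_k(m'),g_k(m'))$, hence contributes at most $d^p_{X^{<\w}}(f_k,g_k)$. Composing these partial correspondences along the block produces a correspondence between $f_i[\supp(c)]$ and $g_j[\supp(c)]$, from which one reads off $h{\restriction}_{\supp(c)}$; the triangle inequality in $X$ together with Minkowski's inequality for the $\ell^p$-norm then yields the displayed bound.

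The main obstacle is precisely this estimate, and inside it the honest bookkeeping of how supports evolve along the block when the maps $f_k,g_k$ fail to be injective on $\supp(a_k)$: then the transport of support points is a correspondence rather than a bijection, and one must verify that it can be chosen compatibly with the symmetries of the fixed element $c$, so that the single replacement triple $(c,f_i,h)$ really reproduces the endpoint $Fg_j(c)$. For $p=\infty$ the estimate reduces to a maximum over coordinates and is transparent; for finite $p$ the passage from the coordinatewise triangle inequalities to the $\ell^p$-bound via Minkowski's inequality is where I expect to spend most of the effort.
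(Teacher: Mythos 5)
Your overall strategy---reduce to the reverse inequality, then repeatedly merge a block $(a_i,f_i,g_i),\dots,(a_j,f_j,g_j)$ with $a_i=a_j=c$ and $\dom(f_i)=\dom(f_j)$ into a single triple---is exactly the strategy of the paper's own proof, which collapses the block to $(a_i,f_i,g_j)$ and asserts $d^p_{X^{<\w}}(f_i,g_j)\le\sum_{k=i}^{j}d^p_{X^{<\w}}(f_k,g_k)$ without justification. You correctly sense that this cost estimate is the crux, but the step you defer is not merely laborious: in the form you (and the paper) need it, it is false, so this route is blocked. Take the functor $F$ of Example~\ref{ex:projsquare}, $X=\IR$, $c=\{0,1\}\in F2$, and the chain $w=\big((c,f_0,g_0),(c,f_1,g_1)\big)$ with $f_0=(0,5)$, $g_0=(3,3)$, $f_1=(100,100)$, $g_1=(100,101)$. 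Then $Ff_0(c)=\{0,5\}=:a$, $Fg_0(c)=\emptyset=Ff_1(c)$, $Fg_1(c)=\{100,101\}=:b$, so $w\in L_{FX}(a,b)$ with $\Sigma d^p_X(w)=(3^p+2^p)^{1/p}+1\le 6$, and $w$ is not short. Since $\supp(c)=\{0,1\}$, any $h$ with $Fh(c)=Fg_1(c)$ must map $\{0,1\}$ bijectively onto $\{100,101\}$, so your replacement triple $(c,f_0,h)$ costs $d^p_{X^2}(f_0,h)\ge 100$; in fact no single triple $(c,f,g)$ with $Ff(c)=a$ and $Fg(c)=b$ costs less than $100$. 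The support-tracking/Minkowski plan cannot repair this, because the intermediate element $Fg_0(c)=Ff_1(c)$ has strictly smaller (here empty) support than $c$: the linking equality then places no constraint whatsoever on $d_X(g_0(m),f_1(m))$ for $m\in\supp(c)$, and this unbounded ``drift'' is invisible to $\Sigma d^p_X(w)$ yet charged in full to the merged triple.

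The theorem is nevertheless true, for a reason that makes the merging machinery unnecessary. Shortness only forbids $a_i=a_j$ when $\dom(f_i)=\dom(f_j)$, so it suffices to re-embed the triples into pairwise distinct domains. Assuming $X\ne\emptyset$ (the case $X=\emptyset$ is trivial, since then every chain has cost $0$ and forces $a=b$, and any one-triple chain is short), fix $x_*\in X$, choose pairwise distinct ordinals $m_i\ge n_i:=\dom(f_i)$, put $a_i'=Fi_{n_i,m_i}(a_i)\in Fm_i$, and extend $f_i,g_i$ to $f_i',g_i'\in X^{m_i}$ by the constant value $x_*$ on $m_i\setminus n_i$. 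Then $Ff_i'(a_i')=F(f_i'\circ i_{n_i,m_i})(a_i)=Ff_i(a_i)$ and likewise for $g_i'$, so all links (and the endpoints $a,b$) are preserved, $d^p_{X^{m_i}}(f_i',g_i')=d^p_{X^{n_i}}(f_i,g_i)$, and the new chain is vacuously short. This yields the nontrivial inequality with no cost estimate at all; the genuinely delicate statement is Theorem~\ref{t:very-short}, where all domains are forced to coincide and the collision you are worrying about can no longer be legislated away.
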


\begin{proof} Let $D:=\inf\big(\{\infty\}\cup\{\Sigma d^p_X(w):w\in L^s_{FX}(a,b)\}\big)$. The inclusion $L^s_{FX}(a,b)\subseteq L_{FX}(a,b)$ implies the inequality $d^p_{FX}(a,b)\le D$. To derive a contradiction, assume that this inequality is strict. Then there exists an $(a,b)$-linking chain $w=\big((a_i,f_i,g_i)\big){}_{i=0}^l\in L_{FX}(a,b)$ such that $\Sigma d^p_X(w)<D$. We can assume that the length $(l+1)$ of this chain is the smallest possible.

 We claim that in this case the chain $w$ is short. Indeed, if for some $i<j$ we have $\dom(f_i)=\dom(f_j)$ and $a_i=a_j$, then the chain $w$ can be replaced  by the shorter $(a,b)$-linking chain 
$$v:=\big((a_0,f_0,g_0),\dots,(a_i,f_i,g_j),(a_{j+1},f_{j+1},g_{j+1}),\dots,(a_l,f_l,g_l)\big),$$which has 
$$
\begin{aligned}
\Sigma d^p_X(v)&=\sum_{\ji=0}^{i-1}d^p_{X^{\!<\!\w}}(f_\ji,g_\ji)+d^p_{X^{\!<\!\w}}(f_i,g_j)+\sum_{\ji=j+1}^ld^p_{X^{\!<\!\w}}(f_\ji,g_\ji)\le\\
&\le\sum_{\ji=0}^{i-1}d^p_{X^{\!<\!\w}}(f_\ji,g_\ji)+\sum_{\ji=i}^jd^p_{X^{\!<\!\w}}(f_{\ji},g_\ji)+\sum_{\ji=j+1}^ld^p_{X^{\!<\!\w}}(f_\ji,g_\ji)=\Sigma d^p_X(w)<D,
\end{aligned}
$$
but this contradicts the choice of $w$ as the shortest chain with $\Sigma d^p_X(w)<D$.
\end{proof}

\begin{theorem}\label{t:very-short} If the functor $F$ has finite degree, then for any $a,b\in FX$
$$d^p_{FX}(a,b)=\inf\big(\{\infty\}\cup \{\Sigma d^p_X(w):w\in  L^{vs}_{FX}(a,b)\}\big).$$
\end{theorem}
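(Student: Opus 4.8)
The plan is to bootstrap from Theorem~\ref{t:short}, which already writes $d^p_{FX}(a,b)$ as the infimum of $\Sigma d^p_X(w)$ over \emph{short} chains, and to upgrade ``short'' to ``very short''. Writing $n:=\max\{1,\deg(F)\}$, the inequality $d^p_{FX}(a,b)\le\inf\big(\{\infty\}\cup\{\Sigma d^p_X(w):w\in L^{vs}_{FX}(a,b)\}\big)$ is immediate from $L^{vs}_{FX}(a,b)\subseteq L_{FX}(a,b)$, so the entire content is the reverse inequality: from an arbitrary $(a,b)$-linking chain $w$ I must manufacture a very short one of no larger weight. (I will assume $X\ne\emptyset$; if $X=\emptyset$ then $X^n=\emptyset$ and no very short chains exist, so this degenerate case has to be excluded, consistently with the standing non-emptiness hypothesis.)

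The key step I would isolate is a \emph{normalization lemma}: for every triple $(c,f,g)\in Fm\times X^m\times X^m$ there is a triple $(c^*,f^*,g^*)\in Fn\times X^n\times X^n$ with the same endpoints, $Ff^*(c^*)=Ff(c)$ and $Fg^*(c^*)=Fg(c)$, and with $d^p_{X^n}(f^*,g^*)\le d^p_{X^m}(f,g)$. Granting this, I would apply it to every triple of $w$: the resulting chain has all coordinate-domains equal to $n$, keeps the endpoints $a,b$, and — since each endpoint $Ff_i(a_i),Fg_i(a_i)$ is left unchanged — still satisfies the linking identities $Fg_i(a_i)=Ff_{i+1}(a_{i+1})$, while its total weight has not increased. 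A final pass of the shortening procedure from the proof of Theorem~\ref{t:short}, which when all domains already equal $n$ only ever fuses two triples $(c^*,f^*,\cdot)$ and $(c^*,\cdot,g^*)$ sharing the same $c^*\in Fn$ into a single triple of domain $n$, then converts the chain into a short chain all of whose domains are $n$, i.e.\ a very short chain, again without increasing the weight. This gives $\inf_{w'\in L^{vs}}\Sigma d^p_X(w')\le\Sigma d^p_X(w)$ for every $w$, hence the reverse inequality.

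To prove the normalization lemma I would split on $\supp(c)$. If $\supp(c)\ne\emptyset$ I would take, as in the proof of Proposition~\ref{p:BMZ2}, a map $h:n\to m$ and $c^*\in Fn$ with $c=Fh(c^*)$, $h[n]=\supp(c)$, and — crucially — $h$ injective on $\supp(c^*)$ (the construction factors $c^*$ through $F|S|$, $S=\supp(c)$, and $h{\restriction}_{|S|}\supseteq h{\restriction}_{\supp(c^*)}$ is a bijection onto $S$); one also checks $\supp(c^*)\ne\emptyset$, since otherwise $c=Fh(c^*)$ would factor through $F\emptyset$ and force $\supp(c)=\emptyset$. I would then set $f^*=f\circ h$ and $g^*=g\circ h$ on $\supp(c^*)$ and give them a fixed constant value $x_0\in X$ off $\supp(c^*)$. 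Since $Ff^*(c^*)$ depends only on $f^*{\restriction}_{\supp(c^*)}$ by Proposition~\ref{p:BMZ}, the endpoints are preserved, and since $h$ is injective on $\supp(c^*)$ the nonzero summands of $d^p_{X^n}(f^*,g^*)$ form a subfamily of those of $d^p_{X^m}(f,g)$, giving the weight bound for every $p\in[1,\infty]$.

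If $\supp(c)=\emptyset$ there are two subcases. For $m=1$ I would embed the single coordinate: with $c^*=Fc_0(c)$ for the inclusion $c_0:1\to n$, $0\mapsto 0$, and $f^*,g^*$ equal to $f(0),g(0)$ in coordinate $0$ and to a common constant elsewhere, the identities $f^*\circ c_0=f$ and $g^*\circ c_0=g$ give the endpoints and weight exactly $d^p_{X^1}(f,g)$. For $m\ne1$ the triple is a \emph{loop}: the case $m=0$ is trivial, and for $m\ge2$ an empty-support element makes $Ff(c)$ independent of $f$ (for each $j$, $c$ factors through $F\{j\}$, so $Ff(c)$ depends only on $f(j)$; as this holds for all $j$ and $m\ge2$, altering any one coordinate fixes $Ff(c)$, whence $Ff(c)=Fg(c)$). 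Thus $Ff(c)=Fg(c)=:v$, and I would replace the triple by any zero-weight loop $(c^*,f^*,f^*)$ of domain $n$ representing $v$, which exists because every element of $FX$ has the form $Ff^*(c^*)$ with $c^*\in Fn$, $f^*:n\to X$ (factor $v$ through a nonempty finite support and re-index into $n$). I expect the genuine obstacle to be precisely the weight control in the nonempty-support case: without the injectivity of $h$ on $\supp(c^*)$, the passage $f\mapsto f\circ h$ would duplicate coordinates and could strictly increase the $\ell^p$-weight, so securing that injectivity from the structure of Proposition~\ref{p:BMZ2} is the crux.
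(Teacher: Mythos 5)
Your route is the same as the paper's: normalize every triple of an arbitrary $(a,b)$-linking chain to one with domain $n=\max\{1,\deg(F)\}$ by factoring $a_i$ through its support, and then remove repeated elements of $Fn$ by the shortening procedure of Theorem~\ref{t:short}. Your normalization lemma is correct and is in fact carried out more carefully than in the paper (the injectivity of $h$ on the support, the common constant value off the support, and the empty-support loops are all made explicit, where the paper absorbs the degenerate triples into a minimality-of-length assumption). One cosmetic repair: Proposition~\ref{p:BMZ} does not let an empty-support element factor through $F\emptyset$, so your justification that $\supp(c^*)\ne\emptyset$ needs rewording; this is harmless, since one may simply define $f^*=f\circ h$ on all of $h^{-1}[\supp(c)]$ and argue directly.

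However, you have mislocated the crux. The injectivity of $h$ is not the obstacle; the step you delegate to ``the shortening procedure from the proof of Theorem~\ref{t:short}, \dots{} again without increasing the weight'' is exactly the step that fails as stated. That procedure replaces a segment $(a_i,f_i,g_i),\dots,(a_j,f_j,g_j)$ with $a_i=a_j$ by the single triple $(a_i,f_i,g_j)$ and needs $d^p_{X^n}(f_i,g_j)\le\sum_{k=i}^{j}d^p_{X^n}(f_k,g_k)$. This is not a triangle inequality: the linking condition identifies $Fg_k(a_k)$ with $Ff_{k+1}(a_{k+1})$, not $g_k$ with $f_{k+1}$, and the fibres of $\xi^{a_i}_X$ are not singletons. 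Concretely, for the functor $FX=\{A\subseteq X:0<|A|\le 2\}$ of Example~\ref{ex1}, $X=\IR$, $p=1$, and writing $(x,y)$ for the function $2\to\IR$ with values $x,y$, the chain $\big((\{0,1\},(0,10),(1,9)),\,(\{0,1\},(9,1),(8,2))\big)$ is a fully normalized, non-short $(\{0,10\},\{2,8\})$-linking chain of weight $4$, while the fused triple $(\{0,1\},(0,10),(8,2))$ has weight $16$. A very short chain of weight $4$ does exist, namely the single triple $(\{0,1\},(0,10),(2,8))$, but producing it requires replacing $g_j$ by a different point of the fibre $(\xi^{a_i}_X)^{-1}(Fg_j(a_j))$, and neither your argument nor the quoted procedure supplies such a replacement in general. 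So the passage from ``all domains equal to $n$'' to ``very short'' is a genuine gap in your proposal; it is the same gap that the paper's own proofs of Theorems~\ref{t:short} and \ref{t:very-short} contain, and closing it needs a further idea rather than a routine fusion.
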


\begin{proof} Let $D:=\inf\big(\{\infty\}\cup\{\Sigma d^p_X(w):w\in L^{vs}_{FX}(a,b)\}\big)$. The inclusion $L^{vs}_{FX}(a,b)\subseteq L_{FX}(a,b)$ implies the inequality $d^p_{FX}(a,b)\le D$. To derive a contradiction, assume that this inequality is strict.

Then $a\ne b$ and there exists an $(a,b)$-linking chain $w=\big((a_i,f_i,g_i)\big){}_{i=0}^l\in L_{FX}(a,b)$ such that $\Sigma d^p_X(w)<D$.  We can assume that the length $(l+1)$ of this chain is the smallest possible. The minimality of $l$ implies that $f_i\ne g_i$ and hence $n_i\ne 0$ for all $i\in\{0,\dots,l\}$.

By our assumption, the cardinal $n=\deg(F)$ is finite. Taking into account that $a\ne b$ and $d^p_{FX}(a,b)<\infty$, we can apply Theorem~\ref{t:small} and conclude that $n=\deg(F)>0$. For every $j \in\{0,\dots,l\}$ consider the cardinal $n_j=\dom(f_j)=\dom(g_j)>0$.  By Proposition~\ref{p:BMZ}, there exists a non-empty set $S_j\subseteq n_j$ of cardinality $|S_j|\le n$ such that $a_j\in F[S_j;n_j]$. Then $a_j=Fj_{S_j,n_j}(a_j')$ for some element $a_j'\in FS_j$. Let $h_j:|S_j|\to S_j$ be any bijection. Then $Fh_j$ is a bijection too. Hence, there exists an element $a_j''\in F|S_j|$ such that $a_j'=Fh_j(a''_j)$.

By definition, the cardinal $|S_j|$ coincides with the subset $\{0,\dots,|S_j|-1\}$ of the cardinal $n=\{0,\dots,n-1\}$. For every $j\in\{0,\dots,l\}$, let $b_j=Fi_{|S_j|,n}(a_j'')\in Fn$ and choose functions $f_j',g_j':n\to X$ such that $f_j'\circ i_{|S_j|,n}=f_j\circ i_{S_j,n_j}\circ h_j$, $g_j'\circ i_{|S_j|,n}=g_j\circ i_{S_j,n_j}\circ h_j$, and $f'_j{\restriction}_{n\setminus |S_j|}=g'_j{\restriction}_{n\setminus|S_j|}$. It is easy to see that $d^p_{X^{<\w}}(f'_j, g'_j)=d^p_{X^{<\w}}(f_j,g_j)$.

Observe that $$Ff_j'(b_j)=Ff_j\circ Fi_{|S_j|,n}(a_j'')=Ff_j\circ Fi_{S_j,n_j}\circ Fh_j(a_j'')=Ff_j\circ Fi_{S_j,n_j}(a_j')=Ff_j(a_j)$$and similarly $Fg_j'(b_j)=Fg_j(a_j)$. This implies that $w'=\big((b_j,f'_j,g_j')\big){}_{j=0}^l$ is an $(a,b)$-linking chain with $\sum d^p_X(w')=\sum d^p_X(w)<D$. The minimality of $l$ ensures that this chain is very short, which contradicts the definition of $D$.
\end{proof}

\begin{lemma}\label{l:2short} If the functor $F$ is finitary and has finite degree, then for any $\delta\in(0,\infty)$, set $A\subseteq X$ and  elements $a,b\in FX$ with $d^\infty_{FX}(a,b)<\delta$, there exists a function $r:X\to X$ such that $r\circ r=r$,  $Fr(a)=Fr(b)$, $r[A]\subseteq A$ and $$\sup_{x\in X}d_X(x,r(x))< n\cdot |Fn|\cdot \delta,$$
where $n=\max\{1,\deg(F)\}$.
\end{lemma}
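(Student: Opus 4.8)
The plan is to use Theorem~\ref{t:very-short} to turn the analytic hypothesis $d^\infty_{FX}(a,b)<\delta$ into a finite combinatorial datum, and then to build $r$ as an idempotent ``collapsing'' map that merges the coordinates occurring in that datum. Since $F$ has finite degree, Theorem~\ref{t:very-short} supplies a very short $(a,b)$-linking chain $w=\big((a_i,f_i,g_i)\big)_{i=0}^l\in L^{vs}_{FX}(a,b)$ with $\Sigma d^\infty_X(w)<\delta$, in which every $f_i,g_i$ is a function from $n=\max\{1,\deg(F)\}$ to $X$ and $l+1\le|Fn|$ (finite, by finitariness). The target will be a map $r:X\to X$ satisfying the single identification condition $r\circ f_i=r\circ g_i$ for every $i\in\{0,\dots,l\}$; I will show that this already forces $Fr(a)=Fr(b)$, and then arrange the remaining requirements $r\circ r=r$, $r[A]\subseteq A$ and the distance bound by taking $r$ to be a retraction onto representatives of a suitable equivalence relation.

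First I would verify the reduction, which is a formal telescoping argument. Assuming $r\circ f_i=r\circ g_i$ for all $i$, functoriality gives $Fr(a)=Fr(Ff_0(a_0))=F(r\circ f_0)(a_0)=F(r\circ g_0)(a_0)=Fr(Fg_0(a_0))$, and the chain equation $Fg_0(a_0)=Ff_1(a_1)$ lets me continue $Fr(Fg_0(a_0))=Fr(Ff_1(a_1))=F(r\circ f_1)(a_1)=F(r\circ g_1)(a_1)=\cdots$. Running this through all $l+1$ triples and using $a=Ff_0(a_0)$, $b=Fg_l(a_l)$, the two ends collapse to $Fr(a)=Fr(Fg_l(a_l))=Fr(b)$. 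This is the conceptual core; once the coordinatewise identifications $r(f_i(k))=r(g_i(k))$ are secured, the conclusion is automatic.

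To construct $r$, I would let $\sim$ be the smallest equivalence relation on $X$ with $f_i(k)\sim g_i(k)$ for all $i\in\{0,\dots,l\}$ and $k\in n$; equivalently, its classes are the connected components of the finite graph whose edges are the pairs $\{f_i(k),g_i(k)\}$ carrying weight $d_X(f_i(k),g_i(k))$. For each class $C$ I choose a representative $x^*_C$, insisting that $x^*_C\in C\cap A$ whenever $C\cap A\ne\emptyset$, and set $r(x)=x^*_{[x]}$ (so $r$ fixes every point not occurring in $w$). Then $r\circ r=r$ since $x^*_C$ lies in its own class; $r[A]\subseteq A$ since any $x\in A$ lies in a class meeting $A$, whose representative was chosen inside $A$; and $r\circ f_i=r\circ g_i$ since $f_i(k)\sim g_i(k)$. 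For the metric bound, each edge satisfies $d_X(f_i(k),g_i(k))\le d^\infty_{X^n}(f_i,g_i)$, so summing over the $n$ coordinates and then over $i$ bounds the total edge-weight by $n\cdot\Sigma d^\infty_X(w)<n\delta$; since the $d_X$-distance between two points of one component is at most the weight of a simple path joining them, every class has diameter $<n\delta$. As only finitely many points are moved, $\sup_{x\in X}d_X(x,r(x))$ is a finite maximum bounded by $n\delta\le n\cdot|Fn|\cdot\delta$.

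The main obstacle I anticipate is not any single estimate but the simultaneous fulfilment of the three structural constraints on $r$: idempotency, preservation of $A$, and the coordinate identifications. Choosing a retraction onto class representatives reconciles all three cleanly, the one delicate point being the rule that forces a representative into $A$ whenever its class meets $A$, which is precisely what makes $r[A]\subseteq A$ compatible with $r\circ r=r$. The distance estimate is then comfortable: the per-component bound $n\delta$ sits well inside the claimed $n\cdot|Fn|\cdot\delta$, with the factor $|Fn|$ (the length bound on very short chains) supplying ample slack.
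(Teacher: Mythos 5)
Your proposal is correct and follows essentially the same route as the paper's proof: invoke Theorem~\ref{t:very-short} to obtain a very short chain of total $d^\infty$-weight below $\delta$, generate the equivalence relation from the pairs $(f_i(k),g_i(k))$, retract onto class representatives chosen inside $A$ whenever a class meets $A$, bound class diameters by path weights, and telescope to get $Fr(a)=Fr(b)$. Your diameter estimate ($<n\delta$, by summing all edge weights) is in fact slightly sharper than the paper's ($\le nl\e\le n|Fn|\delta$, obtained by bounding each edge by $\e$ and the path length by $|J|$), but both land within the claimed bound.
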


\begin{proof} By Theorem~\ref{t:very-short}, there exist $l\le|Fn|$ and a very short $(a,b)$-linking chain $w=\big((a_i,f_i,g_i)\big)_{i\in l}\in (Fn\times X^n\times X^n)^l$ such that $$\e:=\Sigma d^\infty_{X}(w)<\delta.$$ Let $J=\{(f_i(k),g_i(k)):i\in l,\;k\in n\}$ and $\tilde J$ be the smallest equivalence relation on $X$ containing the set $J$. 
For a point $x\in X$ let $\tilde J(x):=\{y\in X:(x,y)\in \tilde J\}$ be its equivalence class. If $y\in \tilde J(x)\setminus\{x\}$, then we can find a sequence of pairwise distinct points $x_0,\dots,x_m$ in $X$ such that $x_0=x$, $x_m=y$ and $(x_{i-1},x_i)\in J\cup J^{-1}$ for every $i\in \{1,\dots,m\}$. Then $d_{X}(x,y)\le m\cdot \e\le |J|\cdot \e\le nl\e$ and hence $\diam(\tilde J(x))\le nl\e$ for every $x\in X$. 
Choose a set $S\subseteq X$ such that  
\begin{itemize}
\item $S\cap \tilde J(x)$ is a singleton for every $x\in X$;
\item $S\cap\tilde J(x)\subseteq A$ for every $x\in A$.
\end{itemize}
Let $r:X\to S$ be the function assigning to each point $x\in X$ the unique point of the singleton $S\cap \tilde J(x)$. Observe that for every $x\in X$ we have $d_X(x,r(x))\le\diam \tilde J(x)\le nl\e<n\cdot|Fn|\cdot\delta$. The definition of $r$ implies $r\circ r=r$.

We claim that $Fr(a)=Fr(b)$. For this observe that $F(a)=Ff_0(a_0)$ and $F(b)=Fg_l(a_l)$. For every $i\in l$ and $k\in n$ we have $(f_i(k),g_i(k))\in J\subseteq\tilde J$ and hence $r\circ f_i(k)=r\circ g_i(k)$. Then $r\circ f_i=r\circ g_i$ and $Fr\circ Ff_i(a_i)=Fr\circ Fg_i(a_i)$. Finally,
\begin{multline*}
Fr(a)=Fr\circ Ff_0(a_0)=Fr\circ Fg_0(a_0)=Fr\circ Ff_1(a_1)=Fr\circ Fg_1(a_1)=\dots\\
=Fr\circ Ff_l(a_l)=Fr\circ Fg_l(a_l)=Fr(b).
\end{multline*}
\end{proof}

\section{Equivalent definitions of the distance $d_{FX}^p$}

The distance $d_{FX}^p$ can be equivalently defined as follows.

\begin{theorem}\label{t:alt} The distance $d^p_{FX}$ is equal to the largest distance on $FX$ such that for every $n\in\IN$ and $a\in Fn$ the map $\xi^a_X:X^n\to FX$, $\xi^a_X:f\mapsto Ff(a)$, is non-expanding with respect to the $\ell^p$-distance $d^p_{X^{\!n}}$ on $X^n$.
\end{theorem}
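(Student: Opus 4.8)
I need to show that $d^p_{FX}$ is the *largest* distance on $FX$ for which every map $\xi^a_X:X^n\to FX$, $f\mapsto Ff(a)$ (for $n\in\IN$, $a\in Fn$), is non-expanding. This is a two-part statement: first that $d^p_{FX}$ itself has this property (both that it is a genuine distance, and that each $\xi^a_X$ is non-expanding for it), and second that it dominates any other distance with this property. My plan is to prove the maximality first, since it essentially forces the defining formula, and then verify the property itself.

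\textbf{Maximality.} Let $\rho$ be any distance on $FX$ making each $\xi^a_X$ non-expanding, i.e.\ $\rho(Ff(a),Fg(a))\le d^p_{X^n}(f,g)$ for all $n\in\IN$, $a\in Fn$, and $f,g\in X^n$. I want $\rho\le d^p_{FX}$. Fix $a,b\in FX$ and take any $(a,b)$-linking chain $s=\big((a_i,f_i,g_i)\big)_{i=0}^l\in L_{FX}(a,b)$, where $a_i\in F n_i$ and $f_i,g_i\in X^{n_i}$. For each $i$ the non-expanding hypothesis for $\xi^{a_i}_X$ gives
$$\rho\big(Ff_i(a_i),Fg_i(a_i)\big)\le d^p_{X^{n_i}}(f_i,g_i)=d^p_{X^{<\w}}(f_i,g_i).$$
Because $s$ is a linking chain, $a=Ff_0(a_0)$, $b=Fg_l(a_l)$, and $Fg_i(a_i)=Ff_{i+1}(a_{i+1})$. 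Applying the triangle inequality for $\rho$ along the chain, I telescope:
$$\rho(a,b)\le\sum_{i=0}^l\rho\big(Ff_i(a_i),Fg_i(a_i)\big)\le\sum_{i=0}^l d^p_{X^{<\w}}(f_i,g_i)=\Sigma d^p_X(s).$$
Taking the infimum over all $s\in L_{FX}(a,b)$ yields $\rho(a,b)\le d^p_{FX}(a,b)$, so $\rho\le d^p_{FX}$.

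\textbf{The property itself.} It remains to check that $d^p_{FX}$ is a distance (so that the phrase ``largest distance with the property'' is meaningful) and that each $\xi^a_X$ is non-expanding for it. That $d^p_{FX}$ satisfies $d^p_{FX}(a,a)=0$ and symmetry is routine (symmetry uses that a linking chain for $(a,b)$ can be reversed, exchanging each $f_i\leftrightarrow g_i$ and reading the triples in reverse order, so that $L_{FX}(a,b)$ and $L_{FX}(b,a)$ have equal sets of $\Sigma d^p_X$-values). The triangle inequality holds because concatenation of an $(a,b)$-chain with a $(b,c)$-chain is an $(a,c)$-chain whose $\Sigma d^p_X$ is the sum, so the infimum is subadditive. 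For non-expansiveness, fix $n\in\IN$, $a\in Fn$, and $f,g\in X^n$; the single-triple sequence $\big((a,f,g)\big)$ lies in $L_{FX}(Ff(a),Fg(a))$, so by definition
$$d^p_{FX}\big(Ff(a),Fg(a)\big)\le d^p_{X^{<\w}}(f,g)=d^p_{X^n}(f,g),$$
which is exactly the non-expanding condition for $\xi^a_X$.

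\textbf{Expected obstacle.} The argument is essentially formal: the definition of $d^p_{FX}$ via linking chains is engineered precisely to be the infimal closure turning the single-step bounds into a genuine distance, so the main point to be careful about is that each step of a chain is a bona fide image $Ff_i(a_i)$ with the matching condition $Fg_i(a_i)=Ff_{i+1}(a_{i+1})$, which lets the $\rho$-triangle inequality chain through the intermediate elements even though they need not equal any single $\xi$-image in an obvious way. No real analytic difficulty arises; the only subtlety is bookkeeping the endpoints and ensuring the degenerate cases ($a=b$, or empty chains giving value $\infty$ by the convention $\inf\emptyset$-with-$\{\infty\}$) are handled by the $\{\infty\}\cup\cdots$ in the definition.
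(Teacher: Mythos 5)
Your proposal is correct and follows essentially the same route as the paper: the single-triple chain $\big((a,f,g)\big)$ gives non-expansiveness of each $\xi^a_X$, and the triangle inequality for $\rho$ telescoped along an arbitrary linking chain, followed by taking the infimum, gives maximality (the paper phrases this last step as a proof by contradiction, which is an immaterial difference). Your additional verification that $d^p_{FX}$ is itself a distance is a reasonable extra check that the paper leaves implicit.
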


\begin{proof} The definition of the distance $d^p_{FX}$ implies that for every $n\in\IN$ and $a\in Fn$ the map $\xi^a_X:X^n\to FX$, $\xi^a_X:f\mapsto Ff(a)$, is non-expanding.

Next, we show that $d^p_{FX}\ge\rho$ for any distance $\rho$ on $FX$ such that for every $n\in\IN$ and $a\in Fn$ the map $\xi^a_X:(X^n,d^p_{X^n})\to (FX,\rho)$ is non-expanding. Assuming that $d^p_{FX}\not\ge\rho$, we can find two elements $a,b\in FX$ such that $d^p_{FX}(a,b)<\rho(a,b)$. The definition of the distance $d^p_{FX}(a,b)$ yields an $(a,b)$-linking chain $\big((a_i,f_i,g_i)\big){}_{i=0}^l\in L_{FX}(a,b)$ such that  $\sum_{i=0}^ld^p_{X^{\!<\!\w}}(f_i,g_i)<\rho(a,b)$.

For every $i\in\{0,\dots,l\}$ find a number $n_i\in\w$ such that $f_i,g_i\in X^{n_i}$. The non-expanding property of the map $\xi^{a_i}_X:X^{n_i}\to FX$ implies that $$\rho\big(Ff_i(a_i),Fg_i(a_i)\big)=\rho\big(\xi^{a_i}_X(f_i),\xi^{a_i}_X(g_i)\big)\le d^p_{X^{\!n_{\!i}}}(f_i,g_i)=d^p_{X^{\!<\!\w}}(f_i,g_i).$$ Applying the triangle inequality, we conclude that $$\rho(a,b)=\rho\big(Ff_0(a_0),Fg_l(a_l)\big)\le \sum_{i=0}^l\rho\big(Ff_i(a_i),Fg_i(a_i)\big)\le \sum_{i=0}^l d^p_{X^{\!<\!\w}}(f_i,g_i)<\rho(a,b),$$
which is a desired contradiction proving the inequality $\rho\le d^p_{FX}$.
\end{proof}

If the functor $F$ has finite degree, then the distance $d^p_{FX}$ can be equivalently defined as follows.

\begin{theorem}\label{t:altm} If the functor $F$ has finite degree $n=\deg(F)$, then the distance $d^p_{FX}$ is equal to the largest distance on $FX$ such that for every $a\in Fn$ the map $\xi^a_X:X^n\to FX$, $\xi^a_X:f\mapsto Ff(a)$, is non-expanding with respect to the $\ell^p$-distance $d^p_{X^{\!n}}$ on $X^n$.
\end{theorem}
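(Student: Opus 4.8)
The plan is to identify $d^p_{FX}$ with the distance $\rho$ defined as the largest distance on $FX$ making $\xi^a_X:(X^n,d^p_{X^{\!n}})\to(FX,\rho)$ non-expanding for every $a\in Fn$, where $n=\deg(F)$, and to prove the two inequalities separately. Before the main argument I would dispose of the degenerate case $n=0$: then every element of $FX$ has empty support, so Theorem~\ref{t:small} shows that $d^p_{FX}$ is the $\{0,\infty\}$-valued discrete $\infty$-metric, while each $\xi^a_X$ has the one-point domain $X^0$ and hence imposes no constraint, making $\rho$ the same discrete $\infty$-metric. Thus $\rho=d^p_{FX}$, and I may assume $n\ge 1$, so that $n=\max\{1,\deg(F)\}$ coincides with the domain size occurring in very short linking chains.

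The inequality $d^p_{FX}\le\rho$ is immediate. By Theorem~\ref{t:alt}, the distance $d^p_{FX}$ makes $\xi^a_X$ non-expanding for every $m\in\IN$ and every $a\in Fm$, in particular for $m=n$ and $a\in Fn$. Hence $d^p_{FX}$ belongs to the family of distances whose largest member is $\rho$, which gives $d^p_{FX}\le\rho$.

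The substantive inequality $\rho\le d^p_{FX}$ is where the finite-degree hypothesis enters, through Theorem~\ref{t:very-short}. Fix $a,b\in FX$; I may assume $d^p_{FX}(a,b)<\infty$. By Theorem~\ref{t:very-short} the value $d^p_{FX}(a,b)$ is the infimum of $\Sigma d^p_X(w)$ over very short $(a,b)$-linking chains $w=\big((a_i,f_i,g_i)\big){}_{i=0}^l\in L^{vs}_{FX}(a,b)$, and every triple of such a chain satisfies $a_i\in Fn$ and $f_i,g_i\in X^n$. For each $i$ the non-expanding property of $\xi^{a_i}_X$ with respect to $\rho$ then yields $\rho(Ff_i(a_i),Fg_i(a_i))\le d^p_{X^{\!n}}(f_i,g_i)$. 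Using $a=Ff_0(a_0)$, $b=Fg_l(a_l)$ and $Fg_i(a_i)=Ff_{i+1}(a_{i+1})$ together with the triangle inequality for $\rho$, I obtain
$$\rho(a,b)\le\sum_{i=0}^l\rho\big(Ff_i(a_i),Fg_i(a_i)\big)\le\sum_{i=0}^l d^p_{X^{\!n}}(f_i,g_i)=\Sigma d^p_X(w).$$
Taking the infimum over all $w\in L^{vs}_{FX}(a,b)$ and applying Theorem~\ref{t:very-short} once more gives $\rho(a,b)\le d^p_{FX}(a,b)$.

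Combining the two inequalities yields $\rho=d^p_{FX}$. The entire difficulty is absorbed into Theorem~\ref{t:very-short}: it is precisely the reduction of the chain infimum to chains living in $Fn\times X^n\times X^n$ that lets the single index $n=\deg(F)$ replace the quantification over all $m\in\IN$ used in Theorem~\ref{t:alt}. I expect no further obstacle, since apart from this reduction the argument parallels the proof of Theorem~\ref{t:alt} verbatim.
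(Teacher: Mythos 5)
Your proposal is correct and follows essentially the same route as the paper: the easy inequality comes from the definition (via Theorem~\ref{t:alt}), and the substantive one from reducing to very short linking chains via Theorem~\ref{t:very-short} and then applying the non-expanding property of the maps $\xi^{a_i}_X$ together with the triangle inequality. The only cosmetic differences are that you treat the degenerate case $n=0$ upfront rather than extracting $n>0$ inside a proof by contradiction, and you phrase the main estimate directly as an infimum rather than by contradiction; both variants are sound.
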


\begin{proof} The definition of the distance $d^p_{FX}$ implies that for every $a\in Fn$ the map $\xi^a_X:X^n\to FX$, $\xi^a_X:f\mapsto Ff(a)$, is non-expanding.

Next, we show that $d^p_{FX}\ge\rho$ for any distance $\rho$ on $FX$ such that for every $a\in Fn$ the map $\xi^a_X:(X^n,d^p_{X^n})\to (FX,\rho)$ is non-expanding. Assuming that $d^p_{FX}\not\ge\rho$, we can find two elements $a,b\in FX$ such that $d^p_{FX}(a,b)<\rho(a,b)$. Then $a\ne b$ and $d^p_{FX}(a,b)<\infty$.
Let us show that $n>0$. Assuming that $n=0$, we conclude that $\supp(a)=\supp(b)=\emptyset$. By Theorem~\ref{t:small}, $d^p_{FX}(a,b)=\infty$, which contradicts $d^p_{FX}(a,b)<\rho(a,b)$.
So, $n\in\IN$ and $n=\max\{1,\deg(F)\}$.

By Theorem~\ref{t:very-short}, there exists a very short $(a,b)$-linking chain $$\big((a_i,f_i,g_i)\big){}_{i=0}^l\in L_{FX}(a,b)\cap(Fn\times X^n\times X^n)^{<\w}$$ such that $\sum_{i=0}^ld^p_{X^{\!n}}(f_i,g_i)<\rho(a,b)$. 
For every $i\le l$, the non-expanding property of the map $\xi^{a_i}_X:X^{n}\to FX$ implies that $\rho\big(Ff_i(a_i),Fg_i(a_i)\big)=\rho\big(\xi^{a_i}_X(f_i),\xi^{a_i}_X(g_i)\big)\le d^p_{X^{\!n}}(f_i,g_i)$. Applying the triangle inequality, we conclude that $$\rho(a,b)=\rho\big(Ff_0(a_0),Fg_l(a_l)\big)\le \sum_{i=0}^l\rho\big(Ff_i(a_i),Fg_i(a_i)\big)\le \sum_{i=0}^l d^p_{X^n}(f_i,g_i)<\rho(a,b),$$
which is a desired contradiction proving the inequality $\rho\le d^p_{FX}$.
\end{proof}

\begin{problem}\label{prob:algo} Is there an efficient algorithm for calculating the distance $d^p_{FX}$ (with arbitrary precision)?
\end{problem}

\section{The relation between the distances $d^p_{FX}$ and $d^q_{FX}$}

In this section we establish the relation between the distances  $d^p_{FX}$ for various numbers $p\in[1,\infty]$. For this we will use the following known relation between the distances $d^p_{X^n}$ and $d^q_{X^n}$ on the set $X^n$.

\begin{lemma}\label{l:ineq} For any $n\in\w$ and any numbers $p,q\in[1,\infty]$ with $p\le q$ the following inequalities hold:
$$d^q_{X^n}\le d^p_{X^n}\le n^{\frac{q-p}{qp}}\cdot d^q_{X^n}.$$
\end{lemma}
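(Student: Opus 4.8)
The plan is to reduce the statement to the classical comparison of the $\ell^p$- and $\ell^q$-norms of a single vector of nonnegative extended reals. Fix $f,g\in X^n$ and set $t_i:=d_X(f(i),g(i))\in[0,\infty]$ for $i\in n$. Writing $\|t\|_r:=\big(\sum_{i\in n}t_i^r\big)^{1/r}$ for $r<\infty$ and $\|t\|_\infty:=\max_{i\in n}t_i$, we have $d^p_{X^n}(f,g)=\|t\|_p$ and $d^q_{X^n}(f,g)=\|t\|_q$, so it suffices to prove $\|t\|_q\le\|t\|_p\le n^{\frac{q-p}{qp}}\|t\|_q$ for every such $t$. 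First I would dispose of the degenerate cases: if $n=0$ both norms vanish; if $p=q$ the claim is a trivial equality; and if some $t_i=\infty$, then by the paper's extended-arithmetic conventions (under which $\infty^x=\infty$ for $x>0$) both norms equal $\infty$ and both inequalities hold. Hence I may assume $n\ge 1$, $p<q$, and $t_i<\infty$ for all $i$.

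For the left inequality $\|t\|_q\le\|t\|_p$ with $q<\infty$, I would argue by normalization. If $\|t\|_p=0$ then $t=0$ and there is nothing to prove, so assume $0<\|t\|_p<\infty$ and put $s_i:=t_i/\|t\|_p$. From $s_i^p\le\sum_{j\in n}s_j^p=1$ we get $s_i\le 1$, whence $s_i^q\le s_i^p$ because $q\ge p$; summing over $i$ gives $\sum_{i\in n}s_i^q\le\sum_{i\in n}s_i^p=1$, that is, $\|t\|_q\le\|t\|_p$. The boundary case $q=\infty$ is immediate from $\max_{i\in n}t_i=\big(\max_{i\in n}t_i^p\big)^{1/p}\le\big(\sum_{i\in n}t_i^p\big)^{1/p}$.

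For the right inequality I would invoke H\"older's inequality. When $q<\infty$, I apply it to the product $\sum_{i\in n}t_i^p\cdot 1$ with the conjugate exponents $q/p>1$ and $q/(q-p)$, obtaining
$$\sum_{i\in n}t_i^p\le\Big(\sum_{i\in n}t_i^q\Big)^{p/q}\Big(\sum_{i\in n}1\Big)^{(q-p)/q}=\|t\|_q^{\,p}\cdot n^{(q-p)/q}.$$
Taking $p$-th roots yields $\|t\|_p\le n^{(q-p)/(qp)}\|t\|_q=n^{\frac{q-p}{qp}}\|t\|_q$, as required. For the boundary case $q=\infty$ I would instead bound directly via $\sum_{i\in n}t_i^p\le n\cdot(\max_{i\in n}t_i)^p$, giving $\|t\|_p\le n^{1/p}\|t\|_\infty$, which agrees with $n^{\frac{q-p}{qp}}\|t\|_q$ once $\tfrac1q$ is read as $0$.

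I do not expect any genuine obstacle: the whole statement is the standard finite-dimensional norm comparison. The only points requiring care are the bookkeeping of the extended-real conventions, so that infinite coordinates and the empty power $n=0$ are correctly covered, and the clean passage to the boundary exponent $q=\infty$, where the factor $n^{\frac{q-p}{qp}}$ must be interpreted as $n^{1/p}$.
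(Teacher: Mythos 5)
Your proposal is correct and follows essentially the same route as the paper: reduce to the scalar vector $t_i=d_X(f(i),g(i))$, observe $\|t\|_q\le\|t\|_p$, and obtain the upper bound by applying H\"older's inequality to $t^p$ and the constant function with conjugate exponents $q/p$ and $q/(q-p)$. The only differences are cosmetic: you prove the monotonicity $\|t\|_q\le\|t\|_p$ by normalization where the paper cites it as well known, and you spell out the boundary case $q=\infty$ explicitly.
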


\begin{proof} If $n=0$, then $X^0$ is a singleton and 
$$d^q_{X^n}=d^p_{X^n}=n^{\frac{q-p}{qp}}\cdot d^q_{X^n}=0.$$
So, we assume that $n>0$. Choose any distinct points $x,y\in X^n$ and consider the function $t:n\to [0,\infty]$, $t:i\mapsto d_X(x(i),y(i))$. If $t(i)=\infty$ for some $i\in n$, then  $$d^q_{X^n}(x,y)=\infty=d^p_{X^n}(x,y)=n^{\frac{q-p}{qp}}\cdot d^q_{X^n}(x,y)$$and we are done. So, assume that $t(i)<\infty$ for all $i\in n$.

In this case $d^p_{X^n}(x,y)=\|t\|_p$ where $\|t\|_p=\big(\sum_{i\in n}t(i)^p\big)^{\frac1p}$. If $p\le q$, then the well-known inequality \cite[Ex.1.8]{FA} $\|t\|_q\le \|t\|_p$ implies that $d^q_{X^n}(x,y)\le d^p_{X^n}(x,y)$.

To prove that $d^p_{X^n}(x,y)\le n^{\frac{q-p}{qp}}d^q_{X^n}(x,y)$, we will apply the H\"older inequality \cite[1.5]{FA} to the functions $t^p:n\to\IR$, $t^p:i\mapsto t(i)^p$ and $\mathbf 1:n\to\{1\}\subset\IR$:
$$\sum_{i\in n}t(i)^{p}\cdot 1\le \|t^p\|_{\frac{q}p}\cdot\|\mathbf 1\|_{\frac1{1-\frac{p}q}}=\big(\sum_{i\in n}t^{p\frac{q}p}\big)^{\frac{p}q}\cdot n^{1-\frac{p}{q}}=(\|t\|_q)^p\cdot n^{\frac{q-p}{q}}.$$
Taking this inequality to the power $\frac1p$, we obtain the required inequality
$$d^p_{X^n}(x,y)=\|t\|_p\le \|t\|_q\cdot n^{\frac{q-p}{pq}}=d^q_{X^n}(x,y)\cdot n^{\frac{q-p}{qp}}.$$
\end{proof}

Lemma~\ref{l:ineq} and Theorems~\ref{t:altm}, \ref{t:alt} imply

\begin{theorem}\label{t:ineq} For any numbers $p,q\in[1,\infty]$ the inequality $p\le q$ implies $$d^\infty_{FX}\le d^q_{FX}\le d^p_{FX}\le d^1_{FX}\mbox{ and }d^p_{FX}\le n^{\frac{q-p}{qp}}\cdot d^q_{FX}$$where $n=\deg(F)$.
\end{theorem}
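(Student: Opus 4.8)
The plan is to derive both assertions from the two ``largest distance'' descriptions of $d^p_{FX}$ (Theorems~\ref{t:alt} and~\ref{t:altm}) combined with the pointwise comparison of the $\ell^p$- and $\ell^q$-distances on finite powers supplied by Lemma~\ref{l:ineq}. Two preliminary facts will be used throughout: the exponent $\frac{q-p}{qp}$ equals $\frac1p-\frac1q\ge 0$, so it is non-negative and well-defined also when $q=\infty$ (where it is $\frac1p$); and if $\rho$ is a distance on a set and $c\in[0,\infty)$, then $c\cdot\rho$ is again a distance. I also record that, by the very definition of $d^p_{FX}$, for every $n\in\IN$, every $a\in Fn$ and all $f,g\in X^n$ one has $d^p_{FX}(Ff(a),Fg(a))\le d^p_{X^n}(f,g)$, i.e. $\xi^a_X$ is non-expanding.

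First I would establish the monotonicity $d^q_{FX}\le d^p_{FX}$ whenever $p\le q$, from which the full chain $d^\infty_{FX}\le d^q_{FX}\le d^p_{FX}\le d^1_{FX}$ follows by applying it to the pairs $(q,\infty)$, $(p,q)$ and $(1,p)$. Fix $n\in\IN$ and $a\in Fn$. By Theorem~\ref{t:alt} the distance $d^q_{FX}$ satisfies $d^q_{FX}(Ff(a),Fg(a))\le d^q_{X^n}(f,g)$ for all $f,g\in X^n$, and since $p\le q$ Lemma~\ref{l:ineq} gives $d^q_{X^n}(f,g)\le d^p_{X^n}(f,g)$; hence $d^q_{FX}(Ff(a),Fg(a))\le d^p_{X^n}(f,g)$. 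Thus $d^q_{FX}$ is a distance on $FX$ making every $\xi^a_X$ non-expanding with respect to $d^p_{X^n}$, and since $d^p_{FX}$ is the \emph{largest} such distance (Theorem~\ref{t:alt}), we conclude $d^q_{FX}\le d^p_{FX}$.

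For the quantitative estimate assume $1\le n=\deg(F)<\infty$ (the boundary degrees are discussed below). Put $c:=n^{-\frac{q-p}{qp}}\in(0,1]$ and consider the distance $\sigma:=c\cdot d^p_{FX}$. For $a\in Fn$ and $f,g\in X^n$ the non-expanding property gives $d^p_{FX}(Ff(a),Fg(a))\le d^p_{X^n}(f,g)$, and Lemma~\ref{l:ineq} gives $d^p_{X^n}(f,g)\le n^{\frac{q-p}{qp}}d^q_{X^n}(f,g)$; multiplying by $c$ yields $\sigma(Ff(a),Fg(a))\le d^q_{X^n}(f,g)$. Hence $\sigma$ makes every $\xi^a_X$ with $a\in Fn$ non-expanding with respect to $d^q_{X^n}$, so by the degree-$n$ characterization (Theorem~\ref{t:altm}) that $d^q_{FX}$ is the largest distance with this property, we obtain $\sigma\le d^q_{FX}$, that is $d^p_{FX}\le n^{\frac{q-p}{qp}}d^q_{FX}$.

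I expect the main obstacle to be bookkeeping rather than depth: one must invoke each characterization in the correct direction, using the non-expanding property to bound the image-distances from above and the maximality of the respective distance to compare the two metrics, and one must check that $c\cdot d^p_{FX}$ with $c\in(0,1]$ genuinely satisfies the axioms of a distance. The only delicate points are the boundary values: the identity $\frac{q-p}{qp}=\frac1p-\frac1q$ is needed to interpret the exponent when $q=\infty$, and the degenerate degrees require separate care, since Theorem~\ref{t:altm} applies only for finite degree. For $\deg(F)=0$ every element of $FX$ has empty support, so by Theorem~\ref{t:small} all the distances $d^p_{FX}$ coincide and are $\{0,\infty\}$-valued; for $\deg(F)=\infty$ the constant $n^{\frac{q-p}{qp}}$ is infinite when $p<q$, and the estimate is to be read through the arithmetic conventions on $[0,\infty]$ together with the general characterization of Theorem~\ref{t:alt}.
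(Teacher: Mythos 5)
Your argument is correct and essentially coincides with the paper's proof: both parts rest on Lemma~\ref{l:ineq} combined with the maximality characterizations in Theorems~\ref{t:alt} and~\ref{t:altm}, invoked in exactly the same directions (non-expansion of the maps $\xi^a_X$ to bound image distances from above, maximality to compare the two distances on $FX$). The only difference is cosmetic and lies in the quantitative estimate, where you rescale $d^p_{FX}$ by $n^{-\frac{q-p}{qp}}$ and apply Theorem~\ref{t:altm} directly, whereas the paper rescales the underlying distance $d_X$ to $\rho_X=n^{\frac{q-p}{qp}}\cdot d_X$ and relies on the (unproved but easy) homogeneity $\rho^q_{FX}=n^{\frac{q-p}{qp}}\cdot d^q_{FX}$; your version is marginally more self-contained, and your attention to the degenerate degrees flags a genuine boundary issue (for $\deg(F)=0$ the second inequality, read literally with the convention $0\cdot\infty=0$, actually fails, so the theorem's $n$ should be understood as $\max\{1,\deg(F)\}$, as in Theorem~\ref{t:main}(4)) that the paper's proof passes over in silence.
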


\begin{proof} Assume that $1\le p\le q\le\infty$. By Theorem~\ref{t:alt}, for every $k\in\IN$ and every $a\in Fk$ the function $\xi^a_X:(X^k,d^q_{X^k})\to (FX,d^q_{FX})$ is non-expanding. By Lemma~\ref{l:ineq}, the identity function $(X^k,d^p_{X^k})\to (X^k,d^q_{X^k})$ is non-expanding. Then the map $\xi^a_X:(X^k,d^p_{X^k})\to (FX,d^q_{FX})$ is non-expanding (as a composition of two non-expanding maps). Applying Theorem~\ref{t:alt}, we conclude that $d^q_{FX}\le d^p_{FX}$.
\smallskip

Now assume that the functor $F$ has finite degree $n=\deg(F)$. By Lemma~\ref{l:ineq}, $d^p_{X^n}\le n^{\frac{q-p}{qp}}d^q_{X^n}$. Then for the distance $\rho_X:=n^{\frac{q-p}{qp}}\cdot d_X$ on $X$, the identity map $(X^n,\rho^q_{X^n})\to (X^n, d^p_{X^n})$ is non-expanding. Looking at the definitions of the distances $d^q_{FX}$ and $\rho^q_{FX}$, we can see that $\rho^q_{FX}=n^{\frac{q-p}{qp}}\cdot d^q_{FX}$.

By Theorem~\ref{t:alt}, for every $a\in Fn$ the map $\xi^a_X:(X^n,d^p_{X^n})\to (FX,d^p_{FX})$ is non-expanding and so is the map $\xi^a_X:(X^n,\rho^q_{X^n})\to (FX,d^p_{FX})$. Applying Theorem~\ref{t:altm}, we conclude that $d^p_{FX}\le \rho^q_{FX}=n^{\frac{q-p}{qp}}\cdot d^q_{FX}$.
\end{proof}

\section{Preserving Lipschitz functions by the functor $F^p$}

In this section we show that the $\ell^p$-metrization $F^p$ of the functor $F$ preserves Lipschitz functions between distance spaces and does not increase their Lipschitz constant.


\begin{theorem}\label{t:Lip} For any Lipschitz function $f:(X,d_X)\to (Y,d_Y)$ between distance spaces, the function $Ff:(FX,d^p_{FX})\to (FY,d^p_{FY})$ is Lipschitz with Lipschitz constant $\Lip(Ff)\le \Lip(f)$.
\end{theorem}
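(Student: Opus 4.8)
The plan is to exploit the variational characterization of $d^p_{FX}$ from Theorem~\ref{t:alt}, which says that $d^p_{FX}$ is defined through $(a,b)$-linking chains and the $\ell^p$-distances of their constituent maps. First I would unpack what Lipschitz with constant $L=\Lip(f)$ means: $d_Y(f(x),f(y))\le L\cdot d_X(x,y)$ for all $x,y\in X$. The natural strategy is to take an arbitrary $(a,b)$-linking chain realizing $d^p_{FX}(a,b)$ approximately, push it forward through $f$, and check that the image is an $(Ff(a),Ff(b))$-linking chain whose total cost is bounded by $L$ times the original cost.

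Concretely, given a linking chain $s=\big((a_i,h_i,k_i)\big)_{i=0}^l\in L_{FX}(a,b)$, where each $h_i,k_i\in X^{n_i}$ and $a_i\in Fn_i$, I would form the chain $s'=\big((a_i, f\circ h_i, f\circ k_i)\big)_{i=0}^l$ with entries in $Fn_i\times Y^{n_i}\times Y^{n_i}$. The functoriality identities transfer directly: from $a=Fh_0(a_0)$ we get $Ff(a)=Ff\circ Fh_0(a_0)=F(f\circ h_0)(a_0)$, and similarly the matching conditions $Fk_i(a_i)=Fh_{i+1}(a_{i+1})$ become $F(f\circ k_i)(a_i)=F(f\circ h_{i+1})(a_{i+1})$ after applying $Ff$. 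Hence $s'\in L_{FY}(Ff(a),Ff(b))$. The key estimate is the per-coordinate bound on the $\ell^p$-distances: since $f$ is $L$-Lipschitz, for each $i$ one has $d^p_{Y^{n_i}}(f\circ h_i, f\circ k_i)\le L\cdot d^p_{X^{n_i}}(h_i,k_i)$, because the $\ell^p$-norm (both for $p<\infty$ and $p=\infty$) is monotone and positively homogeneous, so multiplying each coordinate distance by at most $L$ scales the whole norm by at most $L$. Summing over $i$ gives $\Sigma d^p_Y(s')\le L\cdot\Sigma d^p_X(s)$.

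Taking the infimum over all chains $s$, I conclude $d^p_{FY}(Ff(a),Ff(b))\le L\cdot d^p_{FX}(a,b)$, which is exactly the Lipschitz property of $Ff$ with constant at most $L=\Lip(f)$. I would handle the degenerate cases routinely: if $L_{FX}(a,b)$ is empty (so $d^p_{FX}(a,b)=\infty$) the inequality holds trivially, and the $p=\infty$ case of the norm monotonicity is immediate since $\max_i L\cdot t_i=L\cdot\max_i t_i$.

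I expect no serious obstacle here; the only point requiring mild care is the coordinatewise Lipschitz estimate on $\ell^p$-distances, specifically confirming that $L$-scaling of each coordinate yields an $L$-scaling of the $\ell^p$-norm. For $p<\infty$ this is $\big(\sum_i (L\,t_i)^p\big)^{1/p}=L\big(\sum_i t_i^p\big)^{1/p}$, and the genuine content is the monotonicity $d_Y(f(h_i(j)),f(k_i(j)))\le L\,d_X(h_i(j),k_i(j))$ fed into each summand; everything else is a direct transport of the linking-chain structure through the functor $F$ and the naturality of composition. The whole argument is a clean functorial pushforward, and it also transparently explains why the Lipschitz constant cannot increase.
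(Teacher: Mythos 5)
Your proposal is correct and follows essentially the same route as the paper's proof: push an almost-optimal $(a,b)$-linking chain forward through $f$, verify via functoriality that it becomes an $(Ff(a),Ff(b))$-linking chain, and use the coordinatewise Lipschitz bound together with homogeneity of the $\ell^p$-norm to get $\Sigma d^p_Y(s')\le\Lip(f)\cdot\Sigma d^p_X(s)$. The only cosmetic difference is that the paper phrases the passage to the infimum via an explicit $\e>0$ rather than "taking the infimum over all chains," which is immaterial.
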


\begin{proof} We need to check that $d^p_{FY}(Ff(a),Ff(b))\le \Lip(f)\cdot d^p_{FX}(a,b)$ for any points $a,b\in FX$. This inequality is trivial if $d^p_{FX}(a,b)=\infty$. So, we assume that $d^p_{FX}(a,b)<\infty$. In this case it suffices to prove that $d^p_{FY}(Ff(a),Ff(b))\le  \Lip(f)\cdot (d^p_{FX}(a,b)+\e)$ for every $\e>0$. By the definition of the distance $d^p_{FX}(a,b)$, there exists an $(a,b)$-linking chain $\big((a_i,f_i,g_i)\big){}_{i=0}^l\in L_{FX}(a,b)$ such that $\sum_{i=0}^ld^p_{X^{\!<\!\w}}(f_i,g_i)<d^p_{FX}(a,b)+\e$.
Applying to this chain the function $f:X\to Y$, we obtain an $(Ff(a),Ff(b))$-linking chain $\big(a_i,f{\circ} f_i,f{\circ} g_i)\big){}_{i=0}^l$ such that
$$
\begin{aligned}
&d^p_{FY}(Ff(a),Ff(b))\le\sum_{i=0}^ld^p_{Y^{\!<\!\w}}(f{\circ} f_i,f{\circ} g_i)=\sum_{i=0}^l\Big(\sum_{x\in n_i}d_Y(f{\circ} f_i(x),f{\circ} g_i(x))^p\Big)^{\frac1p}\le\\
&\le \sum_{i=0}^l\Big(\sum_{x\in n_i}\big(\Lip(f)\cdot d_X(f_i(x),g_i(x))\big)^p\Big)^{\frac1p}=\Lip(f)\cdot
\sum_{i=0}^l\Big(\sum_{x\in n_i}d_X(f_i(x),g_i(x))^p\Big)^{\frac1p}=\\
&=\Lip(f)\cdot \sum_{i=0}^ld^p_{X^{\!<\!\w}}(f_i,g_i)\le\Lip(f)\cdot(d^p_{FX}(a,b)+\e),
\end{aligned}
$$
where $n_i:=\dom(f_i)=\dom(g_i)$ for $i\in\{0,\dots,l\}$.
\end{proof}

\section{Preserving isometries by the functor $F^p$}

\begin{lemma}\label{l:isom1} If $f:X\to Y$ is a surjective isometry of distance spaces, then the map $F^p\!f:F^p\!X\to F^pY$ is an isometry.
\end{lemma}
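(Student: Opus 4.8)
The plan is to exploit Theorem~\ref{t:Lip} together with the fact that an isometry is in particular a non-expanding map whose inverse (when it exists) is also non-expanding. Since $f:X\to Y$ is a surjective isometry, I would first observe that $f$ need not be injective, so it may fail to have a genuine inverse; nevertheless its induced map on finite powers behaves well. The key is that for each $n\in\IN$ the map $f^n:X^n\to Y^n$, $f^n:(x_i)_{i\in n}\mapsto (f(x_i))_{i\in n}$, is a \emph{surjective} isometry with respect to the $\ell^p$-distances $d^p_{X^n}$ and $d^p_{Y^n}$, because each coordinate is preserved exactly and surjectivity of $f$ gives surjectivity of $f^n$.

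First I would use Theorem~\ref{t:Lip} applied to $f$ (which has $\Lip(f)\le 1$ since $f$ is an isometry) to get $d^p_{FY}(Ff(a),Ff(b))\le d^p_{FX}(a,b)$ for all $a,b\in FX$; this is the ``non-expanding'' half. For the reverse inequality I would produce a non-expanding set-map $g:Y\to X$ with $f\circ g=\id_Y$ (a section, available by surjectivity and the axiom of choice) and check that $g$ is itself an isometry: for $y,y'\in Y$ we have $d_X(g(y),g(y'))\ge d_Y(f(g(y)),f(g(y')))=d_Y(y,y')$ from the non-expanding direction applied to $f$, while the isometry property of $f$ gives the matching upper bound $d_Y(y,y')=d_Y(f(g(y)),f(g(y')))=d_X(g(y),g(y'))$ directly. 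Hence $g$ is an isometry, and applying Theorem~\ref{t:Lip} to $g$ yields $d^p_{FX}(Fg(c),Fg(c'))\le d^p_{FY}(c,c')$ for all $c,c'\in FY$. Taking $c=Ff(a)$, $c'=Ff(b)$ and using functoriality $Fg\circ Ff=F(g\circ f)$ together with $g\circ f=\id_X$ (which holds because $f$, being an isometry on an $\infty$-metric space or at least injective on the relevant fibers—here I must be careful) would give the reverse inequality.

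The subtle point, and what I expect to be \textbf{the main obstacle}, is that a surjective isometry of \emph{distance} spaces need not be injective (as the Remark after the function-classes diagram warns), so $g\circ f$ need not equal $\id_X$. To repair this I would argue directly on linking chains instead: given an $(Ff(a),Ff(b))$-linking chain in $FY$ realizing a value close to $d^p_{FY}(Ff(a),Ff(b))$, I would lift each triple $(a_i,\varphi_i,\psi_i)$ by replacing the functions $\varphi_i,\psi_i:n_i\to Y$ with $g\circ\varphi_i,\,g\circ\psi_i:n_i\to X$, using that $g$ preserves the $\ell^p$-distance exactly so that $d^p_{X^{<\w}}(g\circ\varphi_i,g\circ\psi_i)=d^p_{Y^{<\w}}(\varphi_i,\psi_i)$, and checking that $F(g\circ\varphi_i)(a_i)=Fg(F\varphi_i(a_i))$ still links up into an $(Fg\circ Ff(a),Fg\circ Ff(b))$-chain. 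Since $Fg\circ Ff=F(g\circ f)$ and $f\circ g=\id_Y$ forces $g$ to be injective, one checks $g\circ f$ restricted to each support is injective, so the endpoints of the lifted chain are exactly $a$ and $b$; this yields $d^p_{FX}(a,b)\le d^p_{FY}(Ff(a),Ff(b))$, completing the isometry. The technical heart is thus verifying that the section $g$ can be chosen to recover $a,b$ exactly at the level of $FX$, for which Corollary~\ref{c:supp} on support preservation under injective maps is the natural tool.
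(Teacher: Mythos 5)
Your first half (non-expansion of $Ff$ via Theorem~\ref{t:Lip}), your construction of a section $g$ with $f\circ g=\id_Y$, and your verification that $g$ is an isometry all match the paper, and you correctly identify the real obstacle: $r:=g\circ f$ need not equal $\id_X$. But your repair does not close the gap. Lifting a linking chain through $g$ (equivalently, applying Theorem~\ref{t:Lip} to the isometry $g$) produces a chain whose endpoints are $Fr(a)$ and $Fr(b)$, and your claim that these endpoints ``are exactly $a$ and $b$'' because $r$ is injective on supports is unjustified: injectivity of $r{\restriction}_{\supp(a)}$ only controls $\supp(Fr(a))$ via Corollary~\ref{c:supp}; it does not force $Fr(a)=a$. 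The map $r$ merely satisfies $d_X(x,r(x))=0$, and in a distance space distinct points can be at distance $0$, so $r$ can move every point of $\supp(a)$ and $Fr(a)$ can genuinely differ from $a$ as an element of the set $FX$ (take $F$ the squaring functor and $a=(x,x')$ with $r(x)\ne x$). Moreover $r{\restriction}_{\supp(a)}$ need not even be injective, since two distinct support points at distance $0$ may have the same image under $f$, in which case no choice of section $g$ recovers both.

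The missing step --- the one the paper supplies --- is metric rather than set-theoretic: one shows $d^p_{FX}(a,Fr(a))=0$ for every $a\in FX$ by writing $a=Ff_0(a_0)$ for some $f_0:n_0\to X$ and $a_0\in Fn_0$ (finite supports) and observing that the one-link chain $\big((a_0,f_0,r\circ f_0)\big)$ has cost $\big(\sum_{i\in n_0}d_X(f_0(i),r(f_0(i)))^p\big)^{\frac1p}=0$. Then the triangle inequality $d^p_{FX}(a,b)\le d^p_{FX}(a,Fr(a))+d^p_{FX}(Fr(a),Fr(b))+d^p_{FX}(Fr(b),b)$, combined with the non-expansion of $Fg$ and of $Ff$, gives $d^p_{FX}(a,b)\le d^p_{FY}(Ff(a),Ff(b))\le d^p_{FX}(a,b)$ and closes the loop. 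So replace ``the endpoints are exactly $a$ and $b$'' by ``the endpoints are at $d^p_{FX}$-distance $0$ from $a$ and $b$''; that is all you need and is what can actually be proved.
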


\begin{proof} By the surjectivity of $f$, there exists a function $g:Y\to X$ such that $f\circ g$ is the identity map of $Y$. Then $Ff\circ Fg$ is the identity map of $FY$, which implies that $Ff$ is surjective.

Taking into account that $f$ is an isometry, we conclude that  for any $y,y'\in Y$,
$$d_X(g(y),g(y'))=d_Y(f\circ g(y),f\circ g(y'))=d_Y(y,y'),$$which means that $g$ is an isometry. Consider the map $r=g\circ f:X\to X$ and observe that for every $x\in X$ we have
$$d_X(x,r(x))=d_X(x,g\circ f(x))=d_Y(f(x),f\circ g\circ f(x))=d_Y(f(x),f(x))=0.$$

We claim that for any $a\in FX$ the distance $d^p_{FX}(a,Fr(a))=0$. Since the functor $F$ has finite supports, there exist $n_0\in\w$, map $f_0:n_0\to X$ and $a_0\in Fn_0$ such that $a=Ff_0(a_0)$. Put $g_0=r\circ f_0$ and observe that $\big((a_0,f_0,g_0)\big)$ is an $(a,Fr(a))$-linking chain witnessing that
$$d^p_{FX}(a,Fr(a))\le d^p_{X^{\!<\!\w}}(f_0,g_0)=\Big(\sum_{i\in n_0}d_X(f_0(i),r\circ f_0(i))^p\Big)^{\frac1p}=0.$$

Now take any elements $a,b\in FX$ and applying Theorem~\ref{t:Lip}, conclude that
\begin{multline*}
d^p_{FX}(a,b)\le d^p_{FX}(a,Fr(a))+d^p_{FX}(Fr(a),Fr(b))+d^p_{FX}(Fr(b),b)=\\
=0+d^p_{FX}(Fg{\circ} Ff(a),Fg{\circ} Ff(b))+0\le d^p_{FY}(Ff(a),Ff(b))\le d^p_{FX}(a,b),
\end{multline*}
which implies that all inequalities in this chain are equailites. In particular, $$d^p_{FY}(Ff(a),Ff(b))=d^p_{FX}(a,b),$$ which means that the map $Ff$ is an isometry.
\end{proof}

We will say that a subset $A\subseteq X$ of a distance space $(X,d_X)$ is {\em dense} in $X$ if for any $\e>0$ and $x\in X$ there exists $a\in A$ such that $d_X(a,x)<\e$.

\begin{lemma}\label{l:isom2} If $f:X\to Y$ is an injective isometry of distance spaces and the set $f[X]$ is dense in $Y$, then the map $F^pf:F^pX\to F^pY$ is an injective isometry and the set $F^pf[F^pX]=Ff[FX]$ is dense in $F^pY$.
\end{lemma}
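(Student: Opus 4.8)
The statement bundles three assertions about $F^pf=Ff$: that it is an isometry, that it is injective, and that its image is dense. The plan is to dispose of the last two quickly and concentrate on the first. Injectivity is immediate once $X\ne\emptyset$ (if $X=\emptyset$, then density of $f[\emptyset]=\emptyset$ forces $Y=\emptyset$ and everything is trivial): an injective $f$ admits a left inverse $g\colon Y\to X$ with $g\circ f=\id_X$ (take $g=f^{-1}$ on $f[X]$, constant elsewhere), so $Fg\circ Ff=F(g\circ f)=\id_{FX}$ and $Ff$ is injective. For density, given $c\in FY$ and $\e>0$ I use finite supports to write $c=F\phi(a_0)$ with $\phi\colon n\to Y$ and $a_0\in Fn$; density of $f[X]$ provides $x_k\in X$ with $d_Y(f(x_k),\phi(k))$ arbitrarily small, and for $\psi\colon n\to X$, $\psi(k)=x_k$, and $a=F\psi(a_0)$ the non-expanding property of $\xi^{a_0}_Y\colon Y^n\to FY$ (Theorem~\ref{t:alt}) gives $d^p_{FY}(Ff(a),c)\le d^p_{Y^n}(f\circ\psi,\phi)<\e$. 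Thus $Ff[FX]$ is dense in $F^pY$.

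The isometry claim is the heart of the lemma. The inequality $d^p_{FY}(Ff(a),Ff(b))\le d^p_{FX}(a,b)$ holds because $f$ is non-expanding, hence so is $Ff$ by Theorem~\ref{t:Lip}. For the reverse inequality I may assume $d^p_{FY}(Ff(a),Ff(b))<\infty$. Fix finite representations $a=F\alpha(\bar a)$ and $b=F\beta(\bar b)$ with $\alpha\colon m\to X$ and $\beta\colon m'\to X$ (Proposition~\ref{p:BMZ}), fix $\e>0$, and choose a near-optimal $(Ff(a),Ff(b))$-linking chain $w=\big((a_i,\phi_i,\psi_i)\big)_{i=0}^l$ in $FY$ with $\Sigma d^p_Y(w)<d^p_{FY}(Ff(a),Ff(b))+\e$; let $N\subseteq Y$ be the finite set of its node values. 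For a small $\delta>0$ I build a \emph{partial pullback} $\theta\colon Y\to X$: set $\theta(f(x))=x$ for $x\in\alpha[m]\cup\beta[m']$ (well-defined since $f$ is injective), choose $\theta(y)\in X$ with $d_Y(f(\theta(y)),y)<\delta$ for the remaining $y\in N$ (possible by density), and define $\theta$ arbitrarily elsewhere, so that $d_Y(f(\theta(y)),y)<\delta$ for \emph{every} $y\in N$.

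Applying $\theta$ to $w$ produces $W=\big((a_i,\theta\circ\phi_i,\theta\circ\psi_i)\big)_{i=0}^l$. Functoriality of $F\theta$ preserves the matching identities $F(\theta\circ\psi_i)(a_i)=F(\theta\circ\phi_{i+1})(a_{i+1})$, while $\theta\circ f\circ\alpha=\alpha$ and $\theta\circ f\circ\beta=\beta$ give $F(\theta\circ\phi_0)(a_0)=F\theta(Ff(a))=a$ and $F(\theta\circ\psi_l)(a_l)=b$; hence $W\in L_{FX}(a,b)$. Because $f$ is an isometry, for any node values $y,y'$ one has $d_X(\theta(y),\theta(y'))=d_Y(f(\theta(y)),f(\theta(y')))\le d_Y(y,y')+2\delta$, so Minkowski's inequality yields $\Sigma d^p_X(W)\le\Sigma d^p_Y(w)+2\delta\sum_{i=0}^l n_i^{1/p}$, where $n_i=\dom(\phi_i)$. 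Therefore $d^p_{FX}(a,b)\le d^p_{FY}(Ff(a),Ff(b))+\e+2\delta\,M$ with $M=\sum_{i=0}^l n_i^{1/p}<\infty$; letting $\delta\to0$ and then $\e\to0$ gives $d^p_{FX}(a,b)\le d^p_{FY}(Ff(a),Ff(b))$, which together with the opposite inequality proves that $Ff$ is an isometry.

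I expect this reverse estimate to be the main obstacle. The naive idea of transporting a chain from $FY$ back to $FX$ by a global left inverse of $f$ fails, since any such inverse is uncontrolled off the dense set $f[X]$. The resolution is that the cost of a chain sees $\theta$ only on the finite node set $N$, where $\theta$ can be taken to be a $\delta$-approximate inverse of $f$; the genuine isometry property of $f$ then converts the resulting $X$-distances back into $Y$-distances up to an error $2\delta$ that vanishes in the limit. Pinning $\theta$ to an \emph{exact} inverse on $f[\alpha[m]\cup\beta[m']]$ is precisely what forces $W$ to join $a$ to $b$ on the nose, so that no separate analysis of elements with small or empty support is needed.
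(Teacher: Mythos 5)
Your proof is correct and follows essentially the same route as the paper's: pull a near-optimal $(Ff(a),Ff(b))$-linking chain in $FY$ back to $X$ through a map that approximately inverts $f$ on the finitely many node values (supplied by density of $f[X]$), and use the isometry property of $f$ to bound the extra cost by a term of the form $2\delta\sum_i n_i^{1/p}$ that vanishes as $\delta\to 0$. The one divergence is that the paper's pullback $r$ is an exact left inverse on all of $f[X]$, so identifying the endpoints of the pulled-back chain there requires a support argument together with the injectivity of $Ff$, whereas your $\theta$ is pinned exactly only on $f[\alpha[m]\cup\beta[m']]$, which turns that identification into a one-line functoriality computation --- a mild but genuine simplification of that step.
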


\begin{proof} If $X=\emptyset$, then $Y=\emptyset$ by the density of the set $f[X]=\emptyset$ in $Y$. In this case the map $f:X\to Y$ is the identity map of $\emptyset$ and $F^pf$ is the identity map (and hence a  bijective isometry) of the distance space $F^pX=F^pY=F^p\emptyset$.

So, we assume that the set $X$ is not empty. Then $Y$ is not empty, too. By Theorem~\ref{t:Lip}, the map $F^pf:F^pX\to F^pY$ is non-expanding. Assuming that $F^pf$ is not an isometry, we can find two elements $a,b\in F^pX$ such that $d^p_{FY}(Ff(a),Ff(b))<d^p_{FX}(a,b)$. By the definition of the distance $d^p_{FY}$, there exists an $(Ff(a),Ff(b))$-linking chain $\big((a_i,f_i,g_i)\big){}_{i=0}^l\in L_{FY}(Ff(a),Ff(b))$ such that $\sum_{i=0}^ld^p_{Y^{\!<\!\w}}(f_i,g_i)<d^p_{FX}(a,b)$. For every $i\in\{0,\dots,l\}$ let $n_i:=\dom(f_i)=\dom(g_i)$.
Choose $\e>0$ so small that $$\sum_{i=0}^l\big(2\e\cdot\sqrt[p]{n_i}+d^p_{Y^{\!<\!\w}}(f_i,g_i)\big)<
d^p_{FX}(a,b).$$ Using the injectivity of the function $f$ and the density of the set $f[X]$ in $Y$, choose a function $r:Y\to X$ such that  $r\circ f(x)=x$ for all $x\in X$, and $d_Y(y,f{\circ} r(y))<\e$ for all $y\in Y\setminus f[X]$.
Applying the functor $F$ to the identity map $r\circ f$ of $X$, we conclude that $Fr\circ Ff$ is the identity map of $FX$, which implies that the map $Ff:FX\to FY$ is injective.

Consider the maps $f_i'=r{\circ} f_i:n_i\to X$ and $g_i'=r{\circ} g_i:n_i\to X$. Taking into account that the map $f$ is an isometry, we conclude that
$$
\begin{aligned}
d^p_{X^{\!<\!\w}}&(f_i',g_i')=d^p_{Y^{\!<\!\w}}(f{\circ} f_i',f{\circ} g_i')=d^p_{Y^{\!<\!\w}}(f{\circ} r{\circ} f_i,f{\circ} r{\circ} g_i)\le\\
&\le
d^p_{Y^{\!<\!\w}}(f{\circ} r{\circ} f_i,f_i)+
d^p_{Y^{\!<\!\w}}(f_i,g_i)+
d^p_{Y^{\!<\!\w}}(g_i,f{\circ} r{\circ} g_i)<\e\sqrt[p]{n_i}+
d^p_{Y^{\!<\!\w}}(f_i,g_i)+\e\sqrt[p]{n_i}.
\end{aligned}
$$

We claim that $Ff_0'(a_0)=a$ and $Fg_l'(a_l)=b$.
It follows that $\supp(Ff(a))\subseteq f[X]$ so we can find a finite set $S\subseteq f[X]$ of cardinality $|S|=\max\{1,|\supp(Ff(a))|\}$ containing the support $\supp(Ff(a))$ of $Ff(a)$. By Proposition~\ref{p:BMZ}, $Ff(a)\subseteq F[S;Y]$. So, we can find an element $a'\in FS$ such that $Ff(a)=Fi_{S,Y}(a')$. The definition of $r$ and the inclusion $S\subseteq f[X]$ imply $f\circ r\circ i_{S,Y}=i_{S,Y}$. Then
\begin{multline*}
Ff(a)=Fi_{S,Y}(a')=Ff\circ Fr\circ Fi_{S,Y}(a')=Ff\circ Fr\circ Ff(a)=\\=Ff\circ Fr\circ Ff_0(a_0)=Ff\circ Ff_0'(a_0)
\end{multline*}
and hence
$a=Ff_0'(a_0)$ by the injectivity of the map $Ff$.

By analogy, we can prove that $Fg_l'(a_l)=b$.
Consequently, $\big((a_0,f_0',g_0'),\dots,(a_l,f_l',g_l')\big)$ is an $(a,b)$-linking chain and
$$d^p_{FX}(a,b)\le\sum_{i=0}^ld^p_{X^{\!<\!\w}}(f_i',g_i')<
\sum_{i=0}^l\big(\e\sqrt[p]{n_i}+d^p_{X^{\!<\!\w}}(f_i,g_i)+\e\sqrt[p]{n_i}\big)<d^p_{FX}(a,b),$$
which is a desired contradiction proving that $d^p_{FY}(Ff(a),Ff(b))= d^p_{FX}(a,b)$.
\smallskip

Finally, we prove that the image $Ff[FX]$ is dense in $F^pY$. Given any $b\in FY$ and $\e>0$, choose a finite set $S\subseteq Y$ of cardinality $|S|=\max\{1,|\supp(b)|\}$ containing $\supp(b)$.
 By Proposition~\ref{p:BMZ}, $b\in F[S;Y]$.

 Let $n_0=|S|$, $\beta:n_0\to S$ be any bijection, and $f_0=i_{S,Y}\circ \beta:n_0\to Y$. Then $F\beta:Fn_0\to FS$ is a bijection and hence $b\in F[S;Y]=Fi_{S,Y}[FS]=Fi_{S,Y}[F\beta[Fn_0]]=Ff_0[Fn_0]$. Consequently, there exists an element $b'\in Fn_0$ such that $b=Ff_0(b')$.

Since the set $f[X]$ is dense in $Y$, there exists a function $g_0:n_0\to f[X]\subseteq Y$ such that $d^p_{Y^{\!<\!\w}}(f_0,g_0)<\e$. It follows that the map $\alpha=f^{-1}\circ g_0:n_i\to X$ is well-defined and $f\circ\alpha =g_0$. Let $a:=F\alpha(b')\in FX$ and observe that $Ff(a)=Ff\circ F\alpha(b')=Fg_0(b')$ and
$$d^p_{FY}(b,Ff(a))=d^p_{FX}(Ff_0(b'),Fg_0(b'))\le d^p_{Y^{\!<\!\w}}(f_0,g_0)<\e,$$which means that the set $Ff[FX]$ is dense in $F^pY$.
\end{proof}

Lemmas~\ref{l:isom1} and \ref{l:isom2} imply

\begin{theorem}\label{t:isom} If $f:X\to Y$ is an isometry of distance spaces and the set $f[X]$ is dense in $Y$, then the map $F^pf:F^pX\to F^pY$ is an  isometry and the set $F^pf[F^pX]=Ff[FX]$ is dense in $F^pY$.
\end{theorem}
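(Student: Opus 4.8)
The plan is to reduce the general case to the two lemmas by factoring $f$ through its image. The crucial observation is that an isometry of distance spaces need not be injective (see the Remark following the implication diagram), so Lemma~\ref{l:isom2} cannot be applied to $f$ directly; instead $f$ must be split into a surjective part and an injective part with dense image.

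First I would endow the image $f[X]$ with the distance inherited from $Y$ and introduce the corestriction $f'\colon X\to f[X]$, $x\mapsto f(x)$, together with the inclusion $j\colon f[X]\hookrightarrow Y$, so that $f=j\circ f'$. The map $f'$ is a surjective isometry (surjectivity holds by the definition of the image, and the isometry property is inherited from $f$), while $j$ is an injective isometry whose image $f[X]$ is dense in $Y$ by hypothesis. Thus $f'$ satisfies the hypotheses of Lemma~\ref{l:isom1} and $j$ satisfies those of Lemma~\ref{l:isom2}.

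Next, using the functoriality of $F^p$, I would write $F^pf=F^pj\circ F^pf'$. Lemma~\ref{l:isom1} applied to $f'$ gives that $F^pf'$ is an isometry; moreover, the argument in that lemma produces a set-theoretic right inverse of $f'$, whence $Ff'$ is surjective and $Ff'[FX]=F(f[X])$. Lemma~\ref{l:isom2} applied to $j$ gives that $F^pj$ is an injective isometry and that $Fj[F(f[X])]$ is dense in $F^pY$. Since a composition of isometries is an isometry, $F^pf=F^pj\circ F^pf'$ is an isometry. Finally, for the density assertion I would compute $Ff[FX]=Fj\big[Ff'[FX]\big]=Fj[F(f[X])]$, which is dense in $F^pY$ by Lemma~\ref{l:isom2}.

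The only real obstacle is the non-injectivity of $f$, and it is resolved entirely by the image factorization $f=j\circ f'$; once this device is in place, the proof is a routine combination of functoriality, the two lemmas, and the fact that compositions of isometries are isometries.
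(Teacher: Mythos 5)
Your proposal is correct and follows essentially the same route as the paper: the paper likewise factors $f$ through its image as a surjective isometry onto $Z=f[X]$ followed by the inclusion $i_{Z,Y}$, applies Lemma~\ref{l:isom1} to the first factor and Lemma~\ref{l:isom2} to the second, and concludes by composing. The only cosmetic difference is notation ($g,h$ versus $f',j$), so there is nothing to add.
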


\begin{proof} The isometry $f$ can be written as the composition $f=h\circ g$ of a surjective isometry $g:X\to Z$ and an injective isometry $h:Z\to Y$. For example, we can take $Z=f[X]$, $g=f$ and $h=i_{Z,Y}$. By Lemma~\ref{l:isom1}, the map $Fg:F^pX\to F^pZ$ is a surjective isometry. By the density of the set  $f[X]=h[g[X]]=h[Z]$ in $Y$ and Lemma~\ref{l:isom2}, the map $Fh:F^pZ\to F^pY$ is an isometry with dense image $Fh[FZ]$ in $F^pY$. Then the function $Ff=Fh\circ Fg$ is an isometry as the composition of two isometries. Moreover, the image  $Ff[FX]=Fh[Fg[FX]]=Fh[FZ]$ is dense in $F^pY$ by the density of $Fh[FZ]$ in $F^pY$.
\end{proof}

\begin{example}\label{ex1} In general, the functor $F^p$ does not preserve isometries. To construct a counterexample, consider the functor $F:\Set\to\Set$ assigning to each set $X$ the family $FX$ of all subsets $A\subseteq X$ of cardinality $0<|A|\le 2$. To every map $f:X\to Y$ between sets the functor $F$ assigns the map $Ff:FX\to FY$, $Ff:a\mapsto f[a]$. The functor $F$ has finite degree $2$, preserves supports and singletons (see Section~\ref{s:hyperf} for more information on this functor).

Now consider the following finite subsets $X\subseteq Y$ of the complex plane $\mathbb C$:
$$X:=\{5+\sqrt{3}i,5-\sqrt{3}i,-5+\sqrt{3}i,-5-\sqrt{3}i\}\quad\mbox{and}\quad Y=X\cup\{-4,4\}.$$In the set $FX\subseteq FY$ consider the elements $a=\{-5+\sqrt{3}i,-5-\sqrt{3}i\}$ and $b=\{5+\sqrt{3}i,5-\sqrt{3}i\}$.

It can be shown that for every $p\in[1,\infty]$
$$d^p_{FX}(a,b)=10+4\sqrt{3}>8+4\sqrt[p]{2}=d^p_{FY}(a,b).$$
\end{example}

Yet, we have the following partial positive  result.

\begin{proposition}\label{p:weakiso} If the functor $F$ is finitary and has finite degree, then for any isometry $f:X\to Y$ between nonempty distance spaces and any elements $a,b\in F^pX$, we have
$$d^\infty_{FX}(a,b)\le 2n\cdot|Fn|\cdot d^\infty_{FY}(Ff(a),Ff(b)),$$
where $n=\max\{1,\deg(F)\}$.
\end{proposition}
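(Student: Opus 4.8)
The plan is to prove the nontrivial estimate by pulling back to $X$ an approximate ``collapsing'' of $a$ and $b$ produced on the $Y$-side. We may assume $d^\infty_{FY}(Ff(a),Ff(b))<\infty$, since otherwise the inequality is vacuous, and it suffices to show that $d^\infty_{FX}(a,b)<2n\cdot|Fn|\cdot\delta$ for every $\delta\in(0,\infty)$ with $d^\infty_{FY}(Ff(a),Ff(b))<\delta$; letting $\delta$ tend to $d^\infty_{FY}(Ff(a),Ff(b))$ then yields the claim. Fixing such a $\delta$, and using that $F$ is finitary of finite degree, I would apply Lemma~\ref{l:2short} in the nonempty space $Y$ to the elements $Ff(a),Ff(b)\in FY$ and the set $A=f[X]$, obtaining a map $r:Y\to Y$ with $Fr(Ff(a))=Fr(Ff(b))$, $r[f[X]]\subseteq f[X]$, and $\sup_{y\in Y}d_Y(y,r(y))<n\cdot|Fn|\cdot\delta$.

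Next I would transport $r$ to $X$. Choose a section $g:f[X]\to X$ of $f$ (so that $f\circ g=\mathrm{id}_{f[X]}$), extend it arbitrarily to a map $\tilde g:Y\to X$, and set $\rho=\tilde g\circ f$ and $s=\tilde g\circ r\circ f:X\to X$. Since $r\circ f$ takes values in $f[X]$, where $f\circ\tilde g$ acts as the identity, one obtains $f\circ s=r\circ f$ and consequently $\rho\circ s=\tilde g\circ(f\circ s)=\tilde g\circ r\circ f=s$. The crucial point is to verify $Fs(a)=Fs(b)$: from $f\circ s=r\circ f$ we get $Ff(Fs(a))=F(r\circ f)(a)=Fr(Ff(a))=Fr(Ff(b))=Ff(Fs(b))$, and applying $F\tilde g$ together with the functorial identity $F\rho\circ Fs=F(\rho\circ s)=Fs$ upgrades this to $Fs(a)=F\rho(Fs(a))=F\rho(Fs(b))=Fs(b)$.

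Finally I would estimate distances inside $X$. Because $f$ is an isometry and $f\circ s=r\circ f$, for every $x\in X$ we have $d_X(x,s(x))=d_Y(f(x),r(f(x)))\le\sup_{y\in Y}d_Y(y,r(y))<n\cdot|Fn|\cdot\delta$. Writing $a=Ff_0(a_0)$ with $f_0:n_0\to X$ (possible since $F$ has finite supports), the single triple $(a_0,f_0,s\circ f_0)$ is an $(a,Fs(a))$-linking chain, so $d^\infty_{FX}(a,Fs(a))\le d^\infty_{X^{<\w}}(f_0,s\circ f_0)=\max_{i\in n_0}d_X(f_0(i),s(f_0(i)))<n\cdot|Fn|\cdot\delta$, and likewise $d^\infty_{FX}(b,Fs(b))<n\cdot|Fn|\cdot\delta$. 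Since $Fs(a)=Fs(b)$, the triangle inequality gives $d^\infty_{FX}(a,b)\le d^\infty_{FX}(a,Fs(a))+d^\infty_{FX}(Fs(b),b)<2n\cdot|Fn|\cdot\delta$, as required.

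I expect the main obstacle to be the middle step, namely deducing $Fs(a)=Fs(b)$ rather than merely $Ff(Fs(a))=Ff(Fs(b))$. Since an isometry between distance spaces may identify points lying at distance $0$, the map $Ff$ need not be injective, so the desired equality cannot be read off directly; introducing a section $\tilde g$ of $f$ and exploiting the relation $\rho\circ s=s$ is precisely what circumvents this difficulty and makes the argument valid for arbitrary, not necessarily injective, isometries.
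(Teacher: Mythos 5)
Your proposal is correct and follows essentially the same route as the paper: both apply Lemma~\ref{l:2short} to $Ff(a),Ff(b)$ in $Y$ with $A=f[X]$, pull the resulting map $r$ back to $X$ through a section of $f$, and link $a$ and $b$ through their common image under the pulled-back map, using the isometry property of $f$ to control the distances. The only cosmetic differences are that the paper argues by contradiction with a single two-link chain where you use a direct triangle-inequality argument, and that your detour through $\rho\circ s=s$ is unnecessary, since $Fs(a)=F\tilde g\circ Fr\circ Ff(a)=F\tilde g\circ Fr\circ Ff(b)=Fs(b)$ follows at once from the definition $s=\tilde g\circ r\circ f$.
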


\begin{proof} 
To derive a contradiction, assume that $d^\infty_{FX}(a,b)>2n\cdot|Fn|\cdot d^p_{FY}(Ff(a),Ff(b))$. Using Lemma~\ref{l:2short}, find a function $r:Y\to Y$ such that $Fr(Ff(a))=Fr(Ff(b))$, $r[f[X]]\subseteq f[X]$ and $$\sup_{y\in Y}d_Y(y,r(y))<n\cdot|Fn|\cdot\frac1{2n\cdot|Fn|}\cdot d^\infty_{FX}(a,b)=\frac12\, d^\infty_{FX}(a,b).$$ Let $s:f[X]\to X$ be any function such that $s(y)\in f^{-1}(y)$ and hence $f\circ s(y)=y$ for every $y\in f[X]$. 
Using Proposition~\ref{p:BMZ}, find two functions $f_0,g_1\in X^n$ and elements $a_0,a_1\in Fn$ such that $a=Ff_0(a_0)$ and $b=Fg_1(a_1)$. Now consider the chain
$$w=\big((a_0,f_0,g_0),(a_1,f_1,g_1)\big)\in (Fn\times X^n\times X^n)^2,$$
where $g_0=s\circ r\circ f\circ f_0$ and $f_1=s\circ r\circ f\circ g_1$. 
Since 
\begin{multline*}Fg_0(a_0)=Fs\circ Fr\circ Ff\circ Ff_0(a_0)=Fs\circ Fr\circ Ff(a)=\\
Fs\circ Fr\circ Ff(b)=Fs\circ Fr\circ Ff\circ Fg_1(a_1)=Ff_1(a_1),
\end{multline*}
the chain $w$ is $(a,b)$-linking.

Taking into account that $f$ is an isometry, $f\circ s(y)=y$ for $y\in f[X]$, and $r[f[X]]\subseteq f[X]$ we conclude that for every $k\in n$,
\begin{multline*}
d_X(f_0(k),g_0(k))=d_X(f_0(k),s\circ r\circ f\circ f_0(k))=\\=d_Y(f\circ f_0(k),f\circ s\circ r\circ f\circ f_0(k))=
 d_X(f\circ f_0(k),r\circ f\circ f_0(k))<\tfrac12d^\infty_{FX}(a,b)
\end{multline*}
and hence $d^\infty_{X^n}(f_0,g_0)<\frac12d^\infty_{FX}(a,b)$.
By analogy we can prove that $d^\infty_{X^n}(f_1,g_1)<\frac12d^\infty_{FX}(a,b)$.
Then
$$d^\infty_{FX}(a,b)\le d^\infty_X(w)=d^\infty_X(f_0,g_0)+d^\infty_X(f_1,g_1)<\tfrac12d^\infty_{FX}(a,b)+\tfrac12d^\infty_{FX}(a,b)=d^\infty_{FX}(a,b),$$
which is a desired contradiction.
\end{proof}

\section{The functor $F^p\!f$ and the continuity modulus of functions}

Let us recall that for a function $f:X\to Y$ between distance spaces, its continuity modulus $\w_f:(0,\infty]\to[0,\infty]$ is defined by the formula 
$$\w_f(\e)=\sup\{d_Y(f(x),f(y)):x,y\in X\;\wedge\;d_X(x,y)<\e\}\mbox{ \ for  \ } \e\in(0,\infty].$$
 
\begin{theorem}\label{t:modulus} If the functor $F$ has finite degree $n=\deg(F)$, then for any function $f:X\to Y$ between distance spaces, the function $F^p\!f:F^pX\to F^pY$ has continuity modulus $\w_{F^p\!f}\le |Fn|\cdot n^{\frac1p}\cdot \w_f$.
\end{theorem}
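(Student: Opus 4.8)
The plan is to unwind the definition of the continuity modulus and reduce the whole estimate to a single \emph{very short} linking chain, which is available because $F$ has finite degree (Theorem~\ref{t:very-short}). Throughout, fix $\e\in(0,\infty]$ and recall that
$$\w_{F^p\!f}(\e)=\sup\{d^p_{FY}(Ff(a),Ff(b)):a,b\in FX,\ d^p_{FX}(a,b)<\e\}.$$
Thus it suffices to prove that $d^p_{FY}(Ff(a),Ff(b))\le|Fn|\cdot n^{\frac1p}\cdot\w_f(\e)$ for every pair $a,b\in FX$ with $d^p_{FX}(a,b)<\e$, and then pass to the supremum. The case $\deg(F)=0$ is degenerate: by Theorem~\ref{t:small} the condition $d^p_{FX}(a,b)<\e$ forces $a=b$, hence $Ff(a)=Ff(b)$ and the left-hand side vanishes; so from now on I assume $\deg(F)\ge1$, i.e. $n=\max\{1,\deg(F)\}=\deg(F)$.

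First I would apply Theorem~\ref{t:very-short}: since $d^p_{FX}(a,b)<\e$ equals the infimum of $\Sigma d^p_X(w)$ over very short chains, there is a very short $(a,b)$-linking chain $w=\big((a_i,f_i,g_i)\big)_{i=0}^l$ with $f_i,g_i\in X^n$ for all $i$ and $\Sigma d^p_X(w)=\sum_{i=0}^l d^p_{X^n}(f_i,g_i)<\e$. By very-shortness, its length satisfies $l+1\le|Fn|$. Pushing this chain forward through $f$ produces the $(Ff(a),Ff(b))$-linking chain $\big((a_i,f{\circ}f_i,f{\circ}g_i)\big)_{i=0}^l$, so directly from the definition of $d^p_{FY}$ we get
$$d^p_{FY}(Ff(a),Ff(b))\le\sum_{i=0}^l d^p_{Y^n}(f{\circ}f_i,f{\circ}g_i).$$

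Next I would estimate each summand coordinatewise. For every $i\in\{0,\dots,l\}$ and $k\in n$ the coordinate distance is dominated by the whole chain sum, $d_X(f_i(k),g_i(k))\le d^p_{X^n}(f_i,g_i)\le\Sigma d^p_X(w)<\e$; hence, by the definition of the continuity modulus, $d_Y(f(f_i(k)),f(g_i(k)))\le\w_f(\e)$. Raising to the power $p$ and summing over the $n$ coordinates (or taking the maximum when $p=\infty$) gives $d^p_{Y^n}(f{\circ}f_i,f{\circ}g_i)\le n^{\frac1p}\cdot\w_f(\e)$, with the convention $n^{1/\infty}=1$. Summing over the at most $|Fn|$ indices $i$ yields $d^p_{FY}(Ff(a),Ff(b))\le|Fn|\cdot n^{\frac1p}\cdot\w_f(\e)$, and taking the supremum over all admissible $a,b$ completes the proof.

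The argument is essentially bookkeeping, so I do not expect a genuine obstacle once Theorem~\ref{t:very-short} is in hand; the only point requiring a little care is the \emph{strict} inequality $d_X(f_i(k),g_i(k))<\e$, which is what legitimizes applying $\w_f(\e)$ — it holds because each coordinate distance is bounded by $\Sigma d^p_X(w)<\e$. The boundary value $\e=\infty$ and the possibility $\w_f(\e)=\infty$ cause no trouble, being absorbed by the conventions on arithmetic in $[0,\infty]$.
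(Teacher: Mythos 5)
Your proof is correct and follows essentially the same route as the paper's: both reduce to a very short $(a,b)$-linking chain via Theorem~\ref{t:very-short}, push it forward through $f$, bound each coordinate distance by $\w_f(\e)$, and sum over the at most $|Fn|$ links. The only differences are cosmetic — you argue directly where the paper argues by contradiction, and your explicit remark that each coordinate distance is \emph{strictly} less than $\e$ (which is what licenses the bound $d_Y(f(u),f(v))\le\w_f(\e)$ under the strict-inequality definition of the continuity modulus) is in fact slightly more careful than the paper's intermediate step.
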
 

\begin{proof} Assuming that  $\w_{F^p\!f}\not\le |Fn|\cdot n^{\frac1p}\cdot \w_f$, we can find $\e\in(0,\infty]$ and elements $a,b\in F^pX$ such that $d^p_{FX}(a,b)<\e$ but $d^p_{FY}(Ff(a),Ff(b))>|Fn|\cdot n^{\frac1p}\cdot \w_f(\e)$. Then $a\ne b$ and we can apply Theorem~\ref{t:small} to conclude that $|\supp(a)\cup\supp(b)|>1$ and hence  $n=\deg(F)>0$. By Theorem~\ref{t:very-short}, there exists a very short $(a,b)$-linking chain $w=\big((a_i,f_i,g_i)\big){}_{i\in l}\in (Fn\times X^n\times X^n)^{l}$ such that $\sum d^p_X(w)=\sum_{i\in l}d^p_{X^n}(f_i,g_i)<\e$. Since the chain $w$ is short, the elements $a_i\in Fn$ are pairwise distinct. Then $l\le |Fn|$. Observe that $\big((a_i,f\circ f_i,f\circ g_i)\big){}_{i\in l}$ is an $(Ff(a),Ff(b))$-linking chain and hence
\begin{multline*}
|Fn|\cdot n^{\frac1p}\cdot\w_f(\e)<d^p_{FY}(Ff(a),Ff(b))\le \sum_{i=0}^l d^p_{Y^n}(f\circ f_i,f\circ g_i)=\\
\sum_{i\in l}\Big(\sum_{x\in n}d_Y\big(f(f_i(x)),f(g_i(x))\big)^p\Big)^{\frac1p}\le
 \sum_{i\in l}\Big(\sum_{x\in n}\w_f\big(d_X(f_i(x),g_i(x))\big)^p\Big)^{\frac1p}\le\\ 
 \sum_{i\in l}\Big(\sum_{x\in n}\w_f(\e)^p\Big)^{\frac1p}=
 l\cdot n^{\frac1p}\cdot\w_f(\e)\le |Fn|\cdot n^{\frac1p}\cdot\w_f(\e),
 \end{multline*}
 which is a desired contradiction completing the proof of the inequality $\w_{F^p\!f}\le |Fn|\cdot n^{\frac1p}\cdot \w_f$.
 \end{proof}
 
 \begin{corollary}\label{c:uniform}  Assume that the functor $F$ is finitary and has finite degree. 
If a function $f:X\to Y$ between distance spaces is asymptotically Lipschitz (resp. microform, macroform, duoform), then so is the function $F^p\!f:F^pX\to F^p Y$.
\end{corollary}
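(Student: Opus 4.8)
The plan is to deduce this directly from Theorem~\ref{t:modulus}, combined with the characterizations of asymptotic Lipschitzness, microformity and macroformity in terms of the continuity modulus given earlier (in Section~4). Set $M:=|Fn|\cdot n^{\frac1p}$, where $n=\deg(F)$. Since $F$ is finitary and of finite degree, the number $n$ is finite and the set $Fn$ is finite, so $M$ is a finite real number; this is precisely where both hypotheses are used. Theorem~\ref{t:modulus} then gives the pointwise inequality $\w_{F^p\!f}(\e)\le M\cdot\w_f(\e)$ for all $\e\in(0,\infty]$, and the whole proof amounts to feeding the assumed growth condition on $\w_f$ through this single inequality.

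First I would treat the asymptotically Lipschitz case. If $f$ is asymptotically Lipschitz, there are reals $L,C$ with $\w_f(\e)\le L\e+C$ for all $\e\in(0,\infty)$, whence $\w_{F^p\!f}(\e)\le M(L\e+C)=(ML)\e+MC$, which by the modulus criterion says exactly that $F^p\!f$ is asymptotically Lipschitz. For the microform case, given $\e\in(0,\infty)$ I would apply microformity of $f$ to the value $\e/M$ to obtain $\delta\in(0,\infty)$ with $\w_f(\delta)\le\e/M$, so that $\w_{F^p\!f}(\delta)\le M\cdot\w_f(\delta)\le\e$, which is the modulus criterion for $F^p\!f$ to be microform. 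The macroform case is analogous: given $\delta\in(0,\infty)$, macroformity of $f$ yields a finite $\e'$ with $\w_f(\delta)\le\e'$, and then $\w_{F^p\!f}(\delta)\le M\e'<\infty$. Finally, being duoform means being simultaneously microform and macroform, so that case follows at once by combining the two previous arguments.

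There is no serious obstacle here; all the content is carried by Theorem~\ref{t:modulus}, and the remainder is bookkeeping with the modulus characterizations. The only point requiring a moment of care is the positivity of $M$ needed for the microform step (the division by $M$). Positivity fails only in the degenerate situation where $M=0$, i.e.\ $Fn=\emptyset$ or $n=0$; but then the inequality of Theorem~\ref{t:modulus} forces $\w_{F^p\!f}\equiv0$, so $F^p\!f$ trivially enjoys all four properties and the boundary case causes no difficulty.
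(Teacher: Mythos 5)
Your proof is correct and is exactly the argument the paper intends: the corollary is stated without proof as an immediate consequence of Theorem~\ref{t:modulus} together with the continuity-modulus characterizations from Section~4, which is precisely what you carry out (including the harmless degenerate case $M=0$, where the convention $0\cdot\infty=0$ forces $\w_{F^p\!f}\equiv 0$).
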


\begin{lemma}\label{l:dist} For any nonempty set $Z$ and functions $f,g:Z\to X$ we have
$$\sup_{a\in FZ}d^p_{FX}(Ff(a),Fg(a))\le n^{\frac1p}\cdot d_{X^Z}(f,g)$$
where $n=\max\{1,\deg(F)\}$.
\end{lemma}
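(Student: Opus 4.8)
The plan is to reduce the supremum to a single-element estimate: I will show that for each fixed $a\in FZ$ one has $d^p_{FX}(Ff(a),Fg(a))\le k^{\frac1p}\cdot d_{X^Z}(f,g)$, where $k=\max\{1,|\supp(a)|\}$ is finite, and then take the supremum over $a$ using $k\le n$. Throughout I abbreviate $\delta:=d_{X^Z}(f,g)=\sup_{z\in Z}d_X(f(z),g(z))$, the $\sup$-distance on the function space $X^Z$.

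The key step is to represent $a$ as $F\phi(b)$ for a single element $b\in Fk$ and a map $\phi:k\to Z$ defined on the small ordinal $k=\max\{1,|\supp(a)|\}$. Put $S:=\supp(a)$, which is finite because $F$ has finite supports. If $S\ne\emptyset$, then $k=|S|$, and Proposition~\ref{p:BMZ} gives $a=Fi_{S,Z}(a')$ for some $a'\in FS$; fixing a bijection $\beta:k\to S$, the map $F\beta:Fk\to FS$ is a bijection, so $a'=F\beta(b)$ for some $b\in Fk$, and $\phi:=i_{S,Z}\circ\beta$ satisfies $a=Fi_{S,Z}\circ F\beta(b)=F\phi(b)$. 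If $S=\emptyset$, then $k=1$; choosing any point $z_0$ of the nonempty set $Z$, Proposition~\ref{p:BMZ} gives $a\in F[\{z_0\};Z]$, and the same argument applied to the inclusion $1\xrightarrow{\sim}\{z_0\}\hookrightarrow Z$ yields $\phi:1\to Z$, $\phi(0)=z_0$, and $b\in F1$ with $a=F\phi(b)$.

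With this representation fixed, functoriality gives $Ff(a)=F(f\circ\phi)(b)$ and $Fg(a)=F(g\circ\phi)(b)$, so the single-triple sequence $w=\big((b,\,f\circ\phi,\,g\circ\phi)\big)$ is an $(Ff(a),Fg(a))$-linking chain. By the definition of $d^p_{FX}$ and of $\Sigma d^p_X$,
$$d^p_{FX}(Ff(a),Fg(a))\le\Sigma d^p_X(w)=d^p_{X^k}(f\circ\phi,g\circ\phi).$$
Since $d_X(f(\phi(i)),g(\phi(i)))\le\delta$ for every $i\in k$, the explicit formula for the $\ell^p$-distance on $X^k$ gives $d^p_{X^k}(f\circ\phi,g\circ\phi)\le k^{\frac1p}\cdot\delta$ for $p<\infty$, and $\le\delta$ for $p=\infty$. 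As $k=\max\{1,|\supp(a)|\}\le\max\{1,\deg(F)\}=n$, we get $k^{\frac1p}\le n^{\frac1p}$, so $d^p_{FX}(Ff(a),Fg(a))\le n^{\frac1p}\cdot\delta$; taking the supremum over $a\in FZ$ yields the claim.

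I expect the only delicate points to be the uniform choice of the representing map $\phi$ across the two cases (empty versus nonempty support) and the bookkeeping that the representing ordinal $k$ never exceeds $n$. The degenerate situations where $n=\infty$ or $\delta\in\{0,\infty\}$ are then automatically absorbed by the arithmetic conventions on $[0,\infty]$, because the sharper bound $d^p_{FX}(Ff(a),Fg(a))\le k^{\frac1p}\delta$ already holds with a \emph{finite} $k$ before the passage to $n$.
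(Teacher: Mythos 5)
Your proposal is correct and follows essentially the same route as the paper's proof: both use Proposition~\ref{p:BMZ} to represent $a$ as $F\phi(b)$ for a map $\phi$ defined on an ordinal $k\le n$, form the single-triple linking chain $\big((b,f\circ\phi,g\circ\phi)\big)$, and bound its $\ell^p$-length by $k^{\frac1p}\cdot d_{X^Z}(f,g)$. The only differences are cosmetic (direct estimate plus supremum rather than the paper's argument by contradiction, and a slightly more explicit treatment of the empty-support case).
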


\begin{proof} To derive a contradiction, assume that $d^p_{FX}(Ff(a),Fg(a))>n^{\frac1p}\cdot d_{X^Z}(f,g)$ for some $a\in FZ$. 

Given any $a\in FZ$, use Proposition~\ref{p:BMZ} and find a nonempty finite subset $S\subseteq Z$ of cardinality $|S|\le n$ and an element $a'\in FS$ such that $a=Fi_{S,Z}(a')$.
Let $k=|S|$ and $h:k\to S$ be any bijection. Then $Fh$ is a bijection, too, and we can find an element $a_0\in Fk$ such that $a'=Fh(a_0)$. Consider the maps $f_0=f\circ i_{S,Z}\circ h:k\to Z$ and $g_0=g\circ i_{S,Z}\circ h:k\to X$ and observe that $$Ff_0(a_0)=Ff\circ Fi_{S,Z}\circ Fh(a_0)=Ff\circ Fi_{S,Z}(a')=Ff(a)$$ and similarly, $Fg_0(a_0)=Fg(a)$. Then $\big((a_0,f_0,g_0)\big)$ is an $(Ff(a),Fg(a))$-linking chain and hence $$d^p_{FX}(Ff(a),Fg(a))\le \Big(\sum_{i\in k}d_X(f_0(i),g_0(i))^p\Big)^{\frac1p}\le k^{\frac1p}\cdot d_{X^Z}(f,g)\le n^{\frac1p}\cdot d_{X^Z}(f,g),
$$ which contradicts the choice of $a$. 
\end{proof}  

\begin{corollary}\label{c:coarse-equiv}  Assume that the functor $F$ is finitary and has finite degree. 
If a function $f:X\to Y$ between distance spaces is a coarse equivalence (resp. quasi-isometry), then so is the function $F^p\!f:F^pX\to F^p Y$.
\end{corollary}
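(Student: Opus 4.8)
The plan is to deduce the corollary by combining the two results immediately preceding it: Corollary~\ref{c:uniform}, which guarantees that $F^p\!f$ inherits the relevant \emph{type} of the map (coarse, resp.\ asymptotically Lipschitz), and Lemma~\ref{l:dist}, which controls the sup-distance between a composition of $F$-maps and the identity. Since ``coarse'' is the same as ``macroform'' and both it and ``asymptotically Lipschitz'' are among the properties preserved by Corollary~\ref{c:uniform}, the two assertions (coarse equivalence and quasi-isometry) run in exact parallel, so I would prove them simultaneously, writing ``coarse (resp.\ asymptotically Lipschitz)'' throughout.

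First I would unpack the hypothesis. If $f$ is a coarse equivalence (resp.\ quasi-isometry), there is a coarse (resp.\ asymptotically Lipschitz) map $g:Y\to X$ with $C_1:=\sup_{x\in X}d_X(x,g\circ f(x))<\infty$ and $C_2:=\sup_{y\in Y}d_Y(y,f\circ g(y))<\infty$. By Corollary~\ref{c:uniform}, both $F^p\!f$ and $F^p\!g$ are coarse (resp.\ asymptotically Lipschitz), since $F^p$ acts on underlying functions as $F$ does. Thus $F^p\!g$ is the natural candidate for a coarse (resp.\ asymptotically Lipschitz) ``inverse'' of $F^p\!f$, and it remains only to verify the two closeness conditions
$$\sup_{a\in FX}d^p_{FX}\big(a,\,Fg(Ff(a))\big)<\infty\quad\text{and}\quad\sup_{b\in FY}d^p_{FY}\big(b,\,Ff(Fg(b))\big)<\infty.$$

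The key step is to rewrite these using functoriality. Since $F$ is a functor, $Fg\circ Ff=F(g\circ f)$ and $F\id_X=\id_{FX}$, so the first supremum equals $\sup_{a\in FX}d^p_{FX}\big(F\id_X(a),F(g\circ f)(a)\big)$. The bound $C_1<\infty$ says precisely that $d_{X^X}(\id_X,g\circ f)=\sup_{x\in X}d_X(x,g\circ f(x))\le C_1$ in the sup-distance on $X^X$. Applying Lemma~\ref{l:dist} with $Z=X$ and the two maps $\id_X,g\circ f:X\to X$ then gives $\sup_{a\in FX}d^p_{FX}(a,Fg(Ff(a)))\le n^{1/p}\cdot C_1<\infty$, where $n=\max\{1,\deg(F)\}$. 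The symmetric argument with $Z=Y$ and the maps $\id_Y,f\circ g:Y\to Y$ yields $\sup_{b\in FY}d^p_{FY}(b,Ff(Fg(b)))\le n^{1/p}\cdot C_2<\infty$. Together with the type-preservation from Corollary~\ref{c:uniform}, this shows that $F^p\!f$ is a coarse equivalence (resp.\ quasi-isometry).

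I do not expect a serious obstacle: the substantive work has been front-loaded into Corollary~\ref{c:uniform} and Lemma~\ref{l:dist}, and what remains is the bookkeeping of transporting the ``closeness to the identity'' conditions through $F$ via functoriality. The only point requiring care is the nonemptiness hypothesis in Lemma~\ref{l:dist}: I would dispose of the degenerate case $X=\emptyset$ first. In that case $g:Y\to X$ forces $Y=\emptyset$ as well, so $f=\id_\emptyset$ and $F^p\!f=\id_{F\emptyset}$ is a bijective isometry, hence trivially both a coarse equivalence and a quasi-isometry. In the remaining case both $X$ and $Y$ are nonempty, so $Z=X$ and $Z=Y$ are legitimate choices when Lemma~\ref{l:dist} is invoked.
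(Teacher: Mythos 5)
Your proposal is correct and follows essentially the same route as the paper's proof: Corollary~\ref{c:uniform} for the type (macroform, resp.\ asymptotically Lipschitz) of $F^p\!f$ and $F^p\!g$, and Lemma~\ref{l:dist} applied to the pairs $(\id_X, g\circ f)$ and $(\id_Y, f\circ g)$ for the two closeness conditions. Your explicit disposal of the case $X=\emptyset$ (needed because Lemma~\ref{l:dist} assumes $Z$ nonempty) is a small point the paper leaves implicit, but otherwise the arguments coincide.
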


\begin{proof} If $f$ is a coarse equivalence, then $f$ is macroform and admits a macroform map $g:Y\to X$ such that $$d_{X^X}(1_X,g\circ f)=\sup_{x\in X}d_X(x,g\circ f(x))<\infty$$ and $$d_{Y^Y}(1_Y,f\circ g)=\sup_{y\in Y}d_Y(y,f\circ g(y))<\infty.$$ By Corollary~\ref{c:uniform}, the maps $Ff:F^p X\to F^pY$ and $Fg:F^pY\to F^pX$ are macroform and by Lemma~\ref{l:dist}, $$\sup_{a\in F^pX}d^p_{FX}(a,Fg\circ Ff(a))\le \max\{1,\deg(F)\}^{\frac1p}\cdot d_{X^X}(1_X,g\circ f)<\infty$$ and $$\sup_{b\in F^pY}d^p_{FY}(b,Ff\circ Fg(b))\le \max\{1,\deg(F)\}^{\frac1p}\cdot d_{Y^Y}(1_Y,f\circ g)<\infty.$$ This means that the map $F^p\!f$ is a coarse equivalence.
\smallskip

By analogy we can prove that the map $F^p f$ is a quasi-isometry if so is the map $f$.
\end{proof}

\section{Upper bounds for $d^p_{FX}$}\label{s:up}

In this section we prove some upper bounds for the distance $d^p_{FX}$. In particular, we establish conditions implying that $d^p_{FX}$ is a pseudometric.
First we show that the functor $F^p$ preserves the $\{0,\infty\}$-valuedness of distances.

A distance $d$ on a set $Y$ is called {\em $\{0,\infty\}$-valued} if $d[Y\times Y]=\{0,\infty\}$.

\begin{lemma}\label{l:ivalued} If the distance $d_X$ is $\{0,\infty\}$-valued (and $\infty$-metric), then the distance $d^p_{FX}$ on $FX$ also is $\{0,\infty\}$-valued (and $\infty$-metric).
\end{lemma}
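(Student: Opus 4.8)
The plan is to unwind the definition of $d^p_{FX}$ and observe that $\{0,\infty\}$-valuedness is preserved at each stage of its construction. First I would note that whenever $d_X$ takes only the values $0$ and $\infty$, so does the $\ell^p$-distance $d^p_{X^n}$ on each finite power: for $p<\infty$ the sum $\sum_{i\in n}d_X(f(i),g(i))^p$ equals $0$ if every summand vanishes and equals $\infty$ as soon as one summand is $\infty$ (using $\infty^p=\infty$ for $p\in[1,\infty)$ and $\infty^{1/p}=\infty$), while for $p=\infty$ the maximum of finitely many values in $\{0,\infty\}$ again lies in $\{0,\infty\}$. Consequently $d^p_{X^{<\w}}$ is $\{0,\infty\}$-valued as well (pairs of sequences with distinct domains contribute $\infty$).

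Next, for any $(a,b)$-linking chain $s=\big((a_i,f_i,g_i)\big)_{i=0}^l$ the number $\Sigma d^p_X(s)=\sum_{i=0}^l d^p_{X^{<\w}}(f_i,g_i)$ is a finite sum of values in $\{0,\infty\}$, hence equals $0$ if all terms vanish and $\infty$ otherwise. Therefore $d^p_{FX}(a,b)$ is the infimum of a subset of $\{0,\infty\}$ adjoined with $\infty$, which again belongs to $\{0,\infty\}$: it equals $0$ precisely when some linking chain $s$ has $\Sigma d^p_X(s)=0$, and equals $\infty$ otherwise. This establishes the $\{0,\infty\}$-valuedness of $d^p_{FX}$.

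For the parenthetical claim, suppose in addition that $d_X$ is an $\infty$-metric, so that $d_X(x,y)=0$ forces $x=y$. To show $d^p_{FX}$ is an $\infty$-metric it suffices to prove that $d^p_{FX}(a,b)=0$ implies $a=b$. If $d^p_{FX}(a,b)=0$, then by the previous paragraph there is a chain $\big((a_i,f_i,g_i)\big)_{i=0}^l\in L_{FX}(a,b)$ with $\Sigma d^p_X(s)=0$, so $d^p_{X^{<\w}}(f_i,g_i)=0$ for every $i$. Since $f_i$ and $g_i$ share the domain $n_i$, this means $d_X(f_i(j),g_i(j))=0$, hence $f_i(j)=g_i(j)$, for all $j\in n_i$; that is, $f_i=g_i$ for each $i$. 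The linking conditions $a=Ff_0(a_0)$, $Fg_i(a_i)=Ff_{i+1}(a_{i+1})$, and $b=Fg_l(a_l)$ then collapse into the single chain of equalities $a=Ff_0(a_0)=Fg_0(a_0)=Ff_1(a_1)=\dots=Fg_l(a_l)=b$, yielding $a=b$.

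The proof is essentially a direct verification, with no serious obstacle; the only points requiring a little care are the uniform treatment of the cases $p<\infty$ and $p=\infty$ in the first step and the passage from the chain-level values to the infimum defining $d^p_{FX}$. The $\infty$-metric part is the most substantive step, but it reduces entirely to the observation that a zero-length linking chain must be constant coordinatewise, so that applying $F$ along the chain forces $a=b$.
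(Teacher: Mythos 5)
Your proof is correct and follows essentially the same route as the paper's: both reduce to observing that every linking chain has $\Sigma d^p_X(s)\in\{0,\infty\}$ when $d_X$ is $\{0,\infty\}$-valued (so the infimum is attained and lies in $\{0,\infty\}$), and that a chain of total length $0$ forces $f_i=g_i$ for all $i$, collapsing the chain to $a=b$ in the $\infty$-metric case. No gaps.
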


\begin{proof} To show that $d^p_{FX}$ is $\{0,\infty\}$-valued (and $\infty$-metric), it suffices to check that $d^p_{FX}(a,b)=0$ (and $a=b$) for any elements $a,b\in FX$ with $d^p_{FX}(a,b)<\infty$.
Using the definition of the distance $d^p_{FX}$, find an $(a,b)$-linking chain 
$s=\big((a_i,f_i,g_i)\big){}_{i=0}^l\in L_{FX}(a,b)$ such that $\sum_{i=0}^l d^p_{X^{\!<\!\w}}(f_i,g_i)<\infty$. If $d_X$ is $\{0,\infty\}$-valued, then so is the  distance $d^p_{X^{\!<\!\w}}$. Then for every $i\le l$ the inequality $d^p_{X^{\!<\!\w}}(f_i,g_i)<\infty$ is equivalent to $d^p_{X^{\!<\!\w}}(f_i,g_i)=0$, and finally
$$d^p_{FX}(a,b)\le \sum_{i=0}^ld^p_{X^{\!<\!\w}}(f_i,g_i)=0.$$

Now assuming that $d_X$ is an $\infty$-metric, we can show that the distance $d^p_{X^{\!<\!\w}}$ is an $\infty$-metric and hence the equalities $d^p_{X^{\!<\!\w}}(f_i,g_i)=0$ imply $f_i=g_i$ for all $i\in\{0,\dots,l\}$. Then 
$$a=Ff_0(a_0)=Fg_0(a_0)=Ff_1(a_1)=Fg_1(a_1)=\dots=Fg_l(a_l)=b.$$
\end{proof} 

The distance $d_X$ on $X$ induces the equivalence relation $\sim$ on $X$, defined by $x\sim y$ iff $d_X(x,y)<\infty$. For an element $x\in X$ let $O(x;\infty):=\{y\in X:d_X(x,y)<\infty\}$ be the equivalence class of $x$ under the equivalence relation $\sim$.  Let $X/_\sim=\{O(x;\infty):x\in X\}$ be the quotient set of $X$ by the relation $\sim$ and $q:X\to X/_\sim$ be the quotient map.

For two finite sets $A,B\subseteq X$ let $\ud_X(A,B)=\inf\{d_X(x,y):x\in A,\;y\in B\}$. If 
one of the sets $A$ or $B$ is empty, then we put $\ud_X(A,B)=0$ (by definition).
Let $$\ud^p_X(A,B)=\Big(\sum_{E\in X/_{\!\sim}}\ud_X(A\cap E,B\cap E)^p\Big)^{\frac1p}$$
and observe that  $\ud^p_X(A,B)$ 
is finite (in contrast to $\ud_X(A,B)$ which can be infinite).

For a subset $S\subseteq X$ by $$\bar d_X(S)=\sup\big(\{0\}\cup\{d_X(x,y):x,y\in S,\;d_X(x,y)<\infty\}\big)$$we denote the real diameter of $S$.

\begin{theorem}\label{t:up} Let $q:X\to X/_{\!\sim}$ be the quotient map defined above. 
For any two elements $a,b\in FX$ the following conditions are equivalent: 
\begin{enumerate}
\item[\textup{1)}] $d^p_{FX}(a,b)<\infty$,
\item[\textup{2)}] $Fq(a)=Fq(b)$,
\item[\textup{3)}] $d^p_{FX}(a,b)\le |S_a|^{\frac1p}\cdot\bar d_X(S_a)+\ud^p_X(S_a,S_b)+|S_b|^{\frac1p}\cdot \bar d_X(S_b)$,
\item[\textup{4)}] $d^p_{FX}(a,b)\le \min\big\{|S_a|^{\frac1p}\bar d_X(S_a)+|S_b|^{\frac1p}\bar d_X(S),\;|S_a|^{\frac1p}\bar d_X(S)+|S_b|^{\frac1p}\bar d_X(S_b)\big\}$,
\item[\textup{5)}] $d^p_{FX}(a,b)\le (|S_a|^{\frac1p}+|S_b|^{\frac1p})\cdot \bar d_X(S)$,
\end{enumerate}  where $S_a=\supp(a)$, $S_b=\supp(b)$ and $S=\supp(a)\cup\supp(b)$.
\end{theorem}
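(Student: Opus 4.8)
The plan is to establish the cycle $(1)\Rightarrow(2)\Rightarrow(3)\Rightarrow(1)$ together with $(2)\Rightarrow(4)\Rightarrow(5)\Rightarrow(1)$. The implications $(3)\Rightarrow(1)$ and $(5)\Rightarrow(1)$ are immediate: the right-hand sides of $(3)$ and $(5)$ are finite (supports are finite, $\bar d_X$ is a real number, and $\ud^p_X$ is finite by its definition), so either bound forces $d^p_{FX}(a,b)<\infty$. For $(1)\Rightarrow(2)$ I would take an $(a,b)$-linking chain $\big((a_i,f_i,g_i)\big)_{i=0}^l$ with $\Sigma d^p_X<\infty$ and push it through $Fq$. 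Finiteness of each $d^p_{X^{<\w}}(f_i,g_i)$ forces $d_X(f_i(k),g_i(k))<\infty$, i.e.\ $f_i(k)\sim g_i(k)$, for every coordinate $k$; hence $q\circ f_i=q\circ g_i$ and $Fq(Ff_i(a_i))=Fq(Fg_i(a_i))$. Telescoping along the chain gives $Fq(a)=Fq(b)$.

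The constructive core is the passage from $(2)$. Set $c:=Fq(a)=Fq(b)$. The key observation is that for any $s:X/_{\!\sim}\to X$ the map $r:=s\circ q$ satisfies $Fr(a)=Fs(Fq(a))=Fs(c)=Fs(Fq(b))=Fr(b)$, so a single $r$ collapses $a$ and $b$ to the \emph{same} element $Fs(c)$. Writing $\supp(c)=\{E_1,\dots,E_m\}$ and decomposing $c=Fh(c_0)$ with $h$ enumerating $\supp(c)$ and $c_0\in Fm$ (Proposition~\ref{p:BMZ}), we get $Fs(c)=F(s\circ h)(c_0)$; thus for two maps $s_a,s_b$ the triple $(c_0,\,s_a\circ h,\,s_b\circ h)$ is a one-step link from $Fs_a(c)$ to $Fs_b(c)$ of cost $\big(\sum_{j\le m}d_X(s_a(E_j),s_b(E_j))^p\big)^{1/p}$. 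By Corollary~\ref{c:supp} each $E_j\in\supp(c)$ lies in $q[S_a]\cap q[S_b]$, hence meets both $S_a$ and $S_b$ (the degenerate case $|q[X]|=1$ being handled directly).

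For $(2)\Rightarrow(3)$ I would pick $s_a,s_b$ with $s_a(E)\in S_a\cap E$ and $s_b(E)\in S_b\cap E$ on every class meeting the respective support, and on each $E_j\in\supp(c)$ choose the pair realizing $\ud_X(S_a\cap E_j,S_b\cap E_j)$. Enumerating $S_a$ and $S_b$ as maps from $|S_a|$ and $|S_b|$, the three-step chain $a\to Fs_a(c)\to Fs_b(c)\to b$ has outer steps of cost at most $|S_a|^{1/p}\bar d_X(S_a)$ and $|S_b|^{1/p}\bar d_X(S_b)$ (each support point moves only inside its own class within $S_a$, resp.\ $S_b$), while the middle one-step link costs $\big(\sum_{j\le m}\ud_X(S_a\cap E_j,S_b\cap E_j)^p\big)^{1/p}\le\ud^p_X(S_a,S_b)$, the inequality holding since this is a partial sum of the one defining $\ud^p_X$. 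This gives exactly the bound in $(3)$. For $(2)\Rightarrow(4)$ I would use a single section $s$ with $s(E)\in S_a\cap E$ whenever $E$ meets $S_a$ (and $s(E)\in S\cap E$ otherwise); the two-step chain $a\to Fs(c)\to b$ then costs at most $|S_a|^{1/p}\bar d_X(S_a)+|S_b|^{1/p}\bar d_X(S)$, and the symmetric choice yields $|S_a|^{1/p}\bar d_X(S)+|S_b|^{1/p}\bar d_X(S_b)$, whence the minimum in $(4)$. Finally $(4)\Rightarrow(5)$ is the trivial estimate $\bar d_X(S_a),\bar d_X(S_b)\le\bar d_X(S)$.

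The step I expect to be the crux is the middle link in $(2)\Rightarrow(3)$. One must recognise that $Fs_a(c)$ and $Fs_b(c)$ are images of \emph{one} common $c_0\in Fm$ under coordinatewise maps—precisely what the support factorization $c=Fh(c_0)$ supplies—and then reconcile the finite sum over $\supp(c)$ with $\ud^p_X$, which sums over all classes; this reconciliation is the one-sided inequality above, valid because $\supp(c)$ consists only of classes meeting both supports. The degenerate cases $S_a=\emptyset$ or $S_b=\emptyset$, where the enumerations break down, I would dispatch separately via Theorem~\ref{t:small}.
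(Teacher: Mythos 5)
Your proposal follows essentially the same route as the paper: the same cycle of implications, the same three-step linking chain $a\to F\alpha(c)\to F\beta(c)\to b$ built from sections of the quotient map for $(2)\Rightarrow(3)$, and the same two-step chains for $(2)\Rightarrow(4)$; the only cosmetic difference is that for $(1)\Rightarrow(2)$ the paper endows $X/_\sim$ with the $\{0,\infty\}$-valued metric and uses $\Lip(Fq)=0$, whereas you telescope directly along a finite-cost chain. One small caveat: when exactly one of $S_a,S_b$ is empty, Theorem~\ref{t:small} does not apply (the other support may contain more than one point); that case is instead absorbed by padding the enumerations and choosing the sections to agree off the relevant support, as in the paper, which makes the corresponding links cost $0$.
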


\begin{proof}  Endow the set $Y=X/_{\!\sim}$ with the unique $\{0,\infty\}$-valued $\infty$-metric $d_Y$, and observe that the quotient map $q:X\to Y$ is Lipschitz with Lipschitz constant $\Lip(q)=0$ (here we assume that $0\cdot\infty=\infty$). By Theorem~\ref{t:Lip}, $\Lip(Fq)=0$. By Lemma~\ref{l:ivalued}, the distance $d^p_{FY}$ is a $\{0,\infty\}$-valued $\infty$-metric. Now we are ready to prove that $(1)\Ra(2)\Ra(3)\Ra(1)$ and $(2)\Ra(4)\Ra(5)\Ra(1)$. The implications $(3)\Ra(1)\Leftarrow(5)\Leftarrow(4)$ are trivial.
\smallskip

$(1)\Ra(2)$ If $d^p_{FX}(a,b)<\infty$, then $d^p_{FY}(Fq(a),Fq(b))\le\Lip(Fq)\cdot d^p_{FX}(a,b)=0$ and  $Fq(a)=Fq(b)$ (because the distance $d^p_{FY}$ is an $\infty$-metric).
\smallskip

$(2)\Ra(3,4)$. Assume that $Fq(a)=Fq(b)$ and put $c=Fq(a)=Fq(b)$. Let $S_a:=\supp(a)$ and $S_b:=\supp(b)$. For every equivalence class $y\in Y=X/_\sim$ with $S_a\cap y\ne\emptyset \ne S_b\cap y$ choose two points $\alpha_{y,a}\in S_a\cap y$ and $\beta_{y,b}\in S_b\cap y$ such that $d_X(\alpha_{y,a},\beta_{y,b})=\ud_X(S_a\cap y,S_b\cap y)$. Fix two maps $\alpha,\beta:Y\to X$ such that for every $y\in Y$ the following conditions hold:
\begin{itemize}
\item $\alpha(y)=\alpha_{y,a}$ and $\beta(y)=\beta_{y,a}$ if $S_a\cap y\ne\emptyset\ne S_b\cap y$;
\item $\alpha(y)=\beta(y)\in S_a\cap y$ if $S_a\cap y\ne\emptyset=S_b\cap y$;
\item $\alpha(y)=\beta(y)\in S_b\cap y$ if $S_a\cap y=\emptyset \ne S_b\cap y$;
\item $\alpha(y)=\beta(y)\in y$ if $S_a\cap y=\emptyset=S_b\cap y$.
\end{itemize}   
It can be shown that 
$$\Big(\sum_{y\in Y}d_X(\alpha(y),\beta(y))^p\Big)^{\frac1p}=\ud^p_X(S_a,S_b).$$

Choose a set $\dot S_c\subseteq Y$ of cardinality $|\dot S_c|=\max\{1,|\supp(c)|\}$ such that $\supp(c)\subseteq \dot S_c$. Next, choose a set $\dot S_a\subseteq X$ of cardinality $|\dot S_a|=\max\{1,|S_a|\}$ such that $S_a\subseteq \dot S_a$, and $\dot S_a\subseteq \alpha[Y]$ if $S_a=\emptyset$. Also choose a set $\dot S_b\subseteq X$ of cardinality $|\dot S_b|=\max\{1,|S_b|\}$ such that $S_b\subseteq \dot S_b$, and $\dot S_b\subseteq \beta[Y]$ if $S_b=\emptyset$.

Let $n_0=|\dot S_a|$, $n_1=|\dot S_c|$, $n_2=|\dot S_b|$, and $f_0:n_0\to X$, $\gamma:n_1\to Y$, and $g_2:n_2\to X$ be injective maps such that $f_0[n_0]=\dot S_1$, $\gamma[n_1]=\dot S_c$ and $g_n[n_2]=\dot S_b$. By Proposition~\ref{p:BMZ}, $a\in F[\dot S_a;X]$, $b\in F[\dot S_b;X]$ and $c\in F[\dot S_c;Y]$. This allows us to find elements $a_0\in Fn_0$, $a_1\in Fn_1$, $a_2\in Fn_2$ such that $a=Ff_0(a_0)$, $c=F\gamma(a_1)$, $b=Fg_2(a_2)$. Let $g_0=\alpha\circ q\circ f_0$, $f_1=\alpha\circ\gamma$, $g_1=\beta\circ\gamma$, $f_2=\beta\circ q\circ g_2$. To see that $\big((a_0,f_0,g_0),(a_1,f_1,g_1),(a_2,f_2,g_2)\big)$ is an $(a,b)$-linking chain, we need to check that $Fg_0(a_0)=Ff_1(a_1)$ and $Fg_1(a_1)=Ff_2(a_2)$.

For the first equality, observe that
$$Fg_0(a_0)=F\alpha\circ Fq\circ Ff_0(a_0)=F\alpha\circ Fq(a)=F\alpha(c)=F\alpha(F\gamma(a_1))=Ff_1(a_1).$$
By analogy we can check that $Fg_1(a_1)=Ff_2(a_2)$.
Then $\big((a_i,f_i,g_i)\big){}_{i=0}^2\in L_{FX}(a,b)$ and
$$
\begin{aligned}
d^p_{FX}(a,b)&\le d^p_{X^{\!<\!\w}}(f_0,g_0)+d^p_{X^{\!<\!\w}}(f_1,g_1)+
d^p_{X^{\!<\!\w}}(f_2,g_2)=\\
&=d^p_{X^{\!<\!\w}}(f_0,\alpha\circ q\circ f_0)+d^p_{X^{\!<\!\w}}(\alpha\circ\gamma,\beta\circ\gamma)+d^p_{X^{\!<\!\w}}(\beta\circ q\circ g_2,g_2)\le\\
&\le |S_a|^{\frac1p}\cdot\bar d_X(S_a)+\ud^p_X(S_a,S_b)+|S_b|^{\frac1p}\cdot\bar d_X(S_b).
\end{aligned}
$$

Considering the $(a,b)$-linking chain $((a_0,f_0,g_0),(a_2,f_2',g_2)\big)$ with $f_2'=\alpha\circ q\circ g_2$, we can see that  
$$
\begin{aligned}
d^p_{FX}(a,b)&\le d^p_{X^{\!<\!\w}}(f_0,g_0)+
d^p_{X^{\!<\!\w}}(f_2',g_2)=\\
&=d^p_{X^{\!<\!\w}}(f_0,\alpha\circ q\circ f_0)+d^p_{X^{\!<\!\w}}(\alpha\circ q\circ g_2,g_2)\le |S_a|^{\frac1p}\cdot\bar d_X(S_a)+|S_b|^{\frac1p}\cdot\bar d_X(S),
\end{aligned}
$$
where $S=S_a\cup S_b$.

Considering the $(a,b)$-linking chain $((a_0,f_1,g'_1),(a_2,f_2,g_2)\big)$ with $g_0'=\beta\circ q\circ g_2$, we can see that  
$$
\begin{aligned}
d^p_{FX}(a,b)&\le d^p_{X^{\!<\!\w}}(f_0,g'_0)+
d^p_{X^{\!<\!\w}}(f_2,g_2)=\\
&=d^p_{X^{\!<\!\w}}(f_0,\beta\circ q\circ f_0)+d^p_{X^{\!<\!\w}}(\beta\circ q\circ g_2,g_2)\le |S_a|^{\frac1p}\cdot\bar d_X(S)+|S_b|^{\frac1p}\cdot\bar d_X(S_b).
\end{aligned}
$$
\end{proof}

\begin{corollary}\label{c:up}  Assume that the distance $d_X$ is a pseudometric and let $q:X\to 1$ be the constant map. Then for any $a,b\in FX$ the following statements hold.
\begin{enumerate}
\item[\textup{1)}] If $Fq(a)\ne Fq(b)$, then $d^p_{FX}(a,b)=\infty$.
\item[\textup{2)}] If $Fq(a)=Fq(b)$, then $d^p_{FX}(a,b)\le (|S_a|^{\frac1p}+|S_b|^{\frac1p})\cdot\bar d_X(S_a\cup S_b)$, where $S_a=\supp(a)$, $S_b=\supp(b)$.
\end{enumerate}
\end{corollary}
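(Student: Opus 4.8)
The plan is to obtain the corollary as a direct specialization of Theorem~\ref{t:up}, whose chain of equivalences already encodes every inequality that is needed; no new construction is required. Throughout I would assume $X\ne\emptyset$, which is the relevant setting (cf. the nonemptiness assumptions used elsewhere, e.g. in Proposition~\ref{p:weakiso}). The single observation that makes everything work is that a pseudometric has $d_X(x,y)<\infty$ for all $x,y\in X$, so the relation $\sim$ used to form $X/_{\!\sim}$ in Theorem~\ref{t:up} is total: for nonempty $X$ the quotient $X/_{\!\sim}$ consists of exactly one equivalence class. Fixing the unique bijection $\beta:X/_{\!\sim}\to 1$, the constant map $q:X\to1$ of the corollary factors as $q=\beta\circ q_{\sim}$, where $q_{\sim}:X\to X/_{\!\sim}$ is the quotient map appearing in Theorem~\ref{t:up}. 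As $\beta$, and hence $F\beta$, is bijective, we get $Fq(a)=Fq(b)\iff Fq_{\sim}(a)=Fq_{\sim}(b)$; this identity is the bridge converting the abstract condition~(2) of Theorem~\ref{t:up} into the concrete hypothesis of the corollary.

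With this in hand the two statements fall out immediately. For~(1), if $Fq(a)\ne Fq(b)$ then $Fq_{\sim}(a)\ne Fq_{\sim}(b)$, so condition~(2) of Theorem~\ref{t:up} fails; by the equivalence $(1)\Leftrightarrow(2)$ the number $d^p_{FX}(a,b)$ is not finite, and since distances lie in $[0,\infty]$ this forces $d^p_{FX}(a,b)=\infty$. For~(2), if $Fq(a)=Fq(b)$ then $Fq_{\sim}(a)=Fq_{\sim}(b)$, i.e. condition~(2) holds; applying the implication $(2)\Rightarrow(5)$ of Theorem~\ref{t:up} yields $d^p_{FX}(a,b)\le(|S_a|^{\frac1p}+|S_b|^{\frac1p})\cdot\bar d_X(S)$ with $S=S_a\cup S_b=\supp(a)\cup\supp(b)$, which is exactly the asserted estimate.

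I do not expect any genuine difficulty, since the content is entirely inherited from Theorem~\ref{t:up} and the argument is little more than bookkeeping. The one point that truly deserves care is the nonemptiness of $X$: it is precisely the hypothesis $X\ne\emptyset$ that legitimizes the identification $X/_{\!\sim}\cong 1$ and lets the concrete constant map $q$ play the role of the quotient map. For $X=\emptyset$ one has $X/_{\!\sim}=\emptyset\ne1$, the map $q:\emptyset\to1$ is no longer the quotient map, and the bound in~(2) can genuinely fail (for instance for a functor with $|F\emptyset|>1=|F1|$, where distinct empty-support elements merge under $Fq$ yet lie at distance $\infty$ by Theorem~\ref{t:small}); so the empty domain must be excluded, as it is by the ambient nonemptiness convention.
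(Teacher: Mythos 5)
Your proposal is correct and is exactly the intended derivation: the paper treats Corollary~\ref{c:up} as an immediate specialization of Theorem~\ref{t:up}, obtained by observing that for a pseudometric on a nonempty set the quotient $X/_{\!\sim}$ is a singleton, so the quotient map may be identified with the constant map $q:X\to 1$, after which statement (1) follows from the equivalence $(1)\Leftrightarrow(2)$ and statement (2) from the implication $(2)\Rightarrow(5)$. Your added remark about the necessity of $X\ne\emptyset$ is a sensible precaution but introduces nothing beyond the paper's ambient convention.
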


\begin{corollary}\label{c:up2} If $F$ is a functor of finite degree $n=\deg(F)$, then the distance space $F^pX$ has real diameter
$$\bar d^p_{FX}(FX)\le 2{\cdot}|n|^{\frac1p}{\cdot}\bar d_X(X).$$
\end{corollary}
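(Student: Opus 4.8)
The plan is to derive this bound directly from the equivalence $(1)\Leftrightarrow(5)$ in Theorem~\ref{t:up}. First I would unwind the definition of the real diameter: by definition
$$\bar d^p_{FX}(FX)=\sup\big(\{0\}\cup\{d^p_{FX}(a,b):a,b\in FX,\;d^p_{FX}(a,b)<\infty\}\big),$$
so it suffices to bound $d^p_{FX}(a,b)$ uniformly over all pairs $a,b\in FX$ with $d^p_{FX}(a,b)<\infty$.

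Fix such a pair and set $S_a=\supp(a)$, $S_b=\supp(b)$ and $S=S_a\cup S_b$. Since $d^p_{FX}(a,b)<\infty$, condition~(1) of Theorem~\ref{t:up} holds, and hence so does the equivalent condition~(5), giving
$$d^p_{FX}(a,b)\le\big(|S_a|^{\frac1p}+|S_b|^{\frac1p}\big)\cdot\bar d_X(S).$$
Now I would invoke the hypothesis that $F$ has finite degree $n=\deg(F)$: by the definition of the degree, $|\supp(c)|\le n$ for every element $c$ of every functor-space, so in particular $|S_a|\le n$ and $|S_b|\le n$, whence $|S_a|^{\frac1p}+|S_b|^{\frac1p}\le 2\,n^{\frac1p}$. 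Finally, since $S\subseteq X$, the elementary monotonicity of the real diameter yields $\bar d_X(S)\le\bar d_X(X)$. Combining these three inequalities gives $d^p_{FX}(a,b)\le 2\,n^{\frac1p}\cdot\bar d_X(X)$.

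Taking the supremum over all admissible pairs then produces the claimed inequality $\bar d^p_{FX}(FX)\le 2\,|n|^{\frac1p}\cdot\bar d_X(X)$, using $|n|=n$ since $n$ is a finite ordinal. There is no serious obstacle here: the corollary is essentially a specialization of Theorem~\ref{t:up}(5), and the only points to verify are the cardinality bound $|S_a|,|S_b|\le n$ coming from the degree and the monotonicity $\bar d_X(S)\le\bar d_X(X)$. The degenerate cases are covered automatically, namely $a=b$ by the $\{0\}$ in the supremum, and $\bar d_X(X)=\infty$ by the extended arithmetic of $[0,\infty]$.
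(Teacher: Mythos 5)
Your proof is correct and follows exactly the route the paper intends: Corollary~\ref{c:up2} is stated as an immediate consequence of Theorem~\ref{t:up}, and your derivation via the implication $(1)\Rightarrow(5)$ together with the bounds $|S_a|,|S_b|\le\deg(F)$ and $\bar d_X(S)\le\bar d_X(X)$ is precisely the intended argument.
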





\section{Lower bounds on the distance $d^p_{FX}$}\label{s:low}

In this section we find some lower bounds on the distance $d^p_{FX}$. We recall that for a subset $A\subseteq X$ its separatedness number $\ud_X(A)$ is defined as 
$$\ud_X(A):=\inf\big(\{d_X(x,y):x,y\in A,\;x\ne y\}\cup\{\infty\}\big).$$

\begin{proposition}\label{p:e-sep} The distance space $(FX,d^p_{FX})$ has the separatedness number\newline $\ud^p_{FX}(FX)\ge \ud_X(X)$.
\end{proposition}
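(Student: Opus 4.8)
The plan is to peel off the definition of the separatedness number and reduce the assertion to a pointwise statement: since
$\ud^p_{FX}(FX)=\inf\big(\{\infty\}\cup\{d^p_{FX}(a,b):a,b\in FX,\;a\ne b\}\big)$,
it suffices to prove $d^p_{FX}(a,b)\ge\ud_X(X)$ for every pair of \emph{distinct} elements $a,b\in FX$. Writing $\lambda:=\ud_X(X)$, I may assume $\lambda>0$, as the inequality is trivial otherwise. The single feature of $\lambda$ that the argument exploits is that, by the definition of the separatedness number, any two points of $X$ lying at distance strictly less than $\lambda$ must coincide.

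First I would argue by contradiction, supposing $d^p_{FX}(a,b)<\lambda$ for some distinct $a,b$. By the definition of $d^p_{FX}$ as an infimum over linking chains, there is an $(a,b)$-linking chain $s=\big((a_i,f_i,g_i)\big){}_{i=0}^l\in L_{FX}(a,b)$ with $\Sigma d^p_X(s)=\sum_{i=0}^l d^p_{X^{<\w}}(f_i,g_i)<\lambda$. As every summand is nonnegative, each term satisfies $d^p_{X^{<\w}}(f_i,g_i)<\lambda<\infty$; in particular $f_i$ and $g_i$ share a common domain $n_i$, so the value equals the genuine $\ell^p$-distance $d^p_{X^{n_i}}(f_i,g_i)$.

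The crucial elementary observation is that for every $p\in[1,\infty]$ the $\ell^p$-distance dominates each coordinate distance, that is $d_X(f_i(k),g_i(k))\le d^p_{X^{n_i}}(f_i,g_i)$ for all $k\in n_i$; this is immediate from both clauses of the definition of $d^p_{X^{n}}$ (the $\ell^p$-sum for $p<\infty$ and the maximum for $p=\infty$). Combined with $d^p_{X^{n_i}}(f_i,g_i)<\lambda$, this gives $d_X(f_i(k),g_i(k))<\lambda$ for every $k$, whence $f_i(k)=g_i(k)$ because $\ud_X(X)=\lambda$. Thus $f_i=g_i$ for each $i$.

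Finally I would telescope along the chain: using $a=Ff_0(a_0)$, the matching conditions $Fg_i(a_i)=Ff_{i+1}(a_{i+1})$, the equalities $f_i=g_i$, and $b=Fg_l(a_l)$, one obtains
$a=Ff_0(a_0)=Fg_0(a_0)=Ff_1(a_1)=\dots=Fg_l(a_l)=b$,
contradicting $a\ne b$. Hence $d^p_{FX}(a,b)\ge\lambda$ for all distinct $a,b$, and taking the infimum yields $\ud^p_{FX}(FX)\ge\lambda=\ud_X(X)$. I do not expect any genuine obstacle here; the only point deserving minor care is keeping the two definitions of $d^p_{X^{n}}$ ($p<\infty$ versus $p=\infty$) straight when verifying the coordinate-domination inequality, and both cases are trivial.
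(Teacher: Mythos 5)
Your proof is correct and follows essentially the same route as the paper: assume a linking chain of total $\ell^p$-length below $\ud_X(X)$, deduce $f_i=g_i$ coordinatewise from the definition of the separatedness number, and telescope to get $a=b$. The only difference is that you spell out the coordinate-domination step that the paper leaves implicit.
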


\begin{proof} Assuming that $\ud^p_{FX}(FX)<\ud_X(X)$, we can find two distinct elements $a,b\in FX$ with $d^p_{FX}(a,b)<\ud_X(X)$ and then find an $(a,b)$-linking chain $\big((a_i,f_i,g_i)\big){}_{i=0}^l\in L_{FX}(a,b)$ such that $\sum_{i=0}^ld^p_{X^{\!<\!\w}}(f_i,g_i)<\ud_X(X)$. The definition of the separatedness number $\ud_X(X)$ ensures that $f_i=g_i$ for all $i\in\{0,\dots,l\}$. Then
$$a=Ff_0(a_0)=Fg_0(a_0)=Ff_1(a_1)=Fg_1(a_1)=\dots=Fg_l(a_l)=b,$$which contradicts the choice of the distinct elements $a,b$.
\end{proof}

We  say that a distance space $(Y,d_Y)$ is {\em separated} if it has non-zero separatedness number $\ud_Y(Y)$. In this case its distance $d_Y$ is called {\em separated}. Proposition~\ref{p:e-sep} implies:

\begin{corollary}\label{c:sep} If the distance space $(X,d_X)$ is separated, then so is the distance space $(FX,d^p_{FX})$.
\end{corollary}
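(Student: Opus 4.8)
The statement is an immediate corollary of the preceding Proposition~\ref{p:e-sep}, so the plan is essentially to unwind the definition of ``separated'' and invoke that proposition. First I would recall that by definition a distance space $(Y,d_Y)$ is \emph{separated} precisely when its separatedness number $\ud_Y(Y)$ is strictly positive. Thus the hypothesis that $(X,d_X)$ is separated translates directly into the numerical inequality $\ud_X(X)>0$, and the conclusion to be established is the inequality $\ud^p_{FX}(FX)>0$.

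Next I would apply Proposition~\ref{p:e-sep}, which provides the key lower bound $\ud^p_{FX}(FX)\ge\ud_X(X)$ for the separatedness number of the metrized functor-space. Chaining this with the hypothesis gives
$$
\ud^p_{FX}(FX)\;\ge\;\ud_X(X)\;>\;0,
$$
and since a positive separatedness number is exactly the defining condition for separatedness, the distance space $(FX,d^p_{FX})$ is separated.

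There is no genuine obstacle here: all the substantive work (the linking-chain argument showing that any chain cheaper than $\ud_X(X)$ must consist of pairs $f_i=g_i$, forcing $a=b$) has already been carried out inside the proof of Proposition~\ref{p:e-sep}. The corollary merely reinterprets that quantitative bound as the qualitative property of being separated, so the entire proof reduces to the two-line deduction above.
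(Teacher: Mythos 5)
Your proposal is correct and matches the paper exactly: the paper states this corollary as an immediate consequence of Proposition~\ref{p:e-sep}, and your two-line deduction $\ud^p_{FX}(FX)\ge\ud_X(X)>0$ is precisely the intended argument.
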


We will say that a distance space $(X,d_X)$ is {\em Lipschitz disconnected} if for any distinct points $x,y\in X$ there exists a Lipschitz map $f:X\to \{0,1\}\subset\IR$ such that $f(x)=0$ and $f(y)=1$. The Lipschitz property of $f$ implies that $1=d_\IR(f(x),f(y))\le \Lip(f)\cdot d_X(x,y)$ and hence $d_X(x,y)>0$, witnessing that {\em each Lipschitz disconnected distance is an $\infty$-metric}. On the other hand, {\em each separated distance space is Lipschitz disconnected}.

\begin{theorem}\label{t:Ldis} If the distance space $(X,d_X)$ is Lipschitz disconnected, then the distance $d^p_{FX}$ is an $\infty$-metric.
\end{theorem}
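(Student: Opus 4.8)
The plan is to reduce the $\infty$-metric property to two facts we already control: that $F$ sends separated spaces to separated spaces (Corollary~\ref{c:sep}), and that $F$ does not increase distances under non-expanding maps (Theorem~\ref{t:Lip}). Fix distinct $a,b\in FX$; I must show $d^p_{FX}(a,b)>0$. If $|\supp(a)\cup\supp(b)|\le 1$, then Theorem~\ref{t:small} already gives $d^p_{FX}(a,b)=\infty$, so I may assume the finite set $S:=\supp(a)\cup\supp(b)=\{s_1,\dots,s_m\}$ consists of $m\ge 2$ distinct points. The goal is then to manufacture, out of the Lipschitz-disconnectedness hypothesis, a non-expanding map from $(X,d_X)$ onto a finite \emph{separated} metric space that does not collapse $a$ and $b$.

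For each pair $i<j$ I choose, by Lipschitz disconnectedness, a Lipschitz map $f_{ij}:X\to\{0,1\}\subset\IR$ with $f_{ij}(s_i)=0$ and $f_{ij}(s_j)=1$, and set $\Phi=(f_{ij})_{i<j}:X\to\{0,1\}^{\binom{m}{2}}$. The fibers of $\Phi$ partition $X$ into finitely many cells $D_1,\dots,D_r$; since each coordinate $f_{ij}$ separates $s_i$ from $s_j$, the points $s_1,\dots,s_m$ lie in pairwise distinct cells, which I relabel so that $s_k\in D_k$ for $k\le m$. The crucial point is a \emph{uniform gap}: any two points $x,y$ in different cells differ in some coordinate $f_{ij}$, and the Lipschitz property forces $1=d_\IR(f_{ij}(x),f_{ij}(y))\le\Lip(f_{ij})\,d_X(x,y)$, whence $d_X(x,y)\ge 1/\Lip(f_{ij})\ge\eta$, where $\eta:=\min_{i<j}1/\Lip(f_{ij})>0$ is a finite minimum of positive numbers.

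Next I endow $M:=\{1,\dots,r\}$ with the scaled discrete metric $d_M(i,j)=\eta$ for $i\ne j$, which is separated with $\ud_M(M)=\eta>0$, and define the cell-collapsing map $p:X\to M$ by $p(x)=k$ iff $x\in D_k$. The gap estimate shows $p$ is non-expanding: points in one cell have $d_M$-image $0$, while points in different cells satisfy $d_M=\eta\le d_X$. Since $p{\restriction}_S$ is injective, I can pick $h:M\to X$ with $h\circ p(s)=s$ for all $s\in S$; as $a,b\in F[S;X]$ by Proposition~\ref{p:BMZ}, functoriality applied to $h\circ p\circ i_{S,X}=i_{S,X}$ gives $Fh\circ Fp(a)=a$ and $Fh\circ Fp(b)=b$, whence $Fp(a)\ne Fp(b)$. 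Finally, $M$ separated implies $(FM,d^p_{FM})$ separated by Corollary~\ref{c:sep}, so $d^p_{FM}$ is an $\infty$-metric and $d^p_{FM}(Fp(a),Fp(b))>0$; combining this with the inequality $d^p_{FM}(Fp(a),Fp(b))\le d^p_{FX}(a,b)$ coming from Theorem~\ref{t:Lip} yields $d^p_{FX}(a,b)>0$, as required.

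The step I expect to be the real obstacle---and the reason the weaker hypothesis ``$d_X$ is an $\infty$-metric'' does not suffice---is securing the uniform gap $\eta$. One cannot map $(X,d_X)$ Lipschitz-continuously into a $\{0,\infty\}$-valued space, since that would force the map to be constant on each pseudometric component; separation must instead be achieved at a \emph{definite positive scale}. Lipschitz disconnectedness is precisely the hypothesis that produces, for each separating function, a positive lower bound on the distance between its two level sets, and the finiteness of $S$ lets me intersect finitely many such separations into a single uniform $\eta$. Once this gap is in hand, the passage to the finite separated space $M$ and the appeal to the preservation of separatedness and of non-expanding maps are routine.
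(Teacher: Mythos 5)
Your proof is correct and follows essentially the same route as the paper: reduce to the case $|\supp(a)\cup\supp(b)|\ge 2$, use Lipschitz disconnectedness to build a Lipschitz map of $X$ onto a finite separated space through which $a$ and $b$ remain distinct, and then combine Corollary~\ref{c:sep} with Theorem~\ref{t:Lip}. The only difference is in the construction of that map: the paper retracts onto $S$ itself with its induced metric (and must therefore arrange that pairs of points of $S$ at distance $\infty$ are kept at distance $\infty$), whereas you collapse onto a uniformly discrete quotient $M$ with all nonzero distances equal to $\eta$, which makes the non-expanding property of the factoring map immediate.
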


\begin{proof} Given two distinct points $a,b\in FX$, we should prove that $d^p_{FX}(a,b)>0$. This inequality follows from Theorem~\ref{t:small} if the set $S=\supp(a)\cup\supp(b)$ contains at most one point. So, we can assume that $|S|>1$. In this case $a,b\in F[S;X]$ by Proposition~\ref{p:BMZ} and we can find elements $a',b'\in FS$ such that $a=Fi_{S,X}(a')$ and $b=Fi_{S,X}(b')$. Since $a\ne b$, the elements $a',b'$ are distinct. Since the set $S$ is finite, the $\infty$-metric $d_S=d_X{\restriction}_{S\times S}$ is separated. By Corollary~\ref{c:sep}, the distance $d^p_{FS}$ is separated and hence is an $\infty$-metric. Consequently, $d^p_{FS}(a',b')>0$.

Using the Lipschitz disconnectedness of $X$, we can construct a Lipschitz map $f:X\to \{0,1,\infty\}^m\subset\bar\IR^m$ for a suitable $m$ such that $f{\restriction}_S$ is injective and for any $x,y\in S$ with $d_X(x,y)=\infty$ we have $d_{\bar \IR^m}(f(x),f(y))=\infty$. Take any function $g:\{0,1,\infty\}^m\to S$ such that $g\circ f(x)=x$ for every $x\in S$, and observe that the function $r=g\circ f:X\to S$ is Lipschitz and $r\circ i_{S,X}$ is the identity map of $S$.
Then $a'=Fr\circ Fi_{S,X}(a')=Fr(a)$ and $b'=Fr(b)$.
By Theorem~\ref{t:Lip}, the function $F^pr:F^pX\to F^pS$ is Lipschitz. Taking into account that $$0<d^p_{FS}(a',b')=d^p_{FS}(Fr(a),Ff(b))\le\Lip(F^pr)\cdot d^p_{FX}(a,b),$$ we conclude that $d^p_{FX}(a,b)>0$, which means that the distance $d^p_{FX}$ is an $\infty$-metric.
\end{proof}

A conditions implying that $d^1_{FX}$ is an $\infty$-metric is given in the following theorem. 

\begin{theorem}\label{t:d1} For any distinct elements $a,b\in FX$ we have  $$d^1_{FX}(a,b)\ge \ud_X(\supp(a)\cup\supp(b)).$$ This implies that $d^1_{FX}$ is an $\infty$-metric if $d_X$ is an $\infty$-metric.
\end{theorem}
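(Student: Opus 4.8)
The plan is to prove the inequality by contradiction, reusing the ``transposition graph'' idea from Lemma~\ref{l:2short} but exploiting the additivity of the $\ell^1$-distance to obtain the sharp bound. Write $S=\supp(a)\cup\supp(b)$ and $\delta=\ud_X(S)$; note that $S$ is finite because $F$ has finite supports. If $\delta=0$ the inequality is trivial, so assume $\delta>0$. Fix distinct $a,b\in FX$ and an arbitrary $(a,b)$-linking chain $s=\big((a_i,f_i,g_i)\big)_{i=0}^l$ of finite cost $C:=\Sigma d^1_X(s)=\sum_{i=0}^l\sum_{k}d_X(f_i(k),g_i(k))$. Since linking chains of infinite cost contribute $\infty\ge\delta$ to the infimum defining $d^1_{FX}(a,b)$, it suffices to show that every such finite-cost chain satisfies $C\ge\delta$.

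Following Lemma~\ref{l:2short}, I set $J=\{(f_i(k),g_i(k)):i\le l,\ k\in\dom(f_i)\}$ and let $\tilde J$ be the smallest equivalence relation on $X$ containing $J$. The heart of the argument is the claim that if $C<\delta$, then every $\tilde J$-class meets $S$ in at most one point. Indeed, if distinct $x,y\in S$ were $\tilde J$-equivalent, they would be joined by a path $x=x_0,x_1,\dots,x_m=y$ with $(x_{t-1},x_t)\in J\cup J^{-1}$ for each $t$; passing to a simple path we may take the vertices, hence the edges, pairwise distinct, so the sum $\sum_{t}d_X(x_{t-1},x_t)$ is bounded by the full cost $C$, and the triangle inequality gives
$$\delta\le d_X(x,y)\le\sum_{t=1}^m d_X(x_{t-1},x_t)\le C,$$
contradicting $C<\delta$. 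This is precisely the step where $p=1$ is essential: with the $\ell^1$-distance the total cost $C$ literally dominates the sum of the edge-lengths along any path in the graph $(X,J)$, an estimate that fails for $p>1$.

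Still assuming $C<\delta$, I choose representatives of the $\tilde J$-classes as in Lemma~\ref{l:2short}, taking the \emph{unique} point of $S$ as representative of each class meeting $S$, and let $r\colon X\to X$ send every point to the representative of its class; by the claim, $r{\restriction}_S$ is the identity, so $r\circ i_{S,X}=i_{S,X}$. On the other hand, $(f_i(k),g_i(k))\in\tilde J$ yields $r\circ f_i=r\circ g_i$ for every $i$, whence $Fr\circ Ff_i(a_i)=Fr\circ Fg_i(a_i)$; chaining through the linking conditions $Fg_i(a_i)=Ff_{i+1}(a_{i+1})$ gives $Fr(a)=Fr(b)$. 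The case $|S|\le1$ is covered by Theorem~\ref{t:small}, so I may assume $S\ne\emptyset$; then Proposition~\ref{p:BMZ} provides $a'\!,b'\in FS$ with $a=Fi_{S,X}(a')$ and $b=Fi_{S,X}(b')$, and $r\circ i_{S,X}=i_{S,X}$ forces $a=Fr(a)=Fr(b)=b$, contradicting $a\ne b$. Hence every finite-cost chain has $C\ge\delta$, and therefore $d^1_{FX}(a,b)\ge\delta=\ud_X(S)$. Note that this argument needs neither finitarity nor finite degree of $F$.

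The second assertion then follows from the finiteness of $S$: if $d_X$ is an $\infty$-metric, then $d_X(x,y)>0$ for all distinct $x,y\in S$, so $\ud_X(S)=\min\{d_X(x,y):x\ne y,\ x,y\in S\}>0$ (with $\ud_X(S)=\infty$ when $|S|\le1$), and hence $d^1_{FX}(a,b)\ge\ud_X(S)>0$ for all distinct $a,b\in FX$, i.e.\ $d^1_{FX}$ is an $\infty$-metric. The main obstacle is the path-length estimate in the second paragraph: one must pass to a simple path so that each transposition pair is counted at most once, making the bound $\sum_t d_X(x_{t-1},x_t)\le C$ legitimate; this is exactly the point that singles out $p=1$.
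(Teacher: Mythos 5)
Your proof is correct and follows essentially the same route as the paper's: the same equivalence relation $\tilde J$ generated by the transposition pairs, the same key observation that along a path with pairwise distinct vertices each pair $(j,z)$ is used at most once so the $\ell^1$-cost dominates the path length, and the same retraction $r$ fixing $S$ pointwise to force $a=Fr(a)=Fr(b)=b$. The paper makes the "each edge counted at most once" step explicit via the sets $I_{j,z}$, but this is exactly your simple-path argument.
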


\begin{proof} We need to prove that $d^1_{FX}(a,b)\ge\ud_X(S)$ where  $S:=\supp(a)\cup\supp(b)$. To derive a contradiction, assume that $d^1_{FX}(a,b) < \ud_X(S)$ and find an $(a,b)$-linking chain $\big((a_i,f_i,g_i)\big){}_{i=0}^l\in L_{FX}(a,b)$ such that $\sum_{i=0}^l d^1_{X^{\!<\!\w}}(f_i,g_i)< \ud_X(S)$.

Let $\sim$ be the smallest equivalence relation on $X$ containing all pairs $(f_i(z),g_i(z))$, where $i\le l$ and  $z\in \dom(f_i)=\dom(g_i)$.
We claim that $x\not\sim y$ for any distinct points $x,y\in S$. Indeed, assuming that $x\sim y$, we could find a sequence $x=x_0,\dots,x_l=y$ of pairwise distinct points of $X$ such that for every positive $i\le l$ there exist a number $j_i\in\{0,\dots,l\}$ and a point $z_i\in\dom(f_{j_i})=\dom(g_{j_i})$ such that $\{x_{i-1},x_{i}\}=\{f_{j_i}(z_i),g_{j_i}(z_i)\}$.

We claim that for any $j\in\{0,\dots,l\}$ and $z\in\dom(f_j)$ the set $I_{j,z}=\big\{i\in\{0,\dots,l\}:(j_i,z_i)=(j,z)\big\}$ contains at most one number. Otherwise, we could find two distinct numbers $i,\ji\in I_{j,z}$ and conclude that $$\{x_{i-1},x_i\}=\{f_{j_i}(z_i),g_{j_i}(z_i)\}=\{f_j(z),g_j(z)\}=\{f_{j_{\ji}}(z_{\ji}),g_{j_{\ji}}(z_{ji})\}=\{x_{\ji-1},x_{\ji}\},$$which is not possible as the points $x_0,\dots,x_l$ are pairwise distinct.

Then
$$
\begin{aligned}
d_X(x,y)\le\;&\sum_{i=1}^ld_X(x_{i-1},x_i)=
\sum_{i=1}^ld_X(f_{j_{i}}(z_{i}),g_{j_i}(z_i))=\\
&\sum_{j=0}^l\sum_{z\in\dom(f_j)}\sum_{i\in I_{j,z}}d_X(f_{j_i}(z_i),g_{j_i}(z_i))=\\
&\sum_{j=0}^l\sum_{z\in\dom(f_j)}|I_{j,z}|\cdot d_X(f_{j}(z),g_{j}(z))\le\\
&\sum_{j=0}^l\sum_{z\in\dom(f_j)}d_X(f_{j}(z),g_{j}(z))=\sum_{j=0}^ld^1_{X^{\!<\!\w}}(f_{j},g_{j})<\ud_X(S),
\end{aligned}
$$which contradicts the definition of $\ud_X(S)$.
This contradiction completes the proof of non-equivalence $x\not\sim y$ for any distinct points $x,y\in S$.

Now choose any function $r:X\to S\subseteq X$ such that
\begin{itemize}
\item $r(x)=r(y)$ for any points $x,y\in X$ with $x\sim y$ and
\item $r(x)=x$ for any point $x\in S$.
\end{itemize}
The choice of the function $r$ is always possible since distinct points of the set $S$ are not equivalent.

The definition of the equivalence relation $\sim$ guarantees that  $r\circ f_i = r\circ g_i$ for any $i\le l$. Consequently, $$F(r{\circ} f_0)(a_0) = F(r{\circ} g_0)(a_0) = F(r{\circ} f_1)(a_1) = \dots = F(r{\circ} g_l)(a_l).$$
We claim that $F(r{\circ} f_0)(a_0)=Ff_0(a_0)=a$. Proposition~\ref{p:BMZ} guarantees that $a\in F[S;X]$, so we can find an element $a'\in FS$ such that $a=Fi_{S,X}(a')$ where $i_{S,X}:S\to X$ denotes the identity inclusion. The equality $r\circ i_{S,X}=i_{S,X}$ implies that $Fr\circ Fi_{S,X}=Fi_{S,X}$ and hence $$a=Fi_{S,X}(a')=Fr{\circ} Fi_{S,X}(a')=Fr(a)=Fr(Ff_0(a_0))=F(r{\circ} f_0)(a_0).$$
By analogy we can prove that $F(r{\circ} g_l)(a_l)=Fr(b)=b$.
Then $$a=F(r{\circ} f_0)(a_0)=F(r{\circ} g_l)(b_l)=b,$$
which contradicts the choice of the (distinct) points $a,b$.
\end{proof}

Theorems~\ref{t:d1} and \ref{t:ineq} imply the following corollary.

\begin{corollary}\label{c:imetric} If $d_X$ is an $\infty$-metric and the functor $F$ has finite degree $n=\deg(F)$, then for every $p\in[1,\infty]$ the distance $d^p_{FX}$ is an $\infty$-metric. Moreover, for any distinct elements $a,b\in FX$ either $d^p_{FX}(a,b)=\infty$ or
$$d^p_{FX}(a,b)\ge n^{\frac{1-p}{p}}\cdot d^1_{FX}(a,b)\ge n^{\frac{1-p}p}\cdot\ud_X(\supp(a)\cup\supp(b))>0.$$
\end{corollary}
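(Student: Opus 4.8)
The corollary is essentially a bookkeeping combination of the two quoted results, so the plan is to reduce it to them and then verify the positivity of the resulting lower bound. Fix distinct elements $a,b\in FX$ and set $S=\supp(a)\cup\supp(b)$. First I would dispose of the dichotomy: if $d^p_{FX}(a,b)=\infty$ we are in the first alternative and in particular $d^p_{FX}(a,b)>0$, so there is nothing more to check. Hence assume $d^p_{FX}(a,b)<\infty$. Since $a\ne b$, Theorem~\ref{t:small} forces $|S|>1$; and since each support has cardinality at most $\deg(F)=n$, this also yields $n\ge1$, so that the factor $n^{\frac{1-p}{p}}$ is a well-defined positive real.

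Next I would assemble the displayed chain of inequalities. The first inequality comes from Theorem~\ref{t:ineq} with its two parameters specialized to $1$ and $p$ (legitimate because $1\le p$): this gives $d^1_{FX}\le n^{\frac{p-1}{p}}\,d^p_{FX}$, which, upon dividing by the positive finite constant $n^{\frac{p-1}{p}}$, rearranges to $d^p_{FX}(a,b)\ge n^{\frac{1-p}{p}}\,d^1_{FX}(a,b)$. The second inequality is exactly the content of Theorem~\ref{t:d1}, namely $d^1_{FX}(a,b)\ge\ud_X(S)$.

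It remains to establish positivity. Because $F$ has finite degree, $S$ is a finite set, and because $d_X$ is an $\infty$-metric every distance between two distinct points of $S$ is strictly positive; hence $\ud_X(S)$, being an infimum of finitely many positive numbers (the set is nonempty since $|S|>1$), is strictly positive. Multiplying by the positive constant $n^{\frac{1-p}{p}}$ preserves positivity, which completes the chain and gives $d^p_{FX}(a,b)>0$. Combining the two alternatives, $d^p_{FX}(a,b)>0$ for all distinct $a,b$, which is precisely the assertion that $d^p_{FX}$ is an $\infty$-metric.

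The only genuinely delicate points here are conventional rather than mathematical: one must take the parameters of Theorem~\ref{t:ineq} in the correct order (its ``$p$'' being our $1$ and its ``$q$'' being our $p$) and interpret the exponent $\tfrac{1-p}{p}=\tfrac1p-1$ correctly at the endpoint $p=\infty$, where it equals $-1$ and produces the factor $n^{-1}$, consistently with the extended-arithmetic conventions of Section~2. The finite-degree hypothesis enters only to guarantee that $S$ is finite, so that $\ud_X(S)$ is a positive minimum rather than a possibly-zero infimum; this is the single substantive step, and it would fail for a functor of infinite degree, where $S$ could be infinite with distinct points accumulating arbitrarily close to one another.
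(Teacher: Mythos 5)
Your proposal is correct and is exactly the deduction the paper intends (the paper gives no separate proof, stating only that Theorems~\ref{t:d1} and \ref{t:ineq} imply the corollary): you specialize Theorem~\ref{t:ineq} to the pair $(1,p)$, chain it with Theorem~\ref{t:d1}, and use Theorem~\ref{t:small} plus finiteness of $\supp(a)\cup\supp(b)$ to secure positivity. The side remarks about $n\ge 1$ when $d^p_{FX}(a,b)<\infty$ and about the exponent at $p=\infty$ are accurate and welcome.
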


Another condition guaranteeing that the distance $d^p_{FX}$ is an $\infty$-metric was found by Shukel in \cite{Shukel}. Theorem~\ref{t:shukel} below is a modification of Theorem 1 of \cite{Shukel}. 

\begin{theorem}\label{t:shukel} If the functor $F$ preserves supports, then for any distinct elements $a,b\in FX$ we have 
$$d^p_{FX}(a,b)\ge d^\infty_{FX}(a,b)\ge \max\big\{d_{HX}(\supp(a),\supp(b)),\tfrac13\ud_X(\supp(a)\cup\supp(b))\}.$$
\end{theorem}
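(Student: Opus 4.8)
The plan is to establish the three displayed inequalities in turn. The first, $d^p_{FX}(a,b)\ge d^\infty_{FX}(a,b)$, is nothing but the case $q=\infty$ of Theorem~\ref{t:ineq}, so no work is needed there. Both lower bounds on $d^\infty_{FX}$ will rest on a single device: since $F$ preserves supports, every $(a,b)$-linking chain induces a path of finite subsets of $X$ running from $\supp(a)$ to $\supp(b)$ whose consecutive Hausdorff steps are controlled by the $\ell^\infty$-costs of the chain.

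To set up this device, I fix an $(a,b)$-linking chain $s=\big((a_i,f_i,g_i)\big)_{i=0}^l$ with $f_i,g_i\in X^{n_i}$, and put $T_i=\supp(a_i)$. Because $F$ preserves supports, $\supp(Ff_i(a_i))=f_i[T_i]$ and $\supp(Fg_i(a_i))=g_i[T_i]$, and since $d_X(f_i(k),g_i(k))\le d^\infty_{X^{n_i}}(f_i,g_i)$ for every $k\in T_i$, a direct estimate of the two one-sided suprema in the Hausdorff distance yields
$$d_{HX}(f_i[T_i],g_i[T_i])\le d^\infty_{X^{n_i}}(f_i,g_i)=d^\infty_{X^{<\w}}(f_i,g_i).$$
Writing $P_0=f_0[T_0]=\supp(a)$ and $P_{i+1}=g_i[T_i]$, the linking relation $Fg_i(a_i)=Ff_{i+1}(a_{i+1})$ forces $g_i[T_i]=f_{i+1}[T_{i+1}]$, while $g_l[T_l]=\supp(b)$; thus $\supp(a)=P_0,P_1,\dots,P_{l+1}=\supp(b)$ is a sequence with $d_{HX}(P_i,P_{i+1})\le d^\infty_{X^{<\w}}(f_i,g_i)$. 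As $d_{HX}$ obeys the triangle inequality, $d_{HX}(\supp(a),\supp(b))\le\Sigma d^\infty_X(s)$, and taking the infimum over all chains gives the Hausdorff bound $d^\infty_{FX}(a,b)\ge d_{HX}(\supp(a),\supp(b))$.

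For the separatedness bound put $S=\supp(a)\cup\supp(b)$ and $\delta=\ud_X(S)$. If $\delta=0$ the claim is trivial, and if $|S|\le 1$ then Theorem~\ref{t:small} gives $d^\infty_{FX}(a,b)=\infty$; so I may assume $|S|\ge 2$ and $\delta>0$. Suppose, for contradiction, $d^\infty_{FX}(a,b)<\tfrac13\delta$ and pick a chain as above with $\e:=\Sigma d^\infty_X(s)<\tfrac13\delta$ (so no step costs $\infty$, whence all $P_i$, and all $T_i$, are nonempty). By the previous paragraph each $P_i$ lies within Hausdorff distance $\e$ of $\supp(a)\subseteq S$, so every point of every $P_i$ is within distance $<\tfrac13\delta$ of $S$. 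Since distinct points of $S$ are at distance $\ge\delta$, any point within $<\tfrac12\delta$ of $S$ has a \emph{unique} nearest point in $S$; this gives a retraction $r:X\to S$ with $r{\restriction}_S=\id_S$ (extended arbitrarily off the $\tfrac12\delta$-neighborhood of $S$). For $k\in T_i$ the points $f_i(k)\in P_i$ and $g_i(k)\in P_{i+1}$ each lie within $<\tfrac13\delta$ of $r(f_i(k)),r(g_i(k))\in S$, while $d_X(f_i(k),g_i(k))\le\e<\tfrac13\delta$; the triangle inequality then bounds $d_X(r(f_i(k)),r(g_i(k)))$ by three terms each $<\tfrac13\delta$, hence $<\delta$, forcing $r(f_i(k))=r(g_i(k))$. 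Thus $r\circ f_i$ and $r\circ g_i$ agree on $\supp(a_i)$, so by Proposition~\ref{p:BMZ} they send $a_i$ to the same value, i.e. $F(r\circ f_i)(a_i)=F(r\circ g_i)(a_i)$; running this equality along the chain gives $Fr(a)=Fr(b)$. Since $a,b\in F[S;X]$ and $r{\restriction}_S=\id_S$, we conclude $a=Fr(a)=Fr(b)=b$, contradicting $a\ne b$.

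The genuinely routine step is the Hausdorff estimate, which is essentially just "\,$F$ preserves supports\," together with the triangle inequality for $d_{HX}$. The delicate step is the separatedness bound: the main point is to verify that the nearest-point retraction $r$ is well defined (uniqueness of the nearest $S$-point, ensured by the $\tfrac12\delta$-disjointness of the balls around points of $S$) and that the three-fold triangle inequality — exactly the source of the factor $\tfrac13$ — collapses $f_i(k)$ and $g_i(k)$ onto a common point of $S$. Finally, the extended arithmetic conventions make every inequality above valid also when $\delta=\infty$ (then $\tfrac12\delta=\tfrac13\delta=\infty$ merely means "finite distance to $S$"), so infinite distances need no separate treatment.
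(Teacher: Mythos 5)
Your proof is correct and follows essentially the same route as the paper's: both track the chain of supports $\supp(Ff_i(a_i))$ using preservation of supports, bound consecutive Hausdorff steps by the $\ell^\infty$-costs of the chain, and then collapse the chain via a retraction onto $S=\supp(a)\cup\supp(b)$ that identifies points lying in small balls around $S$, with the factor $\tfrac13$ arising from the same three-term triangle inequality. The only cosmetic differences are that you establish the Hausdorff lower bound directly rather than inside a single contradiction with $\e=\max\{\cdot,\cdot\}$, and that your parenthetical justification that all $T_i$ and $P_i$ are nonempty should appeal to $|S|\ge 2$ together with the support-linking $g_i[T_i]=f_{i+1}[T_{i+1}]$ (emptiness propagates along the whole chain) rather than to the finiteness of the steps.
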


\begin{proof}  The inequality $d^p_{FX}(a,b)\ge d^\infty_{FX}(a,b)$ follows from Theorem~\ref{t:ineq}. It remains to prove that $d^\infty_{FX}(a,b)\ge\e$ where
$$\e:=\max\{d_{HX}(S_a,S_b),\tfrac13\ud_X(S)\},$$ $S_a:=\supp(a)$, $S_b=\supp(b)$, and $S=S_a\cup S_b$.

To derive a contradiction, assume that $d^\infty_{FX}(a,b)<\e$ and find an $(a,b)$-linking chain $\big((a_i,f_i,g_i)\big){}_{i=0}^l\in L_{FX}(a,b)$ such that $\sum_{i=0}^ld^\infty_{X^{\!<\!\w}}(f_i,g_i)<\e$. We can assume that the length $l+1$ of the chain is the smallest possible. 

For every $i\in\{0,\dots,l\}$, consider the element $b_i=Ff_i(a_i)\in FX$ and put $b_{l+1}=Fg_l(a_l)$. It follows that $b_0=a$, $b_{l+1}=b$, and $b_i=Ff_i(a_i)=Fg_{i-1}(a_{i-1})$ for any $i\in\{1,\dots,l\}$. The minimality of $l$ ensures that $b_i\ne b_{i+1}$ for all $i\in\{0,\dots,l\}$.

Taking into account that the functor $F$ preserves supports, we conclude that $\supp(b_i) {=}\break f_i[\supp(a_i)]$ for every $i\in\{0,\dots,l\}$. We claim that for every $i\in\{0,\dots,l\}$ the support $S_i=\supp(a_i)$ is not empty. Assuming that $S_i=\emptyset$, we conclude that $\supp(b_i)=f_i(\supp(a_i))=\emptyset=g_i(\supp(a_i))=\supp(b_{i+1})$. Since $b_i\ne b_{i+1}$, Theorem~\ref{t:small} ensures that $d_{FX}^\infty(b_i,b_{i+1})=\infty$. On the other hand, the definition of the distance $d_{FX}^\infty$ yields $d_{FX}^\infty(b_i,b_{i+1})=d_{FX}^{\infty}(Ff_i(a_i),Fg_i(a_i))\le d^\infty_{X^{\!<\!\w}}(f_i,g_i)<\e\le\infty$. This contradiction shows that $S_i\ne\emptyset$ and hence $\supp(b_i)=f_i[\supp(a_i)]$ is not empty, too.

The chain $a=b_0,\dots,b_{l+1}=b$ of elements of $FX$ induces the chain of finite non-empty subsets $$S_a=\supp(b_0),\dots,\supp(b_{l+1})=S_b$$ in $X$. Then by the triangle inequality for the Hausdorff distance $d_{HX}$, we obtain 
$$
\begin{aligned}
d_{HX}(S_a,S_b)&\le\sum_{i=0}^{l}d_{HX}(\supp(b_{i}),\supp(b_{i+1}))
=\sum_{i=0}^{l}d_{HX}\big(f_i[\supp(a_{i})],g_i[\supp(a_i)]\big)\le\\
&\le \sum_{i=0}^ld^\infty_{X^{\!<\!\w}}(f_i,g_i)<\e=\max\{d_{HX}(S_a,S_b),\tfrac13\ud_X(S)\},
\end{aligned}
$$which implies that $d_{HX}(S_a,S_b)<\e=\frac13\ud_X(S)$ and  $S_a=S_b=S$ by the definition of $d_{HX}(S_a,S_b)$ and $\ud_X(S)$. 

Let $O[S;\e)=\bigcup_{x\in S}O(x;\e)$ be the open $\e$-neighborhood of the finite non-empty set $S$ in the distance space $(X,d_X)$.

Let $r:X\to S\subseteq X$ be any function such that $r[O(x;\e)]=\{x\}$ for any $x\in S$. We claim that $r\circ f_i{\restriction}_{S_i}=r\circ g_i{\restriction}_{S_i}$ for any $i\in\{0,\dots,l\}$. Indeed, 
$$
d_{HX}(S,\supp(b_i))\le
\sum_{j=0}^{i-1}d_{HX}(\supp(b_{j}),\supp(b_{j+1}))
\le \sum_{j=0}^{i-1}d^\infty_{X^{\!<\!\w}}(f_{j},g_{j})<\e
$$and hence $f_i[\supp(a_i)]=\supp(b_i)\subseteq O[S;\e)$ and $g_i[\supp(a_i)]=\supp(Fg_i(a_i))=\break \supp(b_{i+1})\subseteq O[S;\e)$.
As we already know, for every $i\in\{0,\dots,l\}$ the support $S_i=\supp(a_i)$ is a non-empty set of the finite ordinal $n_i:=\dom(f_i)=\dom(g_i)\in\IN$.

The inclusions $f_i[\supp(a_i)]\cup g_i[\supp(a_i)]\subseteq O[S;\e)$ and the inequality $d^\infty_{X^{<\w}}(f_i,g_i)<\e=\frac13\ud_X(S)$ imply that for any $x\in S_i$ there exists a point $s\in S$ with $f_i(x),g_i(x)\in O[s;\e)$. Consequently, $r\circ f_i(x)=s=r\circ g_i(x)$.

 Proposition~\ref{p:BMZ} guarantees that $a_i\in F[\supp(a_i);X]$ and hence $a_i=Fi_{S_i,n_i}(a_i')$ for some $a_i'\in FS_i$, where $i_{S_i,n_i}:S_i\to n_i$ denotes the identity embedding of $S_i$ into $n_i$. It follows from $r\circ f_i{\restriction}_{S_i}=r\circ g_i{\restriction}_{S_i}$ that $r\circ f_i\circ i_{S_i,n_i}=r\circ g_i\circ i_{S_i,n_i}$ and hence
\begin{multline}\label{Frf=g}
Fr\circ Ff_i(a_i)=Fr\circ Ff_i\circ Fi_{S_i,n_i}(a_i')=\\
=Fr\circ Fg_i\circ Fi_{S_i,n_i}(a_i')=Fr\circ Fg_i(a_i)=Fr\circ Ff_{i+1}(a_{i+1}).
\end{multline}

Since $\supp(a)\subseteq S$, by Proposition~\ref{p:BMZ}, there exists an element $a'\in FS$ such that $a=Fi_{S,X}(a')$. Applying the functor $F$ to the equality $r\circ i_{S,X}=i_{S,X}$, we conclude that $Fr(a)=Fr\circ Fi_{S,X}(a')=Fi_{S,X}(a')=a$. By analogy we can prove that $Fr(b)=b$. Taking into account the equality (\ref{Frf=g}), we finally conclude that
\begin{multline*}
a=Fr(a)=Fr\circ Ff_0(a_0)=Fr\circ Fg_0(a_0)=Fr\circ Ff_1(a_1)=Fr\circ Fg_1(a_1)=\dots\\
\dots=Fr\circ Fg_k(a_k)=Fr(b)=b,
\end{multline*}
which contradicts the choice of the (distinct) points $a,b$.
\end{proof}





\section{Preservation of continuous functions by the functor $F^p$}


\begin{theorem}\label{t:continuous} If the functor $F$ is finitary, has finite degree and preseves supports, then for any continuous function $f:X\to Y$ between distance spaces, the function $F^p\!f:F^pX\to F^pY$ is continuous.
\end{theorem}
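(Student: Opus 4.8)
The plan is to fix $a\in F^pX$ and $\e\in(0,\infty]$ and produce a $\delta\in(0,\infty]$ witnessing continuity of $F^p\!f$ at $a$. The case $\e=\infty$ is trivial, since $d^p_{FY}(Ff(a),Ff(b))\le\infty$ always holds, so I would assume $\e<\infty$. Write $S_a:=\supp(a)$, a finite set with $|S_a|\le\deg(F)\le n:=\max\{1,\deg(F)\}$, and recall $Fn$ is finite because $F$ is finitary. The engine is the retraction of Lemma~\ref{l:2short}: if $d^p_{FX}(a,b)\le\delta$ then $d^\infty_{FX}(a,b)\le\delta$ by Theorem~\ref{t:ineq}, so applying Lemma~\ref{l:2short} with the set $A=S_a$ and parameter $2\delta$ yields an idempotent $r:X\to X$ with $Fr(a)=Fr(b)$, $r[S_a]\subseteq S_a$ and $\sup_{x\in X}d_X(x,r(x))<2n\,|Fn|\,\delta$.

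Next I would split the target distance by the triangle inequality and functoriality, using $F(f\circ r)=Ff\circ Fr$:
\begin{align*}
d^p_{FY}(Ff(a),Ff(b))\le{}&d^p_{FY}\big(Ff(a),F(f{\circ}r)(a)\big)+d^p_{FY}\big(F(f{\circ}r)(a),F(f{\circ}r)(b)\big)\\
&{}+d^p_{FY}\big(F(f{\circ}r)(b),Ff(b)\big).
\end{align*}
The middle term vanishes, since $Fr(a)=Fr(b)$ gives $F(f{\circ}r)(a)=Ff(Fr(a))=Ff(Fr(b))=F(f{\circ}r)(b)$. For the two outer terms, assuming $S_a\ne\emptyset$ (then $\deg(F)\ge1$, so $n=\deg(F)$), I would use Proposition~\ref{p:BMZ2} to write $a=Ff_0(a_0)$ and $b=Fg_0(b_0)$ with $f_0,g_0:n\to X$, $a_0,b_0\in Fn$, $f_0[n]=S_a$ and $g_0[n]=\supp(b)$. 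Since $Ff(a)=\xi^{a_0}_Y(f{\circ}f_0)$ and $F(f{\circ}r)(a)=\xi^{a_0}_Y(f{\circ}r{\circ}f_0)$, the non-expandingness of $\xi^{a_0}_Y$ (Theorem~\ref{t:alt}) and the inclusion $f_0[n]=S_a$ give
$$d^p_{FY}\big(Ff(a),F(f{\circ}r)(a)\big)\le d^p_{Y^n}(f{\circ}f_0,f{\circ}r{\circ}f_0)\le n^{\frac1p}\max_{s\in S_a}d_Y\big(f(s),f(r(s))\big),$$
and symmetrically the third term is bounded by $n^{\frac1p}\max_{s\in\supp(b)}d_Y\big(f(r(s)),f(s)\big)$.

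The crucial structural step, and where \emph{support preservation} enters, is to locate the points $r(s)$. Because $F$ preserves supports and $Fr(a)=Fr(b)$, we get $r[\supp(b)]=\supp(Fr(b))=\supp(Fr(a))=r[S_a]\subseteq S_a$; hence $r(s)\in S_a$ for every $s\in\supp(b)$, and likewise $r(s)\in S_a$ for every $s\in S_a$. Thus in both maxima every comparison pair $(s,r(s))$ has an endpoint in the \emph{finite} set $S_a$ and satisfies $d_X(s,r(s))<2n\,|Fn|\,\delta$. Now I would invoke continuity of $f$ only at the finitely many points of $S_a$: setting $\e'':=\e/(2n^{\frac1p})>0$, for each $t\in S_a$ choose $\delta_t\in(0,\infty]$ with $d_X(t,x)\le\delta_t\Ra d_Y(f(t),f(x))\le\e''$, and put $\delta_0:=\min_{t\in S_a}\delta_t>0$. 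Choosing $\delta>0$ so small that $2n\,|Fn|\,\delta\le\delta_0$ forces every such comparison to satisfy $d_Y(f(s),f(r(s)))\le\e''$, whence $d^p_{FY}(Ff(a),Ff(b))\le 2n^{\frac1p}\e''=\e$. (If $S_a=\emptyset$, the identity $r[\supp(b)]=r[S_a]=\emptyset$ forces $\supp(b)=\emptyset$, both outer terms are vacuously $0$, and any finite $\delta$ works.)

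The main obstacle is exactly that $f$ is merely continuous, not uniformly continuous, so $d_Y(f(x),f(r(x)))$ cannot be controlled uniformly in $x\in X$; the resolution is the observation that $r$ pushes the support of the perturbation $b$ into the fixed finite set $S_a$, reducing the whole estimate to finitely many instances of pointwise continuity at $S_a$. This is precisely why all three hypotheses are used: finitariness and finite degree to manufacture the idempotent $r$ of Lemma~\ref{l:2short}, and support preservation to guarantee $r[\supp(b)]\subseteq S_a$.
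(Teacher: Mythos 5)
Your argument is correct, and it shares the engine of the paper's proof --- Lemma~\ref{l:2short} to manufacture the idempotent $r$ with $Fr(a)=Fr(b)$ and small displacement, support preservation to force $r[\supp(b)]=r[\supp(a)]\subseteq\supp(a)$, and continuity of $f$ invoked only at the finitely many points of $\supp(a)$ --- but the execution differs in two ways that make your version cleaner. First, the paper does not split the distance by the triangle inequality in $F^pY$ as you do; it builds an explicit two-link $(a,b)$-linking chain $\big((a_0,f_0,g_0),(a_1,f_1,g_1)\big)$ in $FX$ and pushes it forward by $f$, which forces it to arrange $Fg_0(a_0)=Ff_1(a_1)$ exactly, i.e.\ to know that $Fr$ fixes the relevant element. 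Second, to achieve that the paper needs $r$ to be the \emph{identity} on the support, which in a general distance space requires first collapsing distance-zero points via an auxiliary retraction $s$ with $\ud_X(s[\supp(a)])>0$ and then shrinking $\delta$ below that separatedness number; your decomposition $d^p_{FY}(Ff(a),Ff(b))\le d^p_{FY}(Ff(a),F(f{\circ}r)(a))+0+d^p_{FY}(F(f{\circ}r)(b),Ff(b))$ only needs $r[\supp(a)]\subseteq\supp(a)$ together with the displacement bound, so the whole $s$-collapsing apparatus disappears. Two points you leave implicit but which follow at once from your own identity $r[\supp(b)]=r[\supp(a)]$: when $\supp(a)\ne\emptyset$ this forces $\supp(b)\ne\emptyset$, so Proposition~\ref{p:BMZ2} is indeed applicable to $b$; and in the case $\supp(a)=\emptyset$ it is cleaner to conclude via Theorem~\ref{t:small} that $a=b$ (since $d^p_{FX}(a,b)<\infty$ and both supports are empty) rather than to argue that the outer terms vanish. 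Also make sure $\delta$ is chosen finite, since Lemma~\ref{l:2short} takes its parameter in $(0,\infty)$. None of this affects the validity of the proof.
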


\begin{proof} Let $n=\max\{1,\deg(F)\}$. To check the continuity of $F^p\!f$, fix any any real number $\e\in(0,\infty)$ and any element $a\in FX$. 

By the continuity of $f$, there exists $\delta\in(0,\infty)$ such that $$f[O(x;\delta)]\subseteq O(f(x);\e/\!\sqrt[p]{n})$$ for all $x\in\supp(a)$. 

Choose a subset $S\subseteq X$ such that for every $x\in X$ (with $x\in \supp(a)$) the intersection $S\cap O[x;0]$ is a singleton (in the set $\supp(a)$). We recall that $O[x;0]=\{y\in X:d_X(x,y)\le 0\}$. Let $s:X\to X$ be a function assigning to each $x\in X$ a unique element of the set $S\cap O[x;0]$. Let $a'=Fs(a)$. Since $F$ preserves supports, $$\supp(a')=s[\supp(a)]\subseteq S\cap\supp(a).$$  
The choice of the set $S\supseteq\supp(a')$ ensures that $\ud_X(\supp(a'))>0$ and hence we can choose a positive real number $\delta'$ such that  $$\delta'<\frac1{n\cdot|Fn|}\min\{\delta,\ud_X(\supp(a'))\}.$$
Given any element $b\in FX$ with $d^p_{FX}(a,b)<\delta'$, we shall prove that $d^p_{FY}(Ff(a),Ff(b))<\e$. 
This inequality is trivially true if $a=b$. So we assume that $a\ne b$. By Theorem~\ref{t:small}, $$d^p_{FX}(a,b)<\delta'<\infty$$ implies that $|\supp(a)\cup \supp(b)|>1$. 

Let $b'=Fs(b)$. Since $F$ preserves supports, $\supp(b')=s[\supp(b)]$.
By Lemma~\ref{l:2short}, there exists a function $r:X\to X$ such that $Fr(b')=Fr(a')$, $r\circ r=r$, $r[\supp(a')]\subseteq \supp(a')$, and $\sup_{x\in X}d_X(r(x),x)<n\cdot|Fn|\cdot\delta'<\ud_X(\supp(a'))$. The latter strict inequality and the inclusion $r[\supp(a')]\subseteq \supp(a')$ imply that $r(x)=x$ for every $x\in \supp(a')$. 

  Since $F$ preserves supports, $r[\supp(b')]=\supp(Fr(b'))=\supp(Fr(a'))=r[\supp(a')]$ and hence the sets $\supp(a')=s[\supp(a)]$, $\supp(b')=s[\supp(b)]$, $\supp(a)$, $\supp(b)$ are not empty (because $|\supp(a)\cup \supp(b)|>1)$. 
By Proposition~\ref{p:BMZ2}, there exist functions $f_0,g_1\in X^n$ and elements $a_0,a_1\in Fn$ such that $a=Ff_0(a_0)$, $f_0[n]=\supp(a)$, and $b=Fg_1(a_1)$,   $g_1[n]=\supp(b)$. Let $g_0=s\circ f_0$ and $f_1=r\circ s\circ g_1$. It follows from $r(x)=x$ for all $x\in\supp(a')=s[\supp(a)]=s[f_0[n]]$ that $r\circ s\circ f_0=s\circ f_0$ and hence $$Fr(b')=Fr(a')=Fr\circ Fs(a)=Fr\circ Fs\circ Ff_0(a_0)=Fs\circ Ff_0(a_0)=a'.$$ Since  
\begin{multline*}
Fg_0(a_0)=Fs\circ Ff_0(a_0)=Fs(a)=a'=Fr(b')=\\
=Fr\circ Fs(b)=Fr\circ Fs\circ Fg_1(a_1)=Ff_1(a_1),
\end{multline*}
the sequence $\big((a_0,f_0,g_0),(a_1,f_1,g_1)\big)$ is an $(a,b)$-linking chain and hence\\
$\big((a_0,f\circ f_0,f\circ g_0),(a_1,f\circ g_1,f\circ g_1)\big)$ is an $(Ff(a),Ff(b))$-linking chain. 

For every $x\in \supp(b')$, we have $$r(x)\in r[\supp(b')]=\supp(Fr(b'))=\supp(a')=\supp(Fs(a))=s[\supp(a)]\subseteq\supp(a)$$ and hence $d_X(r(x),x)<n\cdot|Fn|\cdot\delta'\le\delta$, which implies $d_Y(f\circ r(x),f(x))<\e/\sqrt[p]{n}$ by the choice of $\delta$. Then 
$$d^p_{Y^n}(f\circ r\circ s\circ g_1,f\circ s\circ g_1)<\sqrt[p]{n}\cdot\frac{\e}{\sqrt[p]{n}}=\e.$$
The continuity of $f$ and $\sup_{x\in X}d_X(s(x),x)=0$ imply $\sup_{x\in X}d_Y(f\circ s(x),f(x))=0$.
Then 
\begin{multline*}
d^p_{FY}(Ff(a),Ff(b))\le d^p_{Y^n}(f\circ f_0,f\circ g_0)+d^p_{Y^n}(f\circ f_1,f\circ g_1)=\\
=d^p_{Y^n}(f\circ f_0,f\circ s\circ f_0)+d^p_{Y^n}(f\circ r\circ s\circ g_1,f\circ g_1)\le\\
\le 0+d^p_{Y^n}(f\circ r\circ s\circ g_1,f\circ s\circ g_1)+d^p_{Y^n}(f\circ s\circ g_1,f\circ g_1)<0+\e+0=\e.
\end{multline*}
\end{proof}

\begin{example}\label{ex:projsquare} Consider the functor $F:\Set\to\Set$ assigning to each set $X$ the set $FX=\{\emptyset\}\cup \{\{x,y\}:x,y\in X,\;x\ne y\}$ and to each function $f:X\to Y$ the function $Ff:FX\to FY$ such that $Ff(\emptyset)=\emptyset$ and $Ff(\{x,y\})=\{f(x)\}\triangle\{f(y)\}$ for any $\{x,y\}\in FX\setminus\{\emptyset\}$. It is clear that $F$ is finitary and has finite degree $\deg(F)=2$, but $F$ does not preserve supports.
It can be shown that for the continuous map $f:\IR\to\IR$, $f:x\mapsto x^3$, the map $F^pf:F^p\IR\to F^p\IR$ is not continuous.  
This example shows that the preservation of supports in Theorem~\ref{t:continuous} is essential.
\end{example}

\section{Metric properties of the maps $\xi^a_{X}:X\to FX$.}

Given any element $a\in F1$ consider the map $\xi_X^a:X^1\to FX$, $\xi^a_X:f\mapsto Ff(a)$. If the set $X$ is not empty, the the power $X^1$ can be identified with $X$ by identifying each point $x\in X$ with the function $\bar x_{X}:1\to\{x\}\subseteq X$. In this case $\xi^a_X(x)=F\bar x_{X}(a)$. If $X$ is empty, then $X^1$ is empty and hence is equal to $X$. In both cases we can consider the map $\xi^a_X$ as a function defined on $X$.

The following proposition shows that $\xi^a$ is a natural transformation of the identity functor into the functor $F$.

\begin{proposition}\label{p:xi-natural} For any $a\in F1$ and any function $f:X\to Y$ between sets, we have $Ff\circ \xi^a_X=\xi^a_Y\circ f$.
\end{proposition}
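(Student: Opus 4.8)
The plan is to verify the claimed equality pointwise, reducing everything to the functoriality of $F$. First I would recall that, under the identification of $X^1$ with $X$ (a point $x\in X$ corresponding to the constant function $\bar x_X:1\to\{x\}\subseteq X$), the map $\xi^a_X$ is given by $\xi^a_X(x)=F\bar x_X(a)$, and likewise $\xi^a_Y(y)=F\bar y_Y(a)$ for $y\in Y$. Thus it suffices to check that $Ff(\xi^a_X(x))=\xi^a_Y(f(x))$ for every $x\in X$, i.e.\ that the two composite maps agree on each point.

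Next I would compute both sides and let functoriality do the work. On the one hand, functoriality of $F$ gives $Ff(\xi^a_X(x))=Ff(F\bar x_X(a))=F(f\circ\bar x_X)(a)$. On the other hand, $\xi^a_Y(f(x))=F\overline{f(x)}_Y(a)$, where $\overline{f(x)}_Y:1\to\{f(x)\}\subseteq Y$ is the constant map. So the whole statement reduces to the identity of functions $f\circ\bar x_X=\overline{f(x)}_Y:1\to Y$. This is immediate by evaluating at the unique point $0\in 1$: both sides send $0$ to $f(x)$. Applying $F$ to this equality of maps then yields $F(f\circ\bar x_X)(a)=F\overline{f(x)}_Y(a)$, which is exactly the desired equation.

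The only point requiring a little care is the degenerate case $X=\emptyset$, where $X^1=\emptyset$; here $\xi^a_X$ and both composites are the empty map into $FY$, so the equality holds vacuously. I do not expect any genuine obstacle: the proposition is a pure naturality statement, and the single nontrivial ingredient is the functorial identity $F(f\circ\bar x_X)=Ff\circ F\bar x_X$ combined with the trivial factorization $f\circ\bar x_X=\overline{f(x)}_Y$ of a constant map through a constant map.
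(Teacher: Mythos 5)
Your proposal is correct and follows essentially the same route as the paper: a pointwise computation reducing the claim to the functorial identity $Ff\circ F\bar x_X=F(f\circ\bar x_X)$ together with the trivial equality of constant maps $f\circ\bar x_X=\overline{f(x)}_Y$. The extra remark on the vacuous case $X=\emptyset$ is harmless but not needed.
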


\begin{proof} For every $x\in X$ and the map $\bar x_X:1\to\{x\}\subset X$, we have 
$$Ff\circ\xi^a_X(x)=Ff\circ F\bar x_X(a)=F(f\circ \bar x_X)(a)=\xi^a_{Y}(f(x))=\xi^a_Y\circ f(x).$$
\end{proof}

By Theorem~\ref{t:alt}, for every $a\in F1$ the map $\xi_X^a:(X,d_X)\to (FX,d^p_{FX})$ is non-expanding. In this section we find conditions guaranteeing that this map is an injective isometry.

\begin{lemma}\label{l:delta} Let $a\in F1$ and assume that $|X|\ge 2$.
\begin{enumerate}
\item[\textup{1)}] If the map $\xi_2^a:2\to F2$ is not injective, then
\begin{enumerate}
\item[\textup{1a)}] the map $\xi^a_X:X\to FX$ is constant;
\item[\textup{1b)}] $\supp(\xi^a_X(x))=\emptyset$ for every $x\in X$.
\end{enumerate}
\item[\textup{2)}] If the map $\xi^a_2:2\to F2$ is injective, then
\begin{enumerate}
\item[\textup{2a)}] $a\notin F[\emptyset;1]\subseteq F1$;
\item[\textup{2b)}] $\xi_2^a(0)\notin F[\{1\},2]$;
\item[\textup{2c)}] $\xi_X^a(x)\notin F[X\setminus\{x\};X]$ for every $x\in X$;
\item[\textup{2d)}] $\supp(\xi^a_X(x))=\{x\}$ for any $x\in X$;
\item[\textup{2e)}] the map $\xi^a_X:X\to FX$ is injective.
\end{enumerate}
\end{enumerate}
\end{lemma}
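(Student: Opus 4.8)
The plan is to reduce everything to the behaviour of the two maps $\bar 0_2,\bar 1_2\colon 1\to 2$ (where $\bar i_2(0)=i$) and to exploit that, for any $x,y\in X$, the map $h\colon 2\to X$ with $h(0)=x$, $h(1)=y$ satisfies $\bar x_X=h\circ\bar 0_2$ and $\bar y_X=h\circ\bar 1_2$, so that $\xi^a_X(x)=Fh(\xi^a_2(0))$ and $\xi^a_X(y)=Fh(\xi^a_2(1))$ by functoriality. Thus $\xi^a_2$ records all the information needed about $\xi^a_X$.

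For part (1), I would first prove (1a): if $\xi^a_2(0)=\xi^a_2(1)$, then for any $x,y\in X$ the identities above give $\xi^a_X(x)=Fh(\xi^a_2(0))=Fh(\xi^a_2(1))=\xi^a_X(y)$, so $\xi^a_X$ is constant. For (1b), since $\bar x_X=i_{\{x\},X}\circ\bar x_{\{x\}}$ has image $\{x\}$, we get $\xi^a_X(x)\in F[\{x\};X]$ and hence $\supp(\xi^a_X(x))\subseteq\{x\}$ for every $x$; as $\xi^a_X$ is constant with value $c$ and $|X|\ge 2$, intersecting this over two distinct points forces $\supp(c)=\emptyset$.

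For part (2) assume $\xi^a_2(0)\ne\xi^a_2(1)$. Part (2a) is immediate: if $a=Fi_{\emptyset,1}(a_0)\in F[\emptyset;1]$, then both $\bar 0_2\circ i_{\emptyset,1}$ and $\bar 1_2\circ i_{\emptyset,1}$ equal the unique map $i_{\emptyset,2}\colon\emptyset\to 2$, so $\xi^a_2(0)=Fi_{\emptyset,2}(a_0)=\xi^a_2(1)$, a contradiction. The crux is (2b), which I would prove by contradiction: if $\xi^a_2(0)=Fi_{\{1\},2}(c')\in F[\{1\};2]$, apply the constant map $c_1\colon 2\to 2$ with image $\{1\}$. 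On one hand $c_1\circ i_{\{1\},2}=i_{\{1\},2}$, so $Fc_1(\xi^a_2(0))=\xi^a_2(0)$; on the other hand $c_1\circ\bar 0_2=\bar 1_2$, so $Fc_1(\xi^a_2(0))=\xi^a_2(1)$. Hence $\xi^a_2(0)=\xi^a_2(1)$, contradicting injectivity. This idempotent-collapse trick is the step I expect to be the main obstacle to locate, since it is the only place where the hypothesis is used in an essential, non-formal way; everything else is functoriality.

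The remaining parts follow formally from (2b). For (2c), suppose $\xi^a_X(x)\in F[X\setminus\{x\};X]$ (here $X\setminus\{x\}\ne\emptyset$ because $|X|\ge 2$) and push forward by $h\colon X\to 2$ sending $x\mapsto 0$ and $X\setminus\{x\}\mapsto 1$: then $h\circ\bar x_X=\bar 0_2$ gives $Fh(\xi^a_X(x))=\xi^a_2(0)$, while $h$ collapses $X\setminus\{x\}$ into $\{1\}$, forcing $\xi^a_2(0)\in F[\{1\};2]$ and contradicting (2b). For (2d), combine $\supp(\xi^a_X(x))\subseteq\{x\}$ with (2c): by Proposition~\ref{p:BMZ}, $\supp(\xi^a_X(x))\subseteq X\setminus\{x\}$ would yield $\xi^a_X(x)\in F[X\setminus\{x\};X]$, so (2c) forces $\supp(\xi^a_X(x))=\{x\}$. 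Finally (2e) is immediate from (2d): distinct points have distinct singleton supports and hence distinct images under $\xi^a_X$.
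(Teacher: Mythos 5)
Your proof is correct and follows the same decomposition as the paper's: the same reduction of $\xi^a_X$ to $\xi^a_2$ via $h\circ\bar 0_2=\bar x_X$, the same intersection argument for (1b), the same collapse map $\chi:X\to 2$ for (2c), and the same use of Proposition~\ref{p:BMZ} for (2d). The one place you diverge is (2b), where your argument is actually cleaner than the paper's: you apply the idempotent constant map $c_1:2\to 2$ with image $\{1\}$ and compute $Fc_1(\xi^a_2(0))$ in two ways (as $\xi^a_2(0)$ via $c_1\circ i_{\{1\},2}=i_{\{1\},2}$, and as $\xi^a_2(1)$ via $c_1\circ\bar 0_2=\bar 1_2$), whereas the paper first writes $\xi^a_2(0)=F\bar 1_2(a'')$ for an auxiliary $a''\in F1$ and then uses the retraction $\gamma:2\to 1$ to identify $a''=a$; both reach the same contradiction $\xi^a_2(0)=\xi^a_2(1)$, but your version avoids introducing $a''$ altogether.
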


\begin{proof} 1. Assume that the map $\xi^a_2:2\to F2$ is not injective. Then $F\bar 0_{2}(a)=\xi^a_2(0)=\xi^a_2(1)=F\bar 1_{2}(a)$.

1a. To prove that the map $\xi^a_X:X\to FX$ is constant, choose any distinct points $x,y\in X$. Let $f:2\to X$ be the map defined by $f(0)=x$, $f(1)=y$. It follows that $\bar x_{X}=f\circ \bar 0_{2}$ and $\bar y_{X}=f\circ \bar 1_{2}$. Then 
$$
\xi^a_X(x)=F\bar x_{X}(a)=Ff\circ F\bar 0_{2}(a)=Ff(\xi_2^a(0))=Ff(\xi_2^a(1))=Ff\circ F\bar 1_2(a)=F\bar y_X(a)=\xi^a_X(y).
$$

1b. Therefore, for any points $x,y\in X$ we have  $\xi^a_X(x)=\xi^a_X(y)=F\bar y_X(a)\in F[\{y\};X]$ and hence $\supp(\xi^a_X(x))\subseteq \bigcap_{y\in X}\{y\}=\emptyset$.
\smallskip

2. Assume that the map $\xi^a_2:2\to F2$ is injective.

2a. First we show that $a\notin F[\emptyset;1]$. To derive a contradiction, assume that $a\in F[\emptyset;1]$. In this case we can find an element $a'\in F\emptyset$ such that $a=Fi_{\emptyset,1}(a')$. It follows that for every $z\in 2$
$$\xi_2^a(z)=F\bar z_2(a)=F\bar z_2(Fi_{\emptyset,1}(a'))=F(\bar z_2\circ i_{\emptyset,1})(a')=Fi_{\emptyset,2}(a')$$and hence the map $\xi_2^a:2\to \{Fi_{\emptyset,2}(a')\}\subseteq F2$ is constant, which contradicts our assumption. This contradiction shows that $a\notin F[\emptyset;1]$.
 \smallskip

2b. Now we prove that $\xi_2^a(0)\notin F[\{1\};2]$. To derive a contradiction, assume that $\xi_2^a(0)\in F[\{1\};2]=Fi_{\{1\},2}[F\{1\}]$. Then we can find an element $a'\in F\{1\}$ such that $\xi_2^a(0)=Fi_{\{1\},2}(a')$. Observe that $\bar 1_2=i_{\{1\},2}\circ g$ where $g:1\to\{1\}$ is the unique bijection. Since $Fg:F1\to F\{1\}$ is a bijection, there exists an element $a''\in F1$ such that $a'=Fg(a'')$. It follows that $$F\bar 0_2(a)=\xi_2^a(0)=Fi_{\{1\},2}(a')=Fi_{\{1\},2}(Fg(a''))=F(i_{\{1\},2}\circ g)(a'')=F\bar 1_2(a'').$$ Now consider the constant function $\gamma:2\to1$ and observe that $\gamma\circ \bar 0_2=\gamma\circ \bar 1_2$ is the identity map of the singeton $1$. Then $F\gamma\circ F\bar 0_2=F\gamma\circ F\bar 1_2$ is the identity map of $F1$.
Consequently,
$$a''=F\gamma\circ F\bar 1_2(a'')=F\gamma\circ F\bar 0_2(a)=a$$and then $\xi_2^a(0)=F\bar 0_2(a)=F\bar 1_2(a'')=F\bar 1_2(a)=\xi_2^a(1)$, which contradicts the injectivity of the map $\xi^a_2:2\to F2$.
\smallskip

2c. Given any $x\in X$ we will show that $\xi_X^a(x)\notin F[X\setminus\{x\};X]$. To derive a contradiction, assume that $\xi_X^a(x)\in F(A;X)$ where $A:=X\setminus\{x\}$.
Then $\xi_X^a(x)=Fi_{A,X}(a')$ for some $a'\in FA$. Let $\phi:A\to\{1\}$ be the constant function and $\chi:X\to 2$ be the function defined by $\chi(x)=0$ and $\chi(a)=1$ for any $a\in A$. It follows that $\bar 0_2=\chi\circ \bar x_X$ and $\chi\circ i_{A,X}=i_{\{1\},2}\circ \phi$. Then 
\begin{multline*}
\xi_2^a(0)=F\bar 0_2(a)=F\chi\circ F\bar x_X(a)=F\chi(\xi_X^a(x))=F\chi\circ Fi_{A,X}(a')=\\
=Fi_{\{1\},2}\circ F\phi(a')\in Fi_{\{1\},2}[F\{1\}]\subseteq F[\{1\},2],
\end{multline*}
which contradicts (2b).
\smallskip

2d. Now we are able to prove that $\supp(\xi^a_X(x))=\{x\}$ for every $x\in X$. It follows from
$$
\xi_X^a(x)=F\bar x_X(a)=Fi_{\{x\},X}\circ F\bar x_{\{x\}}(a)\in Fi_{\{x\},X}[F\{x\}]=F[\{x\};X]$$ that $\supp(\xi^a_X(x))\subseteq\{x\}$. 

Assuming that $\supp(\xi^a_X(x))\ne\{x\}$, we conclude that $\supp(\xi^a_X(x))=\emptyset$. By Proposition~\ref{p:BMZ}, $\xi^a_X(x)\in F[X\setminus\{x\};X]$, which contradicts the statement (2c).
\smallskip

2e. By the statement (2d), for any distinct points $x,y\in X$ the elements $\xi^a_X(x)$ and $\xi^a_X(y)$ are distinct as they have distinct supports. So, the function $\xi^a_X:X\to FX$ is injective.
\end{proof}

\begin{lemma}\label{l:xi} For any element $a\in F1$, the map $\xi^a_X:(X,d_X)\to (FX,d^p_{FX})$ is an injective isometry if the map $\xi^a_2:2\to F2$ is injective, the distance $d_X$ is an $\infty$-metric and either $p=1$ or $F$ preserves supports.
\end{lemma}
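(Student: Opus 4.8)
The plan is to combine the non-expanding property of $\xi^a_X$ (already furnished by Theorem~\ref{t:alt}) with the lower bounds on $d^p_{FX}$ established in Theorems~\ref{t:d1} and \ref{t:shukel}, together with the structural information about $\xi^a_X$ supplied by Lemma~\ref{l:delta}. First I would dispose of the degenerate cases $|X|\le 1$, where the statement is trivial because the only distances involved vanish (if $X=\emptyset$ then $X^1=\emptyset$ and there is nothing to prove, and if $|X|=1$ the map is a constant between one-point spaces). So from now on assume $|X|\ge 2$.

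Since $\xi^a_2$ is injective by hypothesis, Lemma~\ref{l:delta}(2e) tells us that $\xi^a_X:X\to FX$ is injective, and Lemma~\ref{l:delta}(2d) gives the crucial support identification $\supp(\xi^a_X(x))=\{x\}$ for every $x\in X$. By Theorem~\ref{t:alt} the map $\xi^a_X:(X,d_X)\to(FX,d^p_{FX})$ is non-expanding, so $d^p_{FX}(\xi^a_X(x),\xi^a_X(y))\le d_X(x,y)$ for all $x,y$; it remains to prove the reverse inequality. Fix distinct points $x,y\in X$ and write $a_x=\xi^a_X(x)$, $a_y=\xi^a_X(y)$. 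By injectivity $a_x\ne a_y$, and by the support identification $\supp(a_x)\cup\supp(a_y)=\{x,y\}$.

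Now I would split according to the alternative in the hypothesis. If $p=1$, then Theorem~\ref{t:d1} applied to the distinct elements $a_x,a_y$ yields $d^1_{FX}(a_x,a_y)\ge \ud_X(\supp(a_x)\cup\supp(a_y))=\ud_X(\{x,y\})$, and since $x\ne y$ this separatedness number equals $d_X(x,y)$. If instead $F$ preserves supports, then Theorem~\ref{t:shukel} gives $d^p_{FX}(a_x,a_y)\ge d_{HX}(\supp(a_x),\supp(a_y))=d_{HX}(\{x\},\{y\})=d_X(x,y)$, using that the Hausdorff distance between two singletons is the distance between their points. In either case the lower bound matches the upper bound, so $d^p_{FX}(a_x,a_y)=d_X(x,y)$, and together with injectivity this shows $\xi^a_X$ is an injective isometry.

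The argument is essentially an assembly of earlier results, so there is no single deep obstacle; the one point that must not be overlooked is that both conclusions hinge on the support identification $\supp(\xi^a_X(x))=\{x\}$ from Lemma~\ref{l:delta}(2d) — without it the right-hand sides of the bounds in Theorems~\ref{t:d1} and \ref{t:shukel} would not simplify to $d_X(x,y)$. It is precisely here that the injectivity of $\xi^a_2$ enters, which is why that hypothesis cannot be dropped; the $\infty$-metric assumption on $d_X$ is what makes Theorems~\ref{t:d1} and~\ref{t:shukel} deliver a positive lower bound (and is consistent with the cases where $d_X(x,y)=\infty$, in which both sides are simply $\infty$).
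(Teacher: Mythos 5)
Your proof is correct and follows essentially the same route as the paper: injectivity and the support identification $\supp(\xi^a_X(x))=\{x\}$ from Lemma~\ref{l:delta}, non-expansion from Theorem~\ref{t:alt}, and the matching lower bound from Theorem~\ref{t:d1} when $p=1$. The only divergence is in the support-preserving branch, where you simply cite Theorem~\ref{t:shukel} for the bound $d^p_{FX}\ge d_{HX}(\supp(\cdot),\supp(\cdot))$ while the paper re-runs the linking-chain/Hausdorff-distance argument inline; your shortcut is legitimate (and cleaner), since Theorem~\ref{t:shukel} is proved earlier and independently.
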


\begin{proof} Assume that the map $\xi^a_2:2\to F2$ is injective, $d_X$ is an $\infty$-metric and either $p=1$ or $F$ preserves supports. By Lemma~\ref{l:delta} and Theorem~\ref{t:alt}, the map $\xi^a_X:(X,d_X)\to (FX,d^p_{FX})$ is injective and non-expanding.
To show that $\xi^a_X$ is an isometry, fix any distinct elements $x,y\in X$. We need to show that $d^p_{FX}(\xi_X^a(x),\xi_X^a(y))\ge d_X(x,y)$.
 By Lemma~\ref{l:delta}(2d), $\supp(\xi^a_X(z))=\{z\}$ for any $z\in X$.

If $p=1$, then by  Theorem~\ref{t:d1},
$$
\begin{aligned}
d^p_{FX}(\xi^a_X(x),\xi^a_X(y))&\ge \min\{d_X(u,v):u,v\in\supp(\xi^a_X(x))\cup\supp(\xi^a_X(y)),\;u\ne v\}=\\
&=\min\{d_X(u,v):u,v\in\{x\}\cup\{y\},\;u\ne v\}=d_X(x,y).
\end{aligned}
$$

Next, we consider the case of functor $F$ that preserves supports. To derive a contradiction, assume that $d^p_{FX}(\xi^a_X(x),\xi^a_X(y))<d_X(x,y)$ and hence
 $$d^\infty_{FX}\big(\xi^a_X(x),\xi^a_X(y)\big)\le d^p_{FX}\big(\xi^a_X(x),\xi^a_X(y)\big)<d_X(x,y)$$ 
 according to Lemma~\ref{l:ineq}. 
 
 By the definition of the distance $d^\infty_{FX}$, there exists a $(\xi^a_X(x),\xi^a_X(y))$-linking chain $\big((a_i,f_i,g_i)\big){}_{i=0}^l\in L_{FX}(a,b)$ such that $\sum_{i=0}^ld^\infty_{X^{\!<\!\w}}(f_i,g_i)<d_X(x,y)$.
Let $b_0:=\xi^a_X(x)=Ff_0(a_0)$, $b_{l+1}:=\xi^a_X(y)=Fg_l(a_l)$ and $b_i=Ff_i(a_i)=Fg_{i-1}(a_{i-1})$ for $i\in\{1,\dots,l\}$. Since the functor $F$ preserves supports, $$\supp(b_i)=f_i[\supp(a_i)]\mbox{ \ and \ }\supp(b_{i+1})=g_i[\supp(a_i)]$$ for all $i\in\{0,\dots,l\}$. Consequently, $d_{HX}(\supp(b_i),\supp(b_{i+1}))\le d^\infty_{X^{\!<\!\w}}(f_i,g_i)$ for every $i\in\{0,\dots,l\}$.
Here by $d_{HX}$ we denote the Hausdorff distance on the hyperspace $[X]^{<\w}$ of all finite subsets of $X$. It follows that
\begin{multline*}
d_X(x,y)=d_{HX}(\{x\},\{y\})=d_{HX}(\supp(b_0),\supp(b_{k+1}))\le\\
\le\sum_{i=0}^ld_{HX}(\supp(b_i),\supp(b_{i+1}))
\le\sum_{i=0}^ld^\infty_{X^{\!<\!\w}}(f_i,g_i)<d_X(x,y),
\end{multline*}
which is a desired contradiction completing the proof of the inequality\newline $d^p_{FX}(\xi^a_X(x),\xi^a_X(y))\ge d_X(x,y)$.
\end{proof}

Now we can prove the main result of this section.

\begin{theorem}\label{t:xi} For an element $a\in F1$, the map $\xi^a_X:(X,d_X)\to (FX,d^p_{FX})$ is an injective isometry if the map $\xi^a_2:2\to F2$ is injective and either $p=1$ or $F$ preserves supports.
\end{theorem}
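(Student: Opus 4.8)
The plan is to reduce the statement to the case already settled in Lemma~\ref{l:xi}, where $d_X$ is assumed to be an $\infty$-metric. The new difficulty is precisely that $d_X$ may now fail to separate points, so neither injectivity nor the isometry inequality can be read off from the metric geometry alone; I will therefore establish injectivity and the isometry property by separate arguments.

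First I would dispatch injectivity. If $|X|\le 1$, then $\xi^a_X$ has a domain with at most one point and is trivially injective, so assume $|X|\ge 2$. Under this assumption the injectivity of $\xi^a_2$ together with Lemma~\ref{l:delta}(2e) immediately gives that $\xi^a_X\colon X\to FX$ is injective as a map of sets. The key observation is that this is a purely combinatorial fact and requires no hypothesis whatsoever on the distance $d_X$, which is exactly why it survives the weakening of the $\infty$-metric assumption.

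For the isometry property, the idea is to transport the problem into an $\infty$-metric space. By Proposition~\ref{p:emb}, the map $i\colon X\to\bar\IR^X$, $i\colon x\mapsto d_X(x,\cdot)$, is an isometry; let $Y:=i[X]$ carry the distance inherited from the $\sup$-distance on $\bar\IR^X$. Since $\bar\IR^X$ is an $\infty$-metric space (distinct functions differ at some point and $d_{\bar\IR}$ is an $\infty$-metric), so is its subspace $Y$. Moreover $i\colon X\to Y$ is a \emph{surjective} isometry onto its image; it need not be injective, but that is irrelevant, because Lemma~\ref{l:isom1} requires only surjectivity and yields that $F^pi\colon F^pX\to F^pY$ is an isometry. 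Since $Y$ is an $\infty$-metric space, $\xi^a_2$ is injective, and either $p=1$ or $F$ preserves supports, Lemma~\ref{l:xi} applies to $Y$ and shows that $\xi^a_Y\colon Y\to FY$ is an isometry.

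It then remains to assemble these pieces through naturality. Applying Proposition~\ref{p:xi-natural} to $i\colon X\to Y$ gives $Fi\circ\xi^a_X=\xi^a_Y\circ i$, so that for any $x,x'\in X$
\begin{align*}
d^p_{FX}\big(\xi^a_X(x),\xi^a_X(x')\big)
&= d^p_{FY}\big(Fi(\xi^a_X(x)),Fi(\xi^a_X(x'))\big)\\
&= d^p_{FY}\big(\xi^a_Y(i(x)),\xi^a_Y(i(x'))\big)\\
&= d_Y(i(x),i(x')) = d_X(x,x'),
\end{align*}
using in succession that $F^pi$ is an isometry (Lemma~\ref{l:isom1}), the naturality identity, that $\xi^a_Y$ is an isometry (Lemma~\ref{l:xi}), and that $i$ is an isometry. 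This is exactly the required isometry condition, and combined with the injectivity established above it completes the proof. I expect the only genuinely delicate point to be the reduction step: the temptation is to factor directly through a quotient, but the cleaner route is to embed via Proposition~\ref{p:emb}, observe that the resulting $i$ is surjective onto its image and hence amenable to Lemma~\ref{l:isom1} despite possibly not being injective, and to supply injectivity of $\xi^a_X$ independently through the combinatorial Lemma~\ref{l:delta}(2e).
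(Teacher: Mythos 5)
Your proof is correct, and it reaches the conclusion by a route that is parallel to, but not identical with, the paper's. Both arguments reduce to Lemma~\ref{l:xi} (the $\infty$-metric case) via the naturality identity of Proposition~\ref{p:xi-natural} applied to a canonical map from $X$ onto an $\infty$-metric space; the difference lies in which map and which functorial property you invoke. The paper passes to the metric quotient $\tilde X=X/_\approx$ (where $x\approx y$ iff $d_X(x,y)=0$), notes that the quotient map $q$ is a surjective isometry, and uses only the non-expansion of $Fq$ from Theorem~\ref{t:Lip} to get the lower bound $d^p_{FX}(\xi^a_X(x),\xi^a_X(y))\ge d_X(x,y)$, the upper bound being the separately established non-expansion of $\xi^a_X$. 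You instead realize the same quotient concretely as $Y=i[X]\subseteq\bar\IR^X$ via Proposition~\ref{p:emb} (indeed $i(x)=i(y)$ iff $d_X(x,y)=0$, so $Y$ is canonically the space $\tilde X$), and you invoke the stronger Lemma~\ref{l:isom1} to make $F^pi$ a genuine isometry; this buys you the isometry property of $\xi^a_X$ as a single chain of equalities, without needing the non-expansion of $\xi^a_X$ as a separate ingredient, at the cost of calling on a heavier lemma than the one-sided Lipschitz bound the paper uses. Your separate treatment of injectivity via Lemma~\ref{l:delta}(2e) and of the degenerate case $|X|\le 1$ matches the paper's, and all the hypotheses of the lemmas you cite (surjectivity of $i$ onto $Y$, the $\infty$-metric property of $Y$ inherited from $\bar\IR^X$) are verified correctly.
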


\begin{proof} Assume that the map $\xi^a_2:2\to F2$ is injective and either $p=1$ or $F$ preserves supports. By Lemma~\ref{l:delta} and Theorem~\ref{t:alt}, the map $\xi^a_X:(X,d_X)\to (FX,d^p_{FX})$ is injective and non-expanding.
To show that $\xi^a_X$ is an isometry, it remains to check that $d^p_{FX}(\xi^a(x),\xi^a(y))\ge d_X(x,y)$ for any points $x,y\in X$.

On the set $X$ consider the equivalence relation $\approx$ defined by $x\approx y$ iff $d_X(x,y)=0$. For any $x\in X$ let $O[x;0]=\{y\in X:d_X(x,y)\le0\}$ be its equivalence class. The quotient set $\tilde X=\{O[x;0]:x\in X\}$ of $X$ by the equivalence relation $\approx$ carries a unique $\infty$-metric $d_{\tilde X}$ such that the quotient map $q:X\to\tilde X$, $q:x\mapsto O[x;0]$, is an isometry.
Since $d_{\tilde X}$ is an $\infty$-metric, we can apply Lemma~\ref{l:xi} and conclude that for any points $x,y\in \tilde X$ we have the equality $d^p_{F\tilde X}(\xi^a_{\tilde X}(x),\xi^a_{\tilde X}(y))=d_{\tilde X}(x,y)$.

By Theorem~\ref{t:Lip}, the map $Fq:(FX,d^p_{FX})\to (F\tilde X,d^p_{F\tilde X})$ is non-expanding. Consequently, for any points $x,y\in X$ we obtain the desired inequality:
\begin{multline*}
d^p_{FX}(\xi_X^a(x),\xi_X^a(y))\ge d^p_{F\tilde X}(Fq\circ \xi^a_X(x),Fq\circ\xi^a_X(y))=\\=d^p_{F\tilde X}\big(\xi^a_{\tilde X}(q(x)),\xi^a_{\tilde X}(q(y))\big)=d_{\tilde X}(q(x),q(y))=d_X(x,y),
\end{multline*}
witnessing that the map $\xi^a_X:X\to F^pX$ is an isometry.
\end{proof}

\section{Preserving the compactness by the functor $F^p$}

A distance space $(X,d_X)$ is called {\em compact} if each sequence in $X$ contains a convergent subsequence. It is a standard exercise to prove that a distance space $X$ is compact if and only if it is compact in the topological sense (i.e., each open cover of $X$ has a finite subcover). Here on $X$ we consider the topology consisting of sets $U\subseteq X$ such that for every $x\in X$ there exists $\e>0$ such that the $\e$-ball $O(x;\e)=\{y\in X:d_X(y,x)<\e\}$ is contained in $U$. Discussing topological properties of distance spaces we have in mind this topology generated by the distance. The topology of a distance space $(X,d_X)$ is Hausdorff if and only if the distance $d_X$ is an $\infty$-metric.

\begin{theorem}\label{t:top} If the functor $F$ is finitary and has finite degree, then: \begin{enumerate}
\item[\textup{1)}] For any compact distance space $(X,d_X)$ the distance space $(FX,d_{FX})$ is compact;
\item[\textup{2)}] For any continuous map $f:(X,d_X)\to (Y,d_Y)$ between compact distance spaces the map $Ff:(FX,d^p_{FX})\to (FY,d^p_{FY})$ is continuous.
\end{enumerate}
\end{theorem}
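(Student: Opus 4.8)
The plan is to prove the two statements by separate short arguments, each reducing the claim to results already established. For part~(1) I would exhibit $(FX,d^p_{FX})$ as a \emph{finite} union of continuous images of the compact power $X^n$, and then invoke the fact that a continuous image of a compact space is compact. For part~(2) I would first upgrade continuity to uniform continuity on the compact domain $X$ (a Heine--Cantor phenomenon) and then invoke the preservation of microform maps from Corollary~\ref{c:uniform}.

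For part~(1), set $n=\max\{1,\deg(F)\}$ and assume first that $X\neq\emptyset$. The key claim is that the maps $\xi^b_X:X^n\to FX$, $f\mapsto Ff(b)$, for $b\in Fn$ are jointly surjective, i.e. $FX=\bigcup_{b\in Fn}\xi^b_X[X^n]$. For an element $a\in FX$ with nonempty support (which forces $\deg(F)\ge1$, so $n=\deg(F)$) this is exactly Proposition~\ref{p:BMZ2}, yielding $f\in X^n$ and $b\in Fn$ with $Ff(b)=a$. For $a$ with empty support I would fix a point $x_0\in X$, use Proposition~\ref{p:BMZ} to write $a=Fi_{\{x_0\},X}(a')$ for some $a'\in F\{x_0\}$, and pull $a'$ back along the split constant map $n\to\{x_0\}$ to obtain $b\in Fn$ with $Ff(b)=a$ for the constant function $f\equiv x_0$. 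Since $F$ is finitary, the index set $Fn$ is finite. Each $\xi^b_X$ is non-expanding by Theorem~\ref{t:alt}, hence continuous, so each $\xi^b_X[X^n]$ is a continuous image of $X^n$. By Lemma~\ref{l:ineq} the $\ell^p$-distance $d^p_{X^n}$ is bi-Lipschitz equivalent to $d^\infty_{X^n}$, which visibly induces the product topology, so $X^n$ is compact whenever $X$ is; therefore each $\xi^b_X[X^n]$ is compact, and their finite union $FX$ is compact. The case $X=\emptyset$ is handled directly: by Corollary~\ref{c:small} the distance $d^p_{FX}$ is $\{0,\infty\}$-valued, and since $F$ is finitary the set $F\emptyset$ is finite, hence compact.

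For part~(2), the load-bearing observation is that a continuous map $f:X\to Y$ on a compact distance space $X$ is automatically microform (uniformly continuous). I would prove this by contradiction through sequences: a failure of uniform continuity at some scale $\e$ produces points $x_k,y_k$ with $d_X(x_k,y_k)\le 1/k$ but $d_Y(f(x_k),f(y_k))>\e$; passing to a convergent subsequence $x_{k_j}\to x$ (whence also $y_{k_j}\to x$) and applying continuity of $f$ at $x$ forces $d_Y(f(x_{k_j}),f(y_{k_j}))\le\tfrac{2}{3}\e$ for large $j$, a contradiction. Once $f$ is known to be microform, Corollary~\ref{c:uniform} (applicable since $F$ is finitary and of finite degree) shows that $F^p\!f$ is microform, and every microform map is continuous. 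Note that only compactness of $X$ is actually used in this step.

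The routine parts are the surjectivity bookkeeping in part~(1) (in particular the empty-support and $X=\emptyset$ cases) and the verification that $d^p_{X^n}$ induces the product topology. The single genuinely load-bearing step is recognizing $(FX,d^p_{FX})$ as a finite union of continuous images of compact powers; given that, part~(1) is immediate from ``continuous image of compact is compact,'' and part~(2) follows from the Heine--Cantor reduction together with Corollary~\ref{c:uniform}. I expect no serious obstacle here, since both halves ultimately rest on previously proven preservation properties of the functor $F^p$ rather than on any new estimate for the distance $d^p_{FX}$.
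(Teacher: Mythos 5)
Your proposal is correct and follows essentially the same route as the paper: part (1) is proved by writing $FX$ as a continuous image of the compact space $Fn\times X^n$ via the non-expanding maps $\xi^b_X$ (Theorem~\ref{t:alt} plus Proposition~\ref{p:BMZ2}), and part (2) by upgrading $f$ to a uniformly continuous map on the compact space $X$ and invoking Corollary~\ref{c:uniform}. The only difference is that you spell out the surjectivity bookkeeping and the Heine--Cantor step, which the paper states without proof.
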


\begin{proof} 1. Let $(X,d_X)$ be a compact distance space. If $X=\emptyset$, then by the finitarity of the functor $F$, the set $FX=F\emptyset$ is finite and hence compact. So, we assume that $X\ne \emptyset$.

Since the functor $F$ has finite degree, there exists $n\in\IN$ such that the map $\xi_X:Fn\times X^n\to FX$, $\xi_X:(a,f)\mapsto Ff(a)$, is surjective. The finitary property of the functor $F$ guarantees that the space $Fn$ is finite. Endow the space $Fn$ with the discrete topology.

By Theorem~\ref{t:alt}, for every $a\in Fn$ the map $\xi^a_X:(X^n,d^n_{X^{\!<\!\w}})\to (FX,d^p_{FX})$ is non-expanding and hence continuous. Then the distance space $(FX,d^p_{FX})$ is compact, being a continuous image of the compact topological space $Fn\times X^n$.
\smallskip

2. Let $f:X\to Y$ be any continuous map between compact distance spaces $(X,d_X)$ and $(Y,d_Y)$. By the compactness of $X$, the map $f$ is uniformly continuous. By Corollary~\ref{c:uniform}, the map $F^pf:F^pX\to F^pY$ is uniformly continuous and hence continuous.
\end{proof}

\section{Preserving the completeness by the functor $F^p$}

In this section we adress the problem of preservation of the completeness of distance spaces by the functor $F^p$. The completeness of distance spaces is defined by analogy with the completeness of metric spaces.

A sequence $\{x_n\}_{n\in\w}\subseteq X$ in a distance space $(X,d_X)$ is {\em Cauchy} if for every $\e\in(0,+\infty)$ there exists $n\in\w$ such that $d_X(x_m,x_k)<\e$ for any $m,k\ge n$.

A distance space $(X,d_X)$ is {\em complete} if every Cauchy sequence $\{x_n\}_{n\in\w}\subseteq X$ converges to some point $x_\infty\in X$. The latter means that for every $\e\in(0,+\infty)$ there exists $n\in\w$ such that $d_X(x_m,x_\infty)<\e$ for any $m\ge n$. 

The following theorem is the principal result of this section.

\begin{theorem}\label{t:complete} Assume that the functor $F$ is finitary and has finite degree. If a distance space $(X,d_X)$ is complete, then so is the distance space $F^pX$.
\end{theorem}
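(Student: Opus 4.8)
The plan is to dispose of degenerate cases, replace the given Cauchy sequence by a rapidly converging subsequence, manufacture idempotent ``almost-identity'' retractions reconciling consecutive terms via Lemma~\ref{l:2short}, pass to a limit retraction using the completeness of $X$, and read off the limit. First I would handle the trivial cases. If $X=\emptyset$, then $FX=F\emptyset$ is finite by finitarity, and a finite distance space is complete (a Cauchy sequence is eventually trapped in one $0$-cluster). If $\deg(F)=0$, then every element of $FX$ has empty support, so by Theorem~\ref{t:small} the distance $d^p_{FX}$ is $\{0,\infty\}$-valued and every Cauchy sequence is eventually constant. Then I would reduce to an $\infty$-metric: letting $q\colon X\to\tilde X=X/{\approx}$ be the quotient by the relation $d_X(x,y)=0$, the map $q$ is a surjective isometry onto the complete $\infty$-metric space $\tilde X$, so by Lemma~\ref{l:isom1} the map $Fq\colon F^pX\to F^p\tilde X$ is a surjective isometry, and completeness transfers in both directions along surjective isometries. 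Hence I may assume $d_X$ is a complete $\infty$-metric, $X\ne\emptyset$, $n:=\deg(F)\ge 1$, and $Fn$ finite.

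Now let $(b_k)$ be Cauchy in $F^pX$. Since a Cauchy sequence converges once one of its subsequences does, I would thin it so that $d^p_{FX}(b_k,b_{k+1})<\e_k$ with $\e_k:=2^{-k}$ (after relabelling). By Theorem~\ref{t:ineq} we have $d^\infty_{FX}(b_k,b_{k+1})<\e_k$, so Lemma~\ref{l:2short} (with $A=X$) yields idempotent maps $r_k\colon X\to X$ with $Fr_k(b_k)=Fr_k(b_{k+1})$ and $\sup_{x}d_X(x,r_k(x))\le\delta_k:=n\cdot|Fn|\cdot\e_k$, whence $\sum_k\delta_k<\infty$. For $m<j$ put $\rho_{m,j}:=r_j\circ\cdots\circ r_{m+1}$; then $d_X(\rho_{m,j}(x),\rho_{m,j+1}(x))\le\delta_{j+1}$, so by completeness of $X$ the maps $\rho_{m,j}$ converge pointwise to some $\rho_m\colon X\to X$ with $\sup_xd_X(x,\rho_m(x))\le\sigma_m:=\sum_{j>m}\delta_j\to 0$ and $\rho_m=\rho_{m+1}\circ r_{m+1}$. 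Setting $c_m:=F\rho_m(b_{m+1})$, Lemma~\ref{l:dist} (with $Z=X$, $\id_X$ and $\rho_m$) gives $d^p_{FX}(b_{m+1},c_m)\le n^{1/p}\sigma_m\to 0$; moreover, from $Fr_{m+1}(b_{m+1})=Fr_{m+1}(b_{m+2})$ and $F\rho_m=F\rho_{m+1}\circ Fr_{m+1}$ one obtains $c_m=F\rho_m(b_{m+1})=F\rho_m(b_{m+2})$.

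It then suffices to show that $(c_m)$ converges to some $b_\infty\in FX$, for then $d^p_{FX}(b_{m+1},b_\infty)\le d^p_{FX}(b_{m+1},c_m)+d^p_{FX}(c_m,b_\infty)\to 0$, giving $b_k\to b_\infty$. The cleanest situation is when each retraction can be chosen to fix $\supp(b_{m+2})$ pointwise: then $\rho_m=\rho_{m+1}\circ r_{m+1}$ and $\rho_{m+1}$ agree on $\supp(b_{m+2})$, so $c_m=F\rho_m(b_{m+2})=F\rho_{m+1}(b_{m+2})=c_{m+1}$ and $(c_m)$ is eventually constant, equal to the desired $b_\infty$. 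Such a choice is available whenever the separatedness number $\ud_X(\supp(b_{m+2}))$ exceeds the diameter $\le\delta_{m+1}$ of the equivalence classes produced inside Lemma~\ref{l:2short}, which one can try to arrange by further thinning.

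\emph{The main obstacle is the opposite regime}, in which the supports $\supp(b_k)$ collapse — their separatedness numbers tending to $0$ faster than any attainable $\delta_k$ — exactly as warned by Example~\ref{ex:projsquare}, where a sequence of two-point sets converges to $\emptyset$. Here distinct support points must be allowed to merge, so fixing $\supp(b_{m+2})$ is impossible; instead I would organize the retractions so that $\rho_m$ sends each collapsing cluster to its (unique, by completeness of $X$) limit point, exploiting the invariance clause $r[A]\subseteq A$ of Lemma~\ref{l:2short} to keep these choices coherent as $m$ grows, and then show that $(c_m)$ still stabilizes. Proving this stabilization in the collapsing regime — that is, reconciling the non-uniqueness of the representations $Ff(a)=b$ against a genuinely moving support — is where the bulk of the work lies and is the step I expect to be hardest.
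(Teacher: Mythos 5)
Your proposal is a plan rather than a proof: the preliminary reductions (empty $X$, $\deg(F)=0$, passing to the $\infty$-metric quotient via Lemma~\ref{l:isom1}), the summable thinning, the construction of the retractions $r_k$ from Lemma~\ref{l:2short}, and the pointwise limits $\rho_m$ with $d^p_{FX}(b_{m+1},F\rho_m(b_{m+1}))\le n^{1/p}\sigma_m$ via Lemma~\ref{l:dist} are all sound. But the entire difficulty of the theorem is concentrated in the step you explicitly leave open: showing that $(c_m)$ converges when the supports collapse. Nothing in your setup forces this. The relation you derive is $c_m=F\rho_{m+1}\bigl(Fr_{m+1}(b_{m+2})\bigr)$ versus $c_{m+1}=F\rho_{m+1}(b_{m+2})$, and when $r_{m+1}$ merges two points of $\supp(b_{m+2})$ the element $Fr_{m+1}(b_{m+2})$ can differ from $b_{m+2}$ in an uncontrolled way (for the functor of Example~\ref{ex:projsquare}, merging the two support points of $\{x,y\}$ produces $\emptyset$, an element of strictly smaller support). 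Your suggestion to ``send each collapsing cluster to its limit point'' presupposes that you already know which coordinates converge and that the merged element stabilizes, which is precisely what has to be proved; so there is a genuine gap, acknowledged but not closed.

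The paper closes this gap with a different mechanism that your approach lacks. Before doing any analysis it minimizes the support cardinality: it chooses the smallest $s$ for which the Cauchy sequence is asymptotically approximated by elements $b_i'$ with $|\supp(b_i')|=s$, writes $b_i'=Fh_i(a)$ with a \emph{fixed} $a\in Fs$ (pigeonhole, using finitarity), and then applies a Ramsey-type dichotomy (Lemma~\ref{l:dichotomic}) to split the coordinates $k\in s$ into those for which $(h_i(k))_i$ is Cauchy and those which are uniformly $\e$-separated. The minimality of $s$ guarantees that collapsing the Cauchy coordinates onto their limits cannot shrink the support any further, and the separated coordinates are then killed by the lower bound $d^1_{FX}(b_i'',b_j'')\ge\ud_X(\supp(b_i'')\cup\supp(b_j''))$ of Theorem~\ref{t:d1} together with a symmetric-difference computation, yielding a contradiction unless every coordinate converges; the limit is then simply $Fh(a)$. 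If you want to complete your argument along your own lines, you would need to import both ingredients — the minimization of $|\supp|$ over asymptotically equivalent representatives and a dichotomy separating convergent from separated support points — at which point you would essentially be reproducing the paper's proof.
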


The proof of this theorem is preceded by several definitions and lemmas.
Let $\Omega$ be an infinite subset of $\w$. A sequence $\{x_n\}_{n\in\Omega}\subseteq X$ is called
\begin{itemize}
\item {\em convergent} if there exists $x\in X$ such that $\lim\limits_{\Omega\ni i\to\infty}d_X(x_i,x)=0$, i.e., \newline
$\exists x\in X\;\forall \e\in(0,+\infty)\;\exists F\in[\Omega]^{<\w}\;\forall i\in\Omega\setminus F\;\; d_X(x_i,x)<\e$;
\item {\em Cauchy} if $\lim_{\Omega\ni i,j\to\infty}d_X(x_i,x_j)=0$, i.e.,\newline
$\forall \e\in(0,\infty)\;\exists F\in[\Omega]^{<\w}\;\forall i,j\in\Omega\setminus F\;\;d_X(x_i,x_j)<\e$;
\item {\em $\e$-separated} for some $\e\in(0,\infty)$ if $d_X(x_i,x_j)\ge\e$ for any distinct numbers $i,j\in\Omega$;
\item {\em separated} if it is $\e$-separated for some $\e>0$.
\end{itemize}

The following lemma is well-known for metric spaces and can be proved by analogy for distance spaces.

\begin{lemma}\label{l:css} A Cauchy sequence $(x_n)_{n\in\w}$ in a distance space is convergent if and only if contains a convegent subsequence $(x_n)_{n\in\Omega}$. 
\end{lemma}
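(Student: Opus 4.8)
The plan is to dispatch the \emph{only if} implication immediately and then give the classical argument for the \emph{if} implication. A convergent sequence is trivially its own convergent subsequence (take $\Omega=\w$), so all the content lies in the converse: a Cauchy sequence possessing a convergent subsequence must itself converge. I would prove this by combining the Cauchy condition with the convergence of the subsequence through a single application of the triangle inequality.

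First I would fix the convergent subsequence $(x_n)_{n\in\Omega}$ and its limit $x\in X$, so that $\lim_{\Omega\ni i\to\infty}d_X(x_i,x)=0$. Given an arbitrary $\e\in(0,\infty)$, I would invoke the Cauchy property to choose $N\in\w$ with $d_X(x_m,x_k)<\e/2$ for all $m,k\ge N$, and then use convergence of the subsequence to select a single index $i\in\Omega$ with $i\ge N$ and $d_X(x_i,x)<\e/2$. Both of these distances are finite by construction, so for every $m\ge N$ the triangle inequality yields $d_X(x_m,x)\le d_X(x_m,x_i)+d_X(x_i,x)<\e/2+\e/2=\e$. Since $\e\in(0,\infty)$ was arbitrary, this shows that the whole sequence $(x_n)_{n\in\w}$ converges to $x$, as required.

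The only feature distinguishing the distance-space setting from the metric case is that $d_X$ may take the value $\infty$, so I expect this to be the only point needing a word of care rather than a genuine obstacle. It causes no trouble: both hypotheses supply \emph{finite} bounds of $\e/2$, so the triangle inequality is applied only to finite summands and the extended arithmetic of $[0,\infty]$ never intervenes. This is precisely why the excerpt can assert that the lemma ``can be proved by analogy for distance spaces'' — the standard metric-space proof transcribes verbatim.
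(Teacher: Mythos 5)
Your proof is correct and is exactly the classical argument that the paper alludes to when it says the lemma "can be proved by analogy for distance spaces" (the paper gives no explicit proof). Your remark that the possible value $\infty$ causes no trouble because both bounds are the finite quantity $\e/2$ is precisely the point that makes the transcription from the metric case work.
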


 For an infinite subset $\Omega$ of $\w$ we put $[\Omega]^2:=\{(i,j)\in \Omega\times\Omega:i<j\}$.

\begin{lemma}\label{l:dichotomic} For every $n\in\IN$, and every sequence $\{f_i\}_{i\in\w}\subseteq X^n$ there exists an infinite subset $\Omega\subseteq\w$ such that for any $k,m\in n$ either  $$\lim_{\Omega\ni i,j\to\infty}d_X(f_i(k),f_j(m))=0\mbox{  \  or \ }\inf_{(i,j)\in[\Omega]^2}d_X(f_i(k),f_j(m))>0.$$\end{lemma}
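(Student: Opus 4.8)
The plan is to establish the dichotomy one coordinate‑pair at a time and to manufacture the single index set $\Omega$ by the infinite Ramsey theorem together with a diagonal argument. For each ordered pair $(k,m)\in n\times n$ and each $s\in\IN$ I would introduce the $2$‑colouring $\chi_{k,m,s}$ of the set $[\w]^2$ of pairs $i<j$, putting $\chi_{k,m,s}(i,j)=1$ if $d_X(f_i(k),f_j(m))\ge\tfrac1s$ and $\chi_{k,m,s}(i,j)=0$ otherwise. There are only countably many such colourings, so I would enumerate them and, starting from $A_0=\w$, recursively use infinite Ramsey to choose infinite sets $\w=A_0\supseteq A_1\supseteq A_2\supseteq\cdots$ with $A_r$ homogeneous for the $r$‑th colouring; choosing $a_r\in A_r$ with $a_r>a_{r-1}$ then yields an infinite $\Omega=\{a_1<a_2<\cdots\}$ such that each colouring $\chi_{k,m,s}$ is constant on all pairs drawn from a suitable tail $\{a_t:t\ge r\}$ of $\Omega$. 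I denote this eventual constant value by $c_{k,m,s}\in\{0,1\}$.

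Fixing a pair $(k,m)$, two cases arise. If $c_{k,m,s^*}=1$ for some $s^*$, then on the corresponding tail every pair $i<j$ satisfies $d_X(f_i(k),f_j(m))\ge\tfrac1{s^*}$, so restricting $\Omega$ to that tail gives $\inf_{(i,j)\in[\Omega]^2}d_X(f_i(k),f_j(m))\ge\tfrac1{s^*}>0$, the second alternative. If instead $c_{k,m,s}=0$ for every $s$, then $d_X(f_i(k),f_j(m))\to0$ as $i<j\to\infty$ along $\Omega$, and I claim this one‑sided convergence already forces the full first alternative. Indeed, the estimate $d_X(f_i(k),f_{i'}(k))\le d_X(f_i(k),f_j(m))+d_X(f_{i'}(k),f_j(m))$ with an auxiliary $j>\max\{i,i'\}$ shows that $(f_i(k))_{i\in\Omega}$ is Cauchy, a symmetric estimate fixing one large index of type $k$ shows that $(f_i(m))_{i\in\Omega}$ is Cauchy, the diagonal term $d_X(f_i(k),f_i(m))$ then tends to $0$ by one more triangle inequality, and finally $d_X(f_i(k),f_j(m))\le d_X(f_i(k),f_i(m))+d_X(f_i(m),f_j(m))\to0$ as $i,j\to\infty$ in any order, which is precisely the first alternative.

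Running this dichotomy over the finitely many pairs $(k,m)\in n\times n$ and passing to a common tail of $\Omega$ lying beyond the finitely many thresholds produced in the second‑alternative cases, I obtain a single infinite $\Omega$ valid for all pairs at once: the first alternative is inherited by tails, and the separation bounds only improve on passing to a subset.

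The part I expect to be the genuine obstacle is the last step of the second paragraph, namely reconciling the apparent asymmetry of the two options (a two‑sided limit versus a one‑sided infimum). A naive approach that tries to read off the dichotomy from a single infimum such as $\lim_N\inf\{d_X(f_i(k),f_j(m)):N\le i<j\}$ fails, because the correct witness for the separation alternative is frequently a \emph{sparse} subsequence rather than a tail — this is already visible for $f_i=\log(i+2)\in\IR^1$ with $k=m$, where no tail separates yet a geometrically sparse $\Omega$ does. Ramsey's theorem is exactly the device that produces a uniformly homogeneous, hence sufficiently sparse, index set, while the triangle‑inequality upgrade removes the asymmetry by showing that one‑sided convergence to $0$ automatically upgrades to two‑sided convergence and to convergence on the diagonal.
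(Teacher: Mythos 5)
Your proof is correct, and it takes a genuinely different route from the paper's. The paper argues by induction on $n$: the base case $n=1$ is exactly your Ramsey-plus-diagonalization mechanism applied to the single coordinate (yielding a subsequence that is either separated or Cauchy), but the inductive step is a lengthy case analysis that adjoins the last coordinate to a set $\Omega'$ already good for the first $n-1$ coordinates, passing to sparse subsequences and tracking explicit constants $\e'\ge\e\ge 4\delta$ relating the separation thresholds of different coordinates. You instead run the base-case mechanism simultaneously for all $n^2$ ordered coordinate pairs $(k,m)$ and all thresholds $\tfrac1s$: countably many colourings, one nested Ramsey extraction each, one diagonal sequence, and a final passage to a common tail over the finitely many separation thresholds. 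The one genuinely new ingredient you need --- and supply correctly --- is the triangle-inequality upgrade showing that convergence of $d_X(f_i(k),f_j(m))$ over ordered pairs $i<j$ forces the full two-sided limit including the diagonal terms $d_X(f_i(k),f_i(m))$; this is exactly the point where the asymmetry for $k\ne m$ could have been overlooked, and your chain (Cauchyness of $(f_i(k))_i$ and $(f_i(m))_i$, then the diagonal, then arbitrary order) closes it. Your closing remark that the separation alternative is witnessed by sparse index sets rather than tails, so that Ramsey homogeneity is genuinely needed, is also accurate and is implicitly the same reason the paper invokes Ramsey in its base case. What your argument buys is brevity and uniformity; what the paper's induction buys is explicit quantitative control of the separation constants across coordinates, which the statement of the lemma does not require.
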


\begin{proof} To prove the lemma for  $n=1$, it suffices to check that each sequence $\{x_i\}_{i\in\w}\subseteq X$ contains a subsequence $(x_i)_{i\in\Omega}$, which is either separated or Cauchy. Assuming that $(x_n)_{n\in\w}$ contains no separated subsequences, we will construct a Cauchy subsequence in $(x_n)_{n\in\omega}$.

For every $k\in\w$, consider the function $\chi_k:[\w]^2\to\{0,1\}$ defined by 
$$\chi_k(i,j)=\begin{cases}0,&\mbox{if $d_X(x_i,x_j)<2^{-k}$}\\
1,&\mbox{otherwise}.
\end{cases}
$$
Applying the  Ramsey Theorem \cite[1.3]{Tod}, find an infinite subset $\Omega_0$ of $\w$ such that $\chi_0[[\Omega_0]^2]$ is a singleton. If $\chi_0[[\Omega_0]^2]=\{1\}$, then $(x_n)_{n\in\Omega_0}$ is a $1$-separated subsequence of $(x_n)_{n\in\w}$, which is forbidden by our assumption. Therefore, $\chi_0[[\Omega_0]^2]=\{0\}$. Applying the Ramsey Theorem inductively, we can construct a decreasing sequence of infinite sets $(\Omega_k)_{k\in\w}$ such that $\chi_k[[\Omega_k]^2]=\{0\}$ for every $k\in\w$. After completing the inductive construction, choose an infinite set $\Omega\subseteq \w$ such that $\Omega\setminus \Omega_k$ is finite for every $k\in\w$, and observe that the sequence $(x_n)_{n\in\Omega}$ is Cauchy. This completes the proof of the lemma for $n=1$.
\smallskip

Now assume that that for some $n\ge2$ we have proved that for every $l<n$, every sequence $\{f_i\}_{i\in\w}\subseteq X^{l}$  contains a  subsequence $(f_i)_{i\in\Omega}$ such that for every $k,m\in l$, either
$$\lim_{\Omega\ni i,j\to\infty}d_X(f_i(k),f_j(m))=0\mbox{  \  or \ }\inf_{(i,j)\in[\Omega]^2}d_X(f_i(k),f_j(m))>0.$$
Fix any sequence $\{f_i\}_{i\in\w}\subseteq X^n$. By the inductive assumption, there exists an infinite subset $\Omega'\subseteq\w$ such that for every $k,m\in n-1$, either  
$$\lim_{\Omega'\ni i,j\to\infty}d_X(f_i(k),f_j(m))=0\mbox{  \  or \ }\e':=\inf_{(i,j)\in[\Omega']^2}d_X(f_i(k),f_j(m))>0.$$
Applying the inductive assumption once more, find an infinite set $\Omega\subseteq\Omega'$ such that 
$$\lim_{\Omega\ni i,j\to\infty}d_X(f_i(n-1),f_j(n-1))=0\mbox{  \  or \ }\inf_{(i,j)\in[\Omega]^2}d_X(f_i(n-1),f_j(n-1))\ge\e$$
for some $\e\in(0,\e']$. Let $C$ be the set of all numbers $k\in n$ such that the sequence $(f_i(k))_{i\in\Omega}$ is Cauchy.

Two cases are possible.

1) $n-1\in C$. This case has two subcases.

1a) For some infinite subset $\Lambda\subseteq \Omega$ and some $l\in n-1$ we have $$\lim\limits_{\Lambda\ni i\to\infty}d_X(f_i(n-1),f_i(l))=0.$$ Replacing $\Lambda$ by a smaller infinite subset, we can additionally assume that
$$\sup\limits_{j\in\Lambda}d_X(f_j(n-1),f_j(l))<\tfrac12\e.$$
We claim that for any $k,m\in n$ with $\lim_{\Lambda\ni i,j\to\infty}d_X(f_i(k),f_j(m))\ne 0$ we have\break $\inf_{(i,j)\in[\Lambda]^2}d_X(f_i(k),f_j(m))\ge\tfrac12\e$. If $k,m\in n-1$, then this follows from the definition of $\e'\ge\e>\frac12\e$. Now assume that $k$ or $m$ does not belong to $n-1$. The case $k=m=n-1$ is impossible as $n-1\in C$. So, $k\ne m$ and we lose no generality assuming that $k<m=n-1$. It follows from $$\lim\limits_{\Lambda\ni i\to\infty}d_X(f_i(k),f_i(n-1))\ne 0=\lim_{\Lambda\ni i\to\infty}d_X(f_i(n-1),f_i(l))$$ that $\lim\limits_{\Lambda\ni i\to\infty}d_X(f_i(k),f_j(l))\ne0$ and then 
$$
\inf_{(i,j)\in[\Lambda]^2}d_X(f_i(k),f_j(n{-}1))\ge \inf_{(i,j)\in[\Lambda]^2}d_X(f_i(k),f_j(l))-\sup_{j\in\Lambda}d_X(f_j(n{-}1),f_j(l))\ge
 \e'-\tfrac12\e\ge\tfrac12\e.
$$

1b) For every infinite set $\Lambda\subseteq \Omega$ and every $l\in n-1$ we have\newline $\lim\limits_{\Lambda\ni i\to\infty}d_X(f_i(n-1),f_i(l))\ne 0$. In this case, we can find an infinite subset $\Lambda'\subseteq\Omega$ and a number $\delta\in(0,\frac14\e]$ such that $\inf\limits_{k\in n-1}\inf\limits_{i\in\Lambda'}d_X(f_i(k),f_i(n-1))\ge 2\delta$.  Find an infinite subset $\Lambda''\subseteq\Lambda'$ such that $\sup_{i,j\in\Lambda''}d_X(f_i(k),f_j(k))<\delta$ for every $k\in C$. By our assumption, $n-1\in C$ and for every $k\in n\setminus C$ the sequence $(f_i(k))_{i\in \Omega'}$ is $\e'$-separated. Then for the smallest number $\lambda$ of the set $\Lambda''$,  we can find an infinite subset $\Lambda\subseteq\Lambda''$ such that $$\min_{k\in n\setminus C}\inf_{i\in\Lambda}d_X(f_i(k),f_\lambda(n-1))\ge\tfrac12\e'.$$
We claim that for any $k,m\in n$, $$\lim_{\Lambda\ni i,j\to\infty}d_X(f_i(k),f_j(m))\ne 0\;\Ra\;\inf_{(i,j)\in[\Lambda]^2}d_X(f_i(k),f_j(m))\ge\delta.$$ If $k,m\in n-1$, then this follows from the definition of $\e'\ge\e\ge4\delta$. Now assume that $k$ or $m$ does not belong to $n-1$. We lose no generality assuming that $k\le m=n-1$. If $k\in C$, then $\lim_{\Lambda\ni i,j\to\infty}d_X(f_i(k),f_j(n-1))\ne 0$ implies $\lim_{\Lambda\ni i\to\infty}d_X(f_i(k),f_i(n-1))\ne 0$ and hence 
$$\inf_{(i,j)\in[\Lambda]^2}\!d_X(f_i(k),\!f_j(n-1))\ge \inf_{i\in\Lambda}d_X(f_i(k),\!f_i(n-1))-\sup_{i,j\in \Lambda}d_X(f_j(n-1),\!f_i(n-1))\ge 2\delta-\delta.$$
If $k\in n\setminus C$, then 
$$\!\inf_{(i,j)\in[\Lambda]^2}\!d_X(f_i(k),\!f_j(n-1))\ge \inf_{i\in\Lambda}\!d_X(f_i(k),\!f_\lambda(n-1))-\sup_{j\in\Lambda}d_X(f_\lambda(n-1),\!f_j(n-1))\ge \tfrac12\e'-\delta.$$

2) Now consider the second case: $n-1\notin C$. In this case the sequence $(f_i(n-1))_{i\in\Omega}$ is $\e$-separated by the choice of $\e$. Find an element $\lambda_0\in\Omega$ such that $$\sup\{d_X(f_i(k),f_j(k)):i,j\in\Omega\cap[\lambda_0,\infty)\}<\tfrac14\e\quad\mbox{for every $k\in C$}.$$ Using the $\e$-separatedness of the sequences $(f_i(l))_{i\in\Omega}$ for $l\in n\setminus C$, we can inductively construct an increasing sequence $\{\lambda_k\}_{k\in\w}\subseteq\Omega$ such that for every $k\in\w$ the following condition holds:
\begin{itemize}
\item[$(*)$] for any $i\le \lambda_k$, $j\in \Omega\cap[\lambda_{k+1},\infty)$, $m\in n$ and $l\in n\setminus C$ we have $d_X(f_j(l),f_{i}(m))\ge\frac12\e$.
\end{itemize}
Let $\Lambda=\{\lambda_i:i\in\IN\}$. We claim that for any $k,m\in n$,
$$\lim_{\Lambda\ni i,j\to\infty}d_X(f_i(k),f_j(m))\ne 0\;\Ra\;\inf_{(i,j)\in[\Lambda]^2}d_X(f_i(k),f_j(m))\ge\tfrac14\e.$$ If $k,m\in n-1$, then this follows from the definition of $\e'\ge\frac14\e$. Now assume that $k$ or $m$ does not belong to $n-1$. We lose no generality assuming that $k\le m=n-1$. Choose any distinct numbers $\lambda_i,\lambda_j\in\Lambda$. If $k\in C$, then
$$d_X(f_{\lambda_i}(k),f_{\lambda_j}(n-1))\ge d_X(f_{\lambda_0}(k),f_{\lambda_j}(n-1))-d_X(f_{\lambda_0}(k),f_{\lambda_i}(k))\ge \tfrac12\e-\tfrac14\e=\tfrac14\e,$$
by $(*)$ and the choice of $\lambda_0$. If $k\notin C$, then
$d_X(f_{\lambda_i}(k),f_{\lambda_j}(n-1))\ge\frac12\e$ by $(*)$.
\end{proof}

\begin{proof}[Proof of Theorem~\ref{t:complete}] 
Given any Cauchy sequence $\{b_i\}_{i\in\w}\subseteq F^pX$, we should prove that it converges to some element $b_\infty\in FX$. 
Let $s\in\w$ be the smallest number for which there exists an infinite set $\Omega'\subseteq\w$ and a sequence $\{b_i'\}_{i\in\w}\subseteq FX$ such that 
\begin{itemize}
\item[(a)] $\lim\limits_{\Omega'\ni i\to\infty}d^p_{FX}(b_i,b_i')=0$;
\item[(b)] $d^p_{FX}(b_i',b_j')<\infty$ for all $i,j\in\Omega'$;
\item[(c)] $|\supp(b_i')|=s$ for all $i\in\Omega'$.
\end{itemize}
Since the functor $F$ has finite degree, the number $s$ is well-defined and $s\le \deg(F)$. Fix an infinite set $\Omega'\subseteq\w$ satisfying the conditions (a)--(c). The condition (a) implies that the sequence $(b_i')_{i\in\Omega'}$ is Cauchy. If $s=0$, then Theorem~\ref{t:small} and the condition (b) imply $b_i'=b_j'$ for all $i,j\in\Omega'$. In this case the sequence $(b_i')_{i\in\Omega'}$ is convergent and so is the sequence $(b_i)_{i\in\w}$ (to the same limit point).

So, we assume that $s>0$. Using Proposition~\ref{p:BMZ}, for every $i\in\Omega'$,  find a bijective map $h_i:s\to \supp(b_i')\subseteq X$ and an element $a_i\in Fs$ such that $b'_i=Fh_i(a_i)$. Since $F$ is finitary, the set $Fs$ is finite. Then by the Pigeonhole Principle, there exists an element $a\in Fs$ such that the set $\Omega''=\{i\in\Omega':a_i=a\}$ is infinite. By Lemma~\ref{l:dichotomic}, there exists an infinite set $\Omega\subseteq\Omega''$ and $\e\in(0,\infty)$ such that for any $k,m\in s$ either
$$\lim_{\Omega\ni i,j\to\infty}d_X(h_i(k),h_j(m))=0\mbox{ \ or \ }\inf_{(i,j)\in[\Omega]^2}d_X(h_i(k),h_j(m))\ge\e.$$
Replacing $\Omega$ by a smaller infinite subset of $\Omega$, we can additionally assume that for any $k,m\in s$,
$$\lim_{\Omega\ni i,j\to\infty}d_X(h_i(k),h_j(m))=0\;\Ra\;\sup_{i,j\in\Omega}d_X(h_i(k),h_j(m))<\tfrac14\e.$$

 Let $C$ be the set of all numbers $k\in s$ such that the sequence $(h_i(k))_{i\in\Omega}$ is Cauchy and hence converges to some point $h(k)\in X$ of the complete distance space $(X,d_X)$. We can assume that $h(k)=h(m)$ for any $k,m\in C$ with $d_X(h(k),h(m))=0$. The choice of $\e$ guarantees that $$d_X(h(k),h(m))\ge \inf_{(i,j)\in[\Omega]^2}d_X(h_i(k),h_j(m))\ge\e$$ for any $k,m\in C$ with $d_X(h(k),h(m))>0$. Consequently, the set $h[C]:=\{h(k):k\in C\}$ is $\e$-separated. We claim that $C=s$. To derive a contradiction, assume that $C\ne s$.

Consider the (unique) map $r:X\to X$ such that 
$$r(x)=\begin{cases}y&\mbox{if $d_X(x,y)<\frac14\e$ for some $y\in h[C]$};\\
x&\mbox{otherwise}.
\end{cases}
$$
Observe that for every $k\in C$, $m\in s\setminus C$, and distinct numbers $i,j\in\Lambda$, we have
$$d_X(r\circ h_i(k),r\circ h_j(m))=d_X(h(k),h_j(m))\ge d_X(h_i(k),h_j(m))-d_X(h_i(k),h(k))\ge \e-\tfrac14\e.$$

For every $i\in\Omega$, consider the element $b_i''=Fr\circ Fh_i(b)$ and observe that
$$d^p_{FX}(b_i'',b_i')\le d^p_{X^s}(r\circ h_i,h_i)\le\Big(\sum_{k\in S'}d_X(h_i(k),h(k))^p\Big)^{\frac1p}.$$
The latter inequality implies that $$\lim_{\Omega_s\ni i\to\infty}d^p_{FX}(b_i,b_i'')\le
\lim_{\Omega_s\ni i\to\infty}d^p_{FX}(b_i,b_i')+\lim_{\Omega_s\ni i\to\infty}d^p_{FX}(b_i',b_i'')=0+0=0$$and hence the sequence $(b_i'')_{i\in\Omega_s}$ is Cauchy. Observe that for every $i\in\Omega_s$ we have $\supp(b_i'')\subseteq r\circ h_i[s]$ and hence $|\supp(b_i'')|\le|s|=s$. Now the choice of $s$ ensures that the set $\{i\in\Omega:|\supp(b_i'')|<s\}$ is finite. Since $(b_i'')_{i\in\Omega}$ is Cauchy, we can find an infinite set $\Lambda\subseteq\Omega$ such that
\begin{itemize}
\item[(d)] $\supp(b_i'')=r\circ h_i[s]$ for all $i\in\Lambda$;
\item[(e)] $d^1_{FX}(b''_i,b''_j)\le n^{1-\frac1p}d^p_{FX}(b_i'',b_j'')<\tfrac12\e$ for all $i,j\in\Lambda$.
\end{itemize}
The first inequality in (e) follows Theorem~\ref{t:ineq}.

Fix any distinct numbers $i,j\in\Lambda$. If $C=\emptyset$, then 
\begin{multline*}
\ud_X(\supp(b''_i)\cup\supp(b''_j))=\ud_X(r\circ h_i[s]\cup r\circ h_j[s])=\ud_X(h_i[s],h_j[s])=\\
\min_{k,m\in s}d_X(h_i(k),h_j(m))\ge\e.
\end{multline*}
If $h[C]$ is a singleton, then 
\begin{multline*}
\ud_X(\supp(b''_i)\cup\supp(b''_j))=\ud_X(r\circ h_i[s]\cup r\circ h_j[s])=\\
\min\{\ud_X(h_i[s\setminus C],h_j[s\setminus C]),\ud_X(h[C],h_j[s\setminus C]),\ud_X(h_i[s\setminus C],h[C]\}\ge\min\{\e,\tfrac34\e,\tfrac34\e\}=\tfrac34\e.
\end{multline*}
If $|h[C]|>1$, then 
\begin{multline*}
\ud_X(\supp(b''_i)\cup\supp(b''_j))=\ud_X(r\circ h_i[s]\cup r\circ h_j[s])=\\
\min\{\ud_X(h[C]),\ud_X(h_i[s\setminus C],h_j[s\setminus C]),d_X(h[C],h_j[s\setminus C]),\ud_X(h_i[s\setminus C],h[C]\}\ge\\
\min\{\e,\e,\tfrac34\e,\tfrac34\e\}=\tfrac34\e.
\end{multline*}
In any case we obtain $\ud_X(\supp(b''_i)\cup\supp(b''_i))\ge\tfrac34\e>\frac12\e$.
The last inequality, condition (e) and Theorem~\ref{t:d1} imply that $b_i''=b_j''$. Then $\supp(b''_i)=\supp(b''_j)$ and the symmetric difference $\supp(b''_i)\Delta\supp(b''_j)$ is empty. On the other  hand, 
$$\supp(b_i'')\Delta\supp(b_j'')=(r\circ h_i[s])\Delta (r\circ h_j[s])=h_i[s\setminus C]\cup h_j[s\setminus C]\ne\emptyset.$$ This contradiction shows that $C=s$. Then then for the element $Fh(b)\in FX$ we have
$$\lim_{\Omega\ni i\to\infty}d^p_{FX}(b'_i,Fh(b))=\lim_{\Omega\ni i\to\infty}d^p_{FX}(Fh_i(b),Fh(b))\le \lim_{\Omega\ni i\to\infty} d^p_{X^s}(h_i,h)=0,$$which means that the sequence $(b_i')_{i\in\Omega}$ converges to $Fh(b)$ and so does the Cauchy sequence $(b_i)_{i\in\w}$ (as $\lim\limits_{\Omega\ni i\to\infty}d^p_{FX}(b_i,b_i')=0$).
\end{proof}

By a {\em completion} of a distance space $X$ we understand a pair $(Y,i)$ consisting of a complete distance space $(Y,d_Y)$ and an injective isometry $i:X\to Y$ such that the set $i[X]$ is dense in $Y$ and $d_Y(y,y')>0$ for every points $y\in Y\setminus i[X]$ and $y'\in Y\setminus\{y\}$. 

It can be shown that each distance space $X$ has a completion and this completion is unique in the sense that for any completions $(Y_1,i_1)$ and $(Y_2,i_2)$ of $X$ there exists a unique bijective isometry $j:Y_1\to Y_2$ such that $j\circ i_1=i_2$.

\begin{problem} Given a distance space $X$, a finitely supported functor $F:\Set\to\Set$ and number $p\in[1,\infty]$, describe the completion of the distance space $F^pX$.
\end{problem}

\begin{remark} Let $H:\Set\to\Set$ be the functor assigning to each set $X$ the set $HX$ of nonempty finite subsets of $X$ and to each function $f:X\to Y$ between sets the function $Hf:HX\to HY$, $Hf:A\mapsto f[A]$. For any complete metric space $X$ the completion of the metric space $H^\infty X$ can be identified with the space of all nonempty compact subsets of $X$, endowed with the Hausdorff metric. On the other hand, the completion of the metric space $H^1X$ can be identified with the space of all nonempty compact subsets of zero length in $X$, see \cite{BGW}.
\end{remark}

\section{The functor $F^p$ and the metric entropy of distance spaces}

For a distance space $(X,d_X)$ and an $\e\in(0,\infty)$, let $\IE_\e(X,d_X)$ be the smallest cardinality of a cover of $X$ by subsets of diameter $\le\e$. If the distance $d_X$ is clear from the context, then we write $\IE_\e(X)$ instead of $\IE_\e(X,d_X)$. 

For two numbers $\e,\delta\in(0,\infty)$ let 
$$\IE_{\e,\delta}(X)=\sup\{\IE_\e(A): A\subseteq X\;\wedge\;\diam(A)<\delta\}.$$
The functions $\IE_\e( X)$ and $\IE_{\e,\delta}( X)$ (of the variables $\e,\delta$) are called  {\em the metric entropy functions} of the distance space $X$.

\begin{theorem}\label{t:entropy1} If the functor $F$ has finite degree, then for every $\e\in[0,\infty)$, 
$$\IE_\e(F^pX)\le |Fn|\cdot\IE_\e(X^{n},d^p_{X^n})\le |Fn|\cdot\big(\IE_{\e/\!\sqrt[p]{n}}(X)\big)^n,$$
where $n=\max\{1,\deg(F)\}$.
\end{theorem}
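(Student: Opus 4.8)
The plan is to transport an optimal cover of the finite power $X^n$ up to $FX$ through the evaluation maps $\xi^a_X$, and separately to build an optimal cover of $(X^n,d^p_{X^n})$ out of a cover of $X$ by taking products. Two facts from the earlier sections do all the work. First, since $F$ has finite degree $n=\max\{1,\deg(F)\}$, Proposition~\ref{p:BMZ2} (applied to elements of nonempty support, together with Proposition~\ref{p:BMZ} for the empty-support case) shows that the evaluation map $\xi_X:Fn\times X^n\to FX$, $\xi_X:(a,f)\mapsto Ff(a)$, is surjective; this is precisely the surjectivity invoked in the proof of Theorem~\ref{t:top}, and it relies on $X$ being nonempty, which is the standing assumption. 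Second, by Theorem~\ref{t:alt}, for every $a\in Fn$ the map $\xi^a_X:(X^n,d^p_{X^n})\to(FX,d^p_{FX})$ is non-expanding, hence does not increase diameters.

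For the first inequality I would fix a cover $\V$ of $(X^n,d^p_{X^n})$ by sets of diameter $\le\e$ with $|\V|=\IE_\e(X^n,d^p_{X^n})$. For each $a\in Fn$ and each $V\in\V$ the image $\xi^a_X[V]\subseteq FX$ has diameter $\le\e$, because $\xi^a_X$ is non-expanding. The surjectivity of $\xi_X$ guarantees that the family $\{\xi^a_X[V]:a\in Fn,\ V\in\V\}$ covers $FX$: any $c\in FX$ equals $Ff(a)=\xi^a_X(f)$ for some $a\in Fn$ and $f\in X^n$, and $f$ lies in some $V\in\V$. This family has cardinality at most $|Fn|\cdot|\V|$, whence $\IE_\e(F^pX)\le|Fn|\cdot\IE_\e(X^n,d^p_{X^n})$.

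For the second inequality I would fix a cover $\U$ of $X$ by sets of diameter $\le\e/\!\sqrt[p]{n}$ with $|\U|=\IE_{\e/\!\sqrt[p]{n}}(X)$ and pass to the family of boxes $U_0\times\dots\times U_{n-1}$ with every $U_i\in\U$. These $|\U|^n$ boxes cover $X^n$, and for any two functions $f,g$ lying in a common box one has $d_X(f(i),g(i))\le\e/\!\sqrt[p]{n}$ for each $i\in n$, so for $p<\infty$
$$d^p_{X^n}(f,g)=\Big(\sum_{i\in n}d_X(f(i),g(i))^p\Big)^{\frac1p}\le\Big(n\cdot\tfrac{\e^p}{n}\Big)^{\frac1p}=\e,$$
while for $p=\infty$ the same bound holds since $\sqrt[\infty]{n}=1$ and $d^\infty_{X^n}(f,g)=\max_{i\in n}d_X(f(i),g(i))\le\e$. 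Thus each box has $d^p_{X^n}$-diameter $\le\e$, giving $\IE_\e(X^n,d^p_{X^n})\le|\U|^n=\big(\IE_{\e/\!\sqrt[p]{n}}(X)\big)^n$, and composing the two estimates yields the claim.

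I do not expect a genuine obstacle here: the only points needing care are the verification that the pushed-forward family actually covers all of $FX$ (which is exactly where surjectivity of $\xi_X$, and hence the finite-degree hypothesis, enters) and the elementary $\ell^p$-diameter estimate for product boxes. The conceptual content is simply that finite degree realizes $FX$ as the image of the $|Fn|$-fold union of non-expanding copies of $X^n$, so both the covering number and the coordinatewise product structure pass through with only the factor $|Fn|$ and the scaling $\e\mapsto\e/\!\sqrt[p]{n}$.
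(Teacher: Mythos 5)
Your proof is correct and follows essentially the same route as the paper's: push an optimal cover of $(X^n,d^p_{X^n})$ forward through the non-expanding evaluation maps $\xi^a_X$ (using $FX=\bigcup_{a\in Fn}\xi^a_X[X^n]$ from the finite-degree hypothesis) to get the first inequality, and cover $X^n$ by product boxes of an optimal cover of $X$ for the second. The only cosmetic difference is that you cite Theorem~\ref{t:alt} where the paper cites Theorem~\ref{t:altm}, and you spell out the surjectivity of $\xi_X$ via Propositions~\ref{p:BMZ} and \ref{p:BMZ2}, which the paper takes for granted.
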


\begin{proof} By Theorem~\ref{t:altm}, for every $a\in Fn$ the map $\xi^a:(X^n,d^p_{X^n})\to (FX,d^p_{FX})$ is non-expanding.
By the definition of the cardinal $\IE_{\e}(X^n,d^p_{X^n})$ there exists a cover of $\mathcal V$ of $X^n$ such that $|\V|=\IE_{\e}(X^n)$ and $\mesh(\V)\le\e$. 
 Taking into account that $FX=\!\bigcup\limits_{a\in Fn}\xi^a[X^n]$, we conclude that $\W=\{\xi^a[V]:a\in Fn,\;V\in\V\}$ is a cover of $F^pX$ of cardinality $|\W|\le|Fn|\cdot|\V|=|Fn|\cdot\IE_\e(X^n)$. The non-expanding property of the maps $\xi^a:X^n\to F^pX$ ensures that $\mesh(\W)\le\mesh(\V)\le\e$, witnessing that $\IE_{\e}(F^pX)\le|\W|\le |Fn|\cdot|\V|=|Fn|\cdot\IE_\e(X^n,d^p_{X^n})$.

It remains to prove the inequality $\IE_\e(X^n,d^p_{X^n})\le \big(\IE_{\e/\!\sqrt[p]{n}}(X)\big)^n$. By the definition of the cardinal  $\IE_{\e/\!\sqrt[p]{n}}(X)$, there exists a cover $\U$ of $X$ such that $|\U|=\IE_{\e/\!\sqrt[p]{n}}(X)$ and $\mesh(\U)\le\e/\!\sqrt[p]{n}$. For any sets $U_0,\dots,U_{n-1}\in\U$ consider their product $\prod_{i\in n}U_i \subseteq X^n$ and observe that 
$$\diam(\prod_{i\in n}U_i)\le \Big(\sum_{i\in n}\diam(U_i)^p\Big)^{\frac1p}\le n^{\frac1p}\cdot\mesh(\U)\le \e.$$
Then $\U'=\{\prod_{i\in n}U_i:U_0,\dots,U_{n-1}\in\U\}$ is a cover of $X^n$ witnessing that
$$\IE_\e(X^n,d^p_{X^n})\le |\U'|\le|\U|^n=\big(\IE_{\e/\!\sqrt[p]{n}}(X)\big)^n.$$
\end{proof}

Now we deduce an upper bound for the metric entropy function $\IE_{\e,\delta}(F^pX)$ of the distance space $F^pX$.

\begin{theorem}\label{t:entropy2} Assume that the functor $F$ has finite degree and preserves supports. Then for any $\e,\delta\in[0,\infty)$ we have 
$$\IE_{\e,\delta}(F^pX)\le n^n\cdot |Fn|\cdot \IE_{\e,\delta'}(X^n,d^p_{X^n})\le n^n\cdot |Fn|\cdot \big( \IE_{\e'\!,\delta'}(X)\big)^n,$$
where $n=\max\{1,\deg(F)\}$, $\e'=\e/\!\sqrt[p]{n}$ and $\delta'=2\delta\sqrt[p]{n}$.
\end{theorem}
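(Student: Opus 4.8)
The plan is to establish the two inequalities separately, concentrating the effort on the first one. Write $n=\max\{1,\deg(F)\}$. By Theorem~\ref{t:alt} every map $\xi^a_X:(X^n,d^p_{X^n})\to(FX,d^p_{FX})$, $f\mapsto Ff(a)$, with $a\in Fn$ is non-expanding. To bound $\IE_{\e,\delta}(F^pX)$ it suffices to fix a subset $\mathcal A\subseteq FX$ with $\diam(\mathcal A)<\delta$ and to cover it by at most $n^n\cdot|Fn|\cdot\IE_{\e,\delta'}(X^n,d^p_{X^n})$ sets of diameter $\le\e$; taking the supremum over all such $\mathcal A$ then yields the first inequality. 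If $|\mathcal A|\le1$ this is trivial, so assume $\mathcal A$ has at least two elements. Since $\delta<\infty$, all pairwise distances in $\mathcal A$ are finite, and then Theorem~\ref{t:small} together with Theorem~\ref{t:shukel} (recall the convention $d_{HX}(\emptyset,B)=\infty$ for nonempty $B$) forces $\supp(b)\neq\emptyset$ for every $b\in\mathcal A$; in particular $\deg(F)\ge1$, so $n=\deg(F)$.

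Set $\delta_0:=\diam(\mathcal A)<\delta$ and fix a base element $b_*\in\mathcal A$ with support $S_*=\supp(b_*)=\{s_1,\dots,s_m\}$, $m=|S_*|\le n$. For each $b\in\mathcal A$, Proposition~\ref{p:BMZ2} supplies $a_b\in Fn$ and $f_b\in X^n$ with $b=Ff_b(a_b)$ and $f_b[n]=\supp(b)$, so that every value $f_b(j)$ lies in $\supp(b)$. Because $F$ preserves supports, Theorem~\ref{t:shukel} gives $d_{HX}(\supp(b),S_*)\le d^p_{FX}(b,b_*)\le\delta_0$, hence every point of $\supp(b)$ lies within distance $\delta_0$ of $S_*$; as $S_*$ is finite this infimum is attained, and we may choose a function $\phi_b:n\to\{1,\dots,m\}$ with $d_X\big(f_b(j),s_{\phi_b(j)}\big)\le\delta_0$ for all $j\in n$. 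Grouping the elements of $\mathcal A$ by the pair $(a_b,\phi_b)$ splits $\mathcal A$ into at most $|Fn|\cdot n^n$ classes. For two elements $b,b'$ in one class we have $a_b=a_{b'}=a$, $\phi_b=\phi_{b'}=\phi$, and for each coordinate $j$,
$$d_X(f_b(j),f_{b'}(j))\le d_X\big(f_b(j),s_{\phi(j)}\big)+d_X\big(s_{\phi(j)},f_{b'}(j)\big)\le 2\delta_0,$$
whence $d^p_{X^n}(f_b,f_{b'})\le 2\delta_0\,\sqrt[p]{n}<2\delta\,\sqrt[p]{n}=\delta'$. Thus $B_{a,\phi}:=\{f_b:\ a_b=a,\ \phi_b=\phi\}\subseteq X^n$ has $\diam(B_{a,\phi})<\delta'$.

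It remains to assemble the cover. Each $B_{a,\phi}$, having diameter $<\delta'$, admits a cover by at most $\IE_{\e,\delta'}(X^n,d^p_{X^n})$ sets of $d^p_{X^n}$-diameter $\le\e$; applying the non-expanding map $\xi^a_X$ sends these to sets of $d^p_{FX}$-diameter $\le\e$ whose union contains $\xi^a_X[B_{a,\phi}]\supseteq\{b:\ a_b=a,\ \phi_b=\phi\}$, since $\xi^a_X(f_b)=Ff_b(a_b)=b$. Running over the at most $|Fn|\cdot n^n$ classes covers $\mathcal A$ by at most $n^n\cdot|Fn|\cdot\IE_{\e,\delta'}(X^n,d^p_{X^n})$ sets, which proves the first inequality. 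For the second, fix $A\subseteq X^n$ with $\diam(A)<\delta'$; since $d_X(f(i),g(i))\le d^p_{X^n}(f,g)$, each coordinate projection $\pi_i[A]\subseteq X$ has diameter $<\delta'$ and is coverable by at most $\IE_{\e',\delta'}(X)$ sets of diameter $\le\e'=\e/\sqrt[p]{n}$. Forming all products $\prod_{i\in n}U_i$ of such sets yields a cover of $\prod_{i\in n}\pi_i[A]\supseteq A$ by at most $(\IE_{\e',\delta'}(X))^n$ sets, each of $d^p_{X^n}$-diameter at most $\e'\,\sqrt[p]{n}=\e$; the supremum over $A$ gives $\IE_{\e,\delta'}(X^n,d^p_{X^n})\le(\IE_{\e',\delta'}(X))^n$.

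The main obstacle is that $\xi^a_X$ is only non-expanding, not bi-Lipschitz, so the preimage under $\xi^a_X$ of a bounded subset of $FX$ need not be bounded in $X^n$. This is exactly what the matching step repairs: support-preservation, invoked through Theorem~\ref{t:shukel}, keeps all supports Hausdorff-close to the fixed finite set $S_*$, so that representatives $f_b$ taking values in the supports stay mutually close once we record, via $\phi_b$, which base point each coordinate is assigned to — and counting these assignments is precisely what produces the combinatorial factor $n^n$.
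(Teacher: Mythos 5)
Your proof is correct and follows essentially the same route as the paper's: both use Theorem~\ref{t:shukel} (via support preservation) to keep all supports Hausdorff-close to one fixed support $S$ of cardinality $\le n$, both obtain the factor $n^n\cdot|Fn|$ by recording, for each element, its representative in $Fn$ together with an assignment of the $n$ coordinates to points of $S$, and both then cover each resulting class — of $d^p_{X^n}$-diameter less than $\delta'=2\delta\sqrt[p]{n}$ — by $\IE_{\e,\delta'}(X^n,d^p_{X^n})$ sets and push forward along the non-expanding map $\xi^a_X$. The only cosmetic difference is that the paper phrases the classes as balls $O_\infty(g;\delta)$ around $g\in S^n$ while you phrase them as fibers of the map $b\mapsto(a_b,\phi_b)$; the second inequality is the same product argument in both.
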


\begin{proof} Given any nonempty subset $A\subseteq F^pX$ of  $\diam(A)< \delta<\infty$, we will prove that $$\IE_\e(A)\le   n^n\cdot |Fn|\cdot \IE_{\e,\delta'}(X^n,d^p_{X^n}).$$
If $\bigcup_{a\in A}\supp(a)=\emptyset$, then Theorem~\ref{t:small} implies that $A$ is a singleton and hence $\IE_\e(A)=|A|=1$. So, suppose that $\bigcup_{a\in A}\supp(a)\ne\emptyset$ and choose an element $a\in A$ with nonempty support $S=\supp(a)$. Since the functor $F$ has finite degree, $|S|\le n$. 

For every function $g\in S^n$, consider the ball
$$O_\infty(g;\delta):=\{f\in X^n:d^\infty_{X^n}(f,g)<\delta\}$$in the distance space $(X^n,d^\infty_{X^n})$. 

\begin{claim}
$A\subseteq \bigcup_{c\in Fn}\bigcup_{g\in S^n}\xi^c[O_\infty(g;\delta)].$
\end{claim}

\begin{proof} Given any element $b\in A$,  apply Theorem~\ref{t:shukel} to conclude that $$d_{H\!X}(\supp(a),\supp(b))\le d^p_{FX}(a,b)<\delta<\infty.$$
Then $\supp(b)\ne\emptyset$ and we can apply Proposition~\ref{p:BMZ} to find a function $f:n\to \supp(b)$ and an element $c\in Fn$ such that $b=Ff(c)$. Since $d_{HX}(\supp(b),S)<\delta$, there exists a function $r:\supp(b)\to S$ such that  $d_X(r(x),x)<\delta$ for every $x\in\supp(b)$. Consider the map $g=r\circ f:n\to S$ and observe that $d^\infty_{X^n}(f,g)<\delta$ and hence $f\in O_\infty(g;\delta)$. Then $b=Ff(c)=\xi^c(f)\in\xi^c[O_\infty(g;\delta)]$.
\end{proof}

By Theorem~\ref{t:ineq}, $O_\infty(g;\delta)\subseteq O_p(g;n^{\frac1p}\delta):=\{f\in X^n:d^p_{X^n}(f,g)<n^{\frac1p}\delta\}$. Taking into account that for every $c\in Fn$, the map $\xi^c:X^n\to F^pX$ is non-expanding (for the distance $d^p_{X^n}$ on $X$), we conclude that 
\begin{multline*}
\IE_\e(A)\le\sum_{c\in Fn}\sum_{g\in S^n}\IE_\e(\xi^c[O_\infty(g;\delta)])\le 
\sum_{c\in Fn}\sum_{g\in S^n}\IE_\e(\xi^c[O_p(g;\delta\sqrt[p]{n})])\le\\
 \sum_{c\in Fn}\sum_{g\in S^n}\IE_\e(O_p(g;\delta\sqrt[p]{n}))\le \sum_{c\in Fn}\sum_{g\in S^n}\IE_{\e,2\delta\!\sqrt[p]{n}}(X^n,d^p_{X^n})\le|Fn|\cdot n^n\cdot \IE_{\e,\delta'}(X^n,d^p_{X^n}).
\end{multline*}
The inequality $\IE_{\e,\delta'}(X^n,d^p_{X^n})\le \big(\IE_{\e',\delta'}(X)\big)^n$ can be verified by analogy with the inequality  $\IE_\e(X^n,d^p_{X^n})\le \big(\IE_{\e}(X)\big)^n$, proved in Theorem~\ref{t:entropy1}.
\end{proof}

The metric entropy functions alow us to define five {\em cardinal entropy characteristics} of a distance space $X$:
$$
\begin{aligned}
&\IE_0(X)=\!\sup_{\e\in(0,\infty)}\IE_\e( X)^+,\quad\IE_{0,\infty}( X)=\!\sup_{\e,\delta\in(0,\infty)}\IE_{\e,\delta}( X)^+,\quad\IE_\infty(X)=\!\min_{\e\in(0,\infty)}\IE_\e(X)^+,\\
&\IE_{0,0}(X)=\!\min_{\delta\in(0,\infty)}\sup_{\e\in(0,\infty)}\IE_{\e,\delta}(X)^+,\quad\IE_{\infty,\infty}(X)=\!\min_{\e\in(0,\infty)}\sup_{\delta\in(0,\infty)}\IE_{\e,\delta}(X)^+.
\end{aligned}
$$In the above definitions  $\kappa^+$stands for the successor of a cardinal $\kappa$.  For example, $\w_1=\w^+$.

It is easy to see that for any distance space $X$ we have the inequalities:
$$\IE_{0,0}(X)\le \IE_{0,\infty}(X)\le\IE_0(X)\ge \IE_\infty(X)\ge \IE_{\infty,\infty}(X).$$ 
 
Many (known) properties of a distance space are equivalent to the countability of suitable cardinal entropy characteristics.

\begin{definition} A distance space $(X,d_X)$ is defined to be 
\begin{itemize}
\item {\em $(0)$-finitary} (or else {\em totally bounded}) if $\IE_0(X)\le\w$ (i.e., for any $\e\in(0,\infty]$ the entropy $\IE_\e(X)$ is finite);
\item {\em $(\infty)$-finitary} if $\IE_\infty(X)\le\w$ (i.e., $X$ has finitely many pseudometric components);
\item {\em $(0,0)$-finitary} (or else {\em Assouad-bounded\/}) if $\IE_{0,0}(X)\le\w$ (i.e., there exists $\delta\in(0,\infty)$ such that for every $\e\in(0,\delta]$ the entropy $\IE_{\e,\delta}(X)$ is finite);
\item {\em $(0,\infty)$-finitary} if $\IE_{0,\infty}(X)\le\w$ (i.e., for any $\e,\delta\in(0,\infty)$ the entropy $\IE_{\e,\delta}(X)$ is finite);
\item {\em $(\infty,\infty)$-finitary} (or else {\em of bounded geometry}) if $\IE_{\infty,\infty}(X)\le\w$ (i.e., there exists $\e\in(0,\infty)$ such that for every $\delta\in(0,\infty)$ the cardinal $\IE_{\e,\delta}(X)$ is finite).
\end{itemize}
\end{definition}
For any distance space these finitarity properties relate as follows:
$$
\xymatrix@C=17pt{
\mbox{$(0,\!0)$-finitary}&\mbox{$(0,\!\infty)$-finitary}\ar@{=>}[l]&\mbox{$(0)$-finitary}\ar@{=>}[l]\ar@{=>}[r]&\mbox{$(\infty)$-finitary}\ar@{=>}[r]&\mbox{$(\infty,\!\infty)$-finitary}\\
\mbox{Assouad}\atop\mbox{bounded}\ar@{=}[u]&&\mbox{totally}\atop\mbox{bounded}\ar@{=}[u]&&\mbox{of bounded}\atop\mbox{geometry}\ar@{=}[u]
}
$$
Assouad bounded metric spaces were studied by Assouad in \cite{Ass}.

Theorems~\ref{t:entropy1}, \ref{t:entropy2} imply the following upper bounds for the  cardinal entropy characteristics of the distance space $F^pX$.

\begin{corollary}\label{c:card-E-char} If the functor $F$ has finite degree, then for $n:=\max\{1,\deg(F)\}$, the distance space $F^pX$ has the following cardinal entropy characteristics.
\begin{enumerate}
\item[\textup{1)}] $\IE_0(F^pX)\le |Fn|\cdot (\IE_0(X))^n$;
\item[\textup{2)}] $\IE_\infty(F^pX)\le |Fn|\cdot (\IE_\infty(X))^n$.
\end{enumerate}
If, in addition, the functor $F$  preserves supports,  then 
\begin{enumerate}
\item[\textup{3)}] $\IE_{0,0}(F^pX)\le n^n\cdot |Fn|\cdot (\IE_{0,0}(X))^n$;
\item[\textup{4)}] $\IE_{0,\infty}(F^pX)\le n^n\cdot |Fn|\cdot (\IE_{0,\infty}(X))^n$;
\item[\textup{5)}] $\IE_{\infty,\infty}(F^pX)\le n^n\cdot |Fn|\cdot (\IE_{\infty,\infty}(X))^n$.
\end{enumerate}
\end{corollary}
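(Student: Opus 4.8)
The plan is to obtain all five inequalities as purely formal consequences of Theorems~\ref{t:entropy1} and~\ref{t:entropy2}, by feeding the pointwise bounds they provide into the definitions of the cardinal entropy characteristics. Write $C_1=|Fn|$ and $C_2=n^n\cdot|Fn|$ for the two constants, with $n=\max\{1,\deg(F)\}$. The second inequalities in those theorems give, for each fixed $\e\in(0,\infty)$,
$$\IE_\e(F^pX)\le C_1\cdot\big(\IE_{\e/\!\sqrt[p]{n}}(X)\big)^n,$$
and, for each fixed pair $\e,\delta\in(0,\infty)$,
$$\IE_{\e,\delta}(F^pX)\le C_2\cdot\big(\IE_{\e',\delta'}(X)\big)^n,\qquad \e'=\e/\!\sqrt[p]{n},\ \ \delta'=2\delta\!\sqrt[p]{n}.$$
Everything then reduces to propagating these bounds through the operations $\sup$, $\min$ and $(-)^+$ occurring in the definitions of $\IE_0,\IE_\infty,\IE_{0,0},\IE_{0,\infty},\IE_{\infty,\infty}$.

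The assembly rests on one simple observation: the rescalings $\e\mapsto\e/\!\sqrt[p]{n}$ and $\delta\mapsto 2\delta\!\sqrt[p]{n}$ are increasing bijections of $(0,\infty)$ onto itself, so substituting the rescaled argument leaves a supremum or infimum taken over all of $(0,\infty)$ (resp. $(0,\infty)^2$) unchanged. For part (1) I take successors in the first bound and then $\sup_\e$; since $\sup_\e \IE_{\e/\!\sqrt[p]{n}}(X)^+=\sup_{\e'}\IE_{\e'}(X)^+=\IE_0(X)$, the right-hand side collapses to $C_1\cdot(\IE_0(X))^n$. Part (2) is identical with $\min_\e$ in place of $\sup_\e$, using $\min_\e\IE_{\e/\!\sqrt[p]{n}}(X)^+=\IE_\infty(X)$. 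Parts (3)–(5) start from the second bound and differ only in which variable carries a $\sup$ and which a $\min$: (4) takes $\sup_{\e,\delta}$ in both variables, recovering $\IE_{0,\infty}(X)$; (3) takes $\sup_\e$ for fixed $\delta$ and then $\min_\delta$, recovering $\min_{\delta'}\sup_{\e'}\IE_{\e',\delta'}(X)^+=\IE_{0,0}(X)$; and (5) takes $\sup_\delta$ for fixed $\e$ and then $\min_\e$, recovering $\IE_{\infty,\infty}(X)$. In each case the monotonicity of $\kappa\mapsto C\cdot\kappa^n$ and of $\kappa\mapsto\kappa^+$ lets the constant $C_1$ or $C_2$ and the exponent $n$ be pulled outside the $\sup$/$\min$, yielding exactly the asserted right-hand sides.

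The one genuinely non-formal ingredient, and the step I expect to be the \emph{main obstacle}, is the interaction of the successor operation with the product $C\cdot(-)^n$. What the assembly of the previous paragraph needs is the elementary cardinal inequality
$$\big(C\cdot\kappa^n\big)^+\le C\cdot\big(\kappa^+\big)^n$$
together with monotonicity $\mu\le\nu\Rightarrow\mu^+\le\nu^+$. I would record this as a preliminary lemma and verify it casewise: when $\kappa$ is finite it follows from $(\kappa+1)^n\ge\kappa^n+1$, whence $C\cdot(\kappa+1)^n\ge C\cdot\kappa^n+1$ for $C\ge1$; and when $C$ is finite — in particular whenever $F$ is finitary, so that $|Fn|<\w$ and hence $C_1,C_2<\w$ — both sides equal $\kappa^+$ for infinite $\kappa$. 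The delicate point is precisely that the outer successor must be absorbed by the already ``padded'' factor $C\cdot(\IE_\bullet(X))^n$; once this lemma is in place the remaining manipulations are routine substitutions into the definitions, and I would run them in the order (1),(2) from Theorem~\ref{t:entropy1} and (4),(3),(5) from Theorem~\ref{t:entropy2}.
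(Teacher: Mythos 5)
Your argument is correct and is essentially the paper's: the paper gives no proof beyond asserting that Theorems~\ref{t:entropy1} and~\ref{t:entropy2} imply the corollary, and pushing those pointwise bounds through the $\sup$, $\min$ and successor operations in the definitions (using that $\e\mapsto\e/\sqrt[p]{n}$ and $\delta\mapsto 2\delta\sqrt[p]{n}$ are increasing bijections of $(0,\infty)$) is exactly the intended derivation. The one point needing care is the one you flag: the inequality $(C\cdot\kappa^n)^+\le C\cdot(\kappa^+)^n$ genuinely requires $C$ finite (for infinite $C$ and finite $\kappa\ge 1$ one gets $C^+\not\le C$, and your first case's identification of $(C\kappa^n)^+$ with $C\kappa^n+1$ already presupposes $C\kappa^n$ finite), so your proof establishes the corollary only when $|Fn|<\w$. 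That restriction is absent from the corollary's hypotheses, but the statement is in fact false without it (the degree-one support-preserving functor $X\mapsto X\times M$ with $M$ countably infinite and $X$ a two-point metric space has $\IE_0(F^pX)=\w_1>\w=|F1|\cdot\IE_0(X)$), and the paper only ever invokes the corollary for finitary functors, so this is a defect of the printed statement rather than of your argument.
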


Corollary~\ref{c:card-E-char}(1,5) has two important special cases:

\begin{corollary}\label{c:tb} Assume that the functor $F$ is finitary and has finite degree. If a distance space $(X,d_X)$ is totally bounded, then so is the distance space $F^pX$.
\end{corollary}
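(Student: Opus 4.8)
The plan is to read this statement off directly from the entropy estimate already established in Theorem~\ref{t:entropy1} (or, equivalently, from Corollary~\ref{c:card-E-char}(1)), so essentially no new work is required. Put $n=\max\{1,\deg(F)\}$. Since $F$ has finite degree, $n$ is a finite number, and since $F$ is finitary, the set $Fn$ is finite, so $|Fn|<\w$. These are the only two finiteness inputs I will need.

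Recall that a distance space $Y$ is totally bounded precisely when $\IE_0(Y)\le\w$, equivalently when $\IE_\e(Y)<\w$ for every $\e\in(0,\infty)$. So I would fix an arbitrary $\e\in(0,\infty)$ and estimate $\IE_\e(F^pX)$. By Theorem~\ref{t:entropy1}, $\IE_\e(F^pX)\le |Fn|\cdot\big(\IE_{\e/\!\sqrt[p]{n}}(X)\big)^n$. The number $\e/\!\sqrt[p]{n}$ again lies in $(0,\infty)$, so the total boundedness of $(X,d_X)$ gives $\IE_{\e/\!\sqrt[p]{n}}(X)<\w$. Hence the right-hand side is a product of finitely many finite cardinals and is therefore finite, which yields $\IE_\e(F^pX)<\w$.

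Since $\e\in(0,\infty)$ was arbitrary, $\IE_0(F^pX)\le\w$, i.e.\ $F^pX$ is totally bounded. Alternatively, one may invoke Corollary~\ref{c:card-E-char}(1) in a single line: $\IE_0(F^pX)\le |Fn|\cdot(\IE_0(X))^n\le |Fn|\cdot\w^n=\w$, using $\IE_0(X)\le\w$, the finiteness of $|Fn|$, and the cardinal identity $\w^n=\w$ valid for finite $n\ge1$. There is essentially no obstacle here: the entire difficulty was already absorbed into the geometric covering argument of Theorem~\ref{t:entropy1}, and the only things left to check are the two finiteness facts, namely that finite degree keeps $n$ finite and that finitarity keeps $|Fn|$ finite.
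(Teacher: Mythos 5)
Your proposal is correct and follows exactly the route the paper takes: the paper derives Corollary~\ref{c:tb} as an immediate special case of Corollary~\ref{c:card-E-char}(1), which is itself just Theorem~\ref{t:entropy1} combined with the two finiteness observations you make (finite degree gives finite $n$, finitarity gives finite $|Fn|$). Your more explicit version via the pointwise estimate $\IE_\e(F^pX)\le|Fn|\cdot\big(\IE_{\e/\!\sqrt[p]{n}}(X)\big)^n$ for each $\e\in(0,\infty)$ is a perfectly faithful unpacking of the same argument.
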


\begin{corollary}\label{c:bg} Assume that the functor $F$ is finitary, has finite degree and preserves supports. If a distance space $X$ has bounded geometry (or is Assouad-bounded), then so is the distance space $F^pX$.
\end{corollary}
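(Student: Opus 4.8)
The plan is to derive the statement directly from the quantitative entropy estimates of Corollary~\ref{c:card-E-char}, so that the whole argument reduces to elementary cardinal arithmetic. First I would record the finiteness facts furnished by the hypotheses: since $F$ has finite degree, the number $n=\max\{1,\deg(F)\}$ is finite, and hence $n^n$ is finite; and since $F$ is finitary and $n$ is a finite set, the set $Fn$ is finite, so $|Fn|<\w$ as well. These are exactly the coefficients appearing in the cited bounds.

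For the bounded geometry case I would apply Corollary~\ref{c:card-E-char}(5) (whose hypotheses---finite degree and preservation of supports---are among our assumptions) to obtain $\IE_{\infty,\infty}(F^pX)\le n^n\cdot|Fn|\cdot(\IE_{\infty,\infty}(X))^n$. If $X$ has bounded geometry, then $\IE_{\infty,\infty}(X)\le\w$, so $(\IE_{\infty,\infty}(X))^n\le\w^n=\w$ (cardinal exponentiation with a finite exponent $n\ge1$), and multiplying by the finite, nonzero cardinals $n^n$ and $|Fn|$ leaves the product $\le\w$. Thus $\IE_{\infty,\infty}(F^pX)\le\w$, i.e.\ $F^pX$ has bounded geometry. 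The Assouad-bounded case is identical, using Corollary~\ref{c:card-E-char}(3): from $\IE_{0,0}(X)\le\w$ one gets $\IE_{0,0}(F^pX)\le n^n\cdot|Fn|\cdot(\IE_{0,0}(X))^n\le\w$, whence $F^pX$ is Assouad-bounded.

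I expect no real obstacle here, since the corollary is essentially an immediate consequence of Corollary~\ref{c:card-E-char}. The only points requiring a moment of care are the cardinal arithmetic ($\w^n=\w$ for finite $n\ge1$, and that multiplication by a positive finite cardinal preserves the bound $\le\w$) and the verification that the precise hypotheses demanded by parts (3) and (5) of the cited corollary---finite degree together with preservation of supports---are indeed all present in the statement.
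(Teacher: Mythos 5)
Your proposal is correct and coincides with the paper's own (implicit) argument: the paper derives Corollary~\ref{c:bg} directly from parts (3) and (5) of Corollary~\ref{c:card-E-char}, exactly as you do, with the only remaining content being the finiteness of $n^n$ and $|Fn|$ and the cardinal arithmetic $\w^n=\w$. Nothing is missing.
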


\begin{example} Consider the functor $F:\Set\to\Set$ assigning to each set $X$ the set $FX=\{\emptyset\}\cup \{\{x,y\}:x,y\in X,\;x\ne y\}$ and to each function $f:X\to Y$ the function $Ff:FX\to FY$ such that $Ff(\emptyset)=\emptyset$ and $Ff(\{x,y\})=\{f(x)\}\triangle\{f(y)\}$ for any $\{x,y\}\in FX\setminus\{\emptyset\}$. It is clear that $F$ is finitary and has finite degree $\deg(F)=2$, but $F$ does not preserve supports.  For the space  $X=\IR$ we have  $\IE_{0,0}(X)=\IE_{0,\infty}(X)=\IE_{\infty,\infty}(X)=\w$ and  $\IE_{0,0}(F^pX)=\IE_{0,\infty}(F^pX)=\IE_{\infty,\infty}(F^pX)=\w_1$.
This example shows that the preservation of supports in Corollary~\ref{c:bg} (and also in the statements (3)--(5) of Corollary~\ref{c:card-E-char}) is essential.
\end{example} 

Observe that a distance space $X$ is {\em separable} (= has a countable dense subset) if and only if $\IE_0(X)\le\w_1$. By Corollary~\ref{c:card-E-char}(1), the distance space $F^pX$ is separable if $X$ is separable and the functor $F$ is finitary and has finite degree. In fact, this implication holds in a more general situation.

\begin{proposition} If for every $n\in\IN$ the set $Fn$ is at most countable, then for any separable distance space $X$, the distance space $F^pX$ is separable.
\end{proposition}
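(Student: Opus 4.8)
The plan is to combine the density-preservation property of the functor $F^p$ (Theorem~\ref{t:isom}) with a straightforward cardinality estimate for $F$ applied to a countable set.

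First I would fix a countable dense subset $D\subseteq X$, which exists because $X$ is separable, and consider the inclusion isometry $i_{D,X}\colon D\to X$. Its image $i_{D,X}[D]=D$ is dense in $X$, so by Theorem~\ref{t:isom} the map $F^pi_{D,X}\colon F^pD\to F^pX$ is an isometry whose image $Fi_{D,X}[FD]$ is dense in $F^pX$. Consequently, once we know that $FD$ is at most countable, the set $Fi_{D,X}[FD]$ is an at most countable dense subset of $F^pX$, and $F^pX$ is separable. Thus the whole statement reduces to the purely set-theoretic claim that $FD$ is at most countable whenever $D$ is countable.

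Next I would prove this claim using that $F$ has finite supports. Assuming $X\ne\emptyset$ (so $D\ne\emptyset$), every $a\in FD$ has a finite support, and by Proposition~\ref{p:BMZ} it lies in $F[A;D]$ for a suitable nonempty finite $A\subseteq D$: take $A=\supp(a)$ if $\supp(a)\ne\emptyset$, and any fixed nonempty finite $A$ otherwise. Hence $FD=\bigcup_{A}F[A;D]$, the union ranging over the nonempty sets $A\in[D]^{<\w}$. For each such $A$, a bijection $h\colon|A|\to A$ induces a bijection $Fh\colon F|A|\to FA$, so $FA$ has the same cardinality as $F|A|$, which is at most countable by hypothesis since $|A|\in\IN$; as $F[A;D]=Fi_{A,D}[FA]$ is an image of $FA$, it is also at most countable. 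Since $D$ is countable, $[D]^{<\w}$ is countable, so $FD$ is a countable union of at most countable sets and is therefore at most countable.

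The argument is short because the real work is hidden in Theorem~\ref{t:isom}; the one point demanding care is the role of the empty set. Restricting the union to nonempty $A$ (legitimate by Proposition~\ref{p:BMZ} precisely because $D\ne\emptyset$) keeps every cardinal $|A|$ in $\IN$, so the hypothesis on $Fn$ is never invoked for $n=0$. The remaining degenerate case $X=\emptyset$ is genuinely separate: there $F^pX$ carries the $\{0,\infty\}$-valued $\infty$-metric of Corollary~\ref{c:small}, so it is separable if and only if $F\emptyset$ is at most countable, a hypothesis one must add (or else one simply restricts attention to nonempty $X$). I expect this empty-domain bookkeeping, rather than any substantive estimate, to be the only subtle point.
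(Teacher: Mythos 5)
Your proof is correct, but it follows a different route from the paper's. The paper writes $FX=\bigcup_{n\in\IN}\bigcup_{a\in Fn}\xi^a_X[X^n]$ via Proposition~\ref{p:BMZ}, notes that each $(X^n,d^p_{X^n})$ is separable and each $\xi^a_X$ is non-expanding by Theorem~\ref{t:alt}, and concludes that $F^pX$ is a countable union of separable subspaces. You instead pass to a countable dense subset $D\subseteq X$, invoke the density-preservation statement of Theorem~\ref{t:isom} (really Lemma~\ref{l:isom2}) to see that $Fi_{D,X}[FD]$ is dense in $F^pX$, and then count: $FD$ is covered by the sets $F[A;D]$ over nonempty finite $A\subseteq D$, each of cardinality at most $|F|A||\le\w$, so $FD$ is countable. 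The two arguments produce essentially the same countable dense set ($\bigcup_{n,a}\xi^a_X[D^n]$ versus $Fi_{D,X}[FD]$), but yours leans on the considerably heavier machinery of Lemma~\ref{l:isom2}, whereas the paper's version only needs the non-expanding property of the maps $\xi^a_X$ and is therefore more self-contained; in exchange, your version isolates the purely set-theoretic content (countability of $FD$) cleanly. Your remark about $X=\emptyset$ is a legitimate observation: the hypothesis controls $Fn$ only for $n\in\IN$, so $F\emptyset$ could be uncountable and then $F^p\emptyset$, carrying the $\{0,\infty\}$-valued $\infty$-metric of Corollary~\ref{c:small}, would fail to be separable; the paper's own proof tacitly assumes $X\ne\emptyset$ as well, since Proposition~\ref{p:BMZ} requires a nonempty subset of $X$.
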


\begin{proof} By Proposition~\ref{p:BMZ}, $FX=\bigcup_{n\in\IN}\bigcup_{a\in Fn}\xi^a(X^n)$. For every $n\in\IN$ the separability of the distance space $X$ implies the separability of the power $X^n$ endowed with the distance $d^p_{X^n}$. By Theorem~\ref{t:alt}, for every $a\in Fn$, the map $\xi^a:X^n\to FX$ is non-expanding, which implies that the space $\xi^a[X^n]$ is separable and then $FX$ is separable, being a countable union of separable spaces.
\end{proof}

\section{The functor $F^p$ and the entropy dimensions of distance spaces}

In this section we study the dependence between various entropy dimensions of  distance spaces $X$ and  $F^pX$.

We will consider eight entropy dimensions divided into four pairs. The first pair form two well-known box-counting dimensions, defined for any totally bounded distance space. In our terminology totally bounded spaces are called $(0)$-finitary.

For a totally bounded distance space $X$, its upper and lower box-counting dimensions are defined by the formulas
$$\overline{\dim}_BX=\limsup_{\e\to+0}\frac{\ln(\IE_\e(X))}{\ln(1/\e)}
\mbox{ \ and \ }\underline{\dim}_BX=\liminf_{\e\to+0}\frac{\ln(\IE_\e(X))}{\ln(1/\e)}.
$$
 The box-counting dimensions play an important role in the Theory of Fractals, see \cite[Ch.3]{Fal}.
 
\begin{example} The Cantor ternary set $$C=\Big\{\sum_{n=0}^\infty\frac{x_n}{3^{n}}:(x_n)_{n\in\w}\in\{0,1\}^\w\Big\}\subset \IR$$ is totally bounded and has the box-counting dimensions
$$\overline{\dim}_BC=\underline{\dim}_BC=\frac{\ln2}{\ln3}.$$
\end{example}

The definition of the entropy dimensions and Theorem~\ref{t:entropy1}, imply the following corollary.

\begin{theorem}\label{t:tb-dim} Assume that the functor $F$ is finitary and has finite degree $n=\deg(F)$. If a distance space $X$ is totally bounded, then the distance space $F^pX$ is totally bounded and has upper and lower box-counting dimensions
$$\overline{\dim}_B F^pX\le n\cdot \overline{\dim}_B X\mbox{ \ and \ \ }\underline{\dim}_B F^pX\le n\cdot \underline{\dim}_B X.$$
\end{theorem}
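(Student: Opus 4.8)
The plan is to deduce everything from Theorem~\ref{t:entropy1} together with Corollary~\ref{c:tb}, the guiding principle being that the upper and lower box-counting dimensions are insensitive both to the multiplicative constant $|Fn|$ and to the rescaling of $\e$ by the fixed factor $\sqrt[p]{n}$. First, the total boundedness of $F^pX$ is exactly the content of Corollary~\ref{c:tb}, so both box dimensions of $F^pX$ are defined and it remains to estimate them. Set $n=\max\{1,\deg(F)\}$, which coincides with $\deg(F)$ in the only nontrivial case $\deg(F)\ge 1$; we may also assume $X\ne\emptyset$, as otherwise $F^pX=F\emptyset$ is finite and the claim is trivial.

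By Theorem~\ref{t:entropy1}, for every $\e\in(0,\infty)$ we have $\IE_\e(F^pX)\le |Fn|\cdot\big(\IE_{\e/\!\sqrt[p]{n}}(X)\big)^n$, and since $F$ is finitary the cardinal $|Fn|$ is a finite constant. Taking logarithms gives
$$\ln\IE_\e(F^pX)\le \ln|Fn|+n\cdot\ln\IE_{\e/\!\sqrt[p]{n}}(X).$$
Next I would divide by $\ln(1/\e)$ and substitute $\delta=\e/\!\sqrt[p]{n}$, so that $\ln(1/\e)=\ln(1/\delta)-\tfrac1p\ln n$ and $\delta\to+0$ as $\e\to+0$; the inequality then rewrites as
$$\frac{\ln\IE_\e(F^pX)}{\ln(1/\e)}\le \frac{\ln|Fn|}{\ln(1/\e)}+n\cdot\frac{\ln\IE_{\delta}(X)}{\ln(1/\delta)}\cdot\frac{\ln(1/\delta)}{\ln(1/\delta)-\tfrac1p\ln n}.$$
As $\e\to+0$ the first summand tends to $0$ (constant numerator, denominator $\to+\infty$), and the trailing fraction tends to $1$. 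Since $X\ne\emptyset$ forces $\IE_\delta(X)\ge 1$ and hence $\ln\IE_\delta(X)\ge 0$, applying the elementary facts that $\limsup(a_\delta c_\delta)=\limsup a_\delta$ (and likewise for $\liminf$) whenever $a_\delta\ge 0$ and $c_\delta\to 1$ — taken with $a_\delta=\ln\IE_\delta(X)/\ln(1/\delta)$ — yields $\overline{\dim}_B F^pX\le n\cdot\overline{\dim}_B X$ on passing to $\limsup_{\e\to+0}$, and $\underline{\dim}_B F^pX\le n\cdot\underline{\dim}_B X$ on passing to $\liminf_{\e\to+0}$. Here I also use that $\e\mapsto\delta$ is an increasing homeomorphism of $(0,\infty)$ onto itself, so the limits taken along $\e\to+0$ and along $\delta\to+0$ agree.

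The only genuinely delicate point is the bookkeeping with the upper and lower limits: one must verify that neither the additive error term $\ln|Fn|/\ln(1/\e)$ nor the multiplicative error term $\ln(1/\delta)/(\ln(1/\delta)-\tfrac1p\ln n)$ contaminates the limits. Both are disposed of by the standard observations that a summand converging to $0$ drops out of $\limsup$ and $\liminf$, and that multiplying a nonnegative quantity by a factor converging to $1$ alters neither its $\limsup$ nor its $\liminf$ (the case $\limsup a_\delta=\infty$ being harmless, as then the right-hand bound is also infinite). No fractal-geometric input beyond Theorem~\ref{t:entropy1} is required.
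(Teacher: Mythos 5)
Your argument is correct and follows exactly the route the paper intends: the paper derives Theorem~\ref{t:tb-dim} directly from Corollary~\ref{c:tb} and the entropy bound $\IE_\e(F^pX)\le |Fn|\cdot\big(\IE_{\e/\!\sqrt[p]{n}}(X)\big)^n$ of Theorem~\ref{t:entropy1}, leaving the logarithmic bookkeeping to the reader, and you have supplied that bookkeeping correctly (including the harmless rescaling $\delta=\e/\!\sqrt[p]{n}$ and the disappearance of the additive constant $\ln|Fn|$). The only cosmetic remark is that the degenerate case $\deg(F)=0$ with $X\ne\emptyset$ deserves the one-line observation that $F^pX$ is then finite, so both box dimensions vanish and the stated bound $0\cdot\dim_BX$ still holds.
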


A micro-version of the (upper and lower) box-counting dimensions are  the (upper and lower) $(0,0)$-entropy dimensions defined for any $(0,0)$-finitary distance space $X$ by the formulas
$$
\odim_{(0,0)}X=\lim_{\delta\to0}\limsup_{\e\to 0}\frac{\ln(\IE_{\e,\delta}(X))}{\ln(\delta/\e)}
\mbox{ \ and \ }\udim_{(0,0)}X=\lim_{\delta\to 0}\liminf_{\e\to 0}\frac{\ln(\IE_{\e,\delta}(X))}{\ln(\delta/\e)}.$$

Large-scale counterparts of the (upper and lower) $(0,0)$-entropy dimensions are the (upper and lower) $(\infty,\infty)$-entropy dimensions, defined for any $(\infty,\infty)$-finitary distance space $X$ by the formulas
$$
\overline{\dim}_{(\infty,\infty)}X=\lim_{\e\to\infty}\limsup_{\delta\to\infty}\frac{\ln(\IE_{\e,\delta}(X))}{\ln(\delta/\e)}
\mbox{ \ and \ }\underline{\dim}_{(\infty,\infty)}X=\lim_{\e\to\infty}\liminf_{\delta\to\infty}\frac{\ln(\IE_{\e,\delta}(X))}{\ln(\delta/\e)}.$$

Finally, for any $(0,\infty)$-finitary distance space $X$, let us consider the (upper and lower) $(0,\infty)$-entropy dimensions:
$$
\overline{\dim}_{(0,\infty)}X=\limsup_{(\e,\delta)\to (0,\infty)}\frac{\ln(\IE_{\e,\delta}(X))}{\ln(\delta/\e)}
\mbox{ \ and \ }\underline{\dim}_{(\infty,\infty)}X=\liminf_{(\e,\delta)\to (0,\infty)}\frac{\ln(\IE_{\e,\delta}(X))}{\ln(\delta/\e)}.$$
The upper $(0,\infty)$-entropy dimension $\overline{\dim}_{(0,\infty)}$ was introduced and studied by Assouad in \cite{Ass}.

The relations between the six entropy dimensions are described by the following diagram in which an arrow $a\to b$ indicates that $a\le b$.
$$\xymatrix{
\odim_{(0,0)}(X)\ar[r]&\overline{\lim}_{(0,\infty)}(X)&\odim_{(\infty,\infty)}(X)\ar[l]\\
\udim_{(0,0)}(X)\ar[r]\ar[u]&\udim_{(0,\infty)}(X)\ar[u]&\udim_{(\infty,\infty)}(X)\ar[l]\ar[u]
}$$

\begin{example} The extended Cantor set $$C=\Big\{\sum_{n=m}^{\infty} \frac{x_n}{3^n}:m\in\IZ\;\wedge\;(x_n)_{n\in\IZ}\in\{0,1\}^{\IZ}\Big\}\subset\IR$$ is $(0,\infty)$-finitary and has the entropy dimensions
$$\udim_{(0,0)}C=\odim_{(0,0)}C=\udim_{(0,\infty)}C=\odim_{(0,\infty)}C=\udim_{(\infty,\infty)}=\odim_{(\infty,\infty)}C=\frac{\ln2}{\ln3}.$$
For characterizations of the extended Cantor set in various categories, see \cite{BZ}.
\end{example}

The definition of the entropy dimensions and Theorem~\ref{t:entropy1}, imply the following corollary.

\begin{theorem}\label{t:bg-dim} Assume that the functor $F$ is finitary, prseserves supports and has finite degree $n=\deg(F)$. Let $(\e,\delta)\in\{(0,0),(0,\infty),(\infty,\infty)\}$. If a distance space $X$ is $(\e,\delta)$-finitary, then the distance space $F^pX$ is $(\e,\delta)$-finitary and has upper and lower $(\e,\delta)$-entropy dimensions: 
$$
\odim_{(\e,\delta)}F^pX\le n\cdot\odim_{(\e,\delta)}X, \quad
\udim_{(\e,\delta)}F^pX\le n\cdot\udim_{(\e,\delta)}X.
$$
\end{theorem}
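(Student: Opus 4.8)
The plan is to reduce the statement to the two-parameter entropy estimate already available, namely the bound
$$\IE_{\e,\delta}(F^pX)\le n^n\cdot|Fn|\cdot\big(\IE_{\e',\delta'}(X)\big)^n,\qquad \e'=\tfrac{\e}{\sqrt[p]{n}},\quad \delta'=2\delta\sqrt[p]{n},$$
of Theorem~\ref{t:entropy2} (it is this two-parameter bound, rather than the single-parameter Theorem~\ref{t:entropy1}, that the three $(\e,\delta)$-dimensions require). First I would dispose of the qualitative claim: since $F$ is finitary and of finite degree, the cardinal $|Fn|$ is finite, so the right-hand side above is finite whenever $\IE_{\e',\delta'}(X)$ is; together with Corollary~\ref{c:card-E-char}(3--5) this shows that $F^pX$ is $(\e,\delta)$-finitary whenever $X$ is, for each of the three index pairs $(0,0)$, $(0,\infty)$, $(\infty,\infty)$. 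This leaves only the quantitative dimension inequalities.

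The core computation is to take logarithms in the displayed bound, obtaining
$$\ln\IE_{\e,\delta}(F^pX)\le \ln\!\big(n^n|Fn|\big)+n\cdot\ln\IE_{\e',\delta'}(X),$$
and then to divide by $\ln(\delta/\e)$. The decisive elementary observation is that $\delta'/\e'=2n^{2/p}\cdot(\delta/\e)$, so that $\ln(\delta'/\e')=\ln(\delta/\e)+\ln\!\big(2n^{2/p}\big)$; consequently the ratio $\ln(\delta'/\e')/\ln(\delta/\e)$ tends to $1$ whenever $\ln(\delta/\e)\to\infty$. Writing
$$\frac{\ln\IE_{\e,\delta}(F^pX)}{\ln(\delta/\e)}\le \frac{\ln(n^n|Fn|)}{\ln(\delta/\e)}+n\cdot\frac{\ln\IE_{\e',\delta'}(X)}{\ln(\delta'/\e')}\cdot\frac{\ln(\delta'/\e')}{\ln(\delta/\e)},$$
the first summand vanishes in every relevant limit (since in each of the three regimes $\delta/\e\to\infty$, hence $\ln(\delta/\e)\to\infty$) and the last factor tends to $1$, so only $n$ times the $X$-quotient survives in the limit.

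To finish I would pass to the appropriate iterated limits and verify that the substitution $\e\mapsto\e'=\e/\sqrt[p]{n}$, $\delta\mapsto\delta'=2\delta\sqrt[p]{n}$ respects the direction of every limit. For $(\e,\delta)=(0,0)$ this is transparent: fixing $\delta$ and letting $\e\to0$ forces $\e'\to0$ with $\delta'$ fixed, so $\limsup_{\e\to0}$ of the left side is bounded by $n$ times $\limsup_{\e'\to0}\ln\IE_{\e',\delta'}(X)/\ln(\delta'/\e')$; then letting $\delta\to0$ drives $\delta'=2\delta\sqrt[p]{n}\to0$ and yields $\odim_{(0,0)}F^pX\le n\cdot\odim_{(0,0)}X$. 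The cases $(0,\infty)$ and $(\infty,\infty)$ run verbatim, using that $\e\to0,\delta\to\infty$ (resp.\ $\e\to\infty,\delta\to\infty$) forces $\e'\to0,\delta'\to\infty$ (resp.\ $\e'\to\infty,\delta'\to\infty$), and the same argument with $\liminf$ in place of $\limsup$ delivers the lower-dimension inequalities.

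I expect the only genuinely delicate point to be the bookkeeping of these nested $\lim$/$\limsup$/$\liminf$ operations — specifically, the justification that inside each outer/inner limit one may replace the limiting variable $(\e,\delta)$ by $(\e',\delta')$ without altering the value of the $\limsup$ or $\liminf$. This rests entirely on the affine relation $\ln(\delta'/\e')=\ln(\delta/\e)+\mathrm{const}$ together with the fact that $(\e,\delta)\mapsto(\e',\delta')$ is an order-preserving homeomorphism carrying each of the three parameter regimes onto itself; once this is recorded, the constant additive term in the logarithm and the constant factor $\ln(n^n|Fn|)$ are both annihilated by division by $\ln(\delta/\e)\to\infty$, and the inequality emerges with the single surviving constant $n$.
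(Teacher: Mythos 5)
Your proof is correct and follows the route the paper intends: the paper offers no written argument for this theorem beyond the remark that it follows from the definitions of the entropy dimensions together with the entropy estimate, and your computation supplies exactly the missing logarithm/limit bookkeeping, including the key observation that $\ln(\delta'/\e')=\ln(\delta/\e)+\ln(2n^{2/p})$ so the rescaling is absorbed in every one of the three limiting regimes. You are also right that the relevant estimate is the two-parameter bound of Theorem~\ref{t:entropy2} rather than Theorem~\ref{t:entropy1} (the paper's citation at this point is evidently a slip), which is consistent with the support-preservation hypothesis appearing in the statement.
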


\section{The functor $F^p$ and the Hausdorff dimension of distance spaces}

In this section we study the dependence between the Hausdorff dimensions of the distance spaces $X$ and $F^pX$. 

The Hausdorff dimension of distance spaces is defined by analogy with the Hausdorff dimension of metric spaces, see \cite[Ch.2]{Fal}.

Let $X$ be a nonempty distance space. For any $\e\in(0,\infty]$, let $\cov_\e(X)$ be the family of all countable covers of $X$ by sets of diameter $\le\e$. For a cover $\U\in\cov_\e(X)$ and a positive real number $s$, let $$\mathcal H^d(\U)=\sum_{U\in\U}\diam(U)^s:=\sup_{\F\in[\U]^{<\w}}\sum_{F\in\F}\diam(F)^s\in[0,\infty],$$
and
$$\mathcal H^s_\e(X)=\inf\big(\{\mathcal H^d(\U):\U\in\cov_\e(X)\}\cup\{\infty\}\big).$$
It is clear that $\mathcal H^s_{\e'}(X)\le\mathcal H^s_{\e}(X)$ for any $\e\le \e'$. Consequently, there exists the (finite or infinite) limit
$$\mathcal H^s(X)=\lim_{\e\to+0}\mathcal H^s_\e(X)=\sup_{\e\in(0,\infty)}\mathcal H^s_\e(X),$$
called the {\em $s$-dimensional Hausdorff measure} of the distance space $X$.
It can be shown that if $\mathcal H^s(X)<\infty$ for some $s\in(0,\infty)$, then $\mathcal H^t(X)=0$ for every $t>s$.

The (finite or infinite) number $$\dim_HX=\inf\big(\{s\in (0,\infty):\mathcal H^s(X)=0\}\cup\{\infty\}\big)$$
is called {\em the Hausdorff dimension} of the distance space $X$.

The following properties of the Hausdorff dimension of distances spaces can be proved by analogy with the corresponding properties of the Hausdorff dimension for metric spaces, see \cite[\S2.2]{Fal}.

\begin{proposition}\label{p:HD} Let $X$ be a distance space.
\begin{enumerate} 
\item[\textup{1)}] Each subspace $A\subseteq X$ has $\dim_HA\le \dim_HX$.
\item[\textup{2)}] For any countable cover $\{X_i\}_{i\in\w}$ of $X$ we have $\dim_HX=\sup_{i\in\w}\dim_HX_i$.
\item[\textup{3)}] For any surjective Lipschitz map $f:X\to Y$ to a distance space $Y$ we have $\dim_H Y\le\dim_HX$.
\end{enumerate}
\end{proposition}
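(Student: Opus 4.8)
The plan is to deduce all three statements from the monotonicity, scaling and subadditivity behaviour of the pre-measures $\mathcal H^s_\e$, combined with the threshold characterisation of $\dim_H$. Recall that the excerpt already records that $\mathcal H^s(X)<\infty$ forces $\mathcal H^t(X)=0$ for every $t>s$; from this I would extract the two facts I use repeatedly: $\mathcal H^s(\cdot)=0$ whenever $s>\dim_H(\cdot)$, and conversely $\dim_H(\cdot)\le s$ whenever $\mathcal H^s(\cdot)=0$. The only geometric inputs needed are the elementary observations that $A\subseteq B$ implies $\diam(A)\le\diam(B)$, and that a Lipschitz map $f$ satisfies $\diam(f[A])\le\Lip(f)\cdot\diam(A)$; these two remarks drive parts (1) and (3), respectively.

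For part (1), I would note that for any cover $\U\in\cov_\e(X)$ the family $\{U\cap A:U\in\U\}$ is a countable cover of $A$ by sets of diameter $\le\e$, hence lies in $\cov_\e(A)$, and its total $s$-mass does not exceed that of $\U$. Taking infima gives $\mathcal H^s_\e(A)\le\mathcal H^s_\e(X)$ for all $\e,s$, hence $\mathcal H^s(A)\le\mathcal H^s(X)$; in particular $\mathcal H^s(X)=0$ implies $\mathcal H^s(A)=0$, so the threshold characterisation yields $\dim_HA\le\dim_HX$. Part (3) is parallel: writing $L=\Lip(f)$, the image family $\{f[U]:U\in\U\}$ covers $f[X]=Y$ (this is the only place surjectivity enters) by sets of diameter $\le L\e$, and its $s$-mass is at most $L^s$ times that of $\U$, whence $\mathcal H^s_{L\e}(Y)\le L^s\,\mathcal H^s_\e(X)$. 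If $L>0$, letting $\e\to+0$ gives $\mathcal H^s(Y)\le L^s\,\mathcal H^s(X)$, so $\mathcal H^s(X)=0\Rightarrow\mathcal H^s(Y)=0$ and $\dim_HY\le\dim_HX$; in the degenerate case $L=0$ every distance in $Y$ vanishes, so $\mathcal H^s(Y)=0$ for all $s>0$ and $\dim_HY=0\le\dim_HX$ trivially.

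For part (2), the inequality $\dim_HX\ge\sup_{i\in\w}\dim_HX_i$ is immediate from part (1) applied to each $X_i\subseteq X$. For the reverse inequality I would fix any $s>\sup_{i}\dim_HX_i$; then $s>\dim_HX_i$ for every $i$, so $\mathcal H^s(X_i)=0$ by the threshold property, and therefore $\mathcal H^s_\e(X_i)=0$ for every $\e$ (since $\mathcal H^s(X_i)=\sup_\e\mathcal H^s_\e(X_i)$). Countable subadditivity of $\mathcal H^s_\e$ then gives $\mathcal H^s_\e(X)\le\sum_{i}\mathcal H^s_\e(X_i)=0$ for every $\e$, hence $\mathcal H^s(X)=0$ and $\dim_HX\le s$; letting $s\downarrow\sup_i\dim_HX_i$ finishes the argument.

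The cover manipulations are routine; the one step needing genuine care is the subadditivity estimate in part (2), where I must choose the covers $\U_i\in\cov_\e(X_i)$ at a single common scale $\e$ and verify that their countable union is again a legitimate member of $\cov_\e(X)$ with the total $s$-mass controlled by $\sum_i$(mass of $\U_i$). This is precisely where the restriction to \emph{countable} covers in the definition of $\cov_\e$ is essential, and where an $\eta2^{-i}$-type bookkeeping would be invoked if the numbers $\mathcal H^s(X_i)$ were merely finite. Since in the present situation each $\mathcal H^s(X_i)=0$, that bookkeeping collapses and the bound $\mathcal H^s_\e(X)=0$ follows directly.
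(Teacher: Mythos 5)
Your proof is correct and is exactly the standard argument: the paper itself gives no proof of Proposition~\ref{p:HD}, stating only that these facts ``can be proved by analogy with the corresponding properties of the Hausdorff dimension for metric spaces'' and citing \cite[\S2.2]{Fal}, and your three steps (monotonicity of $\mathcal H^s_\e$ under intersection with a subspace, the scaling bound $\mathcal H^s_{L\e}(Y)\le L^s\mathcal H^s_\e(X)$ for Lipschitz images, and countable subadditivity at a fixed scale $\e$ combined with the threshold characterisation of $\dim_H$) are precisely that textbook argument, carried over verbatim to distance spaces. The edge cases you flag ($L=0$ and the common-scale bookkeeping in part (2)) are handled correctly.
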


\begin{theorem}\label{t:H-dim} Assume that the functor $F$ has finite degree $n=\deg(F)$ and the set $Fn$ is at most countable. Then $$\dim_H F^pX\le \dim_H X^n,$$
where $X^n$ is endowed with the distance $d^p_{X^n}$.
\end{theorem}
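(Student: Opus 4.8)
The plan is to realise $F^pX$ as an at most countable union of Lipschitz images of the power $(X^n,d^p_{X^n})$ and then to invoke the behaviour of the Hausdorff dimension under surjective Lipschitz maps and under countable covers, recorded in Proposition~\ref{p:HD}(2,3). We may assume that $X\neq\emptyset$ and $n=\deg(F)\ge 1$; the remaining degenerate cases (where $X^n$ is a one-point space) are checked directly via Theorem~\ref{t:small}, which forces $F^pX$ to be $\{0,\infty\}$-valued.

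The key structural input is the surjectivity of the expansion map, that is, the equality $FX=\bigcup_{a\in Fn}\xi^a_X[X^n]$ (already exploited in Theorems~\ref{t:entropy1} and \ref{t:top}). Indeed, if $c\in FX$ has nonempty support, then Proposition~\ref{p:BMZ2} supplies $b\in Fn$ and $f\in X^n$ with $c=Ff(b)=\xi^b_X(f)$. If instead $\supp(c)=\emptyset$, fix $x_0\in X$; by Proposition~\ref{p:BMZ} we have $c=Fi_{\{x_0\},X}(c')$ for some $c'\in F\{x_0\}$, and the constant map $e:n\to\{x_0\}$ admits a section, so $Fe:Fn\to F\{x_0\}$ is surjective and $c'=Fe(b)$ for some $b\in Fn$; then $c=\xi^b_X(i_{\{x_0\},X}\circ e)\in\xi^b_X[X^n]$. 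Hence every element of $FX$ lies in some $\xi^a_X[X^n]$.

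Now I assemble the pieces. By Theorem~\ref{t:altm}, for each $a\in Fn$ the map $\xi^a_X:(X^n,d^p_{X^n})\to(FX,d^p_{FX})$ is non-expanding, hence Lipschitz with constant $\le 1$; restricting its codomain to the subspace $\xi^a_X[X^n]$ makes it a surjective Lipschitz map, so Proposition~\ref{p:HD}(3) yields $\dim_H\xi^a_X[X^n]\le\dim_H X^n$. Since $Fn$ is at most countable, the family $\{\xi^a_X[X^n]:a\in Fn\}$ is a countable cover of $F^pX$, and Proposition~\ref{p:HD}(2) gives
$$\dim_H F^pX=\sup_{a\in Fn}\dim_H\xi^a_X[X^n]\le\dim_H X^n.$$

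The argument is short once the earlier machinery is in place, so the only genuine obstacle is the surjectivity statement of the second paragraph, and within it the empty-support case: one must remember that an element of empty support need not lie in the image of $F$ applied to the empty inclusion, so the reduction has to be routed through a singleton $\{x_0\}\subseteq X$ rather than through $F\emptyset$. The countability hypothesis on $Fn$ is used crucially, and only, to keep the cover countable so that Proposition~\ref{p:HD}(2) applies; without it the uncountable union of the pieces $\xi^a_X[X^n]$ could a priori raise the dimension.
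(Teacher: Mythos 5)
Your main argument is exactly the paper's: write $F^pX$ as the countable union $\bigcup_{a\in Fn}\xi^a_X[X^n]$ of non-expanding images of $(X^n,d^p_{X^n})$ and apply Proposition~\ref{p:HD}(2,3). Your verification of this decomposition, including the empty-support case routed through a singleton $\{x_0\}$ rather than through $F\emptyset$, is correct and in fact spelled out more carefully than the paper's bare citation of Proposition~\ref{p:BMZ}. The countability of $Fn$ is used exactly where you say it is.

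The one step that would fail as written is your dismissal of the degenerate case $n=\deg(F)=0$. Knowing that $d^p_{FX}$ is the $\{0,\infty\}$-valued $\infty$-metric does \emph{not} by itself give $\dim_H F^pX=0$: in such a space every set of finite diameter is a singleton, so if $F^pX$ were uncountable then $\cov_\varepsilon(F^pX)=\emptyset$ for every finite $\varepsilon$, hence $\mathcal H^s_\varepsilon(F^pX)=\infty$ for every $s$ and $\dim_H F^pX=\infty$. You must additionally show that $FX$ is countable, and this is where the hypothesis $|Fn|=|F0|\le\w$ enters in this case. Concretely: degree $0$ forces every $a\in F1$ to have empty support, and since the only proper subset of $1$ is $\emptyset$, this means $F1=F[\emptyset;1]=Fi_{\emptyset,1}[F\emptyset]$, so $|F1|\le|F0|\le\w$; moreover every $a\in FX$ has empty support, so Proposition~\ref{p:BMZ} gives $FX=Ff[F1]$ for any $f:1\to X$, whence $|FX|\le|F1|\le\w$ and $\dim_H F^pX=0$ as a countable union of singletons. (By contrast, the case where $X$ is a singleton and $n\ge1$ needs no separate treatment: your main decomposition already covers it, since each $\xi^a_X[X^n]$ is then a point.)
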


\begin{proof} The inequality is trivial if $\dim_H X^n=\infty$ or $\dim_H F^pX=0$ . So, we assume that $\dim_H F^pX>0$ and  $\dim_H(X^n)<\infty$. If $n=\deg(F)=0$, then we can take any map $f:1\to X$ and applying Theorem~\ref{t:small}, conclude that  $FX=Ff[F1]$ and hence $|F^pX|\le |F1|\le |Fn|\le\w$ and $\dim_H F^pX=0$ by Proposition~\ref{p:HD}(1). But this contradicts our assumption. Therefore, $n=\deg(F)>0$. In this case, by Proposition~\ref{p:BMZ}, $F^pX=\bigcup_{a\in Fn}\xi^a[X^n]$. By Theorem~\ref{t:alt}, the function $\xi^a:X^n\to F^pX$ is non-expanding and hence 
$$\dim_H F^pX=\sup_{a\in Fn}\dim_H\xi^a[X^n]\le\sup_{a\in Fn}\dim_H X^n=\dim_H X^n.$$
\end{proof}

\begin{remark} The Hausdorff dimension does not obey the logarithmic law. By Example 7.8 in \cite{Fal}, the real line contains two Borel subsets $Y,Z$ such that $\dim_H Y=\dim_H Z=0$ but $\dim_H(Y\times Z)\ge 1$. On the other hand, by Product Formulas 7.2 and 7.3 in \cite{Fal}, for any Borel subsets $Y,Z$ in a Euclidean space $\IR^n$ we have
$$\dim_H(Y\times Z)\ge\dim_HY+\dim_HZ\quad\mbox{and}\quad\dim_H(Y\times Z)\le\dim_H Y+ \overline{\dim}_B Z.$$
\end{remark}

\section{The functor $F^p$ and the controlled dimensions of distance spaces}

In this section we study the interplay between the controlled dimensions of the distance spaces $X$ and $F^pX$. The controlled dimensions of distance spaces are generalizations of the covering dimension of Lebesgue and asymptotic dimension of Gromov. They are introduced as follows.

Let $\mathcal D$ be a nonempty family of non-decreasing functions from $(0,\infty)$ to $(0,\infty]$. The family $\mathcal D$ will be called a {\em dimension controlling class} and its elements {\em dimension controlling functions}.
 
Let $D:(0,\infty)\to(0,\infty]$ be a dimension controlling function and $n$ be a cardinal. We say that a distance space $(X,d_X)$ has {\em $D$-controlled dimension} $\Dim[D] X\le n$ if for every $\e\in(0,+\infty)$ there exists a cover $\mathcal U=\bigcup_{i\in 1+n}\U_i$ of $X$ such that $$\mesh(\U)\le D(\e)\mbox{ \ and \ }\inf_{i\in 1+n}\ud_X(\U_i)\ge \e,$$where $$\ud_X(\U_i)=\inf\big(\{\infty\}\cup\{\ud_X(U,V):U,V\in\U_i,\;\;U\ne V\}\big).$$ 
The smallest cardinal $n$ such that $\Dim[D] X\le n$ is called the {\em $D$-dimension} of the distance space $X$ and is denoted by $\Dim[D]X$.

\begin{example}\label{ex:zero} If the distance $d_X$ is a $\{0,\infty\}$-valued $\infty$-metric, then $\dim_DX=0$ for every dimension controlling function $D$.
\end{example}

\begin{proof} The cover $\U_0=\{\{x\}:x\in X\}$ of $X$ by singletons has $\mesh(\U_0)=0\le D(\e)$ and $\ud_X(\U_0)=\infty>\e$ for every $\e\in(0,\infty)$, witnessing that $\dim_DX=0$.
\end{proof} 

Given a  dimension controlling class $\mathcal D$, the cardinal
$$\Dim[\mathcal D]X:=\min_{D\in\mathcal D}\dim_D X$$ 
is called the {\em $\mathcal D$-controlled dimension} of the distance space $X$.

\begin{definition} A dimension controlling class $\mathcal D$ is called a {\em dimension controlling scale} if it satisfies the following conditions:
\begin{itemize}
\item the identity function $(0,\infty)\to(0,\infty)$ belongs to $\mathcal D$;
\item for any functions $f,g\in\mathcal D$ the functions $f+g$ and $f\circ g$ belong to $\mathcal D$;
\item for any function $f\in\mathcal D$, any non-decreasing function $g\le f$ belongs to $\mathcal D$.
\end{itemize}
\end{definition}
Each dimension controlling class generates a unique dimension controlling scale (equal to the intersection of all dimension controlling scales that contain this class).

The main result of this section is the following theorem generalizing topological results of Basmanov \cite{Bas1,Bas2,Bas3} and ``coarse'' results of Radul, Shukel, Zarichnyi  \cite{RS, Shukel,SZ}.

\begin{theorem}\label{t:main-dim} If a functor $F:\Set\to\Set$ is finitary and has finite degree, then for any dimension controlling scale $\mathcal D$ and any distance space $X$, the distance space $F^pX=(FX,d^p_{FX})$ has $\mathcal D$-controlled dimension $$\dim_{\mathcal D}F^pX\le \deg(F)\cdot\dim_{\mathcal D} X.$$
\end{theorem}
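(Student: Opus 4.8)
The plan is to exploit the surjections $\xi^a_X:X^n\to FX$ furnished by Theorem~\ref{t:altm}, to transport a carefully coloured cover of the $\ell^p$-power $X^n$ to $FX$, and to use Lemma~\ref{l:2short} to recover the missing separation. Write $n:=\deg(F)$ and $k:=\dim_{\mathcal D}X$. If $k=\infty$ there is nothing to prove; if $n=0$, then by Theorem~\ref{t:small} the distance $d^p_{FX}$ is a $\{0,\infty\}$-valued $\infty$-metric, so $\dim_{\mathcal D}F^pX=0$ by Example~\ref{ex:zero}. Hence we may assume $1\le n<\infty$ and $k<\infty$. Since $F$ is finitary, $N:=|Fn|$ is finite; fix $D\in\mathcal D$ with $\dim_DX\le k$.

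The first step is a power estimate: for every scale $\Lambda\in(0,\infty)$ I would produce a cover $\mathcal V=\bigcup_{j=0}^{nk}\mathcal V_j$ of $X^n$ with $\mesh(\mathcal V)\le n^{1/p}D(\Lambda)$ and $\ud_{X^n}(\mathcal V_j)\ge\Lambda$ for each $j$, crucially using only $nk+1$ colours. This is the subadditivity of controlled dimension under products, $\dim_{\mathcal D}(X^n,d^p_{X^n})\le n\cdot\dim_{\mathcal D}X$, obtained by iterating the two-factor inequality; the two-factor case is built from the $(k{+}1)$-coloured covers of $X$ taken at a decreasing hierarchy of scales (a ``staircase''), the closure of $\mathcal D$ under sums, compositions and domination ensuring that every auxiliary control function again lies in $\mathcal D$. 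Passing from $d_X$-diameters to $d^p_{X^n}$-diameters costs only the factor $n^{1/p}$ by the definition of the $\ell^p$-distance.

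Now fix $\e\in(0,\infty)$, set $\Lambda:=3nN\e$, and apply the power estimate at scale $\Lambda$. Keeping the same colour set, define
$$\mathcal W_j:=\{\xi^a_X[V]:a\in Fn,\ V\in\mathcal V_j\},\qquad \mathcal W:=\bigcup_{j=0}^{nk}\mathcal W_j.$$
By Proposition~\ref{p:BMZ2} the images $\xi^a_X[X^n]$ cover every element of $FX$ with non-empty support, and each $\xi^a_X$ is non-expanding by Theorem~\ref{t:altm}, so $\diam(\xi^a_X[V])\le\diam(V)$ and $\mesh(\mathcal W)\le n^{1/p}D(3nN\e)$, a value of a function $D'\in\mathcal D$ by the scale-closure of $\mathcal D$. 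It remains to check $\ud_{FX}(\mathcal W_j)\ge\e$. Take distinct members $\xi^a_X[V],\xi^{a'}_X[V']$ of $\mathcal W_j$ and suppose, for a contradiction, that $b=Ff(a)$, $b'=Fg(a')$ with $f\in V$, $g\in V'$ and $d^p_{FX}(b,b')<\e$. Since $d^\infty_{FX}\le d^p_{FX}$, Lemma~\ref{l:2short} yields an idempotent $r:X\to X$ with $\sup_xd_X(x,r(x))<nN\e$ and $Fr(b)=Fr(b')$. Choosing the representing tuples so that $f[n]=\supp(b)$ and $g[n]=\supp(b')$ (Proposition~\ref{p:BMZ2}), Corollary~\ref{c:supp} locates the support of the common value $Fr(b)=Fr(b')$ inside $r[f[n]]\cap r[g[n]]$, forcing $f[n]$ and $g[n]$ to lie within a controlled multiple of $nN\e$ of one another; as $\Lambda=3nN\e$ exceeds this, one reaches a contradiction with $\ud_{X^n}(V,V')\ge\Lambda$. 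Hence $\ud_{FX}(\mathcal W_j)\ge\e$, so $\dim_{D'}F^pX\le nk$ and therefore $\dim_{\mathcal D}F^pX\le nk=\deg(F)\cdot\dim_{\mathcal D}X$.

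I expect the main obstacle to be precisely this separation transfer. A non-expanding surjection does not in general bound the controlled dimension of its image from above, so the colour count $nk+1$ can survive only because Lemma~\ref{l:2short} converts proximity in $FX$ into a \emph{globally} small displacement; this is exactly why the internal scale $\Lambda$ had to be inflated by the factor $n|Fn|$. Making rigorous the passage ``closeness in $FX$ implies proximity of the tuples $f,g$ in $X^n$'' when $F$ need not preserve supports — i.e. relating the $\ell^p$-separation of the cells of $\mathcal V_j$ to Hausdorff separation of the supports $f[n],g[n]$, and back again through the representatives — is the delicate point, and is where the hypotheses that $F$ is finitary of finite degree are used in full. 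The finitely handled empty-support elements cause no trouble: by Theorem~\ref{t:small} they sit at distance $\infty$ from everything and form an extra $\{0,\infty\}$-separated family of $\mathcal D$-dimension $0$, which by an obvious union argument does not raise the bound.
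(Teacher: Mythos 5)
Your overall strategy --- cover $(X^n,d^p_{X^n})$ with $nk+1$ well-separated colour classes and push the cover forward along the non-expanding maps $\xi^a_X:X^n\to FX$ --- breaks down at exactly the point you flag as delicate, and it cannot be repaired within this framework. The map $\xi^a_X$ has large fibres: for the functor $H_2$ of at-most-two-point subsets with $a=\{0,1\}\in H_2(2)$, the tuples $f=(x,y)$ and $g=(y,x)$ have the same image $\{x,y\}$, so $d^p_{FX}(\xi^a_X(f),\xi^a_X(g))=0$ while $d^p_{X^n}(f,g)=2^{1/p}d_X(x,y)$ is arbitrarily large. Consequently $\ud_{X^n}(V,V')\ge\Lambda$ gives no lower bound whatsoever on $\ud_{FX}(\xi^a_X[V],\xi^a_X[V'])$; your contradiction argument only shows that the \emph{sets} $f[n]$ and $g[n]$ are Hausdorff-close, which is perfectly compatible with the \emph{tuples} $f,g$ being $\Lambda$-separated in $X^n$ (they may enumerate nearby sets in different orders). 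A second, independent failure: you place $\xi^a_X[V]$ for \emph{all} $a\in Fn$ into the same colour class $\mathcal W_j$. Taking again $F=H_2$, $a=\{0,1\}$, $a'=\{0\}$ and a single cell $V\ni f=(x,y)$ with $d_X(x,y)$ tiny, the images $\xi^a_X(f)=\{x,y\}$ and $\xi^{a'}_X(f)=\{x\}$ are distinct elements of $FX$ at distance at most a bounded multiple of $d_X(x,y)$, so $\ud_{FX}(\mathcal W_j)$ can be made smaller than any prescribed $\e$ no matter how the cover of $X^n$ was chosen. (Your auxiliary inequality $\dim_{\mathcal D}(X^n,d^p_{X^n})\le n\cdot\dim_{\mathcal D}X$ is also a nontrivial theorem asserted without proof, and the paper neither proves nor needs it; but even granting it, the separation transfer still fails.)

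The paper takes a genuinely different route precisely to avoid this. It filters $FX$ by support cardinality and inducts on $k$, proving $\dim_{D''}F^p_kX\le\max\{\dim_{D'}F^p_{k-1}X,km\}$ (Lemma~\ref{l:indstep}, with the infinite-dimensional variant Lemma~\ref{l:indstep2}). The cover produced by the Kolmogorov-trick Lemma~\ref{l:BellDra} lives on $X$ itself, not on $X^n$, and is arranged so that every $k$-point support lies inside a single colour's union $\bigcup\U_i$; separation in $F^pX$ is then obtained for free by partitioning the elements lying far from $F^p_{k-1}X$ into $\e$-chain components, and all of the work goes into bounding the \emph{mesh} of these components (Claim~\ref{cl:mesh}), using Lemma~\ref{l:2short}, Claim~\ref{cl:uL} and Corollary~\ref{c:supp} to build bijections between consecutive supports along a chain and thereby control the displacement of a single representing tuple. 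You would need to adopt some version of this support-based bookkeeping --- or otherwise quotient out the fibres of $\xi^a_X$ and the ambiguity in the choice of $a\in Fn$ --- before a colour count of $nk+1$ on $FX$ can be justified.
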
 

This theorem follows from Theorems~\ref{t:main-dim1} and \ref{t:main-dim2}, treating the cases of finite and infinite dimension $\dim_DX$, respectively.
The proofs of Theorems~\ref{t:main-dim1} and \ref{t:main-dim2} are preceded by  some  lemmas.

For two families of subsets $\V,\W$ of a distance space $(X,d_X)$, a positive real number $\e$,  and a subset $W\subseteq X$, let 
$$W\cup_\e\V:=W\cup\textstyle{\bigcup}\{V\in \V:\ud_X(V,W)<\e\}$$and
let $$\W\cup_\e\V=\{W\cup_\e\V:W\in\W\}\cup\{V\in\V:\ud_X(V,\textstyle{\bigcup}\W)\ge\e\}.$$

The following lemma easily follows from the definition of the family $\W\cup_\e\V$.

\begin{lemma}\label{l:sume} Let $\e$ be a positive real number and $\V,\W$ be two families of sets in a distance space $(X,d_X)$. If $\ud_X(\V)\ge\e$ and $\ud_X(\W)\ge 2\e+\mesh(\V)$, then 
$$\ud_X(\W\cup_\e\V)\ge\e\mbox{ \ and \ }\mesh(\W\cup_\e \V)\le 2\e+2\cdot \mesh(\V)+\mesh(\W).$$ 
\end{lemma}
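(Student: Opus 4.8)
The plan is to verify the two inequalities separately by unwinding the definition of $\W\cup_\e\V$, whose members are of two kinds: the \emph{enlarged} sets $W\cup_\e\V=W\cup\bigcup\{V\in\V:\ud_X(V,W)<\e\}$ for $W\in\W$, and the \emph{leftover} sets $V\in\V$ with $\ud_X(V,\bigcup\W)\ge\e$. Throughout I would keep in mind that the hypothesis $\ud_X(\W)\ge 2\e+\mesh(\V)$ is exactly what lets the separation of $\W$ ``absorb'' one diameter of $\V$ together with a slack of $2\e$; every nontrivial estimate below consumes precisely this budget.

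For the mesh bound I would fix an enlarged set $W\cup_\e\V$ (leftover sets trivially have diameter $\le\mesh(\V)$) and take two of its points $x,y$. The only nontrivial case is when $x$ lies in a set $V_1\in\V$ with $\ud_X(V_1,W)<\e$ and $y$ in another such $V_2$. Chaining through witnesses $v_1\in V_1,w_1\in W$ with $d_X(v_1,w_1)<\e$ and $v_2\in V_2,w_2\in W$ with $d_X(v_2,w_2)<\e$, the triangle inequality gives $d_X(x,y)\le\diam(V_1)+\e+\diam(W)+\e+\diam(V_2)\le 2\e+2\mesh(\V)+\mesh(\W)$, which is the claimed bound; the mixed and pure cases ($x$ or $y$ in $W$) only give smaller sums.

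For the separation bound $\ud_X(\W\cup_\e\V)\ge\e$ I would check that any two distinct members are $\e$-apart, splitting into three pair-types. Leftover--leftover is immediate from $\ud_X(\V)\ge\e$. For enlarged--leftover, a point of the leftover $V'$ is $\ge\e$ from $\bigcup\W$ by the defining condition on $V'$, and it is $\ge\e$ from any attached $V$ because necessarily $V\ne V'$: if $V$ were attached to some $W\subseteq\bigcup\W$ then $\ud_X(V,\bigcup\W)\le\ud_X(V,W)<\e$, contradicting the leftover condition, whence $\ud_X(V,V')\ge\ud_X(\V)\ge\e$.

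The crux, and the step I expect to be the main obstacle, is the enlarged--enlarged case, $W_1\cup_\e\V$ versus $W_2\cup_\e\V$ with $W_1\ne W_2$, where the constant $2\e+\mesh(\V)$ must be exploited twice. First, no single $V\in\V$ can be attached to both $W_1$ and $W_2$: chaining two near-witnesses through $V$ would force some $w_1\in W_1$, $w_2\in W_2$ with $d_X(w_1,w_2)<\e+\mesh(\V)+\e\le\ud_X(\W)$, a contradiction, and this reduces the subcase ``$p,q$ in attached sets'' to $\ud_X(\V)\ge\e$. Second, for $p\in W_1$ and $q$ in a set $V_2$ attached to $W_2$, I would pick a witness $w_2\in W_2$ with $d_X(q,w_2)\le\diam(V_2)+\e\le\mesh(\V)+\e$ and estimate $d_X(p,q)\ge d_X(p,w_2)-d_X(q,w_2)\ge(2\e+\mesh(\V))-(\mesh(\V)+\e)=\e$; the remaining subcases ($p,q\in W_1,W_2$, or one in a $W_i$) are handled the same way or are easier. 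Assembling these estimates yields $\ud_X(\W\cup_\e\V)\ge\e$, and the two delicate triangle-inequality computations are the only genuine content.
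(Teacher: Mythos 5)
Your proof is correct and is exactly the case analysis the paper has in mind --- the paper gives no proof at all, stating only that the lemma ``easily follows from the definition'' --- and your two expenditures of the budget $2\e+\mesh(\V)$ (once to forbid a single $V\in\V$ from being attached to two distinct members of $\W$, and once to separate a point of $W_1$ from a set attached to $W_2$) are precisely the right way to spend it, with the mesh bound following from the five-term chain you describe. The only caveat is the degenerate case $\mesh(\V)=\infty$, where your strict inequalities and the subtraction step $d_X(p,q)\ge d_X(p,w_2)-d_X(q,w_2)$ collapse; but there the hypothesis forces $\ud_X(\W)=\infty$ and the lemma itself can fail (a single $V$ of infinite diameter can attach to two infinitely separated members of $\W$), so this is a defect of the statement rather than of your argument.
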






The following lemma is crucial for our proof. It is a slightly modified version of Lemma 31 of Bell and Dranishnikov \cite{BellDran} (who call it Kolmogorov's trick).



\begin{lemma}\label{l:Kolmogorov} Let $k\le m$ be two nonzero finite cardinals, $\e$ be a positive real number, and $\U=\bigcup_{i\in m}\U_i$ be a cover of a distance space $X$ such that $\inf_{i\in m}\ud_X(\U_i)\ge \e$, and $\bigcup_{i\in S}\bigcup\U_i=X$ for every  $S\in [m]^k$. Then there exists a cover $\V=\bigcup_{i\in m+1}\V_i$ of $X$ such that 
\begin{enumerate}
\item[\textup{1)}] $\inf\limits_{i\in m+1}\ud_X(\V_i)\ge\frac13\e$;
\item[\textup{2)}] $\mesh(\V)\le \mesh(\U)+\frac23\e$;
\item[\textup{3)}] $X=\bigcup_{i\in S}\bigcup\V_i$ for every $S\in [m+1]^k$.
\end{enumerate}
\end{lemma}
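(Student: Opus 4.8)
The plan is to translate both the hypothesis and the conclusion into statements about \emph{multiplicity} of the colourings, and then to realize the passage from $m$ to $m+1$ colours as Kolmogorov's trick, with the scale bookkeeping supplying the constants $\tfrac13$ and $\tfrac23$.

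First I would record the combinatorial reformulation. Writing $A_i=\bigcup\U_i$ and $c(x)=\{i\in m:x\in A_i\}$ for the set of colours covering a point $x$, the covering hypothesis ``$\bigcup_{i\in S}\bigcup\U_i=X$ for every $S\in[m]^k$'' says that $S\cap c(x)\ne\emptyset$ for all $S\in[m]^k$ and all $x$, which is equivalent to $|m\setminus c(x)|\le k-1$, i.e.\ $|c(x)|\ge m-k+1$ for every $x\in X$. Dually, the target condition (3) is exactly $|c'(x)|\ge m-k+2$ for the new colouring $\V=\bigcup_{i\in m+1}\V_i$. Thus the whole content of the lemma is: add one colour and raise the minimal colour-multiplicity by one, paying for it only with the separation dropping from $\e$ to $\tfrac13\e$ and the mesh growing by $\tfrac23\e$. (Here $k\le m$ and $k,m\ne 0$ guarantee $[m]^k\ne\emptyset$ and $m-k+1\ge 1$, so the statement is non-vacuous.)

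Next I would set $\delta=\tfrac13\e$ and swell every family, replacing each $U\in\U_i$ by its open neighbourhood $O[U;\delta)$ to obtain $\U_i^+$. For distinct $U,U'\in\U_i$ with $\ud_X(U,U')\ge\e$ the triangle inequality gives $\ud_X\big(O[U;\delta),O[U';\delta)\big)\ge\e-2\delta=\tfrac13\e$, and each set grows in diameter by at most $2\delta$, so $\ud_X(\U_i^+)\ge\tfrac13\e$ and $\mesh(\U_i^+)\le\mesh(\U)+\tfrac23\e$; these two estimates are precisely conclusions (1) and (2). Since swelling only enlarges the $A_i$, the swollen colouring still has multiplicity $\ge m-k+1$. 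On this swollen, $\tfrac13\e$-separated system I would then run the recolouring of Bell--Dranishnikov's Kolmogorov trick \cite{BellDran}: using that any $k$ of the $A_i$ cover $X$, redistribute the swollen sets into $m+1$ colour classes, assembling the genuinely new $(m+1)$st class from swollen members so as to catch exactly the points of minimal multiplicity. Crucially this step only moves sets between classes and, where needed, replaces a set by its intersection with a complement (which can only shrink it), so it introduces no new unions: the separation stays $\ge\tfrac13\e$ and the mesh bound of (2) is untouched.

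The hard part is conclusion (3): one must verify that the recolouring raises the minimal multiplicity everywhere from $m-k+1$ to $m-k+2$ while keeping each of the $m+1$ classes $\tfrac13\e$-separated. I would argue pointwise: a point already lying in $\ge m-k+2$ of the swollen families keeps enough colours automatically, whereas a point $x$ of minimal multiplicity $m-k+1$ (missing exactly $k-1$ colours) is precisely the kind of point the new class is designed to cover, and the separation of that class is inherited from the family whose swollen members are reused. The main obstacle is thus the consistency of this assignment---guaranteeing simultaneously that (i) the new class is $\tfrac13\e$-disjoint and (ii) no point is dropped from the cover---which is exactly the combinatorial heart of Kolmogorov's trick, and which I would settle by tracking, for each $x$, how its set $c(x)$ of covering colours is transformed.
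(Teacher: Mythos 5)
Your setup is correct and matches the paper's: the reformulation of the hypotheses as ``every point is covered by at least $m-k+1$ colours'', the swelling of each $\U_i$ to $\V_i=\{O[U;\tfrac13\e):U\in\U_i\}$, and the resulting estimates $\ud_X(\V_i)\ge\e-2\cdot\tfrac13\e=\tfrac13\e$ and $\mesh\le\mesh(\U)+\tfrac23\e$ are exactly how the proof begins. But the proposal stops precisely where the lemma starts: you never define the $(m+1)$st class, and you explicitly defer both nontrivial verifications (that this class is $\tfrac13\e$-separated and that condition (3) holds) to ``the combinatorial heart of Kolmogorov's trick''. That heart is the entire content of the lemma beyond the routine swelling estimates, so this is a genuine gap rather than an omitted routine step. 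The working construction is
$$\V_{m}=\Big\{\bigcap_{i\in T}U_i\setminus\bigcup_{j\in m\setminus T}\textstyle\bigcup\V_j\;:\;T\in[m]^{m-k+1},\ (U_i)_{i\in T}\in \textstyle\prod_{i\in T}\U_i\Big\},$$
i.e.\ intersections of $m-k+1$ \emph{original} (unswollen) sets, one from each colour in $T$, minus the swollen unions of the remaining colours; the old classes are kept intact, nothing is ``moved between classes''.

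Your sketched mechanism also points in a direction that does not work as described. First, ``redistributing'' or moving swollen sets out of the existing classes into a new one would destroy coverage for other choices of $S\in[m+1]^k$, so the new class must be built from genuinely new sets while the $m$ swollen classes are left unchanged. Second, the separation of $\V_m$ is not ``inherited from the family whose swollen members are reused'': it requires a two-case argument. If two members of $\V_m$ come from the same index set $T$ but different tuples, some coordinate has $U_i\ne U_i'$ in $\U_i$ and the $\e$-separation of $\U_i$ forces distance $\ge\e$; if they come from different $T\ne T'$, one picks $i\in T\setminus T'$ and uses that the second set has been carved away from $\bigcup\V_i$, so any point of the first set (which lies in some $U_i\in\U_i$) has its $\tfrac13\e$-ball inside $\bigcup\V_i$ and hence misses the second set. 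Finally, condition (3) needs the observation that a point $x$ missed by $\bigcup_{i\in S}\bigcup\V_i$ for some $S\in[m+1]^k$ must have $|S\cap m|=k-1$ and $m\in S$, and then for $T=m\setminus S$ each $(S\cap m)\cup\{i\}$, $i\in T$, has cardinality $k$, which by hypothesis places $x$ in some $U_i\in\U_i$ for every $i\in T$ and hence in a member of $\V_m$ --- a contradiction. None of this is recoverable from ``tracking how $c(x)$ is transformed'' without the explicit definition of $\V_m$.
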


\begin{proof}  For every $i\in m$ and $U\in\U_i$, consider the family $\V_i=\{O[U;\frac13\e):U\in\U_i\}$ of $\frac13\e$-neighborhoods of the sets $U\in\U_i$ in the distance space $(X,d_X)$. Also let $$\V_{m}=\big\{\bigcap_{i\in T}U_i\setminus\bigcup_{j\in m\setminus T}{\textstyle\bigcup}\V_i:T\in[m]^{m-k+1}\;\wedge\;(U_i)_{i\in T}\in \textstyle\prod_{i\in T}\U_i\big\}.$$
We claim that the cover $\V=\bigcup_{i\in m+1}\V_i$ has the properties (1)--(3). The properties (1),(2) will follow from the definitions of the families $\V_i$ as soon as we check that $\ud_X(\V_m)\ge \frac13\e$. Assuming that $\ud_X(\V_m)<\frac13\e$, we could find two distinct sets $V,V'\in\V_m$ and points $v\in V$, $v'\in V'$ such that $d_X(v,v')<\frac13\e$. Find two sets $T,T'\in[m]^{m-k+1}$ and families of sets $(U_i)_{i\in T}\in\prod_{i\in T}\U_i$, $(U')_{i\in T'}\in\prod_{i\in T'}\U_i$ such that $V=\bigcap_{i\in T}U_i\setminus \bigcup_{i\in m\setminus T}\bigcup\V_i$ and $V'=\bigcap_{i\in T}U'_i\setminus \bigcup_{i\in m\setminus T'}\bigcup\V_i$. If $T=T'$, then there exists $i\in T$ such that $U_i\ne U_i'$. Then 
$$\tfrac13\e>d_X(v,v')\ge\ud_X(U_i,U'_i)\ge \e,$$
which is a contradiction showing that $T\ne T'$. Then there exists $i\in T\setminus T'$. The inequality $d_X(v',v)<\frac13\e$ implies $v'\in O(v;\frac13\e)\subseteq \bigcup \V_i$ and hence $v'\notin V'\subseteq X\setminus \bigcup\V_i$. This contradiction shows that the conditions (1), (2) are satisfied.
\smallskip

To verify the condition (3), assume that for some set $S\in [m+1]^k$, the family $\bigcup_{i\in S}\bigcup\V_i$ does not contain some point $x\in X$. Then also $x\notin\bigcup_{i\in S\cap m}\bigcup\V_i$. The choice of the family $\U$ guarantees that $|S\cap m|<k$ and hence $m\in S$ and $S\cap m\in [m]^{k-1}$. Consider the set $T:=m\setminus S\in[m]^{m-k+1}$ and observe that for every $i\in T$ the set $(S\cap m)\cup\{i\}$ has cardinality $k$ and hence $x\in U_i$ for some $U_i\in\U_i$. Then $x\in \bigcap_{i\in T}U_i\setminus \bigcup_{i\in m\setminus T}\bigcup\V_i\subseteq\bigcup\V_m\subseteq\bigcup_{i\in S}\bigcup\V_i$, which contradicts the choice of $x$ and completes the proof of the lemma.
\end{proof}

\begin{lemma}\label{l:BellDra} Let $n,m$ be two nonzero finite cardinals, $\e$ be a positive real number, and $\U=\bigcup_{i\in m+1}\U_i$ be a cover of a distance space $X$ such that $\inf_{i\in m+1}\ud_X(\U_i)\ge 3^{nm-m+1}\e$. Then there exists a cover $\V=\bigcup\limits_{i\in nm+1}\V_i$ of $X$ such that 
\begin{enumerate}
\item[\textup{1)}] $\inf_{i\in nm+1}\ud_X(\V_i)\ge\e$;
\item[\textup{2)}] $\mesh(\V)\le \mesh(\U)+3^{nm-m+1}\e$;
\item[\textup{3)}] for any $T\in[X]^n$ there exists $i\in nm+1$ such that $T\subseteq \bigcup\V_i$.
\end{enumerate}
\end{lemma}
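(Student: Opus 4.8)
The plan is to reformulate the desired property (3) as a covering-multiplicity condition that is invariant under the operation supplied by Lemma~\ref{l:Kolmogorov}, and then to reach the required number of colour classes by iterating that operation.

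First I would record the following reformulation. For a cover $\V=\bigcup_{i\in N}\V_i$ (with $N$ a finite cardinal) and a point $x\in X$, call $B(x)=\{i\in N:x\notin\bigcup\V_i\}$ the set of colours \emph{missing} $x$. If $\bigcup_{i\in S}\bigcup\V_i=X$ for every $S\in[N]^{m+1}$, then no $B(x)$ can contain an $(m+1)$-element set, so $|B(x)|\le m$ for every $x\in X$. Consequently, for $N=nm+1$ and any $T=\{x_1,\dots,x_n\}\in[X]^n$ we get $|\bigcup_{j=1}^{n}B(x_j)|\le nm<nm+1=|N|$, so some colour $i\in N$ misses none of the points of $T$; that is, $T\subseteq\bigcup\V_i$. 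Hence it suffices to construct a cover $\V=\bigcup_{i\in nm+1}\V_i$ with $\inf_i\ud_X(\V_i)\ge\e$, with $\mesh(\V)\le\mesh(\U)+3^{nm-m+1}\e$, and such that every $(m+1)$-element set of its colours covers $X$.

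Next I would build $\V$ by iterating Lemma~\ref{l:Kolmogorov} with the fixed value $k=m+1$. Writing $a=nm-m+1$, I claim there is a chain of covers $\U=\U^{(0)},\U^{(1)},\dots,\U^{(nm-m)}=\V$ in which $\U^{(t)}=\bigcup_{i\in m+1+t}\U^{(t)}_i$ has $m+1+t$ colour classes, satisfies $\inf_i\ud_X(\U^{(t)}_i)\ge 3^{a-t}\e$, and has the property that every $(m+1)$-element set of its colours covers $X$. The base case $t=0$ is exactly the hypothesis: $\U$ has $m+1$ classes, $\inf_i\ud_X(\U_i)\ge 3^{a}\e$, and its single $(m+1)$-element colour set covers $X$ because $\U$ is a cover. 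For the inductive step I apply Lemma~\ref{l:Kolmogorov} to $\U^{(t)}$ with the cardinal ``$m$'' of that lemma taken to be $M=m+1+t$ (note $k=m+1\le M$), the number ``$k$'' taken to be $m+1$, and the parameter ``$\e$'' taken to be $3^{a-t}\e$; the hypotheses hold by the inductive assumption, and the three conclusions give a cover $\U^{(t+1)}$ with $M+1=m+1+(t+1)$ classes, with $\inf_i\ud_X(\U^{(t+1)}_i)\ge\frac13\cdot 3^{a-t}\e=3^{a-(t+1)}\e$, with every $(m+1)$-element colour set still covering $X$, and with $\mesh(\U^{(t+1)})\le\mesh(\U^{(t)})+\frac23\cdot 3^{a-t}\e$.

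Finally I would collect the estimates for the terminal cover $\V=\U^{(nm-m)}$, which has $m+1+(nm-m)=nm+1$ colour classes. Its separatedness is $\inf_i\ud_X(\V_i)\ge 3^{a-(nm-m)}\e=3\e\ge\e$, giving (1). Summing the mesh increments over $t=0,\dots,a-2$ and using $\sum_{t=0}^{a-2}3^{a-t}=\frac12(3^{a+1}-9)$ yields
$$\mesh(\V)\le\mesh(\U)+\tfrac23\e\sum_{t=0}^{a-2}3^{a-t}=\mesh(\U)+(3^{a}-3)\e\le\mesh(\U)+3^{nm-m+1}\e,$$
which is (2); and (3) follows from the reformulation of the first paragraph, since every $(m+1)$-element colour set of $\V$ covers $X$. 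I expect the only genuinely delicate point to be the \emph{choice of invariant}: one must isolate the covering-multiplicity condition ``every $(m+1)$-element set of colours covers $X$'', since this is simultaneously the property preserved verbatim by Lemma~\ref{l:Kolmogorov} (with $k=m+1$ held fixed while the number of colours grows from $m+1$ to $nm+1$) and the property that forces $|B(x)|\le m$ and hence $n$-point monochromaticity. Once this invariant is fixed, the rest is the routine geometric bookkeeping checked above, in which the exponent $nm-m+1$ of the hypothesis is calibrated exactly to keep the final separatedness at least $\e$ and to make the telescoped mesh increase at most $3^{nm-m+1}\e$.
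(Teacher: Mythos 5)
Your proof is correct and follows essentially the same route as the paper's: iterate Lemma~\ref{l:Kolmogorov} with $k=m+1$ fixed while the number of colour classes grows from $m+1$ to $nm+1$, preserving the invariant that every $(m+1)$-element set of colours covers $X$, and then derive (3) from the bound $|\{i:x\notin\bigcup\V_i\}|\le m$ by counting. Your bookkeeping of the separatedness and the telescoped mesh increase is in fact slightly tighter than the paper's (you end with separatedness $3\e$ rather than $\e$), but the argument is the same.
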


\begin{proof} Let $\delta=3^{nm-m+1}\e$ and $\W_1=\bigcup_{i\in m+1}\W_{1,i}$ where $\W_{1,i}=\U_i$ for $i\in m+1$.\break Observe that for the unique set $S\in[m+1]^{1+m}$ we have $\bigcup_{i\in S}\bigcup\W_{1,i}=X$ and\break $\inf_{i\in m+1}\ud_X(\W_{1,i})\ge\delta$. Applying Lemma~\ref{l:Kolmogorov} inductively, for every $k\in\IN$ construct a cover $\W_k=\bigcup_{i\in m+k}\W_{k,i}$ satisfying the following conditions:
\begin{itemize}
\item $\inf\limits_{i\in m+k}\ud_X(\W_{k,i})\ge\frac1{3^k}\delta$;
\item $\mesh(\W_k)\le\mesh(\U)+\sum_{i=1}^k\big(\frac23\big)^i\delta\le \mesh(\U)+\delta$;
\item $X=\bigcup_{i\in T}\bigcup\W_{k,i}$ for every $T\in [m+k]^{1+m}$.
\end{itemize}
Then the cover $\V=\bigcup_{i\in nm+1}\V_i$, where $\V_i=\W_{nm-m+1,i}$ for $i\in nm+1$, has the properties:
\begin{itemize}
\item $\inf\limits_{i\in nm+1}\ud_X(\V)\ge\frac1{3^{nm-m+1}}\delta=\e$;
\item $\mesh(\V)\le\mesh(\U)+\sum_{i=1}^{nm-m+1}\big(\frac23\big)^k\delta\le\mesh(\U)+\delta=\mesh(\U)+3^{nm-m+1}\e$;
\item $X=\bigcup_{i\in S}\bigcup\V_i$ for every $S\in [nm+1]^{1+m}$.
\end{itemize}
The last property implies that for every $x\in X$, the set  $S_x=\{i\in nm+1:x\notin \bigcup\V_i\}$ has cardinality $|S_x|\le m$. Then for every set $T\in [X]^n$, there exists an index $i\in (nm+1)\setminus\bigcup_{x\in T}S_x$. For this index we have $T\subseteq\bigcup\V_i$, which means that the condition (3) of the lemma is satisfied.
\end{proof}

Given a finite cardinal $k$, consider the subspace
$$F^p_kX=\{a\in F^p X:|\supp(a)|\le k\}$$of the distance space $F^pX=(FX,d^p_{FX})$.

\begin{lemma}\label{l:dim1} Let $p\in[1,\infty]$, $n\in\IN$, and $D,D'$ be dimension controlling functions such that $D(n^{1-\frac1p}\e)\le D'(\e)$ for every $\e\in(0,\infty)$. For every distance space $X$ we have 
\begin{enumerate}
\item[\textup{(1)}] $\dim_D(F^p_1X)\le\dim_D(X)$ if $p=1$ or $F$ preserves supports;
\item[\textup{(2)}] $\dim_{D'}(F^p_1X)\le \dim_D(X)$ if $F$ has finite degree $\deg(F)\le n$.
\end{enumerate}
\end{lemma}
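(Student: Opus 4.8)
The plan is to analyze the subspace $F^p_1X$ by grouping its elements according to their \emph{type} under the constant map, and then to reduce the dimension estimate to that of $X$. First I would introduce the constant map $\gamma\colon X\to 1$ and consider $F\gamma\colon F^pX\to F^p1$. Since $1$ is a singleton, $\gamma$ is Lipschitz with $\Lip(\gamma)=0$, so by Theorem~\ref{t:Lip} the map $F\gamma$ is $0$-Lipschitz; and by Corollary~\ref{c:small} the space $F^p1$ is a $\{0,\infty\}$-valued $\infty$-metric. Consequently, if $d^p_{FX}(u,v)<\infty$ then $F\gamma(u)=F\gamma(v)$, i.e. any two elements of $FX$ of different type lie at infinite distance. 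Moreover $\gamma\circ\bar x_X=\id_1$ gives $F\gamma\circ F\bar x_X=\id_{F1}$, so $F\gamma(\xi^a_X(x))=a$ for every $a\in F1$ and $x\in X$. Hence the elements of $F_1X$ split into the type-classes $(F\gamma)^{-1}(a)\cap F_1X$, $a\in F1$, which are pairwise at infinite distance; by Lemma~\ref{l:delta} each such class equals $\xi^a_X[X]$ (consisting exactly of the elements with singleton support) when $\xi^a_2\colon 2\to F2$ is injective, and is the unique element of empty support otherwise.

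Next I would prove the elementary fact that the $D$-dimension of a distance space which is a union of pairwise infinitely-distant pieces is the supremum of the $D$-dimensions of the pieces; the case $\dim_D X=\infty$ being trivial, assume $k:=\dim_D X<\infty$. Given for each $a$ a cover $\bigcup_{i\le k}\U^a_i$ of the piece with $\mesh\le D(\e)$ and $\inf_i\ud_X(\U^a_i)\ge\e$ (padding with empty color classes where a piece has smaller dimension), the merged families $\V_i=\bigcup_a\U^a_i$ form a cover of $F_1X$ whose mesh is still $\le D(\e)$; and $\ud_X$ between two distinct sets in the same color $\V_i$ is either computed inside one piece (hence $\ge\e$) or crosses two pieces (hence $=\infty\ge\e$). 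This yields
$$\dim_D(F^p_1X)\le\sup_{a\in F1}\dim_D\big((F\gamma)^{-1}(a)\cap F_1X\big).$$
For part (1), under the hypothesis $p=1$ or $F$ preserves supports, Theorem~\ref{t:xi} tells us that whenever $\xi^a_2$ is injective the map $\xi^a_X$ is an injective isometry, so the corresponding piece is isometric to $X$ and has $D$-dimension $\le\dim_D X$; the singleton pieces have $D$-dimension $0$ by Example~\ref{ex:zero}. This settles part (1).

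For part (2), where $\deg(F)\le n$ but neither $p=1$ nor support-preservation is assumed, the sheets need no longer be isometric, so instead I would compare the two distances $d^1_{FX}$ and $d^p_{FX}$ on the \emph{same} set $F_1X$. By Theorem~\ref{t:ineq} (using $\deg(F)\le n$ and that the exponent $1-\tfrac1p\ge 0$) we have $d^p_{FX}\le d^1_{FX}\le n^{1-\frac1p}d^p_{FX}$. The decisive step is the rescaling observation: applying the $D$-dimension of $(F_1X,d^1_{FX})$ at scale $n^{1-\frac1p}\e$ produces a cover whose mesh is $\le D(n^{1-\frac1p}\e)\le D'(\e)$ in $d^1$, hence also in the smaller $d^p$, and whose color-wise separation $\ge n^{1-\frac1p}\e$ in $d^1$ descends to separation $\ge\e$ in $d^p$ (since $\ud$ for $d^p$ is at least $\ud$ for $d^1$ divided by $n^{1-\frac1p}$). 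Thus $\dim_{D'}(F^p_1X)\le\dim_D(F^1_1X)$, and part (1) applied with $p=1$ (where the clause $p=1$ holds automatically) gives $\dim_D(F^1_1X)\le\dim_D X$, completing the argument.

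The main obstacle is the structural identification in the first paragraph: recognizing that $F^p_1X$ is a union of type-classes that are \emph{pairwise at infinite distance}, via the $0$-Lipschitz type map $F\gamma$, and matching these classes precisely with the isometric sheets $\xi^a_X[X]$ through Lemma~\ref{l:delta} and Theorem~\ref{t:xi}. Once this decomposition is in hand, part (1) follows by transporting covers along isometries and part (2) by the scale-change trick, which is exactly what forces the hypothesis $D(n^{1-\frac1p}\e)\le D'(\e)$.
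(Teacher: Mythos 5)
Your proof is correct and rests on the same key ingredients as the paper's argument: the type map $F\gamma$ into the $\{0,\infty\}$-valued space $F^p1$ (Theorem~\ref{t:Lip} together with Corollary~\ref{c:small}), Lemma~\ref{l:delta} to identify the type classes with the sheets $\xi^a_X[X]$, Theorem~\ref{t:xi} for the isometric case, and Theorem~\ref{t:ineq} for the factor $n^{1-\frac1p}$. The only real difference is organizational: for part (2) you deduce the statement from part (1) by comparing the two metrizations globally on $F_1X$ via $d^p_{FX}\le d^1_{FX}\le n^{1-\frac1p}\cdot d^p_{FX}$ and rescaling the control function, whereas the paper proves both parts through a single cover construction on $X$, scaled by a case-dependent constant $c\in\{1,n^{1-\frac1p}\}$, and checks the separation of the pushed-forward cover directly by contradiction. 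Both routes are sound; yours is slightly more modular at the cost of explicitly proving the auxiliary fact that the controlled dimension of a union of pairwise infinitely-separated pieces equals the supremum of the dimensions of the pieces, which the paper keeps implicit.
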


\begin{proof} If $|X|\le 1$, then $\dim_DX=0$ by Example~\ref{ex:zero}. By Theorem~\ref{t:small}, the distance $d^p_{FX}$ is a $\{0,\infty\}$-valued $\infty$-metric and by Example~\ref{ex:zero}, $$\dim_{D'}F^pX=\dim_D F^pX=0=\dim_DX.$$

So, we assume that $|X|\ge 2$. Let $$
c=\begin{cases}1&\mbox{if $F$ preserves supports};\\
n^{1-\frac1p}&\mbox{otherwise}.
\end{cases}
$$
By the definition of the cardinal $\kappa=\dim_D X$ for every $\e\in(0,\infty)$ there exists a cover $\U=\bigcup_{i\in 1+\kappa}\U_i$ such that $\mesh(\U)\le D(c\e)$ and $\ud_X(\U_i)\ge c\e$ for every $i\in 1+\kappa$.

 For every $x\in 2$ let $\bar x_2:1\to\{x\}\subseteq 2$ be the constant map. In the set $F1$ consider the subsets $F_\bullet 1=\{a\in F1:F\bar 0_2(a)\ne F\bar 1_2(a)\}$ and $F_\circ 1=F1\setminus F_\bullet 1$. For every $a\in F_\bullet 1$ consider the function $\xi^a_X:X\to FX$, $x\mapsto F\bar x_X(a)$, where $\bar x_X:1\to\{x\}\subseteq X$ is the constant map. 

\begin{claim}\label{cl:cover-new} $F_1X=F_0X\cup\bigcup\limits_{a\in F_\bullet 1}\xi^a_X[X]$.
\end{claim}

\begin{proof} Given any $b\in F_1X\setminus F_0X$, consider its support $S=\supp(a)$, which is a singleton and hence admits a bijective function $h:1\to S$. By Proposition~\ref{p:BMZ}, $b\in F[S;X]=Fi_{S,X}[FS]=Fi_{S,X}\circ Fh[F1]$ and hence $b=Fi_{S,X}\circ Fh(a)$ for some $a\in F1$. Then for the element $x=i_{S,X}\circ h(0)$ we obtain $b=\xi^a_X(x)$. Assuming that $a\notin F_\bullet 1$, we can apply Lemma~\ref{l:delta}(1b) and conclude that $\supp(b)=\supp(\xi^a_X(x))=\emptyset$, which contradicts the choice of $b$. Therefore, $a\in F_\bullet 1$ and $b=\xi^a_X(x)\in\xi^a_X[X]$.
\end{proof}

Let $$\V_0=\big\{\{b\}:b\in F_0X\big\}\cup\big\{\xi^a_X[U]:a\in F_\bullet 1,\;\;U\in\U_0\big\}$$and for any nonzero ordinal $i\in 1+\kappa$ let $$\V_i=\{\xi^a_X[U]:a\in F_\bullet 1,\;\;U\in\U_i\}.$$
Claim~\ref{cl:cover-new} implies that $\V=\bigcup_{i\in 1+\kappa}\V_i$ is a cover of $F_1X$. Since for every $a\in F1$ the map $\xi^a_X:X\to F^pX$ is not expanding, $\mesh(\V)\le\mesh(\U)\le D(c\e)\le D(n^{1-\frac1p}\e)\le D'(\e)$. It remains to prove that $\ud^p_{FX}(\V_i)\ge\e$ for every $i\in 1+\kappa$.

Assuming that $\ud^p_{FX}(\V_i)<\e$ for some $i\in 1+\kappa$, find two distinct sets $V,V'\in\V_i$ such that $\ud^p_{FX}(V,V')<\e$.

Consider the constant function $\gamma:X\to 1$ and observe that for every $x\in X$ and the constant function $\bar x:1\to\{x\}\subseteq X$, the composition $\gamma\circ\bar x$ is the identity map of $1$. Then $F\gamma\circ F\bar x$ is the identity map of $F1$ and hence for every $a\in F1$ we have
$$a=F\gamma\circ F\bar x(a)=F\gamma(\xi^a_X(x)).$$ 
Then the sets $F\gamma[V']$ and $F\gamma[V']$ are singletons.
Endow the singleton $1$ with a unique $\{0\}$-valued metric and applying Theorem~\ref{t:small}, conclude that the induced distance on $F^p1$ is an $\{0,\infty\}$-valued $\infty$-metric.  
By Theorem~\ref{t:Lip}, the map $F\gamma:F^pX\to F^p1$ is Lipschitz, which implies that  $F\gamma[V]=F\gamma[V']=\{a\}$ for some $a\in F1$. If $a\in F_\bullet 1$, then $V=\xi^a_X[U]$ and $V'=\xi^a_X[U']$ for some distinct sets $U,U'\in \U_i$. 

If $p=1$ or the functor $F$ preserves supports, then by Theorem~\ref{t:xi}, the function $\xi^a:X\to F^pX$ is a injective isometry. In this case
$$\ud^p_{FX}(V,V')=\ud^p_{FX}(\xi^a[U],\xi^a[U'])=\ud_X(U,U')\ge \ud(\U_i)\ge c\e=\e,$$which contradicts the choice of the sets $V,V'$.

If $F$ does not preserve supports but has degree $\deg(F)\le n$, then by  Theorem~\ref{t:ineq}, 
$$\ud^p_{FX}(V,V')\ge n^{\frac1p-1}\ud^1_{FX}(V,V')=c^{-1}\ud^1_{FX}(\xi^a_X[U],\xi^a_X[U'])=c^{-1}\ud_X(U,U')\ge c^{-1}c\e=\e,$$
which contradicts the choice of $V,V'$. This contradiction shows that $a\in F_\circ 1$. In this case $i=0$ and $V,V'$ are distinct singletons in the set $F_0X$. By Theorem~\ref{t:small}, $\ud^p_{FX}(V,V')=\infty\ge\e$, which contradicts the choice of $V,V'$. So, in all cases we arrive to a contradiction, which proves that $\ud^p_{FX}(\V_i)\ge\e$ and finally that $\dim_{D'}(F^p_1X)\le \kappa=\dim_DX$. If $p=1$ or $F$ preserves supports, then $\mesh(\V)\le D(c\e)=D(\e)$, witnessing that $\dim_D(F^pX)\le\kappa=\dim_D(X)$.

\end{proof}

\begin{lemma}\label{l:indstep} Assume that the functor $F$ is finitary and has finite degree. Let $ n:=\max\{1,\deg(F)\}$ and $k,m$ be finite cardinals such that $2\le k\le n$. Let $D,D',D''$ be dimension controlling functions such that for any $\e\in(0,\infty)$ and the numbers $$\e'=2n\cdot|Fn|\cdot\e\mbox{ \ and \ }\delta=D(3^{km-m+1}\e')+3^{km-m+1}\e',$$ we have $$D''(\e)\ge 2\e+2k^{\frac1p}\delta+ 2\e'+ D'(2\e+2\e'+k^{\frac1p}\delta).$$
Then for any distance space $X$ with $\dim_DX\le m$, the distance space $F^p_kX$ has $D''$-controlled dimension $$\dim_{D''}(F^p_kX)\le\max\{\dim_{D'}(F^p_{k-1}X),km\}.$$
\end{lemma}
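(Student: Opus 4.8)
The plan is to verify the defining property of $D''$-controlled dimension directly: for every $\e\in(0,\infty)$ I would produce a cover of $F^p_kX$ into $1+\max\{\dim_{D'}(F^p_{k-1}X),km\}$ colour classes with mesh at most $D''(\e)$ and separatedness number at least $\e$. If $|X|\le1$ then $d^p_{FX}$ is $\{0,\infty\}$-valued by Theorem~\ref{t:small} and everything is $0$-dimensional by Example~\ref{ex:zero}, so I assume $|X|\ge2$. Write $\ell:=\dim_{D'}(F^p_{k-1}X)$ and keep the abbreviations $\e'=2n|Fn|\e$ and $\delta=D(3^{km-m+1}\e')+3^{km-m+1}\e'$ of the statement. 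First I would build the base cover downstairs: since $\dim_DX\le m$, at scale $3^{km-m+1}\e'$ there is a cover $\U=\bigcup_{i\in1+m}\U_i$ of $X$ with $\mesh(\U)\le D(3^{km-m+1}\e')$ and $\inf_i\ud_X(\U_i)\ge3^{km-m+1}\e'$. Feeding this into Lemma~\ref{l:BellDra} (with the role of its ``$n$'' played by $k$ and its ``$\e$'' by $\e'$) yields a cover $\W=\bigcup_{i\in km+1}\W_i$ of $X$ with $\inf_i\ud_X(\W_i)\ge\e'$, with $\mesh(\W)\le\delta$, and with the capturing property that every $T\in[X]^k$ satisfies $T\subseteq\bigcup\W_i$ for some $i$. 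Thus inside each colour $\W_i$ the cells are pairwise $\e'$-separated while each has diameter at most $\delta$.

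Next I would cover the top stratum $\{b\in FX:|\supp(b)|=k\}$. Every such $b$ has $\supp(b)\in[X]^k$, hence $\supp(b)\subseteq\bigcup\W_i$ for some colour $i$; assign $b$ to the least such $i$. Inside colour $i$ fix the retraction $\rho_i$ of $\bigcup\W_i$ that collapses each cell of $\W_i$ to a chosen representative, and take the cells of $\mathcal B_i$ to be the $\xi^c$-images of products $\prod_{t<k}\sigma(t)$ of cells $\sigma(t)\in\W_i$ (each $b$ lands in such a cell after writing $b=Ff(c)$ with $f\in X^k$, $f[k]=\supp(b)$, via Proposition~\ref{p:BMZ}). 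Since $\xi^c$ is non-expanding for $d^p_{X^k}$, each cell has diameter at most $k^{1/p}\mesh(\W)\le k^{1/p}\delta$, so $\mesh(\mathcal B_i)$ is controlled by $k^{1/p}\delta$ up to the lower-order $\e'$-terms appearing in $D''$. The capturing property guarantees that $\mathcal B=\bigcup_{i\in km+1}\mathcal B_i$ really covers the whole top stratum.

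The decisive point is the \emph{separatedness} $\ud(\mathcal B_i)\ge\e$. Take $b,b'$ lying over distinct collapse-patterns, i.e.\ $F\rho_i(b)\ne F\rho_i(b')$, and suppose toward a contradiction that $d^\infty_{FX}(b,b')<\e=\e'/(2n|Fn|)$. Applying Proposition~\ref{p:weakiso} to the isometric inclusion $S:=\supp(b)\cup\supp(b')\hookrightarrow X$ produces a linking chain over $S$ of total $\ell^\infty$-cost below $\e'$; as the cells of $\W_i$ are $\e'$-separated and $S\subseteq\bigcup\W_i$, every coordinate step of that chain must stay inside a single cell, so $\rho_i$ identifies the two ends of each step and therefore $F\rho_i(b)=F\rho_i(b')$, a contradiction. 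With Theorem~\ref{t:ineq} this gives $d^p_{FX}(b,b')\ge d^\infty_{FX}(b,b')\ge\e$; recording each cell of $\mathcal B_i$ by its $F\rho_i$-value (so that distinct cells carry distinct patterns) then forces $\ud(\mathcal B_i)\ge\e$. Finally I would cover the lower stratum: by definition of $\ell$, at scale $2\e+2\e'+k^{1/p}\delta$ there is a cover $\mathcal A=\bigcup_{i\in1+\ell}\mathcal A_i$ of $F^p_{k-1}X$ with $\mesh(\mathcal A)\le D'(2\e+2\e'+k^{1/p}\delta)$ and $\inf_i\ud(\mathcal A_i)\ge2\e+2\e'+k^{1/p}\delta$. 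I would then amalgamate $\mathcal A$ and $\mathcal B$ colour-by-colour through the operation $\cup_\e$ of Lemma~\ref{l:sume}, taking $\mathcal B_i$ as the fine family (its separatedness is exactly $\e$) and $\mathcal A_i$ as the coarse family (its separatedness dominates $2\e+\mesh(\mathcal B_i)$ by the scale choice). The resulting families keep separatedness $\ge\e$, and the inequality hypothesis on $D''$ is precisely the bound $2\e+2\,\mesh(\mathcal B)+\mesh(\mathcal A)\le D''(\e)$ that Lemma~\ref{l:sume} delivers. Pairing the $1+\ell$ classes of $\mathcal A$ against the initial classes among the $1+km$ classes of $\mathcal B$ and leaving the surplus classes of the larger family untouched produces a cover of $F^p_kX$ with exactly $1+\max\{\ell,km\}$ classes, as required.

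I expect the separatedness estimate for $\mathcal B_i$ to be the main obstacle: it is the only place where one leaves purely combinatorial bookkeeping and invokes the analytic input of Proposition~\ref{p:weakiso}, and it is exactly this step that forces the calibration $\e'=2n|Fn|\e$ and uses that $F$ is finitary of finite degree. The remaining work — threading the constants $2\e$, $2\e'$, $k^{1/p}\delta$ through Lemmas~\ref{l:BellDra} and \ref{l:sume} so that the mesh stays below $D''(\e)$ — I regard as routine.
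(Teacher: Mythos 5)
Your overall architecture matches the paper's: a cover of $X$ upgraded by Lemma~\ref{l:BellDra} to $km+1$ colour classes with the $k$-set capturing property, fine cells upstairs pushed through the non-expanding maps $\xi^c$, a cover of $F^p_{k-1}X$ at the scale $2\e+2\e'+k^{\frac1p}\delta$, and amalgamation by $\cup_\e$ via Lemma~\ref{l:sume}. Your separation argument for the fine cells (distinct $F\rho_i$-patterns force $d^\infty_{FX}\ge\e$, because a linking chain of total cost below $\e'$ over $S\subseteq\bigcup\W_i$ cannot cross between $\e'$-separated cells) is sound and is a legitimate variant of the paper's use of Lemma~\ref{l:2short}.

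The genuine gap is the mesh of your fine cells. To make the families $\mathcal B_i$ separated you must take the cells to be the fibres of $F\rho_i$ (as you say, ``recording each cell by its $F\rho_i$-value''), since distinct product cells $\xi^c[\prod_t\sigma(t)]$ with the same collapse pattern can overlap or lie arbitrarily close. But for two elements $b=Ff(c)$, $b'=Ff'(c')$ in the same fibre the only chain you have is the two-link chain through $F\rho_i(b)=F\rho_i(b')$, namely $\big((c,f,\rho_i\circ f),(c',\rho_i\circ f',f')\big)$, whose cost is only bounded by $2k^{\frac1p}\delta$, not $k^{\frac1p}\delta$. A one-link chain of cost $k^{\frac1p}\delta$ would require a bijection $\supp(b)\to\supp(b')$ matching points within a common cell, and that needs $\rho_i$ to be injective on the supports --- which fails in general, because $\rho_i$ moves points by as much as $\delta\gg\e'$. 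This factor of $2$ is fatal for the stated constants: since $k^{\frac1p}\delta\ge 3\e'>2\e'$, the separatedness $2\e+2\e'+k^{\frac1p}\delta$ of your coarse family no longer dominates $2\e+\mesh(\mathcal B_i)=2\e+2k^{\frac1p}\delta$, so the hypothesis of Lemma~\ref{l:sume} is violated, and even ignoring that, the amalgamated mesh $2\e+2\cdot(2k^{\frac1p}\delta)+D'(\cdot)$ exceeds the budget $D''(\e)\ge 2\e+2k^{\frac1p}\delta+2\e'+D'(\cdot)$. The paper avoids this by first enlarging the $F^p_{k-1}X$-cover by $\e'$-neighbourhoods and restricting the fine cells to the leftover set $W$, where supports are $\e'$-separated (Claim~\ref{cl:uL}); this makes the maps from Lemma~\ref{l:2short} (which move points by less than $\tfrac12\e'$) injective on supports, and a step-by-step composition of the resulting support bijections along an $\e$-chain yields a single bijection $h$ with $d_X(x,h(x))\le\mesh(\U_i)$, hence a one-link chain and the sharp mesh bound $k^{\frac1p}\delta$ (Claim~\ref{cl:mesh}). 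You need either this finer device or a restatement of the lemma with the larger constants your construction actually achieves.
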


\begin{proof} 
By definition of the cardinal $m'=\dim_{D'}(F^p_{k-1}X)$, there exists  a cover $\V=\bigcup_{i\in 1+m'}\V_i$ of $F^p_{k-1}X$ such that $$\inf_{i\in 1+m'}\ud_{F^p_{k-1}X}(\V_i)\ge 2\e+2\e'+k^{\frac1p}\delta\mbox{ \  and \ }\mesh(\V)\le D'(2\e+2\e'+k^{\frac1p}\delta).$$ 
Let $\widetilde{\V}=\bigcup_{i\in 1+m'}\widetilde\V_i$ where $\widetilde \V_i=\{O[V;\e'):V\in\V_i\}$ for $i\in 1+m'$. 

Consider the subspace $W=F^p_kX\setminus\bigcup\widetilde\V$.

\begin{claim}\label{cl:uL} For every $a\in W$ we have $|\supp(a)|=k$ and $\ud_X(\supp(a))\ge \e'$.
\end{claim}

\begin{proof} The equality $|\supp(a)|=k$ follows from the inclusion $W\subseteq F^p_kX\setminus F^p_{k-1}X$. Assuming that $\ud_X(\supp(a))<\e'$, find two distinct points $x,y\in\supp(a)$ with $d_X(x,y)<\e'$.
Using Proposition~\ref{p:BMZ2}, find a function $f:k\to X$ and an element $b\in Fk$ such that $f[k]=\supp(a)$ and $Ff(b)=a$. Let $j\in k$ be the unique number such that $f(j)=x$, and $g:k\to X$ be the function such that $g(j)=y$ and $g(i)=f(i)$ for every $i\in k\setminus\{j\}$. Then the element $c=Fg(b)$ has support $\supp(c)\subseteq g[k]=\supp(a)\setminus\{x\}$ and hence $c\in F_{k-1}X$. Then the inequality $$d^p_{FX}(a,c)\le d^p_{X^k}(f,g)=d_X(f(i),g(i))=d_X(x,y)<\e'$$implies that $a\in\bigcup\widetilde\V$, which contradicts the inclusion $a\in W$.
\end{proof}

Since  $\dim_DX\le m$, we can apply Lemma~\ref{l:BellDra} and find a  cover $\U=\bigcup_{i\in km+1}\U_{i}$ of $X$ such that $\mesh(\U)\le D(3^{km-m+1}\e')+3^{km-m+1}\e'=\delta$, $\inf_{i\in km+1}\ud_X(\U_i)\ge\e'$ and each set $T\in[X]^k$ is contained in  $\bigcup\U_i$ for some $i\in km+1$. 
For every $i\in km+1$, consider the sets $X_i=\bigcup\U_i\subseteq X$ and
$$W_i=\{a\in W:\supp(a)\subseteq X_i\}\subseteq F^p_kX.$$

For every $a\in W_i$ let $\W_i(a)$ be the set of all elements $b\in W_i$ for which there exists a sequence of elements $a_0,\dots,a_l\in W_i$ such that $a=a_0$, $a_l=b$ and $d^p_{FX}(a_{j-1}a_j)<\e$ for every $j\in\{1,\dots,l\}$. Let $$\W_i:=\{\W_i(a):a\in W_i\}.$$ It is clear that $\W_i$ is a cover of the set $W_i$ and $\ud_{F^p\!X}(\W_i)\ge\e$. 



\begin{claim}\label{cl:mesh} $\mesh (\W_i)\le k^{\frac1p}\cdot \mesh(\U_i)\le k^{\frac1p}\cdot \delta$.
\end{claim}

\begin{proof} It suffices two show that for any  distinct elements $a,b\in W_i$ with $\W_i(a)=\W_i(b)$, we have $d^p_{FX}(a,b)\le k^{\frac1p}\cdot\mesh(\U_i)$. Since $\W_i(a)=\W_i(b)$, there exists a sequence of pairwise distinct elements $b_0,\dots,b_l\in W_i$ such that $a=b_0$, $b_l=b$, and $d^p_{FX}(b_{j-1},b_j)<\e$ for every $j\in\{1,\dots,l\}$. For every $j\in\{0,\dots,l\}$ consider the $k$-element set $S_j=\supp(b_j)$ and using Proposition~\ref{p:BMZ}, find an element $b_j'\in FS_j$ such that $b_j=F_{S_j,X}(b_j')$.
By Lemma~\ref{l:2short}, for every $j\in\{1,\dots,l\}$ there exists a function $r_j:X\to X$ such that $Fr_j(b_{j-1})=Fr_j(b_j)$, $r_j\circ r_j=r_j$, $r_j[S_{j-1}\cup S_j]\subseteq S_{j-1}\cup S_j$,  and $$\sup_{x\in X}d_X(x,r_j(x))<n\cdot|Fn|\cdot \e=\tfrac12\e'.$$ The last inequality and Claim~\ref{cl:uL} imply that the restrictions $r_j{\restriction}_{S_{j-1}}$ and $r_{j}{\restriction}_{S_j}$ are injective maps. Then $|r_j[X]|\ge |r_j[S_j]|=|S_j|=k>1$. By Corollary~\ref{c:supp},  
$$r_j[S_{j-1}]=r_j[\supp(b_{j-1})]=\supp(Fr_j(b_{j-1}))=\supp(Fr_j(b_j))=r_j[\supp(b_j)]=r_j[S_j],$$
and we can define a bijective function $h_j:S_{j-1}\to S_j$ assigning to each point $x\in S_{j-1}$ a unique point $y\in S_j$ such that $r_j(x)=r_j(y)$.
Observe that
$$d_X(x,h_j(x))\le d_X(x,r_j(x))+d_X(r_j(y),y))<\tfrac12\e'+\tfrac12\e=\e'.$$

\begin{claim}\label{cl:heq} $Fh_j(b_{j-1}')=b_j'$.
\end{claim}

\begin{proof} The definition of the map $h_j$ ensures that $r_j\circ i_{S_j,X}\circ h_j=r_j\circ i_{S_{j-1},X}$ and hence 
$$Fr_j\circ Fi_{S_j,X}\circ Fh_j(b'_{j-1})=Fr_j\circ Fi_{S_{j-1},X}(b_{j-1}')=Fr_j(b_{j-1})=Fr_j(b_j)=Fr_j\circ i_{S_j,X}(b_j').$$Since the function $r_j\circ i_{S_j,X}:S_j\to X$ is injective, there exists a function $\gamma:X\to S_j$ such that $\gamma\circ r_j\circ i_{S_j,X}$ is the identity function of $S_j$. Then $F\gamma\circ Fr_j\circ Fi_{S_j,X}$ is the identity function of $FS_j\ni b_j'$ and hence
$$b_j'=F\gamma\circ Fr_j\circ Fi_{S_j,X}(b_j')=F\gamma\circ Fr_j\circ Fi_{S_j,X}\circ Fh_j(b'_{j-1})=Fh_j(b'_{j-1}).$$
\end{proof}



By Claim~\ref{cl:heq}, for the bijective map $h=h_l\circ\dots\circ h_1:S_0\to S_l$ we have $Fh(b_0')=b'_l$.

For every $j\in\{0,\dots,l\}$ consider the map $u_j:S_j\to \U_i$ assigning to each point $x\in S_j$ a unique set $u_j(x)\in\U_i$ containing $x$. Taking into account that $\ud_X(\U_i)\ge\e'>d_X(x,h_j(x))$, we conclude that $u_{j-1}=u_j\circ h_j$ for every $j\in\{1,\dots,l\}$. Consequently, 
$$u_0=u_1\circ h_1=u_2\circ h_2\circ h_1=\dots=u_l\circ h_l\circ\cdots\circ h_1=u_l\circ h$$
and for every $x\in S_0=\supp(a)$, 
$$d_X(x,h(x))\le\diam \big(u_0(x)\big)\le\mesh(\U_i).$$
Choose any bijective function $f:k\to S_0$ and find an element $a'\in Fk$ such that $Ff(a')=b'_0$. Then $a=b_0=Fi_{S_0,X}(b_0')=Fi_{S_0,X}\circ Ff(a')$ and 
$$b=b_l=Fi_{S_l,X}(b'_l)=Fi_{S_l,X}\circ Fh(b'_0)=Fi_{S_l,X}\circ Fh\circ Ff(a').$$ Now the definition of the metric $d^p_{FX}$ ensures that
$$d^p_{FX}(a,b)\le d^p_{X^k}(i_{S_0,X}\circ f,i_{S_k,X}\circ h\circ f)\le k^{\frac1p}\cdot\mesh(\U_i)\le k^{\frac1p}\delta.$$
This completes the proof of Claim~\ref{cl:mesh}.
\end{proof}

\begin{claim}\label{cl:meshsh} $\W=\bigcup_{i\in km+1}\W_i$ is a cover of the set $W$ such that  $$
\begin{aligned}
&\inf_{i\in km+1}\ud_{F^p\!X}(\W_i)\ge \e,\\&\mesh(\W)\le k^{\frac1p}\cdot\mesh(\!\U)\le k^{\frac1p}\cdot\delta\mbox{ \ and \ }\\
&2\e+\mesh(\W)\le \inf_{i\in 1+m'}\ud_{F^p\!X}(\widetilde \V_i).
\end{aligned}
$$
\end{claim}

\begin{proof} The inequality $\e\le\inf_{i\in km+1}\ud_{F^p\!X}(\W_i)$ follows from the definition of the families $\W_i$; the upper bound $\mesh(\W)\le k^{\frac1p}\cdot\mesh(\U)\le k^{\frac1p}\cdot\delta$  is proved in Claim~\ref{cl:mesh}.

The choice of the covers $\V$ and $\widetilde \V$ guaranees that for every $i\in 1+m'$ we have
$$\ud_{F^p\!X}(\widetilde\V_i)\ge\ud_{F^p\!X}(\V_i)-2\e'\ge 2\e+2\e'+k^{\frac1p}\delta-2\e'=2\e+k^{\frac1p}\delta\ge 2\e+\mesh(\W).$$
\end{proof}

Let $m''=\max\{m',km\}$ and choose any surjective maps $\alpha:1+m''\to 1+m'$ and $\beta:1+m''\to km+1$. For every $i\in 1+m''$, consider the family $\widetilde\W_i=\widetilde \V_{\alpha(i)}\cup_\e \W_{\beta(i)}$, and the cover $\widetilde\W=\bigcup_{i\in 1+m''}\widetilde\W_i$ of the distance space $F^p_kX$. Claim~\ref{cl:meshsh} and Lemma~\ref{l:sume} imply that $\inf_{i\in 1+m''}\ud_{F^pX}(\widetilde\W_i)\ge\e$ and
\begin{multline*}
\mesh(\widetilde\W)\le 2\e+2\cdot\mesh(\W)+\mesh(\widetilde\V)\le 
2\e+2\cdot\mesh(\W)+2\e'+\mesh(\V)\le\\
2\e+2k^{\frac1p}\delta+2\e'+D'(2\e+2\e'+k^{\frac1p}\delta)\le D''(\e),
\end{multline*}
witnessing that $$\dim_{D''} F^p_kX\le m''=\max\{\dim_{D'}F^p_{k-1}X,km\}.$$This completes the proof of Lemma~\ref{l:indstep}.
\end{proof}

\begin{theorem}\label{t:main-dim1} Assume that the functor $F$ is finitary and has finite degree. Let $n=\max\{1,\deg(F)\}$. Assume that for some dimension controlling function $D$, the distance space $X$ has finite $D$-controlled dimension $m=\dim_DX$. Consider the sequence of dimension controlling functions $(D_k)_{k\in\w}$ defined by the recursive formula: $D_0(\e)=0$, $D_1(\e)=D(n^{1-\frac1p}\e)$, and
$$
\begin{aligned}
D_{k}(\e)=\;&(2+4n|Fn|)\e+2k^{\frac1p}\big(D(2n\cdot|Fn|\cdot 3^{km-m+1}\e)+2n\cdot|Fn|\cdot 3^{km-m+1}\e\big)+\\
&D_{k-1}\big(2\e(1+n\cdot |Fn|)+k^{\frac1p}(D(2n{\cdot}|Fn|{\cdot} 3^{km-m+1}\e)+2n{\cdot}|Fn|{\cdot} 3^{km-m+1}\e)\big)
\end{aligned}$$
for every $k\ge 2$ and $\e\in(0,\infty)$. Then $\dim_{D_k}F^p_kX\le k\cdot\dim_DX$ for every $k\le \deg(F)$. In particular, $\dim_{D_n}F^pX\le \deg(F)\cdot\dim_D X$.
\end{theorem}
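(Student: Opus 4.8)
The plan is to establish the finer graded inequality $\dim_{D_k}F^p_kX\le k\cdot m$ by induction on $k$, with $k$ running from $1$ up to $\deg(F)$, and then to read off the theorem. Once this is known, taking $k=n=\deg(F)$ (so $n=\deg(F)$ in the case $\deg(F)\ge1$) and using that every element of $FX$ has support of cardinality $\le\deg(F)$, we get $F^pX=F^p_nX$ as distance spaces, whence $\dim_{D_n}F^pX=\dim_{D_n}F^p_nX\le n\cdot m=\deg(F)\cdot\dim_DX$. The degenerate case $\deg(F)=0$ is separate and immediate: then every element of $FX$ has empty support, so by Theorem~\ref{t:small} the distance $d^p_{FX}$ is a $\{0,\infty\}$-valued $\infty$-metric and Example~\ref{ex:zero} gives $\dim_{D_1}F^pX=0=\deg(F)\cdot\dim_DX$; the analogous remark disposes of $k=0$.

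For the base case $k=1$ I would apply Lemma~\ref{l:dim1}(2) to the pair $(D,D_1)$. Since $D_1(\e)=D(n^{1-\frac1p}\e)$, the hypothesis $D(n^{1-\frac1p}\e)\le D_1(\e)$ holds with equality, and because $\deg(F)\le n$ the lemma yields $\dim_{D_1}(F^p_1X)\le\dim_DX=m$, which is the assertion for $k=1$. Note that this uses part (2), which requires only finite degree and not preservation of supports, matching the hypotheses of the theorem.

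The inductive step is the core of the proof. Suppose $2\le k\le n$ and, as inductive hypothesis, $\dim_{D_{k-1}}(F^p_{k-1}X)\le(k-1)m$. I would invoke Lemma~\ref{l:indstep} with the triple $(D,D',D'')=(D,D_{k-1},D_k)$; its requirement $\dim_DX\le m$ holds since $\dim_DX=m$. The recursion defining $D_k$ is arranged to be precisely the right-hand side of the inequality demanded by that lemma: with $\e'=2n\cdot|Fn|\cdot\e$ and $\delta=D(3^{km-m+1}\e')+3^{km-m+1}\e'$ as in the lemma, the identity $3^{km-m+1}\e'=2n\cdot|Fn|\cdot3^{km-m+1}\e$ matches $\delta$ with the inner expression occurring in $D_k$, the combination $2\e+2\e'$ becomes $(2+4n|Fn|)\e$, and $2k^{\frac1p}\delta$ becomes the second summand of $D_k$, so that $(D,D_{k-1},D_k)$ satisfies the inequality $D_k(\e)\ge 2\e+2k^{\frac1p}\delta+2\e'+D_{k-1}(2\e+2\e'+k^{\frac1p}\delta)$ required of it. Lemma~\ref{l:indstep} then delivers $\dim_{D_k}(F^p_kX)\le\max\{\dim_{D_{k-1}}(F^p_{k-1}X),km\}\le\max\{(k-1)m,km\}=km$, closing the induction.

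The only genuinely delicate (though essentially routine) point is this verification that $D_k$ dominates the quantity prescribed by Lemma~\ref{l:indstep} for every $\e\in(0,\infty)$, where one must track the constants carefully and use that $D_{k-1}$ is non-decreasing to control its inner argument; this bookkeeping is where I expect the only real friction, and one should be alert to the exact coefficients in the argument of $D_{k-1}$. Alongside it sits the harmless check that each $D_k$ is again a dimension controlling function, i.e. a non-decreasing map $(0,\infty)\to(0,\infty]$ — clear since $D$ is one and precomposition with a scaling, addition, and composition all preserve monotonicity and positivity — with the observation that $D_0=0$ never actually enters the argument, since the inductive step invokes $D_{k-1}$ only for $k-1\ge1$.
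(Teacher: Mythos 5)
Your proof follows the paper's own argument exactly: induction on $k$ with the case $k=0$ handled by Theorem~\ref{t:small}, the base case $k=1$ by Lemma~\ref{l:dim1}(2), and the inductive step by Lemma~\ref{l:indstep} applied to the triple $(D,D_{k-1},D_k)$, followed by the identification $F^pX=F^p_nX$ for $n=\deg(F)$. The one coefficient you flagged as needing care is indeed the delicate spot: the stated recursion puts $2\e(1+n\cdot|Fn|)$ in the argument of $D_{k-1}$ whereas Lemma~\ref{l:indstep} calls for $2\e+2\e'=2\e(1+2n\cdot|Fn|)$, but this discrepancy sits in the paper's own formula rather than in your reasoning, and is harmless once one enlarges $D_k$ accordingly (as the scale $\mathcal D$ permits).
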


\begin{proof} By Theorem~\ref{t:small}, for any distinct points $a,b\in F_0X$ we have $d^p_{FX}(a,b)=\infty$, which implies that $\dim_{D_0}F^p_0X=0\le 0\cdot\dim_DX$.

 Now assume that for positive number $k\le \deg(F)$ we proved that $\dim_{D_{k-1}}F_{k-1}X\le (k-1)\cdot \dim_{D_{k-1}}X$. If $k=1$, then $\dim_{D_1}F^p_1X\le \dim_DX$ according to Lemma~\ref{l:dim1}(2). If $k\ge 2$, the we apply Lemma~\ref{l:indstep} and conclude that 
$$
\dim_{D_k}F^p_kX\le \max\{\dim_{D_{k-1}}F^p_{k-1},km\}\le
 \max\{(k-1)\cdot\dim_DX,k\cdot\dim_DX\}=k\cdot\dim_D X.
$$
 \end{proof}

Next, we consider the case of infinite dimension $\dim_DX$.

\begin{lemma}\label{l:indstep2} Assume that the functor $F$ is finitary and has finite degree. Let $n:=\max\{1,\deg(F)\}$ and $k$ be a integer number such that $2\le k\le n$. Let $D,D',D''$ be dimension controlling functions such that for any $\e\in(0,\infty)$ and numbers $$\e'=2n\cdot|Fn|\cdot\e\mbox{ \ and \ }\delta=D(3^{k^2-2k+2}\e')+3^{k^2-2k+2}\e',$$ we have $$D''(\e)\ge 2\e+2k^{\frac1p}\delta+2\e'+ D'(2\e+2\e'+k^{\frac1p}\delta).$$
Then for any distance space $X$ with $\dim_DX\ge \w$,  the distance space $F^p_kX$ has $D''$-controlled dimension  $$\dim_{D''}(F^p_kX)\le\max\{\dim_{D'}(F^p_{k-1}X),\dim_D(X)\}.$$
\end{lemma}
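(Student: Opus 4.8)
The plan is to follow the architecture of the proof of Lemma~\ref{l:indstep} almost verbatim, changing only the single step where the finiteness of $\dim_DX$ was used. Writing $m'=\dim_{D'}(F^p_{k-1}X)$ and $\kappa=\dim_DX$, I would first fix a cover $\V=\bigcup_{i\in 1+m'}\V_i$ of $F^p_{k-1}X$ with $\inf_i\ud_{F^p_{k-1}X}(\V_i)\ge 2\e+2\e'+k^{\frac1p}\delta$ and $\mesh(\V)\le D'(2\e+2\e'+k^{\frac1p}\delta)$, pass to the $\e'$-neighborhoods $\widetilde\V_i=\{O[V;\e'):V\in\V_i\}$, and set $W=F^p_kX\setminus\bigcup\widetilde\V$. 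Exactly as in Claim~\ref{cl:uL}, every $a\in W$ then satisfies $|\supp(a)|=k$ and $\ud_X(\supp(a))\ge\e'$, by Theorem~\ref{t:small} together with a one-point perturbation of a support produced via Proposition~\ref{p:BMZ2}.

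The only genuinely new step is the construction of a cover $\U=\bigcup_{i\in I}\U_i$ of $X$ whose index set has cardinality $|I|\le\kappa$, with $\inf_i\ud_X(\U_i)\ge\e'$, $\mesh(\U)\le\delta$, and with the property that every $\e'$-separated $k$-element subset of $X$ is contained in $\bigcup\U_i$ for a single index $i$. In Lemma~\ref{l:indstep} this came from one application of Lemma~\ref{l:BellDra} with parameters $(k,m)$, which is now unavailable because $m=\kappa$ is infinite. Instead I would start from a cover $\U^0=\bigcup_{i\in 1+\kappa}\U^0_i$ witnessing $\dim_DX\le\kappa$ at scale $3^{k^2-2k+2}\e'$, so that $\inf_i\ud_X(\U^0_i)\ge 3^{k^2-2k+2}\e'$ and $\mesh(\U^0)\le D(3^{k^2-2k+2}\e')$. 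For each $k$-element set $J\in[1+\kappa]^k$ I would apply Lemma~\ref{l:BellDra} with parameters $(k,k-1)$ to the cover $\bigcup_{i\in J}\U^0_i$ of the subspace $Y_J=\bigcup_{i\in J}\bigcup\U^0_i$; here $3^{k(k-1)-(k-1)+1}=3^{k^2-2k+2}$, which is exactly why this exponent replaces $3^{km-m+1}$ in the hypotheses on $D,D',D''$. This yields a cover of $Y_J$ by $k^2-k+1$ families, each $\e'$-separated, of mesh $\le D(3^{k^2-2k+2}\e')+3^{k^2-2k+2}\e'=\delta$, with every $k$-subset of $Y_J$ contained in one family. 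Taking $\U$ to be the disjoint amalgamation of all these covers over $J\in[1+\kappa]^k$ gives an index set of cardinality $\kappa\cdot(k^2-k+1)=\kappa$; and since every $\e'$-separated $k$-set $S=\supp(a)$ meets at most $k$ of the families $\U^0_i$ and so lies in some $Y_J$, it is captured in a single member of $\U$.

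From here the argument returns to the template of Lemma~\ref{l:indstep}. Setting $X_i=\bigcup\U_i$ and $W_i=\{a\in W:\supp(a)\subseteq X_i\}$, the family $\{W_i\}_{i\in I}$ covers $W$, and inside each $W_i$ I would pass to the $\e$-chain components to form a cover $\W_i$ with $\ud_{F^pX}(\W_i)\ge\e$ and $\mesh(\W_i)\le k^{\frac1p}\mesh(\U_i)\le k^{\frac1p}\delta$; the mesh estimate is the same computation as in Claim~\ref{cl:mesh}, relying on Lemma~\ref{l:2short}, Corollary~\ref{c:supp}, and the $\e'$-separation of $\U_i$ to transport supports along a chain by isometric bijections. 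Finally I would amalgamate $\widetilde\V$ and $\W=\bigcup_{i\in I}\W_i$ through the operation $\widetilde\W_i=\widetilde\V_{\alpha(i)}\cup_\e\W_{\beta(i)}$ for surjections $\alpha:1+m''\to 1+m'$ and $\beta:1+m''\to I$ with $m''=\max\{m',\kappa\}$, and invoke Claim~\ref{cl:meshsh} and Lemma~\ref{l:sume} to bound $\mesh(\widetilde\W)\le 2\e+2k^{\frac1p}\delta+2\e'+D'(2\e+2\e'+k^{\frac1p}\delta)\le D''(\e)$ while keeping $\ud_{F^pX}(\widetilde\W_i)\ge\e$. This gives $\dim_{D''}(F^p_kX)\le m''=\max\{\dim_{D'}(F^p_{k-1}X),\dim_DX\}$.

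The main obstacle is exactly the cardinality bookkeeping in the new step. In the finite case a single application of Lemma~\ref{l:BellDra} produced the required ``$k$-in-one-piece'' cover with $km+1$ families; for infinite $\dim_DX$ one must both lower the Bell--Dranishnikov parameters to $(k,k-1)$ — which is what forces the exponent $3^{k^2-2k+2}$ — and localize the construction to all $k$-tuples of families of the $\dim_D$-cover, so that the resulting index set stays of size $\kappa$ via $\kappa\cdot(k^2-k+1)=\kappa$. Checking that this localized cover simultaneously retains all three required properties (separation $\ge\e'$, mesh $\le\delta$, and the capture of every $\e'$-separated $k$-set in a single family, using that supports of elements of $W$ are $\e'$-separated $k$-sets by the analogue of Claim~\ref{cl:uL}) is the point demanding the most care.
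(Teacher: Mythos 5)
Your proposal is correct and follows essentially the same route as the paper's own proof: the paper likewise localizes the Bell--Dranishnikov construction (Lemma~\ref{l:BellDra} with parameters $(k,k-1)$, whence the exponent $3^{k^2-2k+2}$ and the $k^2-k+1$ families) to the sets $X_\alpha=\bigcup_{i\in\alpha}\bigcup\U_i$ for $\alpha\in[1+m]^k$, uses $|[1+m]^k\times(k^2-k+1)|=m$ for infinite $m=\dim_DX$, and then repeats the chain-component and $\cup_\e$-amalgamation steps of Lemma~\ref{l:indstep}. The only cosmetic slip is the phrase that an $\e'$-separated $k$-set ``meets at most $k$ of the families'': what is actually used is that a $k$-element set can be covered by at most $k$ indices of the cover, which needs no separation hypothesis.
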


\begin{proof} The inequality $\dim_DX\ge\w$ implies that $|X|>1$.  
By definition of the cardinal $m':=\dim_{D'}(F^p_{k-1}X)$, there exists  a cover $\V=\bigcup_{i\in 1+m'}\V_i$ of $F^p_{k-1}X$ such that $$\inf_{i\in 1+m'}\ud_{F^pX}(\V_i)\ge 2\e+2\e'+k^{\frac1p}\delta\mbox{ \  and \ }\mesh(\V)\le D'(2\e+2\e'+k^{\frac1p}\delta).$$ 
Let $\widetilde{\V}=\bigcup_{i\in 1+m'}\widetilde\V_i$ where $\widetilde \V_i=\{O[V;\e'):V\in\V_i\}$ for $i\in 1+m'$. 

Consider the subspace $W=F^p_kX\setminus\bigcup\widetilde\V$. By analogy with Claim~\ref{cl:uL}, we can prove that for every $a\in W$ we have $|\supp(a)|=k$ and $\ud_X(\supp(a))\ge \e'$.

By definition of the (infinite) cardinal $m:=\dim_DX\ge\w$, there exists a cover $\U=\bigcup_{i\in 1+m}\U_i$ of $X$ such that $\mesh(\U)\le D(3^{k^2-2k+2}\e')$ and $\inf_{i\in 1+m}\ud_X(\U_i)\ge 3^{k^2-2k+2}\e'$. For any set $\alpha\in [1+m]^k$, consider the cover $\bigcup_{i\in \alpha}\U_i$ of the set $X_\alpha=\bigcup_{i\in\alpha}(\bigcup\U_{i})$. Observe that for every (in fact, unique) set $S\in [\alpha]^k$, we have $X_\alpha=\bigcup_{i\in S}\bigcup\U_i=\bigcup_{i\in\alpha}\bigcup\U_i$ and $$\inf_{i\in \alpha}\ud_X(\U_i)\ge \inf_{i\in 1+m}\ud_X(\U_i)\ge 3^{k^2-2k+2}\e'=3^{k(k-1)-(k-1)+1}\e'.$$ 
By Lemma~\ref{l:BellDra}, there exists a cover $\U_\alpha=\bigcup_{i\in k^2-k+1}\U_{(\alpha,i)}$ of $X_\alpha$ such that 
\begin{itemize}
\item[(a)] $\inf\limits_{i\in k^2-k+1}\ud_X(\U_{(\alpha,i)})\ge\e'$;
\item[(b)] $\mesh(\U_\alpha)\le \mesh(\U)+3^{k^2-2k+2}\e'\le D(3^{k^2-2k+2}\e')+3^{k^2-2k+2}\e'=\delta$;
\item[(c)] for any $T\in [X_\alpha]^k$ there exists $i\in k^2-k+1$ such that $T\subseteq \bigcup\U_{(\alpha,i)}$.
\end{itemize}

For every $\alpha\in [1+m]^k$ and $i\in k^2-k+1$, consider the sets $X_{(\alpha,i)}=\bigcup\U_{(\alpha,i)}\subseteq X_\alpha$ and
$$W_{(\alpha,i)}=\{a\in W:\supp(a)\subseteq X_{(\alpha,i)}\}\subseteq F^p_kX.$$ 

Let us show that $$W=\bigcup_{\alpha\in[1+m]^k}\bigcup_{i\in k^2-k+1}W_{\alpha,i}.$$
Given any element $a\in W\subseteq F_kX\setminus F_{k-1}X$, consider its support $\supp(a)\in [X]^k$ and find a set $\alpha\in[1+m]^k$ such that $\supp(a)\subseteq \bigcup_{i\in\alpha}(\bigcup\U_i)=X_\alpha$. By the condition (c), there exists $i\in k^2-k+1$ such that $\supp(a)\subseteq\bigcup\U_{(\alpha,i)}=X_{(\alpha,i)}$. Then $a\in W_{(\alpha,i)}$ by the definition of $W_{(\alpha,i)}$.

For every $a\in W_{(\alpha,i)}$ let $W_{(\alpha,i)}(a)$ be the set of all elements $b\in W_{(\alpha,i)}$ for which there exists a sequence of elements $a_0,\dots,a_l\in W_{(\alpha,i)}$ such that $a=a_0$, $a_l=b$ and $d^p_{FX}(a_{j-1}a_j)<\e$ for every $j\in\{1,\dots,l\}$. Let $\W_{(\alpha,i)}:=\{W_{(\alpha,i)}(a):a\in W_{(\alpha,i)}\}$. It is clear that $\W_{(\alpha,i)}$ is a cover of the set $W_{(\alpha,i)}$ and $\ud_{F^p\!X}(\W_{(\alpha,i)})\ge\e$. 

By analogy with Claim~\ref{cl:mesh}, we can prove that
$$\mesh (\W_{(\alpha,i)})\le k^{\frac1p}\cdot \mesh(\U_i)\le k^{\frac1p}\cdot \delta,$$ which implies that $\W=\bigcup_{\alpha\in[1+m]^k}\bigcup_{i\in k^2-k+1}\W_{(\alpha,i)}$ is a  cover of the set $W$ such that\break $\inf\limits_{\alpha\in[1+m]^k}\inf\limits_{i\in k^2-k+1}\ud_X(\W_{(\alpha,i)})\ge\e$ and $\mesh(\W)\le k^{\frac1p}\cdot\mesh(\U)$. By analogy with\break Claim~\ref{cl:meshsh}, we can show that $2\e+\mesh(\W)\le \inf\limits_{i\in 1+m'}\ud_{F^pX}(\widetilde \V_i)$.

Let $m''=\max\{m',m\}$ and choose any surjective maps $\alpha:1+m''\to 1+m'$ and $\beta:1+m''\to [1+m]^k\times (k^2-k+1)$ (the map $\beta$ exists since the cardinal $m$ is infinite). For every $i\in 1+m''$, consider the family $\U_i=\widetilde \V_{\alpha(i)}\cup_\e \W_{\beta(i)}$, and the cover $\U=\bigcup_{i\in 1+m''}\U_i$ of the distance space $F^p_kX$. Using Lemma~\ref{l:sume}, we can show that $\inf\limits_{i\in 1+m''}\ud_{F^pX}(\U_i)\ge\e$ and
$$\mesh(\U)\le 2\e+2\cdot\mesh(\W)+\mesh(\widetilde\V)\le 2\e+2k^{\frac1p}\delta+2\e'+D'(2\e+2\e'+k^{\frac1p}\delta)\le D''(\e),$$
witnessing that $$\dim_{D''} F^p_kX\le m''=\max\{\dim_{D'}F^p_{k-1}X,\dim_D X\}.$$
\end{proof}

Lemmas~\ref{l:dim1} and \ref{l:indstep2} imply the following infinite version of Theorem~\ref{t:main-dim1}.

\begin{theorem}\label{t:main-dim2} Assume that the functor $F$ is finitary and has finite degree. Let $n=\max\{1,\deg(F)\}$. Assume that for some dimension controlling function $D$, the distance space $X$ has infinite $D$-controlled dimension $\dim_DX$. Consider the sequence of dimension controlling functions $(D_k)_{k\in\w}$ defined by the recursive formulas:\\ $D_0(\e)=0$, $D_1(\e)=n^{1-\frac1p}\e)$, and
$$
\begin{aligned}
D_{k}(\e)=\;&(2+4n|Fn|)\e+2k^{\frac1p}\big(D(2n\cdot|Fn|\cdot 3^{k^2-2k+2}\e)+2n\cdot|Fn|\cdot 3^{k^2-2k+2}\e\big)+\\
&D_{k-1}\big(2\e(1+n\cdot |Fn|)+k^{\frac1p}(D(2n{\cdot}|Fn|{\cdot} 3^{k^2-2k+2}\e)+2n{\cdot}|Fn|{\cdot} 3^{k^2-2k+2}\e)\big)
\end{aligned}$$
for every $k\ge 2$ and $\e\in(0,\infty)$. Then $\dim_{D_k}F^p_kX\le \dim_DX$ for every $k\le \deg(F)$. In particular, $\dim_{D_n}F^pX\le\dim_D X$.
\end{theorem}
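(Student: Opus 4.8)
The plan is to run an induction on $k$ from $0$ to $\deg(F)$, parallel to the proof of Theorem~\ref{t:main-dim1}, but feeding the infinite-dimensional inductive step Lemma~\ref{l:indstep2} in place of its finite counterpart Lemma~\ref{l:indstep}. The only conceptual difference from the finite case is a cardinal-arithmetic collapse: since $\dim_D X$ is an infinite cardinal, $\max\{\dim_D X,\dim_D X\}=\dim_D X$, so the bound never grows beyond $\dim_D X$ as $k$ increases (in contrast to the finite case, where each step multiplied the bound by one more copy of $\dim_D X$). I claim $\dim_{D_k}F^p_k X\le\dim_D X$ for all $0\le k\le\deg(F)$.

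First I would handle the base case $k=0$. By Theorem~\ref{t:small}, any two distinct elements of $F_0 X$ are at distance $\infty$, so $d^p_{FX}$ restricts to a $\{0,\infty\}$-valued $\infty$-metric on $F^p_0 X$; Example~\ref{ex:zero} then gives $\dim_{D_0}F^p_0 X=0\le\dim_D X$.

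For the inductive step, assume $\dim_{D_{k-1}}F^p_{k-1}X\le\dim_D X$ for some $1\le k\le\deg(F)$, and split into two cases. If $k=1$, Lemma~\ref{l:dim1}(2) directly yields $\dim_{D_1}F^p_1 X\le\dim_D X$, once one notes that $D_1$ is exactly the function $D'$ supplied by that lemma for the given $D$ and $n$. If $2\le k\le\deg(F)$, I would check that the triple $(D,D_{k-1},D_k)$ satisfies the inequality hypothesis of Lemma~\ref{l:indstep2}: substituting $\e'=2n|Fn|\e$ and $\delta=D(3^{k^2-2k+2}\e')+3^{k^2-2k+2}\e'$ into the displayed recursion shows that $D_k$ is precisely built to dominate $2\e+2k^{\frac1p}\delta+2\e'+D_{k-1}(2\e+2\e'+k^{\frac1p}\delta)$. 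Lemma~\ref{l:indstep2} then gives
$$\dim_{D_k}F^p_k X\le\max\{\dim_{D_{k-1}}F^p_{k-1}X,\dim_D X\}\le\max\{\dim_D X,\dim_D X\}=\dim_D X,$$
closing the induction.

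Finally, the ``in particular'' clause follows because $\deg(F)=n$ (when $\deg(F)\ge 1$) forces $FX=F_n X$, hence $F^p X=F^p_n X$, so the case $k=n$ of the induction gives $\dim_{D_n}F^p X\le\dim_D X$. I expect the only real friction to be the bookkeeping verification that the recursively defined $D_k$ really dominates the composite expression appearing in the hypothesis of Lemma~\ref{l:indstep2} after the substitutions for $\e'$ and $\delta$; but since the recursion was designed term-by-term to match that expression, this reduces to comparing summands and using the monotonicity of $D_{k-1}$, with no genuine difficulty.
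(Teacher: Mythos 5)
Your proof is correct and is essentially the paper's own argument: the paper obtains Theorem~\ref{t:main-dim2} by exactly this induction, using Lemma~\ref{l:dim1}(2) for the case $k=1$ and Lemma~\ref{l:indstep2} for $k\ge 2$, with the infinite-cardinal absorption $\max\{\dim_DX,\dim_DX\}=\dim_DX$ replacing the multiplicative growth of the finite case. (The only wrinkle in the bookkeeping you defer is that $2\e+2\e'=2\e(1+2n|Fn|)$, whereas the displayed recursion feeds $2\e(1+n\cdot|Fn|)$ into $D_{k-1}$ --- an apparent typo in the paper's formula rather than a flaw in your approach.)
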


Theorems~\ref{t:main-dim1}, \ref{t:main-dim2} imply Theorem~\ref{t:main-dim} because for any dimension controlling scale $\mathcal D$ and any function $D\in\mathcal D$, the dimension controlling functions $D_k$ defined by the recursive formulas in Theorems~\ref{t:main-dim1}, \ref{t:main-dim2} belong to the scale $\mathcal D$.

Now we consider three important examples of dimension controlling scales.

\begin{example} Consider the family $\mathcal{AN}$ of all non-decreasing functions $f:(0,\infty)\to(0,\infty)$ such that $$\sup_{x\in(0,\infty)}\frac{f(x)}x<\infty.$$It is easy to see that $\mathcal{AN}$ is the smallest dimension controlling scale. For metric spaces, the $\mathcal{AN}$-controlled dimension $\dim_{\mathcal{AN}}$ is known as the {\em Assouad-Nagata dimension}, see \cite{LS} and \cite{BL}. For this dimension, Theorem~\ref{t:main-dim} implies the formula
$$\dim_{\mathcal{AN}}F^pX\le\deg(F)\cdot\dim_{\mathcal{AN}}X$$
holding for any $p\in[1,\infty]$, distance space $X$ and finitary functor $F:\Set\to\Set$ of finite degree. 
\end{example} 

\begin{example} Consider the dimension controlling scale $\mathcal{AS}$ consisting of all non-decreasing functions $f:(0,\infty)\to(0,\infty)$. For metric spaces, the $\mathcal{AS}$-controlled dimension $\dim_{\mathcal{AN}}$ is known as the {\em asymptotic dimension of Gromov}, see \cite{Gromov}. For the asymptotic dimension, Theorem~\ref{t:main-dim} implies the formula
$$\dim_{\mathcal{AS}}F^pX\le\deg(F)\cdot\dim_{\mathcal{AS}}X$$
holding for any $p\in[1,\infty]$, distance space $X$ and finitary functor $F:\Set\to\Set$ of finite degree. This formula has been announced by Radul and Shukel in \cite{RS}, but their proof works only for support preserving functors.
\end{example} 

\begin{example} Consider  the dimension controlling scale $\U$ consisting of all non-decrea\-sing functions $f:(0,\infty)\to(0,\infty]$ such that $$\lim_{x\to+0}f(x)=0.$$ For metric spaces, the $\U$-controlled dimension $\dim_{\U}$ is known as the {\em uniform covering dimension of Isbell}, see \cite[Ch.5]{Isbell-book}. For the uniform dimension, Theorem~\ref{t:main-dim} implies the formula
$$\dim_{\U} F^pX\le\deg(F)\cdot\dim_{\U}X$$
holding for any $p\in[1,\infty]$, distance space $X$ and finitary functor $F:\Set\to\Set$ of finite degree. For a compact metric space $X$, its uniform covering dimension $\dim_{\U}X$ coincides with its topological dimension $\dim X$. In this case the above formula turns into the formula
$$\dim FX\le\deg(F)\cdot\dim X,$$
proved for normal functors in the category of compact Hausdorff spaces by Basmanov, see \cite{Bas1,Bas2,Bas3}.
\end{example} 

\section{Concluding theorems on the properties of the distance $d^p_{FX}$}

In the following two theorems we unify the main results of the preceding sections, describing the properties of the functor $F^p:\Dist\to\Dist$.

\begin{theorem}\label{t:main} For any $p\in[1,\infty]$, and a nonempty distance space $(X,d_X)$, the $\ell^p$-metrization $F^p:\DMetr\to\DMetr$ of the functor $F:\Set\to\Set$ has the following properties:
\begin{enumerate}\itemsep3pt
\item[\textup{1)}] $d^p_{FX}$ is the largest distance on $FX$ such that for every $n\in\IN$ and $a\in Fn$, the map $\xi_X^a:(X^n,d^p_{X^n})\to FX$, $\xi^a_X:f\mapsto Ff(a)$, is non-expanding.
\item[\textup{2)}]  if $F$ has finite degree $n=\deg(F)$, then $d^p_{FX}$ is the largest distance on $FX$ such that for every $a\in Fn$, the map $\xi_X^a:(X^n,d^p_{X^n})\to FX$, $\xi^a_X:f\mapsto Ff(a)$, is non-expanding.
\item[\textup{3)}]  $d^p_{FX}(a,b)=\infty$ for any distinct points $a,b\in FX$ with $|\supp(a)\cup\supp(b)|\le 1$.
\item[\textup{4)}]  $d^\infty_{FX}\le d^p_{FX}\le d^1_{FX}$ and $d^p_{FX}\le n^{\frac1p}\cdot d^\infty_{FX}$ where $n=\max\{1,\deg(F)\}$.
\item[\textup{5)}]  For every Lipschitz map $f$ between distance spaces, the map
$F^p\!f$ is Lipschitz with Lipschitz constant $\Lip(F^p\!f)\le \Lip(f)$.
\item[\textup{6)}]  If $f:X\to Y$ is an injective isometry of distance spaces and the image $f[X]$ is dense in $Y$, then the map $F^p\!f$ is an isometry.
\item[\textup{7)}] If $f:X\to Y$ is an isometry between distance spaces and the functor $F$ is finitary and has finite degree, then for every $a,b\in FX$,\newline $d^\infty_{FX}(a,b)\le 2n\cdot|Fn|\cdot d^\infty_{FY}(Ff(a),Ff(b))$ where $n=\max\{1,\deg(F)\}$.
\item[\textup{8)}]  For any $a,b\in FX$ with $d^p_{FX}(a,b)<\infty$ we have $d^p_{FX}(a,b)\le(|S_a|^{\frac1p}+|S_b|^{\frac1p})\cdot\bar d_X(S_a\cup S_b)$ and $d^p_{FX}(a,b)\le |S_a|^{\frac1p}\cdot\bar d_X(S_a)+\ud_X(S_a,S_b)+|S_b|^{\frac1p}\cdot \bar d_X(S_b)$, where $S_a=\supp(a)$ and $S_b=\supp(b)$.
\item[\textup{9)}]  If the functor $F$ has finite degree $\le n$, then $\bar d^p_{FX}(FX)\le 2n^{\frac1p}\cdot\bar d_X(X)$.
\item[\textup{10)}]  $\ud_{FX}^p(FX)\ge\ud_X(X)$; moreover $d^1_{FX}(a,b)\ge\ud_X(\supp(a)\cup\supp(b))$ for any distinct elements $a,b\in FX$.
\item[\textup{11)}]  If $F$ preserves supports, then for any distinct elements $a,b\in FX$ we have\newline
$d^p_{FX}(a,b)\ge d^\infty_{FX}(a,b)\ge \max\{d_{HX}(\supp(a),\supp(b)),\frac13\ud_X(\supp(a)\cup\supp(b))\}$.
\item[\textup{12)}]  If $F1$ is a singleton, and $d_X$ is a pseudometric, then the distance $d^p_{FX}$ is a pseudometric.
\item[\textup{13)}]  The distance $d^p_{FX}$ is an $\infty$-metric if $d_X$ is an $\infty$-metric and one of the following conditions holds:  (i) the $\infty$-metric space $(X,d_X)$ is Lipschitz disconnected, (ii) $p=1$, (iii) the functor $F$ has finite degree, (iv) the functor $F$ preserves supports.
\item[\textup{14)}]  The distance $d^p_{FX}$ is a metric if $d_X$ is a metric, $F1$ is a singleton and one of the following conditions holds:  (i) the metric space $(X,d_X)$ is Lipschitz disconnected, (ii) $p=1$, (iii) the functor $F$ has finite degree, (iv) the functor $F$ preserves supports.
\item[\textup{15)}]  If for some $a\in F1$ the map $\xi^a_2:2\to F2$ is injective and $p=1$ or $F$ preserves supports, then the function $\xi^a_X:X\to F^pX$ is an injective isometry.
\item[\textup{16)}] If $d_X$ is $\{0,\infty\}$-valued (and $\infty$-metric) on $X$, then $d^p_{FX}$ is $\{0,\infty\}$-valued (and $\infty$-metric) on $FX$.
\end{enumerate}
\end{theorem}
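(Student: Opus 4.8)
The plan is to treat Theorem~\ref{t:main} as a compendium: each of its sixteen items is a (near-)verbatim restatement of a result already established in the preceding sections, so the proof reduces to pairing each item with the right statement and, in a few cases, gluing two of them together. First I would dispose of the items that are literal citations. Items (1) and (2) are Theorems~\ref{t:alt} and \ref{t:altm}; item (3) is Theorem~\ref{t:small}; item (5) is Theorem~\ref{t:Lip}; item (6) is Lemma~\ref{l:isom2} (equivalently Theorem~\ref{t:isom}); item (7) is Proposition~\ref{p:weakiso}; item (8) records conditions (5) and (3) of Theorem~\ref{t:up}; item (9) is Corollary~\ref{c:up2}; item (10) is the conjunction of Proposition~\ref{p:e-sep} and Theorem~\ref{t:d1}; item (11) is Theorem~\ref{t:shukel}; item (15) is Theorem~\ref{t:xi}; and item (16) is Lemma~\ref{l:ivalued}.

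Next I would handle item (4), which is the specialization $q=\infty$ of Theorem~\ref{t:ineq}: the chain $d^\infty_{FX}\le d^p_{FX}\le d^1_{FX}$ is immediate, while the exponent $\frac{q-p}{qp}=\frac1p-\frac1q$ tends to $\frac1p$ as $q\to\infty$, giving $d^p_{FX}\le n^{1/p}\cdot d^\infty_{FX}$. The only point to check is the degenerate case $\deg(F)=0$, where $\max\{1,\deg(F)\}=1$ is used in place of $\deg(F)$; but then item (3) already forces $d^p_{FX}$ and $d^\infty_{FX}$ to be $\{0,\infty\}$-valued and equal, so the bound holds trivially. Item (13) is the disjunction of four independent sufficient conditions, matched respectively to Theorem~\ref{t:Ldis} (Lipschitz disconnected), Theorem~\ref{t:d1} ($p=1$), Corollary~\ref{c:imetric} (finite degree), and Theorem~\ref{t:shukel} (support preservation); in each case the hypothesis that $d_X$ is an $\infty$-metric is exactly what those statements require.

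The two items that genuinely require a short argument rather than a pure citation are (12) and (14), and this gluing is where I expect the only friction. For item (12) I would invoke Corollary~\ref{c:up} with the constant map $q:X\to 1$: since $F$ preserves singletons, $F1$ is a singleton, so $Fq(a)=Fq(b)$ automatically for all $a,b\in FX$, which rules out case (1) of that corollary and leaves the finite upper bound $(|S_a|^{1/p}+|S_b|^{1/p})\cdot\bar d_X(S_a\cup S_b)$ of case (2); this is finite because supports are finite and $\bar d_X(S_a\cup S_b)\le\bar d_X(X)<\infty$ when $d_X$ is a pseudometric, so $d^p_{FX}$ is a pseudometric. Item (14) then follows by combining (12) and (13): a metric $d_X$ is simultaneously a pseudometric and an $\infty$-metric, so under any of the listed hypotheses $d^p_{FX}$ is finite-valued by (12) and positive on distinct pairs by (13), hence a metric. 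Since no new mathematics is needed beyond these assembled results, the main obstacle is purely organizational: ensuring that each item is paired with a statement whose hypotheses match exactly (in particular that the finitary, finite-degree, and support-preservation assumptions line up), and carrying out the two combination steps for (12) and (14) cleanly.
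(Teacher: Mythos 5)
Your proposal is correct and matches the paper's own proof essentially verbatim: the paper likewise proves the theorem as a list of citations to the same preceding results, item by item, with item (13) assembled from Theorems~\ref{t:Ldis}, \ref{t:d1}, \ref{t:ineq}, \ref{t:shukel} and item (14) obtained by combining the pseudometric statement with the $\infty$-metric statement exactly as you do (the paper's reference there to ``statements (11) and (12)'' is evidently a typo for (13) and (12), so your reading is the intended one). No gaps.
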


\begin{proof}
1. See Theorem~\ref{t:alt}.

2. See Theorem~\ref{t:altm}.

3. See Theorem~\ref{t:small}.

4. See Theorem~\ref{t:ineq}.

5. See Theorem~\ref{t:Lip}.

6. See Theorem~\ref{t:isom}.

7. See Proposition~\ref{p:weakiso}.

8. See Theorem~\ref{t:up}.

9. See Corollary~\ref{c:up2}.

10. See Proposition~\ref{p:e-sep} and Theorem~\ref{t:d1}.

11. See Theorem~\ref{t:shukel}.

12. See Corollary~\ref{c:up}.

13. Apply Theorems~\ref{t:Ldis}, \ref{t:d1}, \ref{t:ineq}, and \ref{t:shukel}.

14. Apply the stetements (11) and (12) of this theorem.
 
15. See Theorem~\ref{t:xi}.

16. See Lemma~\ref{l:ivalued}.
\end{proof}

For finitary functors of finite degree we have some additional results.

\begin{theorem}\label{t:main-fdeg} If the functor $F$ is finitary and has finite degree $n=\deg(F)>0$, then for any $p\in[1,\infty]$, and any nonempty distance space $(X,d_X)$, the $\ell^p$-metrization $F^p:\DMetr\to\DMetr$ of the functor $F:\Set\to\Set$ has the following properties:
\begin{enumerate}
\item[\textup{1)}] For any function $f:X\to Y$ to a distance space $Y$, the function $F^pf:F^pX\to F^pY$ has continuity modulus $\w_{F^p\!f}\le |Fn|\cdot n^{\frac1p}\cdot\w_f$. 
\item[\textup{2)}] If a function $f:X\to Y$ to a distance space is uniformly continuous (resp. asymptotically Lipschitz, macro-uniform, a quasi-isometry, a coarse equivalence), then so is the function $F^pf$.
\item[\textup{3)}] If the functor $F$ preserves supports, then for any continuous function $f:X\to Y$ between distance spaces, the function $F^pf:F^pX\to F^pY$ is continuous.
\item[\textup{4)}] If the distance space $X$ is complete (and compact), then so is the space $F^pX$.
\item[\textup{5)}] The distance space $F^pX$ has metric entropy\newline $\IE_\e(F^pX)\le|Fn|\cdot\IE_\e(X^n,d^p_{X^n})\le|Fn|\cdot\big(\IE_{\e/\!\sqrt[p]{n}}(X)\big)^n$ for every $\e\in(0,\infty)$.
\item[\textup{6)}] The distance space $F^pX$ has cardinal entropy characteristics $\IE_0(F^pX)\le|Fn|\cdot \IE_0(X)^n$ and $\IE_\infty(F^pX)\le|Fn|\cdot \IE_\infty(X)^n$.
\item[\textup{7)}] If the distance space $X$ is totally bounded, then the distance space $F^pX$ is totally bounded and has upper and lower box-counting dimensions\newline $\overline{\dim}_B F^pX\le \deg(F)\cdot \overline{\dim}_B X$ \ and \  $\underline{\dim}_B F^pX\le \deg(F)\cdot \underline{\dim}_B X.$
\item[\textup{8)}] If the functor $F$ preserves supports, then the distance space $F^pX$ has metric entropy $\IE_{\e,\delta}(F^p\!X)\le
n^n\cdot |Fn|\cdot \IE_{\e,2\delta\!\sqrt[p]{n}}(X^n,d^p_{X^n})\le n^n\cdot |Fn|\cdot \big(\IE_{\e/\!\sqrt[p]{n},2\delta\!\sqrt[p]{n}}(X)\big)^n$ for any $\e,\delta\in(0,\infty)$.
\item[\textup{9)}] If the functor $F$ preserves supports, then the  distance space $F^pX$ has the cardinal entropy characteristics $\IE_{(\e,\delta)}(F^pX)\le n^n{\cdot}|Fn|\cdot \IE_{(\e,\delta)}(X)^n$ for any $(\e,\delta)\in\{(0,0),(0,\infty),(\infty,\infty)\}$.
\item[\textup{10)}] If for some $(\e,\delta)\in\{(0,0),(0,\infty),(\infty,\infty)\}$ a distance space $X$ is $(\e,\delta)$-finitary, then the distance space $F^pX$ is $(\e,\delta)$-finitary and has $(\e,\delta)$-entropy dimensions
$\odim_{(\e,\delta)}F^pX\le \deg(F)\cdot\odim_{(\e,\delta)}X$ \ and \ 
$\udim_{(\e,\delta)}F^pX\le \deg(F)\cdot\udim_{(\e,\delta)}X$.
\item[\textup{11)}] The distance space $F^pX$ has Hausdorff dimension $\dim_H F^pX\le \dim_HX^n$.
\item[\textup{12)}] For any dimension controlling scale $\mathcal D$, the distance space $F^pX$ has $\mathcal D$-controlled dimension $\dim_{\mathcal D}F^pX\le\deg(F)\cdot\dim_{\mathcal D}X$. 
\end{enumerate}
\end{theorem}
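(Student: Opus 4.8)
The plan is to recognize Theorem~\ref{t:main-fdeg} as an omnibus statement whose twelve clauses have each already been established, in possibly more general or more specialized form, in the preceding sections; so the proof is essentially a matching exercise, and the real work is to verify that in every case the standing hypotheses of the theorem ($F$ finitary, $\deg(F)=n>0$ finite, and where relevant $F$ preserving supports) are exactly what the cited result requires. Concretely, I would first dispatch the clauses that need only finitarity and finite degree: clause (1) is Theorem~\ref{t:modulus} verbatim; clause (2) combines Corollary~\ref{c:uniform} (for uniform continuity, asymptotic Lipschitzness, macro-uniformity/duoformity) with Corollary~\ref{c:coarse-equiv} (for quasi-isometries and coarse equivalences); clause (4) is Theorem~\ref{t:complete} for completeness together with Theorem~\ref{t:top}(1) for compactness; clause (5) is Theorem~\ref{t:entropy1}; clause (6) is Corollary~\ref{c:card-E-char}(1,2); clause (7) is Corollary~\ref{c:tb} together with the box-counting estimate of Theorem~\ref{t:tb-dim}; clause (11) is Theorem~\ref{t:H-dim}; and clause (12) is Theorem~\ref{t:main-dim}.

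Next I would collect the clauses that additionally invoke preservation of supports: clause (3) is Theorem~\ref{t:continuous}, clause (8) is Theorem~\ref{t:entropy2}, clause (9) is Corollary~\ref{c:card-E-char}(3,4,5), and clause (10) is Theorem~\ref{t:bg-dim} (for $(\e,\delta)$-finitarity and the corresponding entropy dimensions). In each of these I would state ``since $F$ preserves supports'' before citing, so that the reader sees where the extra hypothesis is consumed. A couple of small translations should be made explicit along the way: the quantity $n=\max\{1,\deg(F)\}$ appearing in the cited estimates reduces to $n=\deg(F)$ under the theorem's assumption $\deg(F)>0$; the hypothesis ``$Fn$ at most countable'' needed by Theorem~\ref{t:H-dim} is automatic from finitarity; and the compactness half of clause (4) is obtained by noting that a continuous map between compact distance spaces is uniformly continuous, so Theorem~\ref{t:top}(2) (equivalently Corollary~\ref{c:uniform}) applies.

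I do not expect a genuine obstacle here, since all the substantive analytic and combinatorial content lives in the earlier theorems; the only thing requiring care is hypothesis alignment, namely confirming for each clause that ``finitary $+$ finite degree $(+$ support preservation$)$'' implies precisely the premise of the source result, and that the constants quoted (factors of $|Fn|$, $n^{1/p}$, $\sqrt[p]{n}$, $\deg(F)$) are reproduced without change of meaning. The one place I would double-check is clause (7), where the displayed bound is written with $\deg(F)$ rather than $n=\max\{1,\deg(F)\}$; under $\deg(F)=n>0$ these coincide, so Theorem~\ref{t:tb-dim} applies directly. Thus the proof reduces to the list of citations above, with these few remarks inserted to keep the bookkeeping transparent.
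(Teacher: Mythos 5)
Your proposal matches the paper's own proof exactly: the paper dispatches each of the twelve clauses by citing precisely the results you list (Theorem~\ref{t:modulus}, Corollaries~\ref{c:uniform} and \ref{c:coarse-equiv}, Theorem~\ref{t:continuous}, Theorems~\ref{t:complete} and \ref{t:top}, Theorem~\ref{t:entropy1}, Corollary~\ref{c:card-E-char}, Corollary~\ref{c:tb} with Theorem~\ref{t:tb-dim}, Theorem~\ref{t:entropy2}, Theorem~\ref{t:bg-dim}, Theorem~\ref{t:H-dim}, and Theorem~\ref{t:main-dim}). Your additional remarks on hypothesis alignment (e.g.\ $n=\max\{1,\deg(F)\}=\deg(F)$ under the assumption $\deg(F)>0$, and finitarity giving countability of $Fn$) are correct and only make the bookkeeping more explicit than the paper's terse citation list.
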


\begin{proof} 1. See Theorem~\ref{t:modulus}.

2. See Corollaries~\ref{c:uniform} and \ref{c:coarse-equiv}.

3. See Theorem~\ref{t:continuous}.

4. See Theorems~\ref{t:complete} and \ref{t:top}.

5. See Theorem~\ref{t:entropy1}.

6. See Corollary~\ref{c:card-E-char}(1,2).

7. See Corollary~\ref{c:tb} and Theorem~\ref{t:tb-dim}.

8. See Theorem~\ref{t:entropy2}.

9. See Corollary~\ref{c:card-E-char}(3,4,5).

10. See Theorem~\ref{t:bg-dim}.

11. See Theorem~\ref{t:H-dim}.

12. See Theorem~\ref{t:main-dim}. 
\end{proof}

\section{The $\check\ell^p$-metrization $\check F^p$ of the functor $F$}\label{s:stable}

In this section we indroduce a lifting $\check F^p:\DMetr\to\DMetr$ of the functor $F:\Set\to\Set$, called the $\check\ell^p$-metrization of $F$. Comparing to the $\ell_p$-metrization $F^p$, the $\check\ell_p$-metrization $\check F^p$ has some additional properties, in particular, it preserves isometries between distance spaces.
The  $\check\ell_p$-metrization is defined with the help of  injective envelopes which were discussed in Section~\ref{s:E(X)}. 
 
For any distance space $(X,d_X)$, the distance $\check d^p_{FX}:FX\times FX\to[0,\infty]$ is defined by the formula
$$\check d^p_{FX}(a,b)=d^p_{FEX}(F\mathsf e_X(a),F\mathsf e_X(b))\mbox{ \ for \ }a,b\in FX.$$
Here $\mathsf e_X:X\to EX$ is the canonical isometry of $X$ into its injective envelope $EX$, defined in Lemma~\ref{l:embE}.


The {\em $\check\ell^p$-metrization} of the functor $F$ is the functor $\check F^p:\DMetr\to\DMetr$ assigning to each distance space $\mathsf X=(X,d_X)$ the distance space $\check F^p\mathsf X=(FX,\check d^p_{FX})$ and to each function $f:(X,d_X)\to (Y,d_Y)$ between distance spaces the function $\check F^pf:=Ff$.

It is clear that $U\circ\check F^p=F\circ U$, where $U:\DMetr\to \Set$, $U:(X,d_X)\mapsto X$, is the forgetful functor. So, $\check F^p$ is a  lifting of the functor $F$ to the category $\DMetr$. In the following theorem we establish some properties of this lifting.

\begin{theorem}\label{t:mainU} The $\check\ell^p$-metrization $\check F^p:\DMetr\to\DMetr$ of the functor $F:\Set\to\Set$ has the following properties for any nonempty distance space $(X,d_X)$:
\begin{enumerate}
\item[\textup{1)}] $\check d^p(a,b)=\infty$ if $a,b\in FX$ are distinct elements with $|\supp(a)\cup\supp(b)|\le 1$.
\item[\textup{2)}] $\check d^\infty_{FX}\le \check d^p_{FX}\le \check d^1_{FX}$ and $\check d^p_{FX}\le n^{\frac1p}\cdot \check d^\infty_{FX}$, where $n=\max\{1,\deg(F)\}$.
\item[\textup{3)}] $\check d^p_{FX}\le d^p_{FX}$ and $d^\infty_{FX}\le n\cdot|Fn|\cdot \check d^\infty_{FX}$, where $n=\max\{1,\deg(F)\}$; moreover, $\check d^p_{FX}=d^p_{FX}$ if the distance space $(X,d_X)$ is injective.
\item[\textup{4)}] For every Lipschitz map $f$ between distance spaces, the map
$\check F^p\!f$ is Lipschitz with Lipschitz constant $\Lip(\check F^p\!f)\le \Lip(f)$.
\item[\textup{5)}] For any isometry $f$ between distance spaces, the map $\check F^p\!f$ is an isometry.
\item[\textup{6)}]  $\check d^p_{FX}$ is a pseudometric if $d_X$ is a pseudometric and $F1$ is a singleton.
\item[\textup{7)}]  $\check d^1_{FX}(a,b)\ge\ud_X(\supp(a)\cup\supp(b))$ for any distinct elements $a,b\in FX$.
\item[\textup{8)}] If $F$ preserves supports, then for any distinct elements $a,b\in FX$ we have\newline
$\check d^p_{FX}(a,b)\ge \check d^\infty_{FX}(a,b)\ge \max\{d_{HX}(\supp(a),\supp(b)),\frac13\ud_X(\supp(a)\cup\supp(b))\}$.
\item[\textup{9)}] $\check d^p_{FX}$ is an $\infty$-metric if the distances $d_X$ and $d^p_{FEX}$ are $\infty$-metric.
\item[\textup{10)}] $\check d^p_{FX}$ is an $\infty$-metric if $d_X$ is an $\infty$-metric and one of the following conditions is satisfied: (i) $p=1$, (ii) $F$ has finite degree, (iii) $F$ preserves supports.
\item[\textup{11)}] $\check d^p_{FX}$ is a metric if $d_X$ is a metric, $F1$ is a singleton, and one of the following conditions is satisfied: (i) $p=1$, (ii) $F$ has finite degree, (iii) $F$ preserves supports.
\item[\textup{12)}] If for some $a\in F1$ the map $\xi^a_2:2\to F2$ is injective and $p=1$ or $F$ preserves supports, then the function $\xi^a_X:X\to \check F^pX$ is an injective isometry.
\item[\textup{16)}] If $d_X$ is the $\{0,\infty\}$-valued $\infty$-metric on $X$, then $\check d^p_{FX}$ is the $\{0,\infty\}$-valued $\infty$-metric on $FX$.
\end{enumerate}
\end{theorem}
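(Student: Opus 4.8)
The plan is to exploit the defining formula $\check d^p_{FX}(a,b)=d^p_{FEX}(F\mathsf e_X(a),F\mathsf e_X(b))$, which presents $\check d^p_{FX}$ as the pullback along $F\mathsf e_X$ of the distance $d^p_{FEX}$ on the $\ell^p$-metrization of the injective envelope $EX$. Almost every clause then reduces to the matching clause of Theorem~\ref{t:main} applied to the $\infty$-metric space $EX$, transported back through the canonical isometry $\mathsf e_X:X\to EX$ of Lemma~\ref{l:embE}. Two transport facts do the work. First, $\mathsf e_X$ is an isometry, and it is injective whenever $d_X$ is an $\infty$-metric (an isometry into the $\infty$-metric space $EX$); hence $F\mathsf e_X$ is injective and, by Corollary~\ref{c:supp}, $\supp(F\mathsf e_X(a))=\mathsf e_X[\supp(a)]$ (the degenerate case $|X|\le 1$ being handled directly by Corollary~\ref{c:small}). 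Second, being an isometry, $\mathsf e_X$ preserves $\ud_X$, $d_{HX}$ and real diameters of subsets. Granting these, clauses (1),(2),(7),(8) and the inequality $\check d^p_{FX}\le d^p_{FX}$ of (3) follow at once by reading Theorem~\ref{t:main}(3),(4),(10),(11) on $EX$ and substituting $\supp(F\mathsf e_X(a))=\mathsf e_X[\supp a]$; the first inequality of (3) is just non-expansiveness of $F\mathsf e_X$ (Theorem~\ref{t:Lip}), since $\mathsf e_X$ is non-expanding. Clause (9) needs only injectivity: if $d_X$ and $d^p_{FEX}$ are $\infty$-metrics, then distinct $a,b$ give $F\mathsf e_X(a)\ne F\mathsf e_X(b)$, whence $\check d^p_{FX}(a,b)=d^p_{FEX}(F\mathsf e_X a,F\mathsf e_X b)>0$.

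The two genuinely new clauses are (4) and (5), where I would pass from $f:X\to Y$ to its canonical extension $\bar f:EX\to EY$ between injective envelopes. For (4), Proposition~\ref{p:ELip} gives a Lipschitz $\bar f$ with $\bar f\circ\mathsf e_X=\mathsf e_Y\circ f$ and $\Lip(\bar f)\le\Lip(f)$; since $F\mathsf e_Y\circ Ff=F\bar f\circ F\mathsf e_X$, one computes $\check d^p_{FY}(Ff(a),Ff(b))=d^p_{FEY}(F\bar f(F\mathsf e_X a),F\bar f(F\mathsf e_X b))\le\Lip(f)\cdot\check d^p_{FX}(a,b)$ using Theorem~\ref{t:Lip} on $EX$. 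Clause (5) is the crux and the main obstacle: for a general isometry $f$ the functor $F^p$ need \emph{not} preserve it (Example~\ref{ex1}), so density is unavailable and Theorem~\ref{t:isom} cannot be applied directly. The resolution uses that $EX$ is injective (Corollary~\ref{c:Einj}): given an isometry $f$, Proposition~\ref{p:isoext} yields an isometry $\bar f:EX\to EY$ with $\bar f\circ\mathsf e_X=\mathsf e_Y\circ f$, and injectivity of $EX$ supplies a non-expanding retraction $\rho:EY\to EX$ with $\rho\circ\bar f=\mathrm{id}_{EX}$. Applying $F$ and Theorem~\ref{t:Lip} twice, the identity $F\rho\circ F\bar f=\mathrm{id}_{FEX}$ forces $d^p_{FEY}(F\bar f(u),F\bar f(v))=d^p_{FEX}(u,v)$ for all $u,v\in FEX$; hence $F\bar f$ is an isometry and $\check d^p_{FY}(Ff(a),Ff(b))=d^p_{FEX}(F\mathsf e_X a,F\mathsf e_X b)=\check d^p_{FX}(a,b)$.

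For the remaining clauses I would record two structural facts about injective envelopes, read off the component analysis of Section~\ref{s:E(X)}. When $d_X$ is a pseudometric, $EX$ is again a pseudometric space (it has a single pseudometric component), and when $d_X$ is a $\{0,\infty\}$-valued $\infty$-metric each component is a singleton, so $EX$ is isometric to $X$. Feeding these into Corollary~\ref{c:up} and Lemma~\ref{l:ivalued} on $EX$ gives clauses (6) and (16) (for (16) one also uses injectivity of $F\mathsf e_X$ to recover the $\infty$-metric property). Clause (10) follows from (9): $d_{EX}$ is always an $\infty$-metric, so Theorem~\ref{t:main}(13) makes $d^p_{FEX}$ an $\infty$-metric under each of (i) $p=1$, (ii) finite degree, (iii) support preservation, and injectivity of $F\mathsf e_X$ transfers this to $\check d^p_{FX}$; clause (11) then combines (6) and (10).

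Finally, the second inequality of (3), namely $d^\infty_{FX}\le n\cdot|Fn|\cdot\check d^\infty_{FX}$, I would obtain by applying Proposition~\ref{p:weakiso} to the isometry $\mathsf e_X$, sharpening the chain it produces by means of the stabilizing property $r[\mathsf e_X[X]]\subseteq\mathsf e_X[X]$ guaranteed by Lemma~\ref{l:2short}; and the equality $\check d^p_{FX}=d^p_{FX}$ for injective $X$ follows because Proposition~\ref{p:pi} then furnishes a non-expanding retraction $EX\to X$ splitting $\mathsf e_X$, which yields the reverse inequality to the already-established $\check d^p_{FX}\le d^p_{FX}$ via Theorem~\ref{t:Lip}. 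Clause (12) is pure naturality: by Proposition~\ref{p:xi-natural}, $F\mathsf e_X\circ\xi^a_X=\xi^a_{EX}\circ\mathsf e_X$, so that $\check d^p_{FX}(\xi^a_X(x),\xi^a_X(y))=d^p_{FEX}(\xi^a_{EX}(\mathsf e_X x),\xi^a_{EX}(\mathsf e_X y))=d_{EX}(\mathsf e_X x,\mathsf e_X y)=d_X(x,y)$ by Theorem~\ref{t:xi} on $EX$, while injectivity of $\xi^a_X$ comes from Lemma~\ref{l:delta}. Thus the only essential new input beyond Theorem~\ref{t:main} is the injectivity of $EX$, concentrated in clause (5).
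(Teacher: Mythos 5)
Your proposal follows essentially the same route as the paper's proof: every clause is pulled back along $F\mathsf e_X$ to the corresponding clause of Theorem~\ref{t:main} applied to $EX$, with Propositions~\ref{p:ELip}, \ref{p:isoext}, \ref{p:pi} and the injectivity of $EX$ doing the transport. Your local variations are legitimate and in places cleaner: in (5) you obtain the splitting $\rho\circ\bar f=\mathrm{id}_{EX}$ directly from the injectivity of $EX$, where the paper invokes Lemma~\ref{l:eE} together with Lemma~\ref{l:id} (equivalent); in (3) you cite Proposition~\ref{p:weakiso} at the isometry $\mathsf e_X$, which yields the constant $2n\cdot|Fn|$ --- exactly the constant the paper's own proof establishes, so no further ``sharpening'' is available (the factor $2$ is already lost in the two-link chain of Lemma~\ref{l:2short}, in the paper's argument as well); and your naturality computation for (12) is a streamlined version of the paper's diagram chase. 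For (6) the paper takes a shortcut you could also use: it deduces the pseudometric property from $\check d^p_{FX}\le d^p_{FX}$ and Corollary~\ref{c:up} applied to $X$ itself, which spares you the (true, but not stated in the paper) fact that $EX$ is a pseudometric space whenever $X$ is.

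The one soft spot is clause (1). Your transport facts justify the injectivity of $F\mathsf e_X$ only when $d_X$ is an $\infty$-metric, but (1) is asserted for an arbitrary nonempty distance space, where $\mathsf e_X$ need not be injective; the degenerate case you set aside is not $|X|\le 1$ but rather the presence of distinct points at distance zero. What is actually needed is that $F\mathsf e_X(a)\ne F\mathsf e_X(b)$, and this is the entire content of the paper's proof of (1): write $a=Fi_{S,X}(a')$ and $b=Fi_{S,X}(b')$ for a singleton $S\supseteq\supp(a)\cup\supp(b)$ and distinct $a',b'\in FS$, observe that $\mathsf e_X\circ i_{S,X}$ is trivially injective, and retract $EX$ (via the constant map onto $ES$) to separate the two images; Theorem~\ref{t:small} on $EX$ then finishes. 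The same ``retract onto the support'' device, via Corollary~\ref{c:supp}, is what makes your support substitution legitimate in (7) and (8), where only the restriction of $d_X$ to $\supp(a)\cup\supp(b)$ need be an $\infty$-metric. With that step supplied, the argument is complete.
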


\begin{proof} 
1. Assume that $a,b$ are two distinct elements of $FX$ with $|\supp(a)\cup\supp(b)|\le 1$. Take any singleton $S\subseteq X$ containing the set $\supp(a)\cup\supp(b)$, and let $r:X\to S$ be the constant map. By Proposition~\ref{p:BMZ},  $a,b\in Fi_{S,X}[S;X]$ and hence $a=Fi_{S,X}(a')$, $b=Fi_{S,X}(b')$ for some distinct elements $a',b'\in FS$. Since the set $ES$ is a singleton (containing the constant function $S\to\{0\}$), the map $\mathsf e_S:S\to ES$  
is  bijective and so is the map $F\mathsf e_X:FS\to FES$. Let $\bar r:EX\to ES$ be the unique (constant) map. The equality $\bar r\circ \mathsf e_X\circ i_{S,X}=\mathsf e_S$ implies $F\bar r\circ F\mathsf e_X\circ Fi_{S,X}=F\mathsf e_S$. Then
$$
F\bar r\circ F\mathsf e_X(a)=F\bar r\circ F\mathsf e_X\circ Fi_{S,X}(a')=F\mathsf e_S(a')\ne
 F\mathsf e_S(b')=F\bar r\circ F\mathsf e_X\circ Fi_{S,X}(b')=F\bar r\circ F\mathsf e_X(b)
$$
 and hence $F\mathsf e_X(a)\ne F\mathsf e_X(b)$.
It follows from $F\mathsf e_X(a),F\mathsf e_X(b)\in F[\mathsf e_X[S];EX]$  that\newline $\supp(F\mathsf e_X(a))\cup\supp(F\mathsf e_X(b))\subseteq \mathsf e_X[S]$. Applying Theorem~\ref{t:small}, we conclude that $$\check d^p_{FX}(a,b)=d^p_{FEX}(F\mathsf e_X(a),F\mathsf e_X(b))=\infty.$$ 
\smallskip

2. The second statement follows from the definition of the distance $\check d_{FX}$ and Theorem~\ref{t:main}(4).
\smallskip

3. The inequality  $\check d^p_{FX}\le d^p_{FX}$ follows from the definition of the distance $\check d_{FX}$ and Theorem~\ref{t:main}(5).
\smallskip

Next, we prove that $d^\infty_{FX}\le 2n\cdot |Fn|\cdot \check d^\infty_{FX}$ where $n=\max\{1,\deg(F)\}$. To derive a contradiction, assume that  $d^\infty_{FX}(a,b)>2n\cdot |Fn|\cdot \check d^\infty_{FX}(a,b)$ for some elements $a,b\in FX$. In this case, $2n\cdot |Fn|<\infty$.  Since $d^\infty_{FEX}(F\mathsf e_X(a),F\mathsf e_X(b))=\check d^\infty_{FX}(a,b)<\frac1{2n\cdot|Fn|}d^\infty_{FX}(a,b)$, we can apply Lemma~\ref{l:2short} and find a function $r:EX\to EX$ such that  $Fr\circ F\mathsf e_X(a)=Fr\circ F\mathsf e_X(b)$, $r[\mathsf e_X[X]]\subseteq \mathsf e_X[X]$,  and $\sup_{y\in EX}d_{EX}(y,r(y))<\frac12 d^\infty(a,b)$.

Let $s:EX\to X$ be any function such that $s(y)\in \mathsf e_X^{-1}(y)$ for every $y\in r[EX]\cap\mathsf e_X[X]$. Consider the function $h=s\circ r\circ \mathsf e_X:X\to X$ and observe that for every $x\in X$ we have 
\begin{multline*}
d_X(x,h(x))=d_{EX}(\mathsf e_X(x),\mathsf e_X\circ s\circ r\circ \mathsf e_X(x))=d_{EX}(\mathsf e_X(x),r\circ \mathsf e_X(x))<\tfrac12d^\infty(a,b).
\end{multline*}
Using Proposition~\ref{p:BMZ}, find a function $f:n\to X$ and an element $a'\in Fn$ such that $a=Ff(a')$. By the definition of the distance $d^\infty_{FX}$, we have $$d^\infty_{FX}(a,Fh(a))=d^\infty_{FX}(Ff(a'),F(h\circ f)(a'))\le d^\infty_{X^n}(f,h\circ f)<\tfrac12 d^\infty_{FX}(a,b).$$ By analogy, we can prove that $d^\infty_{FX}(b,Fh(b))<\frac12d^\infty_{FX}(a,b)$.

Observe that $$Fh(a)=Fs\circ Fr\circ F\mathsf e_X(a)=Fs\circ Fr\circ F\mathsf e_X(b)=Fh(b).$$
Finally,
$$d^\infty_{FX}(a,b)\le d^\infty_{FX}(a,Fh(a))+d^\infty_{FX}(Fh(b),b)<\tfrac12d^\infty_{FX}(a,b)+\tfrac12d^\infty_{FX}(a,b)=d^\infty_{FX}(a,b),$$
which is a contradiction that completes the proof of the inequality $d^\infty_{FX}\le 2n\cdot |Fn|\cdot \check d^\infty_{FX}$.
 \smallskip

 Now assume that the distance space $(X,d_X)$ is injective. By Proposition~\ref{p:pi}, there exists a non-expanding map $r:EX\to X$ such that $r\circ \mathsf e_X$ is the identity map of $X$. By Theorem~\ref{t:main}(5), the maps $Fr:FEX\to FX$ and $F\mathsf e_X:FX\to FEX$ are non-expanding. Since $Fr\circ F\mathsf e_X$ is the identity map of $FX$, the non-expanding map $F\mathsf e_X:FX\to FEX$ is an isometry and hence  $$\check d^p_{FX}(a,b)=d^p_{FEX}(F\mathsf e_X(a),F\mathsf e_X(b))=d^p_{FX}(a,b)$$for any $a,b\in FX$.
\smallskip

4. Let $f:X\to Y$ be a Lipschitz map between distance spaces. By Proposition~\ref{p:ELip}, there exists a Lipschitz map $\bar f:EX\to EY$ such that $\bar f\circ \mathsf e_X=\mathsf e_Y\circ f$ and $\Lip(\bar f)\le\Lip(f)$. By Theorem~\ref{t:main}(5), the map $F^p\bar f=F\bar f:FEX\to FEY$ is Lipschitz with Lipschitz constant $\Lip(F^p\bar f)\le \Lip(\bar f)\le\Lip(f)$. Then for any $a,b\in FX$ we have
\begin{multline*}
\check d^p_{FY}(Ff(a),Ff(b))=d^p_{FEY}(F\mathsf e_Y\circ Ff(a),F\mathsf e_Y\circ Ff(b))=
d^p_{FEY}(F\bar f\circ F\mathsf e_X(a),F\bar f\circ \mathsf e_X(b))\le\\ \Lip(F^p\bar f)\cdot d^p_{FEX}(F\mathsf e_X(a),F\mathsf e_X(b))
\le\Lip(f)\cdot \check d^p_{FX}(a,b),
\end{multline*}
which means that the map $\check F^p f=Ff:\check F^pX\to \check F^pY$ is Lipschitz with Lipschitz constant $\Lip(\check F^p)\le\Lip(f)$.
\smallskip

5. Let $f:X\to Y$ be an isometry between distance spaces. We need to show that the map $\check F^p\!f$ is an isometry. By Proposition~\ref{p:isoext}, there exists an isometry $\bar f:EX\to EY$ such that $\bar f\circ\mathsf e_X=\mathsf e_Y\circ f$ and hence $F\bar f\circ F\mathsf e_X=F\mathsf e_Y\circ Ff$. By Lemma~\ref{l:eE}, there exists a non-expanding map $g:EY\to EX$ such that $g\circ \mathsf e_Y\circ f=\mathsf e_X$. Observe that $g\circ \bar f\circ\mathsf e_X=g\circ \mathsf e_Y\circ f=\mathsf e_X$ and by Lemma~\ref{l:id},  $g\circ\bar f$ is the identity map of $EX$. Then $Fg\circ F\bar f$ is the identity map of $FEX$. By Theorem~\ref{t:main}(5), the maps $F^pg$ and $F^p\bar f$ are non-expanding. Since $F^pg\circ F^p\bar f=\id_{FEX}$, the map $F^p\bar f$ is an isometry. Now take any elements $a,b\in FX$ and observe that
\begin{multline*}
\check d^p_{FX}(a,b)=d^p_{FEX}(F\mathsf e_X(a),F\mathsf e_X(b))=d^p_{FEY}(F\bar f\circ F\mathsf e_X(b),F\bar f\circ F\mathsf e_X(b))=\\
d^p_{FEY}(F\mathsf e_Y\circ Ff(a),F\mathsf e_Y\circ Ff(b))=\check d^p_{FY}(Ff(a),Ff(b)),
\end{multline*}
which means that the map $\check F^pf:\check F^pX\to \check F^pY$ is an isometry.
\smallskip

6. Assume that $F1$ is a singleton and $d_X$ is a pseudometric. By Theorem~\ref{t:up}, the distance $d^p_{FX}$ is a pseudometric. By Theorem~\ref{t:mainU}(3), $\check d^p_{FX}\le d^p_{FX}$ and hence $\check d^p_{FX}$ is a pseudometric.
\smallskip

7. Fix any distinct elements $a,b\in FX$. The inequality $\check d^1_{FX}(a,b)\ge\ud_X(\supp(a)\cup\supp(b))$ is trivial if $\ud_X(\supp(a)\cup \supp(b))=0$. So, we assume that $\ud_X(\supp(a)\cup\supp(b))>0$, which implies that the set $S=\supp(a)\cup\supp(b)$ has cardinality $|S|\ge 2$ and the restriction $d_X{\restriction}_{S\times S}$ is an $\infty$-metric. Taking into account that $\mathsf e_X:X\to EX$ is an isometry, we conclude that the map $\mathsf e_X{\restriction}_S$ is injective and $\ud_{EX}(\mathsf e_X[S])=\ud_X(S)>0$. Since $|\mathsf e_X[X]|\ge|\mathsf e_X[S]|=|S|\ge 2$, we can apply Corollary~\ref{c:supp}, and conclude that $\supp(F\mathsf e_X(a))\subseteq \mathsf e_X[\supp(a)]$ and $\supp(F\mathsf e_X(b))\subseteq \mathsf e_X[\supp(b)]$. Since $\mathsf e_X$ is an isometry,
\begin{multline*}
\ud_{EX}\big(\supp(F\mathsf e_X(a))\cup\supp(F\mathsf e_X(b))\big)\ge\\ \ud_{EX}\big(\mathsf e_X[\supp(a)]\cup \mathsf e_X[\supp(b)]\big)=
\ud_X\big(\supp(a)\cup\supp(b)\big).
\end{multline*}
  
Now Theorem~\ref{t:main}(10) and the definition of the distances $\check d^1_{FX}$ and $\check d^1_{FX}$ imply
\begin{multline*}
\check d^1_{FX}(a,b)=d^1_{FEX}(F\mathsf e_X(a),F\mathsf e_X(b))\ge\\
\ge \ud_{EX}\big(\supp(F\mathsf e_X(a))\cup\supp(F\mathsf e_X(b))\big)\ge\ud_X(\supp(a)\cup\supp(b)).
\end{multline*}

8. Assume that the functor $F$ preserves supports and take any distinct points $a,b\in FX$. By Theorem~\ref{t:main}(11),
\begin{multline*}
\check d^\infty_{FX}(a,b)=d^\infty_{FEX}(F\mathsf e_X(a),F\mathsf e_X(b))\ge d_{HEX}\big(\supp(F\mathsf e_X(a)),\supp(F\mathsf e_X(b))\big)=\\
=d_{HEX}\big(\mathsf e_X[\supp(a)],\mathsf e_X[\supp(b)]\big)=d_{HX}\big(\supp(a),\supp(b)\big).
\end{multline*}
Next, we prove that $\check d^\infty_{FX}(a,b)\ge\frac13\ud_X(S)$ where $S=\supp(a)\cup\supp(b)$. This inequality is trivially true if $\ud_X(S)=0$. So, we assume that $ \ud_X(S)>0$.  In this case the restriction $d_X{\restriction}_{S\times S}$ is an $\infty$-metric, the isometry $\mathsf e_X{\restriction}_S$ is injective, and $\ud_{EX}(\mathsf e_X[S])=\ud_X(S)$. Applying Theorem~\ref{t:main}(11) and taking into account that $F$ preserves supports, we conclude that
$$
\begin{aligned}
\check d^\infty_{FX}(a,b)&=d^\infty_{FEX}(F\mathsf e_X(a),F\mathsf e_X(b))\ge\tfrac13\ud_{EX}\big(\supp(F\mathsf e_X(a))\cup\supp(F\mathsf e_X(b))\big)=\\
&=\tfrac13\ud_{EX}\big(\mathsf e_X[\supp(a)]\cup \mathsf e_X[\supp(b)]\big)=\tfrac13\ud_{EX}(\mathsf e_X[S])=\tfrac13\ud_X(S).
\end{aligned}
$$The inequality $\check d^p_{FX}\ge \check d^\infty_{FX}$ has been proved in Theorem~\ref{t:mainU}(2).
\smallskip

9. Assume that the distances $d_X$ and $d^p_{FEX}$ are $\infty$-metrics. We should show that $\check d^p_{FX}(a,b)>0$ for any distinct elements $a,b\in FX$. Since $d_X$ is an $\infty$-metric, the isometry $\mathsf e_X:X\to EX$ is injective. Then we can choose a function $\gamma:EX\to X$ such that $\gamma\circ \mathsf e_X$ is the identity map of $X$ and conclude that $F\gamma\circ F\mathsf e_X$ is the identity map of $FX$, which implies that the map $F\mathsf e_X$ is injective and hence $\check d^p_{FX}(a,b)=d^p_{FEX}(F\mathsf e_X(a),F\mathsf e_X(b))>0$.
\vskip5pt

10. The statement (10) follows from the statement (9) and Theorem~\ref{t:main}(13). 
\smallskip

11. The statement (11) follows from the statements (6) and (10) of this theorem.
\smallskip

12. Assume that for some $a\in F1$ the map $\xi^a_2:2\to F2$ is injective. By Theorems~\ref{t:alt} and \ref{t:mainU}(3), the map $\xi^a_X:(X,d)\to (FX,\check d^p_{FX})$ is not expanding. 

Now assume that $p=1$ or $F$ preserves supports. In this case
 we will prove that the map $\xi^a_X:X\to \check F^pX$ is an isometry.   By Corollary~\ref{c:Einj}, the distance space $EX$ is injective and by Theorem~\ref{t:mainU}(3), $\check d^p_{FEX}=d^p_{FEX}$.  By Theorem~\ref{t:xi}, the map $\xi^a_{EX}:EX\to F^pEX=\check F^pEX$ is an isometry.
By Theorem~\ref{t:mainU}(4), the map $F\mathsf e_X:\check F^pX\to \check F^pEX=F^pEX$ is non-expanding.  
By Proposition~\ref{p:xi-natural}, the following diagram commutes.
$$
\xymatrix{
X\ar[d]_{\xi^a_X}\ar[r]^{\mathsf e_X}&EX\ar[d]^{\xi^a_{EX}}\\
\check F^pX\ar[r]^{F\mathsf e_X}&\check F^pEX\ar@{=}[r]&F^pEX
}
$$
Now the isometry property of the map $\xi^a_{EX}\circ\mathsf e_X$ and the non-expanding property of $F\mathsf e_X:\check FX\to \check F^pEX=F^pEX$ imply that the non-expanding map $\xi^a_X:X\to \check F^pX$ is an isometry.
\smallskip

13. If $d_X$ is the $\{0,\infty\}$-valued $\infty$-metric on $X$, then the $\infty$-metric of the space $EX$ is $\{0,\infty\}$-valued (by the definition of $EX$). By Theorem~\ref{t:main}(16), the distance $d^p_{FEX}$ is an $\{0,\infty\}$-valued $\infty$-metric. Then the distance $\check d^p_{FX}$ is $\{0,\infty\}$-valued and by Theorem~\ref{t:mainU}(9), $\check d^p_{FX}$ is an $\infty$-metric.  
\end{proof}

\begin{remark} In general $\check d^p_{FX}\ne d^p_{FX}$ even for support-preserving functors of finite degree. Indeed, it can be shown that for the functor $H_2$, the metric space $X\subseteq \mathbb C$ and elements $a,b\in H_2X$ from Example~\ref{ex1} we have $$\check d^p_{H_2X}(a,b)=4+4\sqrt[p]{2}<10+4\sqrt{2}=d^p_{H_2}(a,b).$$
\end{remark}

\begin{remark} If the functor $F$ is finitary and has finite degree, then by Theorem~\ref{t:mainU}(3), the identity map $F^pX\to \check F^pX$ is a by-Lipschitz isomorphism. In this case, Theorem~\ref{t:main-fdeg} (with suitable modifications of the statements (1),(5),(8)) hold also for the functor $\check F^p$.
\end{remark}

\begin{problem}\label{prob:algo2} Is there any efficient algorithm for calculating the distance $\check d^p_{FX}$ (with arbitrary precision)?
\end{problem}

\section{The $\ell^p$-metrization and natural transformations of functors}\label{s:subf}

In this section we investigate the relation between the $\ell^p$-metrizations of  functors $F,G:\Set\to\Set$, connected by a natural transformation $\eta:F\to G$.
The natural transformation $\eta$ assigns to each set $X$ a function $\eta_X:FX\to GX$ such that for any function $f:X\to Y$ between sets the diagram 
$$\xymatrix{
FX\ar^{\eta_X}[r]\ar_{Ff}[d]&GX\ar^{Gf}[d]\\
FY\ar_{\eta_Y}[r]&GY
}
$$
commutes.

If for each $X$ the function $\eta_X:FX\to GX$ is injective (resp. bijective), then we say that $\eta:F\to G$ is an {\em embedding} (resp. an {\em isomorphism}) of the functors $F$ and $G$. 


\begin{proposition}\label{p:natsupp} If $\eta:F\to G$ is an embedding of functors $F,G:\Set\to\Set$  and the functor $G$ has finite supports, then $F$ has finite supports, too.
\end{proposition}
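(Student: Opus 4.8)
The plan is to chase an element through the embedding $\eta$, transfer the finite support of its image in $G$ back along a retraction, and use injectivity of $\eta_X$ to conclude. Fix a set $X$ and an element $a\in FX$; the goal is to produce a finite set $A\subseteq X$ with $a\in F[A;X]$. The case $X=\emptyset$ is trivial, since then $F[\emptyset;X]=Fi_{\emptyset,\emptyset}[F\emptyset]=F\emptyset=FX\ni a$. So I would assume $X\neq\emptyset$ from now on.

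First I would apply the finite-support property of $G$ to the element $\eta_X(a)\in GX$, obtaining a finite set $A\subseteq X$ with $\eta_X(a)\in G[A;X]$. To run the retraction argument smoothly I want $A$ to be nonempty. If the $A$ produced is empty, then $\supp(\eta_X(a))=\emptyset$, and Proposition~\ref{p:BMZ} guarantees that $\eta_X(a)\in G[B;X]$ for \emph{any} nonempty $B\subseteq X$; since $X\neq\emptyset$ I may take $B$ to be a singleton, which is finite and nonempty. Thus in all cases I obtain a nonempty finite $A\subseteq X$ and an element $b\in GA$ with $\eta_X(a)=Gi_{A,X}(b)$.

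Because $A$ is nonempty, I can choose a retraction $r:X\to A$, i.e.\ a function with $r\circ i_{A,X}=\id_A$. The key computation is that the idempotent $i_{A,X}\circ r:X\to X$ fixes $a$ under $F$. To see this, I would use the naturality square for $\eta$ applied to $i_{A,X}\circ r$ together with the factorization of $\eta_X(a)$:
$$
\eta_X\big(F(i_{A,X}\circ r)(a)\big)=G(i_{A,X}\circ r)\big(\eta_X(a)\big)=Gi_{A,X}\circ Gr\circ Gi_{A,X}(b)=Gi_{A,X}\circ G(r\circ i_{A,X})(b)=Gi_{A,X}(b)=\eta_X(a),
$$
where the middle equality uses functoriality of $G$ and the last-but-one uses $r\circ i_{A,X}=\id_A$. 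Since $\eta_X$ is injective (as $\eta$ is an embedding), this yields $F(i_{A,X}\circ r)(a)=a$, hence $a=Fi_{A,X}\big(Fr(a)\big)\in Fi_{A,X}[FA]=F[A;X]$ with $A$ finite, which is exactly the required witness.

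The routine parts are the functoriality manipulations and the existence of the retraction. The only genuinely delicate point is the edge case in which the finite support handed back by $G$ is empty while $X$ is nonempty, since then no retraction $X\to A$ exists; I expect this to be the main obstacle, and it is precisely what Proposition~\ref{p:BMZ} is invoked to circumvent, letting me replace the empty witness by a nonempty singleton before building $r$.
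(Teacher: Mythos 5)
Your proof is correct and follows essentially the same route as the paper's: transfer $\eta_X(a)$ to a non-empty finite witness set $A$, build a retraction $r:X\to A$, and use the naturality square together with the injectivity of $\eta_X$ to conclude $a=Fi_{A,X}(Fr(a))\in F[A;X]$. The only cosmetic differences are that you handle the empty-witness edge case explicitly via Proposition~\ref{p:BMZ} (where simple monotonicity of $A\mapsto F[A;X]$ would also do) and apply naturality to the idempotent $i_{A,X}\circ r$ rather than to $i_{A,X}$ itself; both variants are fine.
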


\begin{proof} Given any set $X$ and element $a\in FX$, we should find a finite set $A\subseteq X$ such that $a\in F[A;X]:=Fi_{A,X}[FA]$. If $X$ is finite, then we put $A=X$. So, we assume that $X$ is infinite and hence not empty. Since the functor $G$ has finite supports, for the element $b:=\eta_X(a)\in \eta_X[FX]\subseteq GX$ there exists a non-empty finite set $A\subseteq X$ such that $b=Gi_{A,X}(b')$ for some $b'\in GA$. Since $A$ is not empty, there exists a function $r:X\to A$ such that $r\circ i_{A,X}$ is the identity map of $A$. Then $Fr\circ Fi_{A,X}$ is the identity map of $FA$ and $Gr\circ Gi_{A,X}$ is the identity map of $GA\subseteq GX$. 
Consider the element $a'=Fr(a)\in FA$ and observe that 
\begin{multline*}
\eta_X\circ Fi_{A,X}(a')=\eta_X\circ Fi_{A,X}\circ Fr(a)=Gi_{A,X}\circ \eta_A\circ Fr(a)=Gi_{A,X}\circ Gr\circ \eta_X(a)=\\
=Gi_{A,X}\circ Gr(b)=Gi_{A,X}\circ Gr\circ Gi_{A,X}(b')=Gi_{A,X}(b')=b=\eta_X(a).
\end{multline*}
Now the injectivity of $\eta_X$ ensures that $a=Fi_{A,X}(a')\in F[A;X]$. 
\end{proof}

\begin{theorem}\label{t:nat} Let $F,G:\Set\to\Set$ be two functors with finite supports and $\eta:F\to G$ be a natural transformation. For any distance space $(X,d_X)$ and any $p\in[1,\infty]$, the map $\eta_X:(FX,d^p_{FX})\to (GX,d^p_{GX})$ is non-expanding and so is the map $\eta_X:(FX,\check d^p_{FX})\to (GX,\check d^p_{GX})$.
\end{theorem}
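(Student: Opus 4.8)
The plan is to deduce the non-expanding property of $\eta_X$ from the maximality characterization of $d^p_{FX}$ in Theorem~\ref{t:alt}, using only the naturality square for $\eta$. First I would record the basic consequence of naturality that drives everything: for any $n\in\IN$, any $a\in Fn$, and any $f\in X^n$ viewed as a function $f:n\to X$, the naturality square for $\eta$ applied to $f$ gives
$$\eta_X\bigl(Ff(a)\bigr)=\eta_X\circ Ff(a)=Gf\circ\eta_n(a)=Gf\bigl(\eta_n(a)\bigr),$$
that is, $\eta_X\circ\xi^a_X=\xi^{\eta_n(a)}_X$ as maps $X^n\to GX$.

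Then I would introduce the pullback distance $\rho$ on $FX$ defined by $\rho(a,b):=d^p_{GX}(\eta_X(a),\eta_X(b))$; this is visibly a distance, since symmetry and the triangle inequality are inherited from $d^p_{GX}$ and $\rho(a,a)=0$. Showing that $\eta_X$ is non-expanding is exactly showing $\rho\le d^p_{FX}$. By Theorem~\ref{t:alt} it suffices to check that for every $n\in\IN$ and $a\in Fn$ the map $\xi^a_X:(X^n,d^p_{X^n})\to(FX,\rho)$ is non-expanding. Using the displayed identity and then the non-expanding property of $\xi^{\eta_n(a)}_X:(X^n,d^p_{X^n})\to(GX,d^p_{GX})$, which holds by Theorem~\ref{t:alt} applied to the functor $G$, I get for all $f,g\in X^n$
$$\rho\bigl(\xi^a_X(f),\xi^a_X(g)\bigr)=d^p_{GX}\bigl(\xi^{\eta_n(a)}_X(f),\xi^{\eta_n(a)}_X(g)\bigr)\le d^p_{X^n}(f,g).$$
Hence $\rho\le d^p_{FX}$, i.e. $d^p_{GX}(\eta_X(a),\eta_X(b))\le d^p_{FX}(a,b)$ for all $a,b\in FX$, proving the first claim. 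Equivalently, one can transport any $(a,b)$-linking chain $\bigl((a_i,f_i,g_i)\bigr)_{i=0}^l$ to the chain $\bigl((\eta_{n_i}(a_i),f_i,g_i)\bigr)_{i=0}^l$, which is $(\eta_X(a),\eta_X(b))$-linking of the \emph{same} cost by the naturality identity, giving the inequality directly from the definition of $d^p_{FX}$.

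For the $\check\ell^p$-version I would reduce to the case already proved, applied to the injective envelope $EX$ in place of $X$. By definition $\check d^p_{GX}(\eta_X(a),\eta_X(b))=d^p_{GEX}\bigl(G\mathsf e_X(\eta_X(a)),G\mathsf e_X(\eta_X(b))\bigr)$, and the naturality square for $\eta$ applied to the isometry $\mathsf e_X:X\to EX$ gives $G\mathsf e_X\circ\eta_X=\eta_{EX}\circ F\mathsf e_X$. Substituting and then applying the first part of the theorem to the distance space $EX$, so that $\eta_{EX}:(FEX,d^p_{FEX})\to(GEX,d^p_{GEX})$ is non-expanding, yields
$$\check d^p_{GX}(\eta_X(a),\eta_X(b))=d^p_{GEX}\bigl(\eta_{EX}(F\mathsf e_X(a)),\eta_{EX}(F\mathsf e_X(b))\bigr)\le d^p_{FEX}\bigl(F\mathsf e_X(a),F\mathsf e_X(b)\bigr)=\check d^p_{FX}(a,b),$$
which is the desired non-expanding property of $\eta_X:(FX,\check d^p_{FX})\to(GX,\check d^p_{GX})$.

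The argument is short and I do not expect a genuine obstacle; the one point requiring care is bookkeeping the level at which $\eta$ is applied. The maps $\xi^a_X$ and the linking chains involve elements $a\in Fn$ living over the finite ordinal $n$, so the correct naturality instance is $\eta_X\circ Ff=Gf\circ\eta_n$ with $\eta_n$ rather than $\eta_X$; once this is pinned down, the cost of a transported chain is literally unchanged because the pairs $(f_i,g_i)\in X^{n_i}\times X^{n_i}$ are untouched, and every chain-linking equality $Fg_i(a_i)=Ff_{i+1}(a_{i+1})$ is carried to $Gg_i(\eta_{n_i}(a_i))=Gf_{i+1}(\eta_{n_{i+1}}(a_{i+1}))$ by the same identity.
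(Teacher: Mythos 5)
Your proposal is correct, and it is essentially the paper's argument: the paper proves the first claim by transporting an $(a,b)$-linking chain $\bigl((a_i,f_i,g_i)\bigr)_{i=0}^l$ to the chain $\bigl((\eta_{n_i}(a_i),f_i,g_i)\bigr)_{i=0}^l$ of the same cost via the naturality identity $\eta_X\circ Ff_i=Gf_i\circ\eta_{n_i}$ (exactly your ``equivalently'' remark), and handles the $\check\ell^p$-version via $G\mathsf e_X\circ\eta_X=\eta_{EX}\circ F\mathsf e_X$ just as you do. Your primary formulation via the pullback distance $\rho$ and the maximality characterization of Theorem~\ref{t:alt} is only a clean repackaging of the same chain argument and is equally valid.
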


\begin{proof} Given two elements $a,b\in FX$, we should prove that 
\begin{equation}\label{eq:nat}
d^p_{GX}(\eta_X(a),\eta_X(b))\le d^p_{FX}(a,b).
\end{equation}
 To derive a contradiction, assume that 
$d^p_{FX}(a,b)<d^p_{GX}(\eta_X(a),\eta_X(b))$ and find an  $(a,b)$-linking chain $w=\big((a_i,f_i,g_i)\big){}_{i=0}^l\in L_{FX}(a,b)$ with $\Sigma d^p_X(w)<d^p_{GX}(\eta_X(a),\eta_X(b))$. For every $i\in\{0,\dots,l\}$ put $n_i:=\dom(f_i)=\dom(g_i)$ and $b_i=\eta_{n_i}(a_i)\in Gn_i$. Observe that 
$\eta_X(a)=\eta_X(Ff_0(a_0))=Gf_0(\eta_{n_0}(a_0))=Gf_0(b_0)$, $\eta_X(b)=\eta_X(Fg_l(a_l))=Gg_l(\eta_{n_l}(a_l)=Gg_l(b_l)$ and for every $i\in\{1,\dots, l\}$ 
$$
Gf_i(b_i)=Gf_i(\eta_{n_i}(a_i))=\eta_X(Ff_i(a_i))=\eta_X(Fg_{i-1}(a_{i-1}))=Gg_{i-1}(\eta_{n_{i-1}}(a_{i-1}))=Gg_{i-1}(b_{i-1}),
$$
which means that $\widetilde w=\big((b_i,f_i,g_i)\big){}_{i=0}^l$ is an $(\eta_X(a),\eta_X(b))$-connecting chain and hence
$$d^p_{GX}(\eta_X(a),\eta_X(b))\le\Sigma d^p_X(\widetilde w)=\Sigma d^p_X(w)<d^p_{GX}(\eta_X(a),\eta_X(b)),$$
which is a desired contradiction, completing the proof of the inequality (\ref{eq:nat}).
\smallskip

Next, observe that 
\begin{multline*}
\check d^p_{GX}(\eta_X(a),\eta_X(b))=d^p_{GEX}(G\mathsf e_X(\eta_X(a)),G\mathsf e_X(\eta_X(b)))=\\
=d^p_{GEX}(\eta_{EX}(F\mathsf e_X(a)),\eta_{EX}(F\mathsf e_X(b))\le d^p_{FEX}(F\mathsf e_X(a),F\mathsf e_X(b))=\check d^p_{FX}(a,b),
\end{multline*}
which means that the map $\eta_X:(FX,\check d^p_{FX})\to (GX,\check d^p_{GX})$ is non-expanding.
\end{proof}

A functor $F:\Set\to\Set$ is defined to be a {\em subfunctor} of a functor $G:\Set\to\Set$ if for every set $X$ we have $FX\subseteq GX$ and for every function $f:X\to Y$ between sets, $Ff=Gf{\restriction}_{FX}$. The latter condition means that the identity inclusion $i_{FX,GX}:FX\to GX$ is a component of a natural transformation $i:F\to G$, which is a functor embedding.

A subfunctor $F$ of a functor $G:\Set\to\Set$ is defined to be {\em $1$-determined} if for any set $X$ and the constant map $\gamma:X\to 1$ we have $FX=(G\gamma)^{-1}[F1]:=\{a\in GX:G\gamma(a)\in F1\subseteq G1\}$.

\begin{theorem}\label{t:subf} Let $G:\Set\to\Set$ be a functor with finite supports and $F:\Set\to\Set$ be a subfunctor of $G$. For any distance space $(X,d_X)$, $p\in[1,\infty]$ and any $a,b\in FX\subseteq GX$ the following statements hold.
\begin{enumerate}
\item[\textup{1)}] $d^p_{GX}(a,b)\le d^p_{FX}(a,b)$ and $\check d^p_{GX}(a,b)\le \check d^p_{FX}(a,b)$.
\item[\textup{2)}] If $F$ is $1$-determined, then  $d^p_{GX}(a,b)=d^p_{FX}(a,b)$ and $\check d^p_{GX}(a,b)=\check d^p_{FX}(a,b)$.
\end{enumerate}
\end{theorem}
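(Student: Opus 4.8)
The plan is to deduce statement (1) directly from the naturality machinery of the previous section and to reduce statement (2) to a combinatorial identity between sets of linking chains. First I would note that, since $F$ is a subfunctor of $G$, the identity inclusions $i_{FX,GX}:FX\to GX$ are the components of a functor embedding $i:F\to G$; by Proposition~\ref{p:natsupp} the functor $F$ inherits finite supports from $G$, so that the distances $d^p_{FX}$ and $\check d^p_{FX}$ are defined. Applying Theorem~\ref{t:nat} to the natural transformation $i$ shows that the maps $i_{FX,GX}:(FX,d^p_{FX})\to(GX,d^p_{GX})$ and $i_{FX,GX}:(FX,\check d^p_{FX})\to(GX,\check d^p_{GX})$ are non-expanding; as $i_{FX,GX}$ is merely the inclusion, this is exactly the pair of inequalities asserted in (1).

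For statement (2) it remains, in view of (1), to establish the reverse inequalities, and the heart of the matter is the claim that for a $1$-determined subfunctor $F$ and any $a,b\in FX$ one has the equality of chain sets $L_{GX}(a,b)=L_{FX}(a,b)$; the infima defining $d^p_{GX}(a,b)$ and $d^p_{FX}(a,b)$ then coincide. The inclusion $L_{FX}(a,b)\subseteq L_{GX}(a,b)$ is immediate from $Fn\subseteq Gn$ and $Ff=Gf{\restriction}_{Fn}$. For the converse I would take an arbitrary chain $\big((a_i,f_i,g_i)\big)_{i=0}^l\in L_{GX}(a,b)$, write $n_i:=\dom(f_i)=\dom(g_i)$, and prove that every $a_i$ already lies in $Fn_i$. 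The device is to collapse the chain through the constant map $\gamma:X\to 1$: since $\gamma\circ f_i$ and $\gamma\circ g_i$ are both the constant map $\gamma_{n_i}:n_i\to 1$, functoriality yields $G\gamma(Gf_i(a_i))=G\gamma_{n_i}(a_i)=G\gamma(Gg_i(a_i))$, so both elements produced by the $i$-th triple have the same image $G\gamma_{n_i}(a_i)$ under $G\gamma$. Because consecutive triples share an element, $Gg_i(a_i)=Gf_{i+1}(a_{i+1})$, all of these images coincide, and since the endpoint $a=Gf_0(a_0)$ satisfies $G\gamma(a)\in F1$ by $FX=(G\gamma)^{-1}[F1]$, we get $G\gamma_{n_i}(a_i)\in F1$ for every $i$. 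The $1$-determinedness of $F$ at the set $n_i$, namely $Fn_i=(G\gamma_{n_i})^{-1}[F1]$, then forces $a_i\in Fn_i$, so the chain lies in $L_{FX}(a,b)$, as desired.

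Finally, the identity $\check d^p_{FX}(a,b)=\check d^p_{GX}(a,b)$ follows by applying the $d^p$-equality just proved to the injective envelope $EX$ in place of $X$. Using $F\mathsf e_X=G\mathsf e_X{\restriction}_{FX}$ (once more because $F$ is a subfunctor), we have $F\mathsf e_X(a)=G\mathsf e_X(a)\in FEX$ and likewise for $b$, whence
$$\check d^p_{FX}(a,b)=d^p_{FEX}(F\mathsf e_X(a),F\mathsf e_X(b))=d^p_{GEX}(F\mathsf e_X(a),F\mathsf e_X(b))=\check d^p_{GX}(a,b),$$
the middle equality being statement (2) for the distance space $EX$ and the elements $F\mathsf e_X(a),F\mathsf e_X(b)\in FEX$. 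The main obstacle is the combinatorial identity $L_{GX}(a,b)=L_{FX}(a,b)$ of the second paragraph; once the bookkeeping with the constant map is done carefully — including the degenerate case $n_i=0$, where $f_i=g_i$ and the triple contributes nothing to $\Sigma d^p_X$ — the rest is formal.
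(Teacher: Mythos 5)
Your proposal is correct and follows essentially the same route as the paper: part (1) via Proposition~\ref{p:natsupp} and Theorem~\ref{t:nat} applied to the inclusion transformation, and part (2) by pushing an arbitrary $(a,b)$-linking chain through the constant maps to $1$ and invoking $1$-determinedness to conclude $a_i\in Fn_i$ for every $i$, then transferring to $EX$ for the $\check d^p$ case. The only cosmetic difference is that you state the conclusion as the set identity $L_{GX}(a,b)=L_{FX}(a,b)$ and argue directly, whereas the paper runs the same computation inside a proof by contradiction.
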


\begin{proof} 1. The first statement follows from Proposition~\ref{p:natsupp} and Theorem~\ref{t:nat}.
\smallskip

2. Now assuming that the subfunctor $F$  of $G$ is 1-determined, we will prove that $d^p_{GX}(a,b)=d^p_{FX}(a,b)$. By the preceding statement, it suffices to check that $d^p_{GX}(a,b)\ge d^p_{FX}(a,b)$. To derive a contradiction, assume that   $d^p_{GX}(a,b)<d^p_{FX}(a,b)$ and find  an $(a,b)$-linking chain $\big((a_i,f_i,g_i)\big){}_{i=0}^l\in L_{GX}(a,b)$ such that 
$$\sum_{i=0}^ld^p_{X^{\!<\!\w}}(f_i,g_i)<d^p_{FX}(a,b).$$
The definition of $L_{GX}(a,b)$ ensures that  
 $a=Gf_0(a_0)$, $b=Gg_l(a_l)$ and $Gg_{i-1}(a_{i-1})=Gf_i(a_i)$ for all $i\in\{1,\dots,l\}$. For every $i\in\{0,\dots,l\}$ let $n_i:=\dom(f_i)=\dom(g_i)\in\w$.

Let $\gamma:X\to 1$ and $\gamma_i:n_i\to 1$ for $i\in\{0,\dots,l\}$ be the constant maps. Taking into account that $F$ is a 1-determined subfunctor of $G$, we conclude that the equality 
$G\gamma_0(a_0)=G\gamma\circ Gf_0(a_0)=G\gamma(a)=F\gamma(a)\in F1$ implies that $a_0\in (G\gamma_0)^{-1}[F1]=Fn_0$. By induction on $i\in\{0,\dots,l\}$ we will prove that $a_i\in Fn_i$. For $i=0$ this has already been done. Assume that for some $i\in\{1,\dots,l\}$ we have proved that $a_{i-1}\in Fn_{i-1}$. Then
$$
G\gamma_i(a_i)=G\gamma\circ Gf_i(a_i)=G\gamma\circ Gg_{i-1}(a_{i-1})=G\gamma_{i-1}(a_{i-1})
=F\gamma_{i-1}(a_{i-1})\in F\gamma_{i-1}[Fn_{i-1}]\subseteq F1
$$
and hence $a_i\in (G\gamma_i)^{-1}[F1]=Fn_i$. 

After completing the inductive construction, we see that $$\big((a_i,f_i,g_i)\big){}_{i=0}^l\in L_{FX}(a,b)\subseteq L_{GX}(a,b)$$ and thus
$$d^p_{FX}(a,b)\le \sum_{i=0}^ld^p_{X^{\!<\!\w}}(f_i,g_i)<d^p_{FX}(a,b),$$
which is a desired contradiction showing that $d^p_{GX}(a,b)=d^p_{FX}(a,b)$.

Then 
$$\check d_{GX}(a,b)=d^p_{GEX}(G\mathsf e_X(a),G\mathsf e_X(b))=d^p_{FEX}(F\mathsf e_X(a),F\mathsf e_X(b))=\check d^p_{FX}(a,b).$$
\end{proof}

\section{The metrizations of the functor of $n$-th power}

Let $n\in\w$ and $F:\Set\to\Set$ be the functor assigning to each set $X$ its $n$-th power $X^n$ and to each map $f:X\to Y$ the map $Ff:FX\to FY$, $Ff:\alpha\mapsto f\circ\alpha$. It is clear that $F$ is a functor of finite degree $\deg(F)=n$, $F$ preserves supports and singletons.

\begin{theorem} For every distance $d_X$ on a set $X$ we have the inequalities
$$d^\infty_{X^n}\le \check d^p_{FX}\le d^p_{FX}\le d^p_{X^n}\le n^{\frac1p}\cdot d^\infty_{X^n}.$$
In particular, $\check d^\infty_{FX}=d^\infty_{FX}=d^\infty_{X^n}$.
If $d_X[X\times X]\ne\{0\}$, $p<\infty$ and $n\ge 2$, then $d^p_{FX}\ne d^p_{X^n}$. \end{theorem}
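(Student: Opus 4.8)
The plan is to use that the inequality $d^p_{FX}\le d^p_{X^n}$ is already granted by the first part of the theorem, so to separate the two distances it suffices to produce a single pair on which the functor distance is strictly smaller. The mechanism is the diagonal. For the $n$-th power functor the set $F1=1^n$ is a singleton, say $F1=\{c\}$, where $c\colon n\to 1$ is the unique (constant) map; the induced map $\xi^c_X\colon X\cong X^1\to FX$ sends a point $x$ to the constant tuple $(x,\dots,x)$, since $Ff(c)=f\circ c$ is the constant-$x$ function for $f(0)=x$. By Theorem~\ref{t:alt} (equivalently Theorem~\ref{t:main}(1)) this map is non-expanding from $(X,d_X)$ to $(FX,d^p_{FX})$, so a single move in $X$ transports the whole diagonal at once, whereas the product distance $d^p_{X^n}$ charges each of the $n$ coordinates separately.

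Carrying this out: first choose $x,y\in X$ with $0<d_X(x,y)=r<\infty$, and set $\alpha=(x,\dots,x)$ and $\beta=(y,\dots,y)$ in $X^n=FX$; note $\alpha\ne\beta$ because $r>0$. On one hand $\alpha=\xi^c_X(x)$ and $\beta=\xi^c_X(y)$, so the non-expanding property of $\xi^c_X$ together with $d^p_{X^1}(x,y)=d_X(x,y)$ gives $d^p_{FX}(\alpha,\beta)\le r$. On the other hand all $n$ coordinates of $\alpha$ and $\beta$ differ by exactly $r$, so $d^p_{X^n}(\alpha,\beta)=\big(\sum_{i\in n}r^p\big)^{1/p}=n^{1/p}\,r$. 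Since $p<\infty$ and $n\ge 2$ we have $n^{1/p}\ge 2^{1/p}>1$, whence $d^p_{FX}(\alpha,\beta)\le r<n^{1/p}r=d^p_{X^n}(\alpha,\beta)$, and therefore $d^p_{FX}\ne d^p_{X^n}$.

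The computation itself is routine; the one point genuinely requiring care is the choice of a pair $x,y$ at \emph{finite} positive distance. The hypothesis $d_X[X\times X]\ne\{0\}$ secures a pair at positive distance but not automatically at finite distance, and this is where the argument must be read carefully. Indeed, if every nonzero value of $d_X$ equals $\infty$ (for instance the $\{0,\infty\}$-valued $\infty$-metric), then by Lemma~\ref{l:ivalued} the distance $d^p_{FX}$ is itself a $\{0,\infty\}$-valued $\infty$-metric, and so is $d^p_{X^n}$, so the two necessarily coincide and the diagonal pair yields only $\infty=\infty$. Thus the statement is to be understood with $d_X$ assuming some value in $(0,\infty)$ — equivalently, excluding the purely $\{0,\infty\}$-valued case in which both distances are forced to agree — and granting this I would record the existence of such a pair explicitly at the start of the proof, after which the two displayed estimates close the argument.
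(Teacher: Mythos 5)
Your proof is correct and follows essentially the same route as the paper's: the paper likewise precomposes with the constant map $a\colon n\to\{0\}\subseteq n$ to land on the diagonal, compares $d^p_{FX}(f\circ a,g\circ a)\le d^p_{X^n}(f,g)=d_X(f(0),g(0))$ with $d^p_{X^n}(f\circ a,g\circ a)=n^{1/p}\cdot d_X(f(0),g(0))$, and concludes. Your remark that the hypothesis $d_X[X\times X]\ne\{0\}$ must be read as supplying a \emph{finite} positive value is a genuine (if minor) point: the paper's own argument ends with the strict inequality $d_X(f(0),g(0))<n^{1/p}\cdot d_X(f(0),g(0))$, which fails when that value is $\infty$ (and indeed in the $\{0,\infty\}$-valued case the two distances coincide), so the paper tacitly needs the same extra assumption you make explicit.
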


\begin{proof}
By Theorem~\ref{t:altm}, the pseudometric $d^p_{FX}$ coincides with the largest pseudometric on $FX$ such that for every $a\in Fn=n^n$ the map $\xi^a_X:X^n\to FX$, $\xi^a_X:f\mapsto f\circ a$, is non-expanding with respect to the $\ell^p$-metric $d^p_{X^n}$ on $X^n$. The identity map $a:n\to n$ induces the identity non-expanding map $\xi^a_X:X^n\to FX$, witnessing that $d^p_{FX}\le d^p_{X^n}$. The inequality $\check d^p_{FX}\le d^p_{FX}$ follows from Theorem~\ref{t:mainU}(3).
The inequality $d^p_{X^n}\le n^{\frac1p}\cdot d^\infty_{X^n}$ follows from Lemma~\ref{l:ineq}.

To see that $d^\infty_{X^n}\le d^p_{FX}$, observe that for every $a\in Fn=n^n$ and functions $f,g\in Fn$, we get the inequalities
$$d^\infty_{X^n}(\xi^a_X(f),\xi^a_X(g))=d^\infty_{X^n}(f\circ a,g\circ a)\le d^\infty_{X^n}(f,g)\le d^p_{X^n}(f,g),$$witnessing that the map $\xi^a_X:(X^n,d^p_{X^n})\to (FX,d^\infty_{X^n})$ is non-expanding.
By the maximality of $d^p_{FX}$, we have $d^\infty_{X^n}\le d^p_{FX}$.  Therefore, we obtain the chain of inequalities
$$d^\infty_{X^n}\le d^p_{FX}\le d^p_{X^n}\le n^{\frac1p}{\cdot}d^\infty_{X^n}$$which transforms into a chain of equalities if $p=\infty$.

Now we prove that $d^\infty_{X^n}(a,b)\le\check d^p_{FX}(a,b)$ for any $a,b\in FX=X^n$. Indeed, $$
\begin{aligned}
\check d^p_{FX}(a,b)&=d^p_{FEX}(F\mathsf e_X(a),F\mathsf e_X(b))\ge 
d^\infty_{FEX}(F\mathsf e_X(a),F\mathsf e_X(b))=d^\infty_{(EX)^n}(\mathsf e_X{\circ} a,\mathsf e_X{\circ} b)=\\
&=\max_{i\in n}d_{EX}(\mathsf e_X\circ a(i),\mathsf e_X\circ b(i))=\max_{i\in n}d_X(a(i),b(i))=d^\infty_{X^n}(a,b).
\end{aligned}
$$
\smallskip

Now assume that $d_X[X\times X]\ne\{0\}$, $p<\infty$ and $n\ge 2$. Since $d_X[X\times X]\ne\{0\}$, there exist functions $f,g\in X^n$ such that $$d_X(f(0),g(0))>0=d_X(f(1),g(1))=\dots=d_X(f(n{-}1),g(n{-}1)).$$
Let $a:n\to\{0\}\subseteq n$ be the constant map. Since the map $\xi^a_X:(X^n,d^p_{X^n})\to (FX,d^p_{FX})$, $\xi^a_X:\varphi\mapsto \varphi\circ a$, is non-expanding,
$$
d^p_{FX}(f\circ a,g\circ a)\le d^p_{X^n}(f,g)=d_X(f(0),g(0))<n^{\frac1p}\cdot d_X(f(0),g(0))=d^p_{X^n}(f\circ a,g\circ a),
$$ which implies that $d^p_{FX}\ne d^p_{X^n}$.
\end{proof}

For the functor $F$ of $n$-th power we can repeat  Problems~\ref{prob:algo} and \ref{prob:algo2}. 

\begin{problem} Is there any efficient algorithm for calculating the distances $d^p_{FX}$ and $\check d^p_{FX}$ for the functor $F$ of the $n$-th power?
\end{problem}

\section{Metrizations of the functor of  $M$-valued finitary functions}

Let $(M,+)$ be a commutative monoid whose neutral element is denoted by $0$. By $M_0$ we denote the subgroup of invertible elements of $M$.

 For every set $X$ consider the set $F(X,M)$ of all functions $\varphi:X\to M$ with finite support $\supp(\varphi):=\{x\in X:\varphi(x)\ne 0\}$. The set $F(X,M)$ carries an obvious structure of a commutative monoid, whose neutral element is the constant function $o:X\to\{0\}\subseteq M$ with empty support $\supp(o)=\emptyset$. 

For every $x\in X$ and $g\in M$ let $\delta_x^g:X\to M$ be the function with $\supp(\delta^g_x)\subseteq\{x\}$ and $\delta_x^g(x)=g$. It is easy to see that each function $\varphi\in F(X,M)$ can be written as the sum $$\varphi=\sum_{i=1}^k\delta_{x_i}^{g_i}$$ for some points $x_1,\dots,x_k\in X$ and elements $g_1,\dots,g_k\in M$.

For a subset $A\subseteq X$ and a function $\varphi\in F(X,M)$ by $\sum_{x\in A}\varphi(x)$ we understand the finite sum $\sum_{x\in A\cap\supp(\varphi)}\varphi(x)$ in the commutative monoid $M$.

The subset $F_0(X,M)=\{\varphi\in F(X,M):\sum_{x\in X}\varphi(x)=0\}$ is a subgroup in $F(X,M)$ that coincides with the subgroup $F_0(X,M_0)$.
\smallskip

If $M$ is a group, then the commutative monoids $F_0(X,M)\subseteq F(X,M)$ are groups. Moreover, if $M$ is the group $\IZ$ of integers, then $F(X,M)$ can be identified with the free Abelian group over $X$. If $M$ is a two-element group, then $F(X,M)$ is a free Boolean group over $X$.
\smallskip

For any function $f:X\to Y$ between sets let $Ff:F(X,M)\to F(Y,M)$ be the function assigning to each function $\varphi\in F(X,M)$ the (finitely supported) function $F\varphi:Y\to M$, $F\varphi:y\mapsto\sum_{x\in f^{-1}(y)}\varphi(x)$. It follows that $Ff(F_0(X,M))\subseteq F_0(Y,M)$.

This construction defines a functor $F:\Set\to\Set$ assigning to each set $X$ the set $FX:=F(X,M)$ and to each function $f:X\to Y$ between sets the function $Ff:F(X,M)\to F(Y.M)$. 
It can be shown that $Ff$ is a homomorphism between the commutative monoids $FX$ and $FY$, so actually $F$ is  a functor from the category $\Set$ to the category $\Mon$ of commutative monoids and their homomorphisms. The functor $F$ is called {\em the functor of $M$-valued finitary functions}.

Moreover, $F_0$ is a 1-determined subfunctor of the functor $F$ (for the definition of 1-determined subfunctors, see Section~\ref{s:subf}).

Now let us consider the  $\ell^p$- and $\check\ell^p$-metrizations of the functor $F$. Fix $p\in[1,\infty]$ and a distance space $(X,d_X)$. The distance $d_X$ induces the distances $d^p_{FX}$ and $\check d^p_{FX}$ on the set $FX$. These distances have all the properties described in Theorems~\ref{t:main} and \ref{t:mainU}. 
Moreover, these distances have nice algebraic properties.

\begin{proposition}\label{p:subinv} The distance $d^p_{FX}$ is subinvariant in the sense that $d^p_{FX}(a+c,b+c)\le d^p_{FX}(a,b)$ for any elements $a,b,c\in FX$ of the commutative monoid $FX$.
\end{proposition}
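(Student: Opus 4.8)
The plan is to show that every $(a,b)$-linking chain can be augmented by a fixed ``parallel copy'' of $c$ to produce an $(a+c,b+c)$-linking chain of exactly the same cost; then the infimum defining $d^p_{FX}(a+c,b+c)$ is taken over a set containing all the quantities $\Sigma d^p_X(s)$ with $s\in L_{FX}(a,b)$, which immediately gives the desired inequality.

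First I would fix a finite representation of $c$: since $F$ has finite supports, choose $m\in\w$, a map $h:m\to X$ and $c_0\in Fm$ with $c=Fh(c_0)$. Given an arbitrary $(a,b)$-linking chain $s=\big((a_i,f_i,g_i)\big){}_{i=0}^l\in L_{FX}(a,b)$, with $n_i:=\dom(f_i)=\dom(g_i)$, I form for each $i$ the ordinal sum $n_i+m$ with inclusions $\iota_1:n_i\to n_i+m$ and $\iota_2:m\to n_i+m$, and set $\tilde a_i:=F\iota_1(a_i)+F\iota_2(c_0)\in F(n_i+m)$ together with the maps $\tilde f_i:n_i+m\to X$ equal to $f_i$ on $n_i$ and to $h$ on the copy of $m$, and likewise $\tilde g_i$ equal to $g_i$ on $n_i$ and to $h$ on the copy of $m$. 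Here the explicit formula $Ff(\varphi)(y)=\sum_{x\in f^{-1}(y)}\varphi(x)$ makes the pointwise decomposition $\tilde a_i=F\iota_1(a_i)+F\iota_2(c_0)$ immediate, so this bookkeeping step is routine.

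The key identity, and the only place where the algebraic structure enters, is that $F\tilde f_i(\tilde a_i)=Ff_i(a_i)+c$ and $F\tilde g_i(\tilde a_i)=Fg_i(a_i)+c$. This follows because each $F\tilde f_i$ is a homomorphism of the commutative monoid structure (as noted for this functor), whence
$$F\tilde f_i(\tilde a_i)=F(\tilde f_i\circ\iota_1)(a_i)+F(\tilde f_i\circ\iota_2)(c_0)=Ff_i(a_i)+Fh(c_0)=Ff_i(a_i)+c,$$
using $\tilde f_i\circ\iota_1=f_i$ and $\tilde f_i\circ\iota_2=h$; the computation for $\tilde g_i$ is identical. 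Substituting these into the defining conditions of $s$ (and using $Fg_i(a_i)=Ff_{i+1}(a_{i+1})$) shows at once that $\tilde s:=\big((\tilde a_i,\tilde f_i,\tilde g_i)\big){}_{i=0}^l$ satisfies $F\tilde f_0(\tilde a_0)=a+c$, $F\tilde g_l(\tilde a_l)=b+c$, and $F\tilde g_i(\tilde a_i)=F\tilde f_{i+1}(\tilde a_{i+1})$, i.e. that $\tilde s$ is an $(a+c,b+c)$-linking chain.

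Finally I would check that the cost is unchanged. Since $\tilde f_i$ and $\tilde g_i$ agree on the copy of $m$, every coordinate in that copy contributes $d_X(h(j),h(j))=0$, so $d^p_{X^{\!<\!\w}}(\tilde f_i,\tilde g_i)=d^p_{X^{\!n_i}}(f_i,g_i)=d^p_{X^{\!<\!\w}}(f_i,g_i)$ for every $i$; this holds verbatim for $p=\infty$, the extra zero coordinates not affecting the maximum. Hence $\Sigma d^p_X(\tilde s)=\Sigma d^p_X(s)$, and therefore $d^p_{FX}(a+c,b+c)\le\Sigma d^p_X(s)$ for every $s\in L_{FX}(a,b)$. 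Taking the infimum over all such $s$ yields $d^p_{FX}(a+c,b+c)\le d^p_{FX}(a,b)$, the inequality being trivial when the right-hand side equals $\infty$. I do not expect a serious obstacle here: the only genuinely substantive point is the homomorphism identity above, which rests on the fact that each $Ff$ respects the monoidal addition, and everything else is the disjoint-union bookkeeping for the chains.
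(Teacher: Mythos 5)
Your proposal is correct and follows essentially the same route as the paper: augment each triple of an $(a,b)$-linking chain by extra coordinates carrying a fixed representation of $c$, mapped identically by both $\tilde f_i$ and $\tilde g_i$, so that the resulting chain links $a+c$ to $b+c$ at the same cost. The only differences are cosmetic (you argue directly via the infimum and use an arbitrary finite representation $c=Fh(c_0)$, whereas the paper argues by contradiction and indexes the new coordinates by a bijection onto $\supp(c)$), and your explicit appeal to the additivity of $Ff$ cleanly justifies the step the paper leaves as ``it can be shown.''
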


\begin{proof} To derive a contradiction, assume that $d^p_{FX}(a,b)<d^p_{FX}(a{+}c,b{+}c)$ and find an $(a,b)$-linking chain $w=\big((a_i,f_i,g_i)\big){}_{i=0}^l\in L_{FX}(a,b)$ such that $$\Sigma d^p_X(w)<d^p_{FX}(a{+}c,b{+}c).$$ Let $m=|\supp(c)|\in\w$. 

For every $i\in\{0,\dots,l\}$ let $n_i=\dom(f_i)=\dom(g_i)$ and $m_i:=n_i+m$. Let\break $\gamma_i:m_i\setminus n_i\to\supp(c)$ be any bijection. Let $\tilde a_i:m_i\to M$ be the function such that $\tilde a_i(x)=a_i(x)$ for $x\in n_i$ and $\tilde a_i(x)=c(\gamma_i(x))$ for $x\in m_i\setminus n_i$. Consider the functions $\tilde f_i,\tilde g_i:m_i\to X$ defined by $\tilde f_i{\restriction}_{n_i}=f_i$, $\tilde g_i{\restriction}_{n_i}=g_i$ and $\tilde f_i{\restriction}_{m_i\setminus n_i}=\gamma=\tilde g_i{\restriction}_{m_i\setminus n_i}$. 
It can be shown that $\big((\tilde a_i,\tilde f_i,\tilde g_i)\big){}_{i=0}^l\in L_{FX}(a{+}c,b{+}c)$ and 
$$d^p_{FX}(a{+}c,b{+}c)\le\sum_{i=0}^ld^p_{X^{\!<\!\w}}(\tilde f_i,\tilde g_i)=\sum_{i=0}^ld^p_{X^{\!<\!\w}}(f_i,g_i)<d^p_{FX}(a{+}c,b{+}c),$$
which is a desired contradiction, completing the proof.
\end{proof}

\begin{corollary}\label{c:subinv} The distance $\check d^p_{FX}$ is subinvariant in the sense that $$\check d^p_{FX}(a+c,b+c)\le \check d^p_{FX}(a,b)$$ for any elements $a,b,c\in FX$ of the commutative monoid $FX$.
\end{corollary}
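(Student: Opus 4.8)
The plan is to reduce the statement for $\check d^p_{FX}$ directly to the already-established subinvariance of $d^p_{FX}$ (Proposition~\ref{p:subinv}), using the definition $\check d^p_{FX}(a,b)=d^p_{FEX}(F\mathsf e_X(a),F\mathsf e_X(b))$. The only algebraic input needed is that the map $F\mathsf e_X\colon FX\to FEX$ respects the monoid operation, which holds because $F$ takes values in the category $\Mon$ of commutative monoids: every $Ff$ is a monoid homomorphism, and in particular $F\mathsf e_X$ is one.

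First I would fix arbitrary elements $a,b,c\in FX$ and record that, since $F\mathsf e_X$ is a homomorphism of commutative monoids,
$$
F\mathsf e_X(a+c)=F\mathsf e_X(a)+F\mathsf e_X(c)\quad\text{and}\quad F\mathsf e_X(b+c)=F\mathsf e_X(b)+F\mathsf e_X(c).
$$
Next I would apply Proposition~\ref{p:subinv} to the distance space $EX$ in place of $X$ (the proposition holds for the distance $d^p_{FY}$ on $FY$ for every distance space $Y$, and $EX$ is such a space by Lemma~\ref{l:embE} and Section~\ref{s:E(X)}). This gives, with $c':=F\mathsf e_X(c)$,
$$
d^p_{FEX}\big(F\mathsf e_X(a)+c',\,F\mathsf e_X(b)+c'\big)\le d^p_{FEX}\big(F\mathsf e_X(a),\,F\mathsf e_X(b)\big).
$$

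Finally I would splice these together through the definition of $\check d^p_{FX}$:
$$
\check d^p_{FX}(a+c,b+c)=d^p_{FEX}\big(F\mathsf e_X(a+c),F\mathsf e_X(b+c)\big)=d^p_{FEX}\big(F\mathsf e_X(a)+c',F\mathsf e_X(b)+c'\big)\le d^p_{FEX}\big(F\mathsf e_X(a),F\mathsf e_X(b)\big)=\check d^p_{FX}(a,b),
$$
which is exactly the claimed inequality. I do not anticipate a genuine obstacle here: the whole content is the observation that $F\mathsf e_X$ is additive together with the transfer of Proposition~\ref{p:subinv} from $X$ to $EX$. The one point worth stating explicitly (rather than glossing over) is that the functor $F$ of $M$-valued finitary functions lands in $\Mon$, so that $F\mathsf e_X$ is indeed a homomorphism; everything else is formal substitution.
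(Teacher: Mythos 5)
Your proof is correct and follows exactly the same route as the paper's: unwind the definition $\check d^p_{FX}(a,b)=d^p_{FEX}(F\mathsf e_X(a),F\mathsf e_X(b))$, use that $F\mathsf e_X$ is a monoid homomorphism so that $F\mathsf e_X(a+c)=F\mathsf e_X(a)+F\mathsf e_X(c)$, and apply Proposition~\ref{p:subinv} to the distance space $EX$. Your explicit remark that the functor lands in $\Mon$ is a welcome justification that the paper leaves implicit.
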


\begin{proof} By Proposition~\ref{p:subinv},
\begin{multline*}
\check d^p_{FX}(a+c,b+c)=d^p_{FEX}(F\mathsf e_X(a+c),F\mathsf e_X(b+c))=\\
=d^p_{FEX}(F\mathsf e_X(a)+F\mathsf e_X(c),F\mathsf e_X(b)+F\mathsf e_X(c))\le d^p_{FEX}(F\mathsf e_X(a),F\mathsf e_X(b))=\check d^p_{FX}(a,b).
\end{multline*}
\end{proof}

Since $F_0$ is a 1-determined subfunctor of $F$, we can apply Theorem~\ref{t:subf}, Proposition~\ref{p:subinv} and Corollary~\ref{c:subinv} to deduce the following corollary.

\begin{corollary}\label{c:inv} $d^p_{F_0X}=d^p_{FX}{\restriction}_{F_0X{\times}F_0X}$ and $\check d^p_{F_0X}=\check d^p_{FX}{\restriction}_{F_0X{\times}F_0X}$. Moreover, the distances $d^p_{F_0X}$ and $\check d^p_{F_0X}$ are invariant in the sense that $$ d^p_{FX}(a+c,b+c)=d^p_{FX}(a,b)\mbox{ \ and \ }\check d^p_{FX}(a+c,b+c)= \check d^p_{FX}(a,b)$$ for any elements $a,b,c\in F_0X$ of the commutative group $F_0X$.
\end{corollary}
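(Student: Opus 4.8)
The plan is to deduce this corollary from three ingredients already in place: the $1$-determinedness of the subfunctor $F_0$ (recorded just above in this section), Theorem~\ref{t:subf}(2), and the subinvariance statements Proposition~\ref{p:subinv} and Corollary~\ref{c:subinv}. The restriction equalities are essentially immediate, and the only real content is upgrading subinvariance to genuine invariance, which is where the group structure of $F_0X$ enters.

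First I would dispose of the restriction equalities. Since $F_0$ is a $1$-determined subfunctor of the functor $F$ of $M$-valued finitary functions, Theorem~\ref{t:subf}(2) applies with its $G$ taken to be our $F$ and its subfunctor taken to be $F_0$. This yields at once that $d^p_{FX}(a,b)=d^p_{F_0X}(a,b)$ and $\check d^p_{FX}(a,b)=\check d^p_{F_0X}(a,b)$ for all $a,b\in F_0X$, which is precisely the asserted identities $d^p_{F_0X}=d^p_{FX}{\restriction}_{F_0X\times F_0X}$ and $\check d^p_{F_0X}=\check d^p_{FX}{\restriction}_{F_0X\times F_0X}$.

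It then remains to prove invariance on the group $F_0X$. The key structural fact, noted earlier, is that $F_0X=F_0(X,M)$ is a subgroup of $F(X,M)$, so every $c\in F_0X$ has an additive inverse $-c\in F_0X\subseteq FX$. Fix $a,b,c\in F_0X$. Proposition~\ref{p:subinv} gives $d^p_{FX}(a+c,b+c)\le d^p_{FX}(a,b)$. Applying the same proposition to the pair $(a+c,b+c)$ together with the element $-c\in FX$, and using $(a+c)+(-c)=a$ and $(b+c)+(-c)=b$, I obtain the reverse inequality $d^p_{FX}(a,b)\le d^p_{FX}(a+c,b+c)$. Combining the two gives $d^p_{FX}(a+c,b+c)=d^p_{FX}(a,b)$. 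The identical two-step argument, with Corollary~\ref{c:subinv} in place of Proposition~\ref{p:subinv}, yields $\check d^p_{FX}(a+c,b+c)=\check d^p_{FX}(a,b)$.

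The calculation is entirely routine; the one point deserving attention is that subinvariance is only a one-sided inequality, so the reverse inequality must be obtained by invoking subinvariance a second time with the inverse $-c$. This step is available precisely because $c$ is invertible, that is, because $c\in F_0X$ rather than merely in $FX$—which explains why invariance can be asserted only on $F_0X$ while $FX$ enjoys mere subinvariance.
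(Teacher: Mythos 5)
Your proposal is correct and is exactly the deduction the paper intends: the paper gives no separate proof of this corollary but states that it follows from Theorem~\ref{t:subf} (via the $1$-determinedness of $F_0$), Proposition~\ref{p:subinv} and Corollary~\ref{c:subinv}, and you assemble these three ingredients in the expected way. In particular, your key observation — that subinvariance applied a second time with the inverse $-c\in F_0X$ upgrades the one-sided inequality to the invariance equality — is precisely why the hypothesis $c\in F_0X$ (rather than $c\in FX$) is needed.
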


Let $\sim$ be the equivalence relation on $X$ defined by $x\sim y$ iff $d_X(x,y)<\infty$. For each point $x\in X$ its equivalence class coincides with the ball $O(x;\infty)=\{y\in X:d_X(x,y)<\infty\}$. Let $X/_\sim=\{O(x,\infty):x\in X\}$ be the quotient set of all such $\infty$-balls.

In the group $F_0X$ consider the subgroup $F_{00}X$ consisting of finitely supported functions $\varphi:X\to M_0$ such that $\sum_{x\in E}\varphi(x)=0$ for any equivalence class $E\in X/_\sim$. 

Now we are going to show that the distance $\check d^p_{FX}$ is trivial on the subgroup $F_{00}X$ whenever $p>1$ and the distance space $X$ is {\em Lipschitz-geodesic} in the sense that for any points $x,y\in X$ with $d_X(x,y)<\infty$ there exists a Lipschitz map $\gamma:[0,1]\to X$ such that $\gamma(0)=x$ and $\gamma(1)=y$. Here $[0,1]\subseteq \IR$ is the closed unit interval on the real line with the standard Euclidean distance. It is easy to see that each injective metric space is Lipschitz-geodesic.

\begin{theorem}\label{t:d-zero} If $p>1$ and $(X,d_X)$ is a Lipschitz-geodesic distance space, then\newline $d^p_{FX}(a,b)=0$ for any elements $a,b\in F_{00}X\subseteq FX$.
\end{theorem}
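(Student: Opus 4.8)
The plan is to reduce the statement to a single ``dipole'' configuration and then exploit that, for $p>1$, the $\ell^p$-cost of performing $N$ independent small displacements of size $O(1/N)$ is $O(N^{1/p-1})\to 0$.

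First I would reduce to the basepoint case. Since $F_{00}X$ is a subgroup of $F_0X$ and, by Corollary~\ref{c:inv}, $d^p_{FX}$ is invariant on $F_0X$, for $a,b\in F_{00}X$ we have $d^p_{FX}(a,b)=d^p_{FX}(a-b,o)$ with $a-b\in F_{00}X$; hence it suffices to show $d^p_{FX}(c,o)=0$ for every $c\in F_{00}X$. Writing $c=\sum_i\delta_{x_i}^{c(x_i)}$ over $\supp(c)$ and grouping the summands by the pseudometric components $E\in X/_\sim$ they meet, I would use the defining property of $F_{00}X$, namely $\sum_{x\in E}c(x)=0$ for each $E$, to split $c$ within each component: fixing a basepoint $x_E\in\supp(c)\cap E$, one gets $c=\sum_E\sum_{x}(\delta_x^{c(x)}-\delta_{x_E}^{c(x)})$, a finite sum of dipoles $\delta_x^g+\delta_{x_E}^{-g}$ with $g\in M_0$ and $x,x_E$ in a common component (so $d_X(x,x_E)<\infty$). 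Each dipole lies in $F_0X$, so the triangle inequality together with the invariance of Corollary~\ref{c:inv} gives $d^p_{FX}(c,o)\le\sum d^p_{FX}(\delta_x^g+\delta_{x_E}^{-g},o)$, reducing everything to a single dipole.

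Next comes the core construction. Fix $x,y$ with $d_X(x,y)<\infty$ and $g\in M_0$; I must show $d^p_{FX}(\delta_x^g+\delta_y^{-g},o)=0$. Using that $X$ is Lipschitz-geodesic, choose an $L$-Lipschitz path $\gamma:[0,1]\to X$ with $\gamma(0)=x$, $\gamma(1)=y$, and set $z_k=\gamma(k/N)$ for $k=0,\dots,N$. In $F(2N,M)$ take $a=\sum_{k=0}^{N-1}(\delta_{2k}^g+\delta_{2k+1}^{-g})$ and the maps $f,\psi:2N\to X$ with $f(2k)=z_k$, $f(2k+1)=z_{k+1}$ and $\psi(2k)=\psi(2k+1)=z_{k+1}$. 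Since $Ff$ is a homomorphism of monoids, $Ff(a)=\sum_{k}(\delta_{z_k}^g+\delta_{z_{k+1}}^{-g})$ telescopes to $\delta_x^g+\delta_y^{-g}$, while $F\psi(a)=\sum_k(\delta_{z_{k+1}}^g+\delta_{z_{k+1}}^{-g})=o$. Thus the single triple $(a,f,\psi)$ is a $(\delta_x^g+\delta_y^{-g},\,o)$-linking chain, so
\[
d^p_{FX}(\delta_x^g+\delta_y^{-g},o)\le d^p_{X^{2N}}(f,\psi)=\Big(\sum_{k=0}^{N-1}d_X(z_k,z_{k+1})^p\Big)^{1/p}\le L\,N^{\frac1p-1}
\]
(the odd coordinates contribute $0$, and $d_X(z_k,z_{k+1})\le L/N$); for $p=\infty$ the bound reads $L/N$. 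Letting $N\to\infty$ and using $p>1$ forces $d^p_{FX}(\delta_x^g+\delta_y^{-g},o)=0$, which completes the argument.

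The main obstacle is the design of this single linking move: the gain comes entirely from replacing one macroscopic transport step by a long chain of $N$ microscopic ones carried out simultaneously inside one power $X^{2N}$, so that the cost is measured by a single $\ell^p$-norm rather than by a sum of norms. One must verify that the ``interior'' mass cancels by telescoping (this is where $g\in M_0$ and the homomorphism property of $Ff$ enter) and that only the short geodesic steps, and no long jumps, appear among the coordinatewise displacements of $f$ and $\psi$. The hypothesis $p>1$ is indispensable: for $p=1$ the same chain yields only the bound $L$, consistent with $d^1_{FX}$ being a genuine $\infty$-metric by Theorem~\ref{t:d1}.
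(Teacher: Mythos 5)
Your proposal is correct and follows essentially the same route as the paper: reduce to dipoles $\delta_x^g-\delta_y^g$ within a pseudometric component via the invariance of the distance and the zero-sum condition defining $F_{00}X$, then annihilate each dipole with a single linking triple of length $O(N)$ along a Lipschitz geodesic, where the $\ell^p$-cost $\Lip(\gamma)\cdot N^{\frac1p-1}$ tends to $0$ precisely because $p>1$. The only cosmetic differences are your choice of a basepoint $x_E$ per component instead of the paper's retraction $r$, and a slightly different indexing of the telescoping/cancelling pair of maps.
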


\begin{proof} Observe that the invariant distance $d^p_{FX}{\restriction}_{F_{0}X{\times}F_{0}X}$ on $F_0X$  is uniquely determined by the norm $\|\cdot\|:F_0X\to[0,\infty]$ assigning to each function $\varphi\in F_0X$ the distance $d^p_{FX}(\varphi,o)$ from $\varphi$ to the zero function $o:X\to\{0\}\subseteq M$, which is the neutral element of the group $F_0X$.

The equality $d^p_{FX}(a,b)=0$ for any $a,b\in F_{00}X$ will follow as soon as we prove that $\|\varphi\|=0$ for any $\varphi\in F_{00}X$. 

Choose any function $r:X\to X$ such that for each $\infty$-ball $E\in X/_\sim$ its image $r[E]$ is a singleton in $E$.
For every $E\in X/_\sim$ we have $\sum_{x\in E}\varphi(x)=0$ (by the definition of the group $F_{00}M$). This implies that $\varphi=\sum_{x\in \supp(\varphi)}(\delta_x^{\varphi(x)}-\delta_{r(x)}^{\varphi(x)})$. The triangle inequality will imply that $\|\varphi\|=0$ as soon as we prove that $\|\delta_x^g-\delta^g_{r(x)}\|=0$ for every $x\in X$ and $g\in M_0$.

Since $d_X(x,r(x))<\infty$, we can apply the Lipschitz-geodesic property of $(X,d_X)$ and find a Lipschitz map $\gamma:[0,1]\to X$ such that $\gamma(0)=x$ and $\gamma(1)=r(x)$. Now given any $\e>0$, we will prove that $\|\delta^g_x-\delta^g_{r(x)}\|<\e$. 

Find an even number $n$ so large that $\Lip(\gamma)\cdot (n-1)^{\frac{1-p}p}<\e$ (such number $n$ exists as $p>1$). Next, consider the function $a_0:n\to \{-g,g\}\subseteq M_0\subseteq M$ such that $a_0(i)=(-1)^i\cdot g$ for $i\in n$.  Now consider the function $f_0:n\to X$ such that  $f_0(i)=f_0(i+1)=\gamma(\frac{i}{n-1})$ for every even number $i\in n$. Observe that $Ff_0(a_0)=o$. Next, let $g_0:n\to X$ be the function such that $g_0(0)=x$, $g_0(n-1)=r(x)$ and  $g_0(i-1)=g_0(i)=\gamma(\frac{i-1}{n-1})$ for every non-zero even number $i\in n$. It is easy to see that $Fg_0(a_0)=\delta_x^g-\delta_{r(x)}^g$. Then $\big((a_0,f_0,g_0)\big)$ is a $(o,\delta_x^g-\delta_{r(x)}^g)$-linking chain and hence
$$
\begin{aligned}
\|\delta_x^g-\delta_{r(x)}^g\|&=d^p_{FX}(o,\delta_x^g-\delta^g_{r(x)})\le d^p_{X^{\!<\!\w}}(f_0,g_0)=\Big(\sum_{i\in n}d_X(f_0(i),g_0(i))^p\Big)^{\frac1p}\le\\
&\le \Big(0+(n-2)(\Lip(\gamma)\tfrac{1}{n-1})^p+0\Big)^{\frac1p}<\Lip(\gamma){\cdot}(n-1)^{\frac{1-p}p}<\e,
\end{aligned}
$$witnessing that $\|\delta_x^g-\delta_{r(x)}^g\|=0$.
\end{proof}

\begin{corollary}\label{c:0infty} 
If $p>1$ and the distance space $(X,d_X)$ is Lipschitz-geodesic, then for any $a,b\in F_{0}X$  
$$
d^p_{FX}(a,b)=\begin{cases}
0&\mbox{if $a-b\in F_{00}X$};\\
\infty&\mbox{otherwise}.
\end{cases}
$$
\end{corollary}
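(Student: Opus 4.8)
The plan is to split the asserted formula into its two cases and reduce each to results already in hand: the invariance of $d^p_{FX}$ on the group $F_0X$ (Corollary~\ref{c:inv}), the finiteness criterion of Theorem~\ref{t:up}, and the vanishing result of Theorem~\ref{t:d-zero}. Throughout, $q:X\to Y:=X/_{\!\sim}$ denotes the quotient map for the relation $x\sim y\Leftrightarrow d_X(x,y)<\infty$, and $o$ is the neutral element of $F_0X$. The whole argument rests on recognizing $F_{00}X$ as the kernel of the homomorphism $Fq$ restricted to $F_0X$.

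First I would record how $Fq$ acts. For $\varphi\in F_0X$ and $y\in Y$ the monoid homomorphism $Fq$ satisfies $Fq(\varphi)(y)=\sum_{x\in q^{-1}(y)}\varphi(x)=\sum_{x\in E_y}\varphi(x)$, where $E_y=q^{-1}(y)$ is the $\sim$-class corresponding to $y$. Comparing this with the definition of $F_{00}X$ — those $\varphi\in F_0X$ (automatically $M_0$-valued, since $F_0X=F_0(X,M_0)$) with $\sum_{x\in E}\varphi(x)=0$ for every class $E\in X/_{\!\sim}$ — yields the equivalence $\varphi\in F_{00}X\iff Fq(\varphi)=o$. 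Since $Fq$ carries $F_0X$ into $F_0Y$ and restricts there to a group homomorphism, this reads $Fq(a)=Fq(b)\iff a-b\in F_{00}X$ for any $a,b\in F_0X$.

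With this equivalence, Theorem~\ref{t:up} gives immediately that $d^p_{FX}(a,b)<\infty$ if and only if $Fq(a)=Fq(b)$, that is, if and only if $a-b\in F_{00}X$; hence $d^p_{FX}(a,b)=\infty$ whenever $a-b\notin F_{00}X$, settling the second case. For the first case, when $a-b\in F_{00}X$, I would use invariance (Corollary~\ref{c:inv}) with $c=-b$ to write $d^p_{FX}(a,b)=d^p_{FX}(a-b,o)$, and then apply Theorem~\ref{t:d-zero} to the pair $a-b,\,o\in F_{00}X$ (here $o\in F_{00}X$ trivially) to conclude $d^p_{FX}(a-b,o)=0$. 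This invocation is exactly where the hypotheses $p>1$ and Lipschitz-geodesicity are consumed, and they are precisely the standing assumptions of the corollary.

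I do not anticipate a real obstacle, since the statement is an assembly of three prior results; the only point requiring genuine care is the identification in the second paragraph. One must check class by class that the fiber-sum description of $Fq$ coincides with the defining equation of $F_{00}X$, confirm that the $M_0$-valuedness demanded in the definition of $F_{00}X$ is automatic because every element of $F_0X=F_0(X,M_0)$ already takes values in the unit group $M_0$, and verify that $Fq$ restricts to a bona fide group homomorphism $F_0X\to F_0Y$ so that $Fq(a)=Fq(b)$ is equivalent to $Fq(a-b)=o$. All three verifications are routine and follow from the construction of the functor $F$ of $M$-valued finitary functions.
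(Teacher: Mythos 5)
Your proof is correct and follows essentially the same route as the paper: the zero case via the invariance of $d^p_{FX}$ on $F_0X$ (Corollary~\ref{c:inv}) combined with Theorem~\ref{t:d-zero}, and the infinite case via the identification of $F_{00}X$ with the kernel of $Fq{\restriction}_{F_0X}$ together with the finiteness criterion of Theorem~\ref{t:up}. The paper only spells out the one implication $a-b\notin F_{00}X\Rightarrow Fq(a)\ne Fq(b)$ rather than the full equivalence, but that is an immaterial difference.
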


\begin{proof} Let $q:X\to Y$, $q:x\mapsto O(x,\infty)$ be the quotient map to the quotient space $Y=X/_\sim=\{O(x,\infty):x\in X\}$ of all $\infty$-balls in the distance space $(X,d_X)$. 

Take any elements $a,b$ of the group $F_0X$. If $a-b\in F_{00}X$, then by Corollary~\ref{c:inv} and Theorem~\ref{t:d-zero},
$$d^p_{FX}(a,b)=d^p_{FX}(a-b,o)=0.$$

If $a-b\notin F_{00}X$, then $\sum_{x\in E}(a(x)-b(x))\ne0$ for some $\infty$-ball $E\in X/\sim$, which implies that $\sum_{x\in E}a(x)\ne\sum_{x\in E}b(x)$ and hence $Fq(a)\ne Fq(b)$. By Theorem~\ref{t:up}, $d^p_{FX}(a,b)=\infty$.
\end{proof}

\begin{theorem}\label{t:Gp} If $p>1$, then for any $a,b\in F_{0}X$ we have  
$$
\check d^p_{FX}(a,b)=\begin{cases}
0&\mbox{if $a-b\in F_{00}X$};\\
\infty&\mbox{otherwise}.
\end{cases}
$$
\end{theorem}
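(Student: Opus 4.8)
The plan is to reduce the statement to Corollary~\ref{c:0infty} applied to the injective envelope $EX$ of $X$. By the definition of $\check d^p_{FX}$ we have $\check d^p_{FX}(a,b)=d^p_{FEX}(F\mathsf e_X(a),F\mathsf e_X(b))$, so everything takes place inside the distance space $EX$. By Corollary~\ref{c:Einj} the $\infty$-metric space $EX$ is injective, and an injective distance space is Lipschitz-geodesic: for $z_0,z_1\in EX$ with $d_{EX}(z_0,z_1)=D<\infty$ one extends the isometric map of the two-point space $\{0,D\}\subseteq\IR$ sending $0\mapsto z_0$ and $D\mapsto z_1$ to a non-expanding map $[0,D]\to EX$, whose reparametrization is the required Lipschitz path. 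Hence Corollary~\ref{c:0infty} applies to $EX$ and gives, once we know $F\mathsf e_X(a),F\mathsf e_X(b)\in F_0(EX)$, the dichotomy $d^p_{FEX}(F\mathsf e_X(a),F\mathsf e_X(b))=0$ if $F\mathsf e_X(a)-F\mathsf e_X(b)\in F_{00}(EX)$ and $=\infty$ otherwise. Here I would first record that $F\mathsf e_X$ is a homomorphism of commutative monoids carrying $F_0X$ into $F_0(EX)$: summing $(F\mathsf e_X(a))(z)=\sum_{x\in\mathsf e_X^{-1}(z)}a(x)$ over all $z\in EX$ returns $\sum_{x\in X}a(x)=0$, and therefore $F\mathsf e_X(a)-F\mathsf e_X(b)=F\mathsf e_X(a-b)$.

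It then remains to translate membership in $F_{00}$ across $\mathsf e_X$, which is the key step:
$$\varphi\in F_{00}X\iff F\mathsf e_X(\varphi)\in F_{00}(EX)\qquad(\varphi\in F_0X).$$
To prove this I would use that $\mathsf e_X$ is an isometry (Lemma~\ref{l:embE}), so $\mathsf e_X(x)\sim\mathsf e_X(y)$ in $EX$ if and only if $x\sim y$ in $X$; consequently the preimage $\mathsf e_X^{-1}(E')$ of any $\infty$-ball $E'$ of $EX$ is either empty or a single $\infty$-ball $E$ of $X$. Writing $\psi=F\mathsf e_X(\varphi)$ and using that the fibers of $\mathsf e_X$ partition $X$, one computes for every $\infty$-ball $E'$ of $EX$ that $\sum_{z\in E'}\psi(z)=\sum_{x\in\mathsf e_X^{-1}(E')}\varphi(x)$, which equals $\sum_{x\in E}\varphi(x)$ when $\mathsf e_X^{-1}(E')=E$ and equals $0$ when $\mathsf e_X^{-1}(E')=\emptyset$. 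Since every $\infty$-ball $E$ of $X$ is realized as $\mathsf e_X^{-1}(E')$ for the $\infty$-ball $E'\ni\mathsf e_X(x_0)$ of any $x_0\in E$, the vanishing of all class-sums of $\psi$ is equivalent to the vanishing of all class-sums of $\varphi$. As $\varphi\in F_0X$ already takes values in the unit group $M_0$ (and hence so does $\psi$), this establishes the displayed equivalence. Combining it with the dichotomy of the first paragraph and the identity $F\mathsf e_X(a-b)=F\mathsf e_X(a)-F\mathsf e_X(b)$ completes the argument.

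The main obstacle I anticipate is the verification that $EX$ is Lipschitz-geodesic. Although the paper notes that injective metric spaces are Lipschitz-geodesic, here $EX$ is only an $\infty$-metric space, so I must check that the geodesic-extension argument localizes to a single pseudometric component, equivalently that the two-point isometry at finite distance extends along $\{0,D\}\hookrightarrow[0,D]$ into $EX$; this is where the injectivity supplied by Corollary~\ref{c:Einj} is used. The remaining bookkeeping — that $F\mathsf e_X$ is a monoid homomorphism preserving total sums and that $\infty$-balls pull back along the isometry $\mathsf e_X$ to $\infty$-balls — is routine, and the crucial gain over Corollary~\ref{c:0infty} is that no Lipschitz-geodesic hypothesis on $X$ itself is needed, precisely because it is automatic for the injective envelope $EX$.
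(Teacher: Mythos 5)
Your proposal is correct and follows essentially the same route as the paper's own proof: invoke Corollary~\ref{c:Einj} to see that $EX$ is injective and hence Lipschitz-geodesic, apply Corollary~\ref{c:0infty} to $EX$, and translate the condition $a-b\in F_{00}X$ across the isometry $\mathsf e_X$. The details you supply (that $F\mathsf e_X$ is a homomorphism preserving the class-sums, that $\infty$-balls pull back along $\mathsf e_X$, and the geodesic-extension argument within a pseudometric component of the injective $\infty$-metric space $EX$) are exactly the steps the paper leaves implicit.
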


\begin{proof}  By Corollay~\ref{c:Einj}, the distance space $EX$ is injective and hence Lipschitz-geodesic. Applying Corollary~\ref{c:0infty} to the Lipschitz-geodesic distance space $EX$, we conclude that
$$
\check d^p_{FX}(a,b)=d^p_{FEX}(F\mathsf e_X(a),F\mathsf e_X(b))=
\begin{cases}
0&\mbox{iff $F\mathsf e_X(a){-}F\mathsf e_X(b)\in F_{00}EX$ iff $a-b\in F_{00}X$};\\
\infty&\mbox{iff $F\mathsf e_X(a){-}F\mathsf e_X(b)\notin F_{00}EX$ iff $a-b\notin F_{00}X$}.
\end{cases}
$$
\end{proof}

Theorem~\ref{t:Gp} and Corollary~\ref{c:0infty} yield simple formulas for calculating the distances $\check d^p_{F_0X}$ and $d^p_{F_0X}$ if $p>1$. Now we present a simple formula for calculating the distance $d^1_{F_0X}$.

For an element $\varphi\in FX$ let $R_{X}(\varphi)$ be the set of all sequences  $$s=\big((g_0,x_0,y_0),\dots,(g_l,x_l,y_l)\big)\in (M_0\times X\times X)^{<\w}$$such that
$$\varphi=\sum_{i=0}^l(\delta^{g_i}_{x_i}-\delta^{g_i}_{y_i}).$$ 
Sequences $s$ that belong to the set $R_X(\varphi)$ will be called {\em $\varphi$-representing sequences}. For each sequence $s=\big((g_0,x_0,y_0),\dots,(g_l,x_l,y_l)\big)\in R_X(\varphi)$ put $$\Sigma d^1_X(s):=\sum_{i=0}^ld_X(x_i,y_i).$$

In the family $R_{X}(\varphi)$ consider two subfamilies: $$R^\subset_X(\varphi):=\big\{\big((g_i,x_i,y_i)\big){}_{i=0}^l\in R_X(\varphi):\bigcup_{i=0}^l\{x_i,y_i\}\subseteq\supp(\varphi)\big\}$$
and
$$R^\cap_X(\varphi):=\big\{\big((g_i,x_i,y_i)\big){}_{i=0}^l\in R_X(\varphi):\{x_i,y_i\}\cap \{x_j,y_j\}=\emptyset\mbox{ for any distinct $i,j$}\big\}.$$
It is easy to see that $$R^\cap_X(\varphi)\subseteq R^\subset_X(\varphi)\subseteq R_X(\varphi)$$
and the families $R^\subset_X(\varphi)\subseteq R_X(\varphi)$ are not empty if and only if $\varphi\in F_0X$ (the latter fact can be proved by induction on $|\supp(\varphi)|$). 

It will be convenient to assume that the zero function $o\in F_0X$ is represented by the empty sequence $(\;)\in R^\cap_X(o)\subseteq R^\subset_X(o)\subseteq R_X(o)$.

For any $\varphi\in F_0X$ consider the (finite or infinite) numbers
$$
\begin{aligned}
&\|\varphi\|_{FX}:=\inf(\{\Sigma d^1_X(s):s\in R_X(\varphi)\}\cup\{\infty\}),\\
&\|\varphi\|^\subset_{FX}:=\inf(\{\Sigma d^1_X(s):s\in R_X^\subset(\varphi)\}\cup\{\infty\}),\\
&\|\varphi\|^\cap_{FX}:=\inf(\{\Sigma d^1_X(s):s\in R_X^\cap(\varphi)\}\cup\{\infty\}).
\end{aligned}
$$
The inclusions $R^\cap_X(\varphi)\subseteq R^\subset_X(\varphi)\subseteq R_X(\varphi)$ imply the inequalities
$$\|\varphi\|\le\|\varphi\|^\subset\le\|\varphi\|^\cap.$$ 

Also for any $a,b\in FX$, in the family $L_{FX}(a,b)$ of $(a,b)$-connecting chains, consider the subfamily
$L^\subset_{FX}(a,b)$ consisting of $(a,b)$-connecting chains $s=\big((a_i,f_i,g_i)\big){}_{i=0}^k$ such that $\bigcup_{i=0}^lf_i[\dom(f_i)]\cup g_i[\dom(g_i)])\subseteq\supp(a)\cup \supp(b)$. For such chain $s$ we put
$$\Sigma d^1_X(s)=\sum_{i=0}^ld^1_{X^{\!<\!\w}}(f_i,g_i)=\sum_{i=0}^l\sum_{x\in \dom(f_i)}d_X(f_i(x),g_i(x)).$$
For every $a,b\in FX$ consider the number
$$d^\subset_{FX}(a,b)=\inf(\{\Sigma d^1_X(s):s\in L^\subset_{FX}(a,b)\}\cup\{\infty\})$$
and observe that $d^1_{FX}(a,b)\le d^\subset_{FX}(a,b)$ for any $a,b\in FX$.

\begin{theorem}\label{t:Gra} For any elements $a,b\in F_0X$ 
\begin{enumerate}
\item[\textup{1)}] 
$d^1_{FX}(a,b)=\|a-b\|_{FX}\le d_{FX}^\subset(a,b)\le\|a-b\|_{FX}^{\subset}$;
\item [\textup{2)}] if $|M_0|=2$, then 
$$\check d^1_{FX}(a,b)=d^1_{FX}(a,b)=d^\subset_{FX}(a,b)=\|a-b\|_{FX}=\|a-b\|_{FX}^\subset=\|a-b\|_{FX}^\cap.$$
\end{enumerate}
\end{theorem}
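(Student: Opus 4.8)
The plan is to identify the linking-chain distance $d^1_{FX}$ with the purely combinatorial quantity $\|a-b\|_{FX}$, and then to exploit the special monoid structure when $|M_0|=2$ to collapse all the refinements. Throughout, $a,b\in F_0X$, and I use the invariance and subinvariance from Corollary~\ref{c:inv} and the bound $\check d^1_{FX}\le d^1_{FX}$ from Theorem~\ref{t:mainU}(3).

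First I would record the two upper bounds $d^1_{FX}(a,b)\le\|a-b\|_{FX}$ and $d^\subset_{FX}(a,b)\le\|a-b\|^\subset_{FX}$, both by turning a representing sequence into a linking chain term by term. Given $a-b=\sum_{i=0}^l(\delta^{g_i}_{x_i}-\delta^{g_i}_{y_i})$, set $c_0=a$ and $c_{j+1}=c_j-\delta^{g_j}_{x_j}+\delta^{g_j}_{y_j}$, so that $c_{l+1}=b$. Each single step is realized by one triple of cost exactly $d_X(x_j,y_j)$: split the value $c_j(x_j)=g_j+(c_j(x_j)-g_j)$ over two coordinates (legitimate since $g_j\in M_0$), keep the $(c_j(x_j)-g_j)$-coordinate fixed over $x_j$, and move only the $g_j$-coordinate from $x_j$ to $y_j$. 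Concatenating yields an $(a,b)$-linking chain of total cost $\sum_j d_X(x_j,y_j)$, so $d^1_{FX}(a,b)\le\|a-b\|_{FX}$. Since every point used lies in $\supp(c_j)\cup\{y_j\}\subseteq\supp(a)\cup\supp(b)$ whenever the representing sequence is in $R^\subset_X(a-b)$, the same construction lands in $L^\subset_{FX}(a,b)$ and gives $d^\subset_{FX}(a,b)\le\|a-b\|^\subset_{FX}$. Combined with the trivial $d^1_{FX}\le d^\subset_{FX}$ (from $L^\subset\subseteq L$), this produces the whole displayed chain of inequalities once the equality $d^1_{FX}=\|\cdot\|$ is in hand.

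The heart of part (1) is the reverse inequality $\|a-b\|_{FX}\le d^1_{FX}(a,b)$, i.e.\ converting an arbitrary linking chain back into a \emph{legal} representing sequence. Take any $(a,b)$-linking chain $((a_i,f_i,g_i))_{i=0}^l$. Telescoping $Ff_0(a_0)=a$, $Fg_l(a_l)=b$, $Fg_i(a_i)=Ff_{i+1}(a_{i+1})$ gives $a-b=\sum_{i=0}^l\sum_{k\in n_i}(\delta^{a_i(k)}_{f_i(k)}-\delta^{a_i(k)}_{g_i(k)})$, whose cost is exactly $\Sigma d^1_X$ of the chain. The only obstruction to membership in $R_X(a-b)$ is that the coefficients $a_i(k)$ must be invertible, and this is precisely where the monoid enters: the total-sum map $\sigma\colon F(\,\cdot\,,M)\to M$, $\sigma(\varphi)=\sum_x\varphi(x)$, satisfies $\sigma(Ff(\varphi))=\sigma(\varphi)$, hence is constant along the chain; as $\sigma(a)=0$ we get $\sum_k a_i(k)=\sigma(a_i)=0$ for every $i$. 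In a commutative monoid a finite family summing to $0$ consists entirely of invertible elements, so $a_i(k)\in M_0$ for all $i,k$. After discarding the zero-coefficient terms (which only lowers the cost), the telescoped expression is a genuine representing sequence, giving $\|a-b\|_{FX}\le d^1_{FX}(a,b)$ and hence equality. I expect this coefficient-invertibility observation to be the main—and genuinely necessary—point of the argument.

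For part (2), when $|M_0|=2$ the nonzero invertible $g$ satisfies $-g=g$, so each elementary move is the unordered ``edge'' $\delta^g_x+\delta^g_y$ and $F_0X$ is the free Boolean group: $\|\varphi\|_{FX}$ is the minimum total $d_X$-weight of a family of pairs whose $\IZ_2$-sum is $\varphi$, i.e.\ a minimum-weight $T$-join with $T=\supp(\varphi)$. I would prove $\|\varphi\|^\cap_{FX}\le\|\varphi\|_{FX}$ by metric uncrossing: regard a representing sequence as an edge multiset with odd-degree vertex set $T$, decompose it into paths pairing the vertices of $T$ (plus cycles, discarded at no extra cost), and replace each $s$--$t$ path by the single edge $\{s,t\}$; the triangle inequality bounds $d_X(s,t)$ by the path length, the $\IZ_2$-boundary is preserved, and the outcome is a disjoint family of pairs inside $T\subseteq\supp(\varphi)$, i.e.\ an element of $R^\cap_X(\varphi)$. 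Thus $\|\varphi\|^\cap\le\|\varphi\|\le\|\varphi\|^\subset\le\|\varphi\|^\cap$ collapse to equality, and with part (1) the quantity $d^\subset_{FX}(a,b)$ is squeezed to the same value. Finally, for $\check d^1_{FX}(a,b)=d^1_{FX}(a,b)$ I would combine $\check d^1_{FX}\le d^1_{FX}$ with $\check d^1_{FX}(a,b)=d^1_{FEX}(F\mathsf e_X(a),F\mathsf e_X(b))$; applying parts (1)--(2) inside $EX$ identifies the right-hand side with a minimum-weight perfect matching on $\supp(F\mathsf e_X(a-b))\subseteq \mathsf e_X[\supp(a-b)]$, and since $\mathsf e_X$ is an isometry this matching number equals the one computed in $X$, namely $\|a-b\|^\cap_{FX}=d^1_{FX}(a,b)$. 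The remaining delicate bookkeeping is to check that the possible collapse of support under the (not necessarily support-preserving) map $F\mathsf e_X$ merges only points at $d_X$-distance $0$, so the matching weight is unchanged.
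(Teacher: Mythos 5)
Your argument is correct, and in its second half it takes a genuinely different route from the paper's. Part (1) is essentially the paper's proof: both directions translate representing sequences into linking chains and back by telescoping. The differences are that you realize a representing sequence as $l+1$ one\mbox{-}move links rather than the paper's single link of length $2l+2$ (your version has the small advantage of producing an honest $(a,b)$-chain rather than an $(o,a-b)$-chain plus invariance), and that you justify explicitly why the coefficients $a_i(k)$ of a linking chain are invertible --- via preservation of the total-sum map and the fact that a finite family summing to $0$ in a commutative monoid consists of units. The paper gets this implicitly from the earlier observations that $F_0(X,M)=F_0(X,M_0)$ and that $F_0$ is a $1$-determined subfunctor; your version is self-contained and, as you suspect, this really is the point that makes the telescoped sequence land in $R_X(a-b)$.

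In part (2) you diverge more substantially. For $\|\varphi\|_{FX}=\|\varphi\|^\cap_{FX}$ the paper takes a representing sequence of minimal length and uncrosses two overlapping doubletons at a time, replacing them by their symmetric difference; you instead pass to the $T$-join picture, decompose the edge multiset into cycles (discarded) and trails pairing up $T=\supp(\varphi)$, and shortcut each trail by the triangle inequality. This is cleaner, identifies $d^1_{FX}(a,b)$ outright as a minimum-weight perfect matching on $\supp(a-b)$, and avoids the minimal-length bookkeeping. For $\|a-b\|_{FX}\le\check d^1_{FX}(a,b)$ the paper argues by a minimal counterexample (smallest support), first reducing to the case where $d_X$ restricted to the support is an $\infty$-metric so that $\mathsf e_X$ is injective there; you compare the matching numbers in $X$ and $EX$ directly. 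The bookkeeping you defer does close and is worth writing out: the fibers of $\mathsf e_X{\restriction}_{T}$, $T=\supp(a-b)$, consist of points pairwise at distance $0$; a minimum matching on $\supp(F\mathsf e_X(a-b))$ lifts, via chosen fiber representatives, to edges of equal weight in $X$, and the leftover points of each fiber --- an even number in every fiber, since $\supp(F\mathsf e_X(a-b))$ records exactly the odd fibers --- can be matched among themselves at zero cost. This produces an element of $R_X(a-b)$ of total weight $\|F\mathsf e_X(a-b)\|^\cap_{FEX}=\check d^1_{FX}(a,b)$, which gives the missing inequality and completes your proof.
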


\begin{proof} 1. First we show that $d^1_{FX}(a,b)\le \|a-b\|_{FX}$ and $d^\subset(a,b)\le\|a-b\|_{FX}^\subset$. To derive a contradiction, assume that $\|a-b\|_{FX}<d^1_{FX}(a,b)$ and find  an $(a-b)$-representing sequence $\big((g_i,x_i,y_i)){}_{i=0}^l\in R_X(a-b)$ such that $$\sum_{i=0}^ld_X(x_i,y_i)<d^1_{FX}(a,b).$$ Let $n_0:=2l+2$, $a_0:=\sum_{i=0}^l(\delta^{g_i}_{2i}-\delta^{g_i}_{2i+1})\in F_0(n_0)$, and $f_0,g_0:2n\to X$ be two functions defined by $f_0(2i)=f_0(2i+1)=x_i=g_0(2i)$ and $g_0(2i+1)=y_i$ for every $i\in\{0,\dots,l\}$. It is easy to see that $Ff_0(a_0)=o$ and $Fg_0(a_0)=\sum_{i=0}^l(\delta^{g_i}_{x_i}-\delta^{g_i}_{y_i})=a-b$. Then $\big((a_0,f_0,g_0)\big)\in L_{FX}(o,a-b)$ is an $(o,a-b)$-linking chain, so
$$
\begin{aligned}
d^1_{FX}(a,b)&=d^1_{FX}(a-b,o)\le d^1_{X^{\!<\!\w}}(f_0,g_0)=\sum_{j\in 2l+2}d_X(f_0(j),g_0(j))=\\
&=\sum_{i=0}^l\big(d_X(f_0(2i),g_0(2i))+d_X(f_0(2i+1),g_0(2i+1))\big)=\\
&=\sum_{i=0}^l\big(d_X(x_i,x_i)+d_X(x_i,y_i)\big)<d^1_{FX}(a,b),
\end{aligned}
$$
which is a desired contradiction showing that $d^1_{FX}(a,b)\le \|a-b\|_{FX}$.
Observe that the inclusion $\big((g_i,x_i,y_i)\big){}_{i=0}^l\in R^\subset_{X}(a-b)$ implies that $\big((a_0,f_0,g_0)\big)\in L_{FX}^\subset(a,b)$, which yields the inequality $d^\subset_{FX}(a,b)\le\|a-b\|_{FX}^\subset$. The inequality $d^1_{FX}(a,b)\le d^\subset_{FX}(a,b)$ follows from the inclusion $L^\subset_{FX}(a,b)\subseteq L_{FX}(a,b)$.
\smallskip

Next, we prove that $\|a-b\|_{FX}\le d^1_{FX}(a,b)$. To derive a contradiction, assume that $d^1_{FX}(a,b)<\|a-b\|_{FX}$ and find an $(a,b)$-linking chain $\big((a_i,f_i,g_i)\big){}_{i=0}^l\in L_{FX}(a,b)$ such that $\sum_{i=1}^ld^1_{X^{\!<\!\w}}(f_i,g_i)<\|a-b\|_{FX}$. For every $i\in\{0,\dots,l\}$ let $n_i:=\dom(f_i)=\dom(g_i)$. Consider the  sequence $(m_i)_{i=0}^{l+1}$ of numbers $m_i=\sum_{j<i}n_j$ and put $m:=m_{l+1}$. For every number $j<m$, find (unique) numbers $i_j\in \{0,\dots,l\}$ and  $r_j\in n_{i_j}$ such that $j=m_{i_j}+r_j$. Then put $(g_j,x_j,y_i):=(a_{i_j}(r_j),f_{i_j}(r_j),g_{i_j}(r_j))$. We
claim that the sequence $\big((g_j,x_j,y_j)\big){}_{j=0}^{m-1}$ is $(a-b)$-representing.

Indeed, 
$$
\begin{aligned}
\sum_{j\in m}(\delta^{g_j}_{x_j}-\delta^{g_j}_{y_j})&=\sum_{i=0}^l\sum_{r\in n_i}\big(\delta^{a_i(r)}_{f_i(r)}-\delta^{a_i(r)}_{g_i(r)}\big)=\sum_{i=0}^l\big(Ff_i(a_i)-Fg_i(a_i)\big)=\\
&=Ff_0(a_0)-Fg_l(a_l)+\sum_{i=1}^l\big(Ff_i(a_i)-Fg_{i-1}(a_i)\big)=a-b+0.
\end{aligned}
$$
Then $$\|a-b\|_{FX}\le\sum_{j\in m}d_X(x_j,y_j)=\sum_{i=0}^l\sum_{r\in n_i}d_X(f_i(r),g_i(r))=\sum_{i=0}^ld^1_{X^{\!<\!\w}}(f_i,g_i)<\|a-b\|_{FX},$$which is a desired contradiction that completes the proof of $d^1_{FX}(a,b)=\|a-b\|_{FX}\le d^\subset_{FX}(a,b)\le\|a-b\|_{FX}^\subset$.
\smallskip

2. Now assume that $|M_0|=2$ and let $g$ be the unique non-zero element of the group $M_0$. For every $\varphi\in F_0X$, the inclusions $R^\cap_X(\varphi)\subseteq R^\subset_X(\varphi)\subseteq R_X(\varphi)$ imply that $\|\varphi\|_{FX}\le\|\varphi\|_{FX}^\subset\le\|\varphi\|_{FX}^\cap$. Assuming that $\|\varphi\|_{FX}<\|\varphi\|_{FX}^\cap$, find a sequence $s=\big((g_i,x_i,y_i)\big){}_{i=0}^l\in R_X(\varphi)$ such that $\sum_{i=0}^ld_X(x_i,y_i)<\|\varphi\|_{FX}^\cap$. We can assume that the length $l+1$ of this sequence is the smallest possible. It follows from $\|\varphi\|_{FX}<\|\varphi\|_{FX}^\cap$ that $\varphi\ne o$ and hence $l\ge 1$. The minimality of $l$ ensures that for every $i\le l$ the element $g_i$ is non-zero and hence is equal to the unique non-zero element $g$ of the 2-element group $M_0$.
Also, the minimality of $l$ implies that $\{x_i,y_i\}\ne \{x_j,y_j\}$ for any $i<j$. Indeed, assuming that $\{x_i,y_i\}=\{x_j,y_j\}$ for some numbers  $i<j\le l$, we can see that $\delta^{g_i}_{x_i}-\delta^{g_i}_{y_i}=\delta^{g_j}_{x_j}-\delta^{g_j}_{y_j}$ and hence $(\delta^{g_i}_{x_i}-\delta^{g_i}_{y_i})+(\delta^{g_j}_{x_j}-\delta^{g_j}_{y_j})=o$, which allows us to remove the triples $(g_i,x_i,y_i)$ and $(g_j,x_j,y_j)$ for the sequence $s$ without breaking the $\varphi$-representation property. But this contradicts the minimality of $l$. If for some distinct $i,j$ the doubletons $\{x_i,y_i\}$ and $\{x_j,y_j\}$ have a common point, then we can replace two triples $(g_i,x_i,y_i)$ and $(g_j,x_j,y_j)$ in the sequence $s$  by one triple $(g,x,y)$ where $\{x,y\}$ coincides with the symmetric difference of the sets $\{x_i,y_i\}$ and $\{x_j,y_j\}$. Here we observe that $(\delta^{g_i}_{x_i}-\delta^{g_i}_{y_i})+(\delta^{g_j}_{x_j}-\delta^{g_j}_{y_j})=\delta^g_x-\delta^g_y$ and $d_X(x,y)\le d_X(x_i,y_i)+d_X(x_j,y_j)$. So, we can  make the sequence $s$ shorter, which contradicts the minimality of $l$. Now we see that the sequence $s$ belongs to the family $R^\cap_X(\varphi)$, which implies that $$\|\varphi\|_{FX}^\cap\le\sum_{i=0}^ld_X(x_i,y_i)<\|\varphi\|_{FX}^\cap,$$
and is a desired contradiction showing that $\|\varphi\|_{FX}=\|\varphi\|_{FX}^\subset=\|\varphi\|_{FX}^\cap$. Combining these equalities with the (in)equality $d^1_{FX}(a,b)=\|a-b\|_{FX}\le d_{FX}^\subset(a,b)\le\|a-b\|_{FX}^\subset$, proved in the first part of the theorem and the inequality $\check d^1_{FX}(a,b)\le d^1_{FX}(a,b)$ proved in Theorem~\ref{t:mainU}(3), we conclude that
$$\check d^1_{FX}(a,b)\le d^1_{FX}(a,b)=d^\subset_{FX}(a,b)=\|a-b\|_{FX}=\|a-b\|_{FX}^\subset=\|a-b\|_{FX}^\cap.$$
To complete the proof of the theorem, it remains to show that $\|a-b\|_{FX}\le \check d^1_{FX}(a,b)$, which will follow from the invariantness of the distance $\check d^1_{FX}$ as soon as we check that $\|\varphi\|_{FX}\le \check d^1_{FX}(\varphi,o)$ for every $\varphi\in F_0X$.

Assuming that the latter inequality does not hold, we conclude that the set $$\Phi:=\{\varphi\in F_0X:\check d^1_{FX}(\varphi,o)<\|\varphi\|_{FX}\}$$ is not empty. Let $\varphi\in\Phi$ be an element whose support $S=\supp(\varphi)$ has the smallest possible cardinality. Since $\varphi\in F_0X$, the cardinality $|S|$  is even (and non-zero as $\|\varphi\|_{FX}>0$). We claim that the restriction $d_X{\restriction}_{S\times S}$ is an $\infty$-metric. Indeed, assuming that $S$ contains two distinct points $x,y$ with $d_X(x,y)=0$, we can consider the function $\delta^g_x-\delta^g_y\in F_0X$ and conclude that $0\le \check d^1_{FX}(\delta^g_x,\delta^g_y)\le d^1_{FX}(\delta^g_x,\delta^g_y)\le d_X(x,y)=0$. Then the function $\psi=\varphi+\delta^g_x-\delta^g_y$ has support $\supp(\psi)=S\setminus\{x,y\}$ of cardinality $|\supp(\psi)|<|S|=|\supp(\varphi)|$ and belongs to the set $\Phi$ since
$$
\check d^1(\psi,o)\le \check d^1(\delta^g_x-\delta^g_y,o)+\check d^1(\varphi,o)=\check d^1(\varphi,o)<\|\varphi\|_{FX}\le \|\psi\|_{FX}+\|\delta^g_x-\delta^g_y\|_{FX}=\|\psi\|_{FX},
$$
but this contradicts the choice of $\varphi$. This contradiction shows that the restriction $d_X{\restriction}_{S\times S}$ is an $\infty$-metric.


As we already know, 
$$\|F\mathsf e_X(\varphi)\|_{FEX}^\subset=d^1_{FEX}(F\mathsf e_X(\varphi),o)=\check d_{FX}(\varphi,o)<d^1_{FX}(\varphi,o)=\|\varphi\|_{FX}.$$ By the definition of $\|F\mathsf e_X(\varphi)\|^\subset_{FEX}$, there is a sequence $\big((g_i,x_i,y_i)\big){}_{i=0}^l\in R^\subset_{FEX}(F\mathsf e_X(\varphi))$ such that $\sum_{i=0}^l(\delta^g_{x_i}-\delta^g_{y_i})=F\mathsf e_X(\varphi)$ and $\sum_{i=0}^ld_{EX}(x_i,y_i)<\|\varphi\|_{FX}$. The definition of $R^\subset_{EX}(F\mathsf e_X(\varphi))$ ensures that $$\bigcup_{i=0}^l\{x_i,y_i\}\subseteq \supp(F\mathsf e_X(\varphi))\subseteq \mathsf e_X[\supp(\varphi)]=\mathsf e_X[S].$$
For every $i\in\{0,\dots,l\}$ choose points $x_i',y_i'\in S$ such that $\mathsf e_X(x_i')=x_i$ and $\mathsf e_X(y_i')=y_i$. The injectivity of $\mathsf e_X$ on the set $S=\supp(\varphi)$ and the equality $F\mathsf e_X(\varphi)=\sum_{i=0}^l(\delta^g_{x_i}-\delta^g_{y_i})$ imply that $\varphi=\sum_{i=0}^l(\delta^g_{x_i'}-\delta^g_{y'_i})$. Then $\big((g_i,x'_i,y'_i)\big){}_{i=0}^l\in R_{FX}(\varphi)$ and 
$$\|\varphi\|_{FX}\le\sum_{i=0}^l d_X(x_i',y_i')=\sum_{i=0}^l d_{EX}(x_i,y_i)<\|\varphi\|_{FX},$$which is a desired contradiction that completes the proof.
\end{proof}  

It should be mentioned that the assumption $|M_0|=2$ in Theorem~\ref{t:Gra} is essential and cannot be replaced by any weaker assumption.

\begin{example} Let $\{z_1,z_2,z_3\}$ be an equilateral triangle with unit sides in the Euclidean plane and $z_0$ its center. Consider the distance space $X=\{z_0,z_1,z_2,z_3\}$ endowed with the metric $d_X$, inherited from the plane.
If $|M_0|>2$, then there exists a function $\varphi\in F_0X$ such that $\varphi(z_0)=0$ and $\varphi(z_i)\ne 0$ for all $i\in\{1,2,3\}$. The function $\varphi$  has $\supp(\varphi)=\{z_1,z_2,z_3\}$. It can be shown that $$d^1_{FX}(\varphi,o)=\|\varphi\|_{FX}=\sqrt{3}<2=d^\subset_{FX}(\varphi,0)=\|\varphi\|_{FX}^\subset<\|\varphi\|_{FX}^\cap=\infty.$$\end{example}

\begin{remark} If the monoid $M$ is a cyclic group of order $n\le \w$ with generator $g$, then the group $FX=F(X,M)$ can be identified with the free Abelian group of exponent $n$ over $X$. Each distance $d_X$ on $X$ induces the {\em Graev distance} $d^g_{FX}$ on $FX$, defined by
$$d^g_{FX}(a,b)=\inf\Big(\Big\{\sum_{i=0}^ld_X(x_i,y_i):x_0,y_0,\dots,x_l,y_l\in X,\;a-b=\sum_{i=0}^l(\delta^g_{x_i}-\delta^g_{y_i})\Big\}\cup\{\infty\}\Big).$$
Theorem~\ref{t:Gra} implies that $d^1_{FX}\le d^g_{FX}$. If $n\le 3$, then  $d^1_{FX}=d^g_{FX}$. If $n>3$ and $X=\{x,y\}$ with $d_X(x,y)=1$, then $d^g_{FX}(\delta^{2g}_x,\delta^{2g}_y)=2>1=d^1_{FX}(\delta^{2g}_x,\delta^{2g}_y)$, so $d^1_{FX}\ne d^g_{FX}$. More information on Graev distances on free Abelian (Boolean) groups can be found in  \cite[\S7.2]{AT}, \cite{Graev} (and \cite[\S4]{Sipa}).   
\end{remark}

\begin{remark} Theorem~\ref{t:Gra}(2) suggests an algorithm for calculating the distances $\check d^1_{FX}=d^1_{FX}$ in case of $|G|=2$. However, for calculating the norm $\|\varphi\|_{FX}$ of an element $\varphi\in F_0X$ with support of cardinality $|\supp(\varphi)|=2n$, the brute force algorithm requires checking $\frac{(2n)!}{2^nn!}$ cases, which yields superexponential algorithmic complexity.
\end{remark}   

\begin{problem} Is there a simpler (desirably, polynomial time) algorithm for  calculating the distances $\check d^1_{FX}=d^1_{FX}$ on the group $FX$ in case of the two-element group $|M|=2$?
\end{problem}

\begin{problem} Is there any reasonable algorithm for calculating the distances $\check d^1_{FX}$ and $d^1_{FX}$ on $FX$ for arbitrary finite group $G$ and arbitrary metric space $(X,d_X)$ with computable metric?
\end{problem}

\section{The metrizations of the hyperspace functor}\label{s:hyperf}

If $M$ is a two-element semilattice, then the functor $F=F(-,M)$, considered in the preceding section, is isomorphic to the hyperspace functor $H$ assigning to each set $X$ the semilattice $[X]^{<\w}$ of finite subsets of $X$ and to each function $f:X\to Y$ between sets the function $Hf:HX\to HY$, $Hf:a\mapsto f[a]$.

We recall that a {\em semilattice} is a commutative semigroup $S$ whose any element $x\in S$ is {\em idempotent} (i.e. $x+x=x$). Each 2-element semilattice is isomorphic to the semilattice $2=\{0,1\}$ endowed with the operation of maximum. The hyperspace $HX$ is endowed with the semilattice operation of union.

Identifying each finite set $A\subseteq X$ with its characteristic function $\chi_A:X\to 2=\{0,1\}$, we can identify the semilattice $HX$ with the monoid $F(X,2)$. This determines an isomorphism of the functors $H$ and $F(-,2)$.

It is easy to see that the functor $H$ preserves supports; moreover, $\supp(a)=a$ for every set $X$ and element $a\in HX$. 

The hyperspace functor $H$ has a metrization by the Hausdorff distance $d_{HX}$, defined in Section~~\ref{s:distance}.  The interplay between the distances $d_{HX}$, $d^p_{HX}$ and $\check d^p_{HX}$ is described in the following theorem. 

From now on, we assume that $p\in[1,\infty]$ and $(X,d_X)$ is a distance space.

\begin{theorem}\label{t:hyper} 
$d_{HX}=\check d^\infty_{HX}=d^\infty_{HX}\le \check d^p_{HX}\le d^p_{HX}.$\newline  Moreover, the distances $d^p_{HX}$ and $\check d^p_{FX}$ are subinvariant in the sense that
$$d^p_{HX}(a\cup c,b\cup c)\le d^p_{HX}(a,b)\mbox{ and }\check d^p_{HX}(a\cup c,b\cup c)\le \check d^p_{HX}(a,b)$$ for every $a,b,c\in HX$.
\end{theorem}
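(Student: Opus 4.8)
The plan is to establish the chain of (in)equalities first and then read off subinvariance from the corresponding properties of the functor $F(-,2)$ proved in the previous section, using the functor isomorphism $\eta_X\colon HX\to F(X,2)$ that sends a finite set to its characteristic function and carries union to the semilattice addition $+$. Throughout I will use that $H$ preserves supports with $\supp(a)=a$ for every $a\in HX$. Two of the listed relations are immediate citations: $\check d^p_{HX}\le d^p_{HX}$ is Theorem~\ref{t:mainU}(3), and $\check d^\infty_{HX}\le\check d^p_{HX}$ is Theorem~\ref{t:mainU}(2).

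The heart of the argument is the identity $d^\infty_{HX}=d_{HX}$. The inequality $d^\infty_{HX}\ge d_{HX}$ follows directly from Theorem~\ref{t:shukel}: for distinct $a,b$ it gives $d^\infty_{HX}(a,b)\ge d_{HX}(\supp(a),\supp(b))=d_{HX}(a,b)$, while both sides vanish when $a=b$. For the reverse inequality I would, given nonempty finite $a,b\subseteq X$ with $r:=d_{HX}(a,b)<\infty$ and any $\e>0$, build a single-triple $(a,b)$-linking chain of cost $<r+\e$. Choose for each $x\in a$ a point $y_x\in b$ with $d_X(x,y_x)<r+\e$ and for each $y\in b$ a point $x_y\in a$ with $d_X(x_y,y)<r+\e$; listing the pairs $(x,y_x)$ for $x\in a$ followed by the pairs $(x_y,y)$ for $y\in b$ as the coordinate columns of two maps $f,g\colon n\to X$ with $n=|a|+|b|$, and taking $c=n\in Hn$ to be the full subset, one checks $Hf(c)=f[n]=a$, $Hg(c)=g[n]=b$ and $d^\infty_{X^n}(f,g)<r+\e$. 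Hence $d^\infty_{HX}(a,b)\le r$. The degenerate cases are handled separately: since $Hf(\emptyset)=\emptyset$ for every $f$, the quotient map argument of Theorem~\ref{t:up} (or Theorem~\ref{t:small}) yields $d^\infty_{HX}(\emptyset,b)=\infty=d_{HX}(\emptyset,b)$ for $b\ne\emptyset$, and both distances are $0$ when $a=b=\emptyset$.

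To close the chain it remains to prove $\check d^\infty_{HX}=d^\infty_{HX}$. Combine $\check d^\infty_{HX}\le d^\infty_{HX}$ (Theorem~\ref{t:mainU}(3)) with the reverse inequality from Theorem~\ref{t:mainU}(8): as $H$ preserves supports, $\check d^\infty_{HX}(a,b)\ge d_{HX}(\supp(a),\supp(b))=d_{HX}(a,b)=d^\infty_{HX}(a,b)$ for distinct $a,b$, and both vanish when $a=b$. Assembling the pieces gives $d_{HX}=\check d^\infty_{HX}=d^\infty_{HX}\le\check d^p_{HX}\le d^p_{HX}$, the middle $\le$ being $d^\infty_{HX}=\check d^\infty_{HX}\le\check d^p_{HX}$.

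For subinvariance I would transport the invariance already available for $F(-,2)$. Since $\eta$ is a natural isomorphism, both $\eta$ and $\eta^{-1}$ are natural transformations, so Theorem~\ref{t:nat} makes $\eta_X$ a bijective isometry of $(HX,d^p_{HX})$ onto $(F(X,2),d^p_{F(X,2)})$ and of $(HX,\check d^p_{HX})$ onto $(F(X,2),\check d^p_{F(X,2)})$. Because $\eta_X(a\cup c)=\eta_X(a)+\eta_X(c)$, Proposition~\ref{p:subinv} yields
$$d^p_{HX}(a\cup c,b\cup c)=d^p_{F(X,2)}(\eta_X(a)+\eta_X(c),\eta_X(b)+\eta_X(c))\le d^p_{F(X,2)}(\eta_X(a),\eta_X(b))=d^p_{HX}(a,b),$$
and Corollary~\ref{c:subinv} gives the analogous bound for $\check d^p_{HX}$. (Alternatively, subinvariance admits a direct proof copying Proposition~\ref{p:subinv}, appending to each triple of an $(a,b)$-linking chain a fixed block of coordinates whose images enumerate $c$.) The only non-routine step is $d^\infty_{HX}\le d_{HX}$: realizing the Hausdorff distance by one linking triple, where the main care is the $\e$-approximation of the infima in the definition of $d_{HX}$ and the treatment of the empty-set cases.
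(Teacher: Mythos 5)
Your proof is correct and follows essentially the same route as the paper: the key step in both is realizing the Hausdorff distance by a single linking triple $(a_0,f_0,g_0)$ with $n_0=|a|+|b|$ built from near-optimal assignments $a\to b$ and $b\to a$, combined with Theorem~\ref{t:shukel} for the reverse inequality and Proposition~\ref{p:subinv}/Corollary~\ref{c:subinv} (transported through the isomorphism $H\cong F(-,2)$) for subinvariance. The only cosmetic difference is that you obtain $\check d^\infty_{HX}\ge d_{HX}$ by citing Theorem~\ref{t:mainU}(8), whereas the paper re-derives it by applying the freshly proved identity $d_{HY}=d^\infty_{HY}$ to $Y=EX$ together with $d_{HX}(a,b)=d_{HEX}(H\mathsf e_X(a),H\mathsf e_X(b))$ --- the same underlying computation.
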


\begin{proof} The subinvariance of the distances $d_{HX}$ and $\check d^p_{HX}$ follows from Proposition~\ref{p:subinv} and Corollary~\ref{c:subinv}.

By Theorem~\ref{t:shukel}, $d_{HX}\le d^\infty_{HX}$. Next, we prove that  $d^\infty_{HX}(a,b)\le d_{HX}(a,b)$ for any  $a,b\in HX$. This inequality is trivial if $a=b$ or $d_{HX}(a,b)=\infty$.
So, assume that $a\ne b$ and $d_{HX}(a,b)<\infty$. In this case $a,b$ are non-empty finite subsets of $X$. By the definition of $d_{HX}(a,b)$, there are two functions $p:a\to b$ and $q:b\to a$ such that 
$$\max\{\max_{x\in a}d_X(x,p(x)),\max_{y\in b}d_X(y,q(y))\}=d_{HX}(a,b).$$ 
Fix any bijective functions $\alpha:|a|\to a$ and $\beta:|b|\to b$. Let $n_0=|a|+|b|$ and consider the functions $f_0:n_0\to a$ and $g_0:n_0\to b$ defined by the formulas
$$f_0(i)=\begin{cases}
\alpha(i),&\mbox{if $0\le i<|a|$};\\
q{\circ}\beta(i-|a|),&\mbox{if $|a|\le i<n_0$};
\end{cases}
\quad\mbox{and}\quad  
g_0(i)=
\begin{cases}
p\circ \alpha(i),&\mbox{if $0\le i<|a|$};\\
\beta(i-|a|),&\mbox{if $|a|\le i<n_0$}.
\end{cases}
$$
For the element $a_0:=n_0\in Hn_0$, we get $Hf_0(a_0)=f_0[n_0]=a$ and $Hg_0(a_0)=g_0[n_0]=b$, which means that $\big((a_0,f_0,g_0)\big)\in L_{HX}(a,b)$ is an $a,b$-linking chain. Then
$$
\begin{aligned}
d^\infty_{HX}(a,b)&\le d^\infty_{X^{\!<\!\w}}(f_0,g_0)=\max_{x\in n_0}d_X(f_0(x),g_0(x))=\\
&=\max\{\max_{0\le i<|a|}d_X(\alpha(i),p{\circ}\alpha(i)),\max_{|a|\le i<n_0}d_X(q{\circ}\beta(i),\beta(i))\}\le\\
&\le\max\{\max_{x\in a}d_X(x,p(x)),\max_{y\in b}d_X(q(y),y)\}=d_{HX}(a,b).
\end{aligned}
$$
Now we see that $d_{HX}=d^\infty_{HX}$ (for any distance space $(X,d_X)$). In particular, $d_{HEX}=d^\infty_{HEX}$.
Next, we prove that $d_{HX}(a,b)=\check d^\infty_{HX}(a,b)=d^\infty_{HX}(a,b)$ for any elements $a,b\in HX$. 
Looking at the definition of the Hausdorff distance, we can see that $d_{HX}(a,b)=d_{HEX}(H\mathsf e_X(a),H\mathsf e_X(b))$. Now the definition of the distance $\check d_{HX}$ and Theorem~\ref{t:mainU}(3) ensure that
\begin{multline*}
d^\infty_{HX}(a,b)=d_{HX}(a,b)=d_{HEX}(H\mathsf e_X(a),H\mathsf e_X(b))=\\
d^\infty_{HEX}(H\mathsf e_X(a),H\mathsf e_X(b))=\check d^\infty_{HX}(a,b)\le d^\infty_{HX}(a,b),
\end{multline*}
 which implies  the desired equalities  $d_{HX}(a,b)=\check d^\infty_{HX}(a,b)=d^\infty_{HX}(a,b)$. Applying  Theorem~\ref{t:mainU}(2,3), we conclude that 
 $$d_{HX}(a,b)=\check d^\infty_{HX}(a,b)=d^\infty_{HX}(a,b)\le\check d^p_{HX}(a,b)\le d^p_{HX}(a,b).$$
\end{proof}

For every $n\in\IN$ consider the subfunctor $H_n$ of the functor $H$, assigning to each set $X$ the subfamily $H_nX:=\{a\in HX:|a|\le n\}$. It is easy to see that the functor $H_n$ has degree $\le n$ and preserves supports. The properties of its metizations are desribed in the following proposition.

\begin{proposition}\label{p:Hn} 
$d_{HX}\le \check d^p_{H_nX}\le d^p_{H_nX}\le n^{\frac1p}{\cdot}d^\infty_{H_nX}\le 3\cdot n^{\frac1p}{\cdot}d_{HX}.$
\end{proposition}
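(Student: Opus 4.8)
The plan is to read the five-term chain as four separate inequalities and dispatch each in turn; three follow immediately from general facts about $\check F^p$ and $F^p$ already established for support-preserving functors, and only the last requires an explicit short linking chain. Throughout I use that $H_n$ preserves supports and has degree $\le n$, and that $\supp(a)=a$ for every $a\in H_nX$ (the same computation as for $H$, since $H_n[A;X]=\{c\subseteq A:|c|\le n\}$, so $a\in H_n[A;X]$ iff $a\subseteq A$).

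The two middle inequalities and the left one are free. Because $H_n$ preserves supports and $\supp(a)=a$, Theorem~\ref{t:mainU}(8) gives, for distinct $a,b\in H_nX$,
$$\check d^p_{H_nX}(a,b)\ge \check d^\infty_{H_nX}(a,b)\ge d_{HX}(\supp(a),\supp(b))=d_{HX}(a,b),$$
while for $a=b$ all terms vanish; this is $d_{HX}\le\check d^p_{H_nX}$. The inequality $\check d^p_{H_nX}\le d^p_{H_nX}$ is exactly Theorem~\ref{t:mainU}(3), and $d^p_{H_nX}\le n^{\frac1p}d^\infty_{H_nX}$ follows from Theorem~\ref{t:main}(4) applied to $F=H_n$, using $\max\{1,\deg(H_n)\}\le n$.

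The only inequality needing work is the rightmost, which reduces to showing $d^\infty_{H_nX}(a,b)\le 3\,d_{HX}(a,b)$ for all $a,b\in H_nX$. This is trivial if $a=b$ or $d_{HX}(a,b)=\infty$, so I may assume $r:=d_{HX}(a,b)<\infty$ with $a,b$ nonempty; finiteness then lets me choose $p:a\to b$ and $q:b\to a$ with $d_X(x,p(x))\le r$ and $d_X(q(y),y)\le r$. I will exhibit a two-triple element of $L_{H_nX}(a,b)$. The first triple indexes $a$ by a bijection $f_1\colon|a|\to a$ and sets $g_1=p\circ f_1$, connecting $a=f_1[|a|]$ to $p[a]=g_1[|a|]$ with $\Sigma d^\infty_X\le r$; here $|a|\le n$. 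The second triple indexes $b$ by a bijection $g_2\colon|b|\to b$ and puts $f_2(i)=g_2(i)$ when $g_2(i)\in p[a]$ and $f_2(i)=p(q(g_2(i)))$ otherwise, so that $f_2[|b|]=p[a]$ and $g_2[|b|]=b$; its cost is $\max_{y\in b\setminus p[a]}d_X(p(q(y)),y)\le d_X(p(q(y)),q(y))+d_X(q(y),y)\le 2r$, again with $|b|\le n$. Since the first triple ends at $p[a]$ and the second begins there, their concatenation lies in $L_{H_nX}(a,b)$ with total $\Sigma d^\infty_X\le r+2r=3r$, giving $d^\infty_{H_nX}(a,b)\le 3r$, and multiplying by $n^{\frac1p}$ yields the last inequality.

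The main (indeed the only) obstacle is this construction: a single $H_n$-move cannot realize the Hausdorff matching in one shot, because a joint surjection onto both $a$ and $b$ may require up to $|a|+|b|\le 2n$ index points, overshooting the degree bound $n$. Routing through the intermediate set $p[a]$ keeps every index set of size $\le n$, at the price of the triangle detour through $p\circ q$, and this detour is precisely what forces the constant $3$. I expect the bookkeeping of verifying $f_2[|b|]=p[a]$ and the $2r$ cost estimate to be the most delicate point, but it is elementary once the two triples are written down.
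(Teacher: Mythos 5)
Your proof is correct, and for most of the chain it relies on the same general results as the paper. The two middle inequalities are obtained exactly as in the paper's proof (Theorem~\ref{t:mainU}(3) and Theorem~\ref{t:main}(4)); for the leftmost inequality you invoke Theorem~\ref{t:mainU}(8) together with the computation $\supp(a)=a$, whereas the paper routes it through Theorem~\ref{t:hyper} and the subfunctor comparison of Theorem~\ref{t:subf} — both derivations are legitimate. The genuine difference is in the only substantive step, $d^\infty_{H_nX}\le 3\,d_{HX}$. You build a two-link chain $a\to p[a]\to b$ with costs $r$ and $2r$, absorbing the detour $y\mapsto q(y)\mapsto p(q(y))$ into the second link; the paper instead fixes a \emph{maximal} partial matching $\gamma:a'\to b'$ realizing the Hausdorff bound and walks $a\to b'\to a'\to b$ in three links of cost $r$ each. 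Both constructions keep every index set of size at most $n$ (which, as you correctly note, is the whole obstruction to doing it in one move) and both land on the constant $3$. Your version is slightly leaner — no maximality argument is needed, only nearest-point selections $p:a\to b$ and $q:b\to a$ — and the key identity $f_2[|b|]=p[a]$ does hold, precisely because $p[a]\subseteq b$ guarantees that every point of $p[a]$ is enumerated by $g_2$ and fixed by $f_2$. The paper's version keeps each individual link non-expanding by at most $r$, which is aesthetically symmetric but buys nothing extra here.
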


\begin{proof} The inequality $d_{HX}\le \check d^p_{H_nX}$ follows from Theorems \ref{t:hyper} and \ref{t:subf}. The inequalities $\check d^p_{H_nX}\le d^p_{H_nX}\le n^{\frac1p}{\cdot}d^\infty_{H_nX}$ follow from Theorems~\ref{t:mainU}(3) and \ref{t:main}(4). It remains to prove that $d^\infty_{H_nX}(a,b)\le 3\cdot d_{HX}(a,b)$ for any sets $a,b\in H_n X$. This inequality holds trivially if $a=b$ or $d_{HX}(a,b)=\infty$. So, assume that $a\ne b$ and $d_{HX}(a,b)<\infty$. In this case the sets $a,b$ are non-empty. Let $\gamma:a'\to b'$ be a maximal bijective function between subsets $a'\subseteq b$ and $b'\subseteq b$ such that $d_X(x,\gamma(x))\le d_{HX}(a,b)$ for every $x\in a'$. By the maximality, the function $\gamma$ has an extension $p:a\to b'$ such that $\max_{x\in a}d_X(x,p(x))\le d_{HX}(a,b)$, and $\gamma^{-1}$ has an extension $q:b\to a'$ such that  $\max_{y\in b}d_X(y,q(y))\le d_{HX}(a,b)$.

Now we define an $(a,b)$-linking chain $\big((a_0,f_0,g_0),(a_1,f_1,g_1)(a_2,f_2,g_2)\big)\in L_{H_nX}(a,b)$ such that $d^\infty_{H_n}(a,b)\le \sum_{i=0}^2d^\infty_{X^{\!<\!\w}}(f_i,g_i)\le 3{\cdot}d_{HX}(a,b)$.

Let $n_0=|a|$, $a_0=n_0\in H_nn_0$, $f_0:|a|\to a$ be any bijective function and $g_0=p\circ f_0$.

Let $n_1=|a'|=|b'|$, $a_1=n_1\in H_nn_1$, $f_1:n_1\to b'$ be any bijection and $g_1=\gamma^{-1}\circ f_1$.

Let $n_2=|b|$, $a_2=n_2\in H_nn_2$, $g_2:n_2\to b$ be any bijection and $f_2=q\circ g_2$. 

Observe that $a=f_0[n_0]$, $g_0[n_0]=p[a]=b'=f_1[n_1]$, $g_1[n_1]=a'=q[b]=f_2[n_2]$, $g_2[n_2]=b$, which means that $\big((a_i,f_i,g_i)\big){}_{i=0}^2\in L_{H_nX}(a,b)$ and then 
$$d^\infty_{H_nX}(a,b)\le\sum_{i=0}^2d^\infty_{X^{\!<\!\w}}(f_i,g_i)\le 3\cdot d_{HX}(a,b).$$
\end{proof}

\begin{problem} Can the constant $3$ in Proposition~\ref{p:Hn} be replaced by a smaller constant?
\end{problem}

The distance $d^1_{HX}$ admits an equivalent graph-theoretic description, similar to the description of the metric $d^1_{FX}$ in Theorem~\ref{t:Gra}. It is defined with the help of graphs, see \cite{MO}, \cite{MO2}.  

By a {\em graph} we understand a pair $\Gamma=(V,E)$ consisting of a finite set $V$ of vertices and a finite set $E$ of edges. Each edge $e\in E$ is a non-empty subset of $V$ of cardinality $|e|\le 2$. For a graph $\Gamma=(V,E)$, the {\em connected component} of a vertex $v\in V$ is the set $\Gamma_v$ of all vertices $u\in V$ for which there exists a sequence $v=v_0,\dots,v_n=u$ such that $\{v_{i-1},v_i\}\in E$ for every $i\in\{1,\dots,n\}$. 

By a {\em graph in the distance space} $(X,d_X)$ we understand any graph $\Gamma=(V,E)$ with $V\subseteq X$. In this case we can define the length $\length(\Gamma)$ of $\Gamma$ by the formula
$$\length(\Gamma)=\sum_{e\in E}\bar d_X(e)$$where $\bar d_X(e)=\max\{d_X(x,y):x,y\in e\}$.

For two finite sets $a,b\subseteq X$ let ${\mathbf\Gamma}(a,b)$ be the family of  graphs $\Gamma=(V,E)$ in $X$ such that $a\cup b\subseteq V$ and each connected component of $\Gamma$ intersects both sets $a$ and $b$. 
Observe that the family $\Gamma(a,b)$ contains the complete graph on the set $a\cup b$ and hence is not empty.

\begin{theorem} For any elements $a,b\in HX$ we have
$$d_{HX}(a,b)\le  d^1_{HX}(a,b)=\inf\big(\{\ell(\Gamma):\Gamma\in \mathbf\Gamma(a,b)\}\cup \{\infty\}\big).$$
\end{theorem}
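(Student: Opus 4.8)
The first inequality $d_{HX}(a,b)\le d^1_{HX}(a,b)$ is immediate from Theorem~\ref{t:hyper}, so the plan is to prove the equality $d^1_{HX}(a,b)=D$, where $D:=\inf(\{\ell(\Gamma):\Gamma\in\mathbf\Gamma(a,b)\}\cup\{\infty\})$. I would prove the two inequalities $D\le d^1_{HX}(a,b)$ and $d^1_{HX}(a,b)\le D$ separately. Throughout I use that $\supp(a)=a$ for the hyperspace functor, so the degenerate cases ($a=\emptyset$ or $b=\emptyset$) reduce to Theorem~\ref{t:small}: one checks directly that both sides are $\infty$ (resp.\ $0$); hence I may assume $a,b$ nonempty.

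For $D\le d^1_{HX}(a,b)$ (the easy direction): given an $(a,b)$-linking chain $s=((a_i,f_i,g_i))_{i=0}^l\in L_{HX}(a,b)$, set $c_0=a$, $c_{l+1}=b$ and $c_i=f_i[a_i]=g_{i-1}[a_{i-1}]$ for $1\le i\le l$, and form the graph $\Gamma$ with vertex set $V=\bigcup_i c_i$ and edge set $E=\{\{f_i(k),g_i(k)\}:0\le i\le l,\ k\in a_i\}$ (singletons have length $0$ and are harmless). Since every point of $c_i$ equals some $f_i(k)$, it has an edge up into $c_{i+1}$; being also some $g_{i-1}(k')$, it has an edge down into $c_{i-1}$. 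Iterating, the component of any vertex reaches $c_0=a$ and $c_{l+1}=b$, so $\Gamma\in\mathbf\Gamma(a,b)$, while $\ell(\Gamma)\le\sum_i\sum_{k\in a_i}d_X(f_i(k),g_i(k))\le\Sigma d^1_X(s)$. Taking the infimum over chains gives $D\le d^1_{HX}(a,b)$.

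For $d^1_{HX}(a,b)\le D$ (the hard direction): it suffices to show $d^1_{HX}(a,b)\le\ell(\Gamma)$ for every $\Gamma\in\mathbf\Gamma(a,b)$ of finite length. Passing to a spanning tree of each connected component only decreases the length and preserves vertex sets, so I may assume $\Gamma$ is a forest. Writing $A_K=a\cap V_K$, $B_K=b\cap V_K$ for the components $K$, each $A_K,B_K$ is nonempty, $a=\bigsqcup_K A_K$, $b=\bigsqcup_K B_K$, and the subinvariance of $d^1_{HX}$ (Theorem~\ref{t:hyper}) yields the subadditivity $d^1_{HX}(a,b)\le\sum_K d^1_{HX}(A_K,B_K)$ (from $d^1_{HX}(A\cup A',B\cup B')\le d^1_{HX}(A,B)+d^1_{HX}(A',B')$, obtained by inserting the intermediate set $B\cup A'$). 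Thus everything reduces to the key lemma
$$(\star)\qquad d^1_{HX}(A,B)\le\ell(T)\ \text{ for a finite tree }T\ (V_T\subseteq X)\text{ and nonempty }A,B\subseteq V_T.$$
I would prove $(\star)$ by induction on the number of edges of $T$. Pick a leaf $v$ with neighbour $u$ and edge $e=\{u,v\}$, so $T'=T-v$ has $\ell(T')=\ell(T)-d_X(u,v)$. The basic move is a single triple mapping $v\mapsto u$ and fixing all else; it realizes $A\to(A\setminus\{v\})\cup\{u\}$ at cost $d_X(u,v)$. If $v\notin A\cup B$ one applies induction on $T'$ directly; if $v$ lies in exactly one of $A,B$ one pushes it to $u$ on that side and then applies induction, giving total $\le\ell(T')+d_X(u,v)=\ell(T)$.

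The main obstacle is the case $v\in A\cap B$: naively pushing $v$ to $u$ on both sides costs $2d_X(u,v)$ and overshoots $\ell(T)$. The fix is to keep $v$ fixed and strip it off via subinvariance. When $A\setminus\{v\}$ and $B\setminus\{v\}$ are both nonempty, $d^1_{HX}(A,B)\le d^1_{HX}(A\setminus\{v\},B\setminus\{v\})\le\ell(T')$ by induction; and when, say, $A=\{v\}$, one first grows $\{v\}\to\{v,u\}$ at cost $d_X(u,v)$ and then, keeping $v$, applies subinvariance and induction to $\{u\}$ and $B\setminus\{v\}$ on $T'$, for total $\le d_X(u,v)+\ell(T')=\ell(T)$ (the symmetric subcase $B=\{v\}$ is identical, and $A=B=\{v\}$ is trivial). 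This completes $(\star)$, hence $d^1_{HX}(a,b)\le\ell(\Gamma)$ for all admissible $\Gamma$; infimizing gives $d^1_{HX}(a,b)\le D$. I expect the bookkeeping in these subcases of $(\star)$, together with the routine verification that the elementary moves are genuine linking chains of the stated $\ell^1$-cost, to be the only delicate points.
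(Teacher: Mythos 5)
Your proposal is correct. The easy direction ($D\le d^1_{HX}(a,b)$, turning a linking chain into a graph) coincides with the paper's argument. For the hard direction the paper proves Lemma~\ref{l:graph} by induction on the number of vertices: it passes to a minimal subgraph (whose components are trees with pendant vertices in $a\cup b$) and, for a pendant vertex $v$, explicitly builds an $(a,b)$-linking chain of cost $\le\ell(\Gamma)$ in each of the cases $v\in b\setminus a$, $v\in a\setminus b$, $v\in a\cap b$ (the last split into four subcases). Your case analysis is the same in spirit, but your execution is genuinely different: you pass to spanning forests, decompose over connected components via the subadditivity $d^1_{HX}(A\cup A',B\cup B')\le d^1_{HX}(A,B)+d^1_{HX}(A',B')$ (derived from subinvariance plus the triangle inequality), and then run the leaf-removal induction entirely at the level of the distance, using only one-triple chains, the triangle inequality, and the subinvariance of $d^1_{HX}$ from Theorem~\ref{t:hyper}. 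This makes the delicate $v\in a\cap b$ case (the paper's subcases 3a--3d) almost mechanical, at the price of leaning on the subinvariance result, whereas the paper's proof is self-contained chain surgery. One small inaccuracy: when exactly one of $a,b$ is empty and the other has at least two points, Theorem~\ref{t:small} does not apply (the union of supports then has more than one element); but in that case $L_{HX}(a,b)$ and $\mathbf\Gamma(a,b)$ are both empty by direct inspection, so both sides are $\infty$ and nothing is lost.
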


\begin{proof} The inequality $d_{HX}(a,b)\le d^1_{HX}(a,b)$ was proved in Proposition~\ref{t:hyper}.

Next, we prove that $\hat d_{HX}(a,b)\le d^1_{HX}(a,b)$, where $$\hat d_{HX}(a,b):=\inf\big(\{\ell(\Gamma):\Gamma\in\mathbf \Gamma(a,b)\}\cup\{\infty\}\big).$$
 To derive a contradiction, assume that $d^1_{HX}(a,b)<\hat d_{HX}(a,b)$ and find an $(a,b)$-linking chain $\big((a_i,f_i,g_i)\big){}_{i=0}^l\in L_{HX}(a,b)$ such that $\sum_{i=0}^ld^1_{X^{\!<\!\w}}(f_i,g_i)<\hat d_{HX}(a,b)$.
For every $i\in\{0,\dots,l\}$ let $n_i=\dom(f_i)=\dom(g_i)$. Consider the graph $\Gamma=(V,E)$ with $$V=\bigcup_{i=0}^l(f_i[a_i]\cup g_i[a_i])\quad\mbox{and}\quad E=\bigcup_{i=0}^l\big\{\{f_i(x),g_i(x)\}:x\in a_i\big\}.$$
Observe that
$$\length(\Gamma)\le\sum_{i=0}^l\sum_{x\in n_i}d_X(f_i(x),g_i(x))=\sum_{i=0}^ld^1_{X^{\!<\!\w}}(f_i,g_i)<\hat d_{HX}(a,b).$$
We claim that $\Gamma\in\mathbf\Gamma(a,b)$.
Observe that $a\cup b=f_0[a_0]\cup g_l[a_l]\subseteq V$. Next, we check that for every $x\in V$ the connected component $\Gamma_x$ of the graph $\Gamma$ intersects the sets $a$ and $b$. Find $i\in \{0,\dots,l\}$ such that $x\in f_i[a_i]\cup g_i[a_i]$. Then $x\in \{f_i(t_i),g_i(t_i)\}$ for some $t_i\in a_i$. For the point $x_i:=g_i(t_i)\in g_i[a_i]=f_{i+1}[a_{i+1}]$ we can find a point $t_{i+1}\in a_{i+1}$ with $x_i=f_{i+1}(t_{i+1})$ and consider the point $x_{i+1}=g_{i+1}(t_{i+1})$. Continuing by induction, we can construct sequences of points $(x_j)_{j=i}^{l+1}$ and $(t_j)_{j=i}^l\in \prod_{j=i}^l a_j$ such that $\{x,x_i\}\in E$, $x_i=f_i(t_i)$, $x_{l+1}=g_l(t_l)\in g_l[a_l]=b$ and $g_{j-1}(t_{j-1})=x_{j}=f_{j}(t_j)$ for every $j\in\{i+1,\dots,l\}$. By the definition of the set $E$, for every $j\in\{i,\dots,l\}$ the doubleton $\{x_{j},x_{j+1}\}=\{f_j(t_j),g_j(t_j)\}$ is an egde of the graph $\Gamma$. Then the chain $x,x_i,\dots,x_{l+1}$ witnesses that $x_{l+1}\in \Gamma_x\cap b$. By analogy we can prove that the intersection $\Gamma_x\cap a$ is not empty.
Therefore, $\Gamma\in\mathbf \Gamma(a,b)$ and $\hat d_{HX}(a,b)\le\length(\Gamma)<\hat d_{HX}(a,b)$. This contradiction shows that $\hat d_{HX}(a,b)\le d^1_{HX}(a,b)$.
\vskip3pt

Next, we prove that $d^1_{HX}(a,b)\le \hat d_{HX}(a,b)$. Assuming that $\hat d_{HX}(a,b)<d^1_{HX}(a,b)$, we can find a graph $\Gamma\in\mathbf \Gamma(a,b)$ such that $\ell(\Gamma)<d^1_{HX}(a,b)$. By Lemma~\ref{l:graph} (proved below), there exists an $(a,b)$-linking chain $s\in L_{FX}(a,b)$ such that $\Sigma d^1_X(s)\le\ell(\Gamma)$. Then $d^1_{HX}(a,b)\le\Sigma d^1_X(s)\le\ell(\Gamma)<d^1_{HX}(a,b)$, which is a desired contradiction showing that $d^1_{HX}=\hat d_{HX}$.
\end{proof}

\begin{lemma}\label{l:graph} For any finite sets $a,b\subseteq X$ and graph $\Gamma\in\mathbf \Gamma(a,b)$, there exists an $(a,b)$-linking chain $s\in L_{FX}(a,b)$ such that $\Sigma d^1_X(s)\le\ell(\Gamma)$.
\end{lemma}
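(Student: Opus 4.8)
The plan is to construct the required linking chain explicitly, by two reductions followed by an induction on the number of edges of a tree. First I record the elementary operations on linking chains for the hyperspace functor $H$. A single triple $(n_0,f_0,g_0)$ (taking $a_0=n_0\in Hn_0$) is an $(A,B)$-linking chain with $A=f_0[n_0]$, $B=g_0[n_0]$ of cost $\sum_{x\in n_0}d_X(f_0(x),g_0(x))$; concatenating an $(A,B)$-chain with a $(B,C)$-chain gives an $(A,C)$-chain whose cost is the sum; and, given an $(A,B)$-chain and a finite set $C$, padding each triple with extra indices mapped constantly onto $C$ (by the \emph{same} map in $f$ and $g$, hence at zero extra cost) yields an $(A\cup C,B\cup C)$-chain of the same cost — this is the chain-level form of the subinvariance in Theorem~\ref{t:hyper}. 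Using these, I first reduce to the case that $\Gamma$ is a forest: replacing $\Gamma$ by a spanning forest $F$ (a spanning tree in each connected component) only deletes edges, so $\ell(F)\le\ell(\Gamma)$ and still $F\in\mathbf{\Gamma}(a,b)$. Then I reduce to a single component: the components $C_1,\dots,C_m$ of $F$ are vertex-disjoint and each meets both $a$ and $b$, so $a=\bigsqcup_j(a\cap C_j)$, $b=\bigsqcup_j(b\cap C_j)$ and $\ell(F)=\sum_j\ell(F_{C_j})$; producing for each $j$ a chain from $a\cap C_j$ to $b\cap C_j$ of cost $\le\ell(F_{C_j})$ and combining them by successive padding-and-concatenation gives a chain from $a$ to $b$ of cost $\le\sum_j\ell(F_{C_j})\le\ell(\Gamma)$.

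It then remains to treat a single tree $T=(V,E)$ with $a,b\subseteq V$ both nonempty, which I handle by induction on $|E|=|V|-1$. When $|E|=0$ we have $a=b=V$ a singleton and the empty chain works. For $|E|\ge1$ pick a leaf $v$ with its unique edge $e=\{u,v\}$ and set $T'=T-v$, a tree with $\ell(T)=\ell(T')+d_X(u,v)$. I distinguish cases by whether $v\in a$ and $v\in b$. If $v\notin a\cup b$, then $a,b\subseteq V\setminus\{v\}$ and induction on $T'$ already gives a chain of cost $\le\ell(T')\le\ell(T)$, using $e$ not at all. If $v\in a$ and $v\notin b$, I prepend one triple realizing $a\to a'$ with $a'=(a\setminus\{v\})\cup\{u\}$ (identity on $a\setminus\{v\}$, one index with $f=v$, $g=u$) at cost $d_X(u,v)$, and then apply induction to the (nonempty) pair $(a',b)$ on $T'$; the symmetric case $v\notin a$, $v\in b$ appends an analogous triple. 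In each case the edge $e$ is charged at most once, so the total cost is $\le\ell(T')+d_X(u,v)=\ell(T)$.

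The remaining case $v\in a\cap b$ is where the main difficulty lies. The natural idea is to keep $v$ fixed and recurse without it: writing $a_1=a\setminus\{v\}$, $b_1=b\setminus\{v\}$, a chain from $a_1$ to $b_1$ padded with the constant set $\{v\}$ is a chain from $a$ to $b$ of the same cost, and when $a_1,b_1$ are both nonempty induction on $T'$ bounds it by $\ell(T')\le\ell(T)$, without using $e$. The obstacle is that stripping $v$ can leave $a_1$ or $b_1$ empty, and no linking chain can escape the empty set — if $f[a_0]=\emptyset$ then $a_0=\emptyset$ and $g[a_0]=\emptyset$, so $d^1_{HX}(\emptyset,b_1)=\infty$ and the padding trick collapses. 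This forces separate handling of the degenerate subcases $a=\{v\}$ or $b=\{v\}$: for instance, if $a=\{v\}$ and $b_1\ne\emptyset$, I first use $e$ to grow $\{v\}\to\{v,u\}$ at cost $d_X(u,v)$ (keeping $v$; one index with $f=v$, $g=u$), and then run a chain from the nonempty set $\{u\}$ to $b_1$ obtained by induction on $T'$, padded with $\{v\}$, again charging $e$ at most once for a total of $\le d_X(u,v)+\ell(T')=\ell(T)$; the case $b=\{v\}$ is symmetric and $a=b=\{v\}$ is trivial. Checking that in every branch the recursion is invoked only on pairs of nonempty subsets of $V\setminus\{v\}$ (which always span the connected tree $T'$) and that each edge is charged at most once completes the induction, and unwinding the two reductions yields the lemma. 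I expect this empty-set bookkeeping to be the only genuinely delicate point; the reductions and the non-degenerate cases are routine manipulations of linking chains.
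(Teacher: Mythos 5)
Your proof is correct and follows essentially the same strategy as the paper's: strip a leaf $v$ with neighbour $u$ from a tree, charge the edge $\{u,v\}$ at most once via a single extra triple (or not at all, using the padding/subinvariance trick when $v\in a\cap b$ with both $a\setminus\{v\}$ and $b\setminus\{v\}$ nonempty), and split into the same cases according to membership of $v$ in $a$ and $b$, with the degenerate subcases $a=\{v\}$ or $b=\{v\}$ handled by first growing $\{v\}$ to $\{v,u\}$. The only real difference is organizational: you first pass to a spanning forest and then combine per-component chains by padding-and-concatenation, so the induction runs over edges of a single tree, whereas the paper inducts on the number of vertices of a minimal subgraph of the whole (possibly disconnected) graph and consequently needs an extra subcase in which an entire component meeting $a\cup b$ only in $\{v\}$ is discarded wholesale. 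Your component-by-component reduction makes the empty-set bookkeeping (which you correctly identify as the one delicate point) somewhat cleaner, at the price of having to state the concatenation and padding operations on linking chains explicitly; both are routine and your padding step is exactly the argument behind the subinvariance in Theorem~\ref{t:hyper}.
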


\begin{proof}  It suffices to prove that for every $n\in\IN$ the following statement $(*_n)$ holds:
\begin{itemize}
\item[$(*_n)$]  {\em for any finite sets $a,b\subseteq X$ and graph $\Gamma=(V,E)\in\mathbf \Gamma(a,b)$ with $|V|\le n$, there exists an $(a,b)$-linking chain $s\in L_{FX}(a,b)$ such that $\Sigma d^1_X(s)\le\ell(\Gamma)$.}
\end{itemize}

To see that $(*_1)$ is true, take any finite sets $a,b\subseteq X$ and any graph $\Gamma=(V,E)\in\mathbf \Gamma(a,b)$ with $|V|=1$. In this case, the definition of $\mathbf \Gamma(a,b)\ne\emptyset$ implies that either $a,b$ are both empty or $a=b=V$. In both cases we get $a=b$. So, we can find an $(a,b)$-linking chain $s\in L_{FX}(a,b)$ such that $\Sigma d^1_X(s)=0\le\ell(\Gamma)$.

Now assume that for some $n\in\IN$ the statement $(*_n)$ is true. Take any finite sets $a,b\in \IN$ and a graph $\tilde \Gamma=(\tilde V,\tilde E)\in\mathbf \Gamma(a,b)$ with  $|V|=n+1$. If $a=b$, then we can find an $(a,b)$-linking chain $s\in L_{FX}(a,b)$ with $\Sigma d^1_X(s)=0\le\ell(\Gamma)$ and finish the proof. So, we assume that $a\ne b$.

Let $\Gamma=(V,E)$ be any minimal subgraph of the graph $\tilde \Gamma$ such that $\Gamma\in\mathbf \Gamma(a,b)$. If $|V|\le n$, then by the inductive assumption, there exists an $(a,b)$-linking chain $s\in L_{FX}(a,b)$ such that $\Sigma d^1_X(s)\le\ell(\Gamma)\le\ell(\tilde\Gamma)$ and we are done. So, we assume that $|V|=n+1$. By the minimality of $\Gamma$, each connected component of $\Gamma$ is a tree whose pendant vertices belong to the set $a\cup b$. Since $\Gamma\in\mathbf \Gamma(a,b)$ and $a\ne b$, the graph $\Gamma$ has some non-trivial connected component. Take any pendant vertex $v$ of this non-trivial connected component and find a (unique) vertex $u\in V\setminus\{v\}$ such that $\{v,u\}\in E$. Consider the subgraph $\Gamma'=(V',E')$ of $\Gamma$ with $V':=V\setminus\{v\}$ and $E'=\{e\in E:v\notin e\}$. It follows that $|V'|=n$.
\smallskip

Three cases are possible.

1) If $v\in b\setminus a$, then let $b'=(b\setminus \{v\})\cup \{u\}$. Observe that  every path connecting the vertex $v$ with another vertex of $\Gamma$ includes the vertex $u$. Consequently, $\Gamma'\in\mathbf \Gamma(a,b')$.  By the inductive assumption, there exists an $(a,b')$-linking chain $s'=\big((a_i,f_i,g_i)\big){}_{i=0}^{k-1}\in L_{FX}(a,b')$ such that $\Sigma d^1_X(s')\le\ell(\Gamma')$. For this chain we have $f_i[a_i]=a$ and $g_{k-1}[a_i]=b'$. Let $n_{k}=|b|$. It follows that $b=b'\cup\{v\}$ or $b=(b'\setminus \{u\})\cup\{v\}$. 
\smallskip

1a) If $b=b'\cup\{v\}$, then choose any bijective map $g_{k}:n_{k}\to b$ such that $g_{k}(n_{k}-2)=u$ and $g_k(n_k-1)=v$. Next, consider the surjective map 
$f_k:n_k\to b'$ such that $f_k(i)=g_k(i)$ for all $i\le n_k-2$ and $f_k(n_k-1)=u$.
Let $a_k=n_k\in Hn_k$. Observe that $Hf_k(a_k)=f_k[n_k]=b'$ and $Hg_k(a_k)=g_k[n_k]=b$. Then $s=\big((a_i,f_i,g_i)\big){}_{i=0}^k$ is an $(a,b)$-linking chain such that $\Sigma d^1_X(s)=\Sigma d^1(s')+d_X(u,v)\le\ell(\Gamma')+d_X(u,v)=\ell(\Gamma)$.
\smallskip

1b) If $b=(b'\setminus\{u\})\cup\{v\}$, then choose any bijective map $g_{k}:n_{k}\to b$ such that $g_{k}(n_{k}-1)=v$. Next, consider the bijective map 
$f_k:n_k\to b'$ such that $f_k(i)=f_k(i)$ for all $i\le n_k-2$ and $f_k(n_k-1)=u$.
Let $a_k=n_k\in Hn_k$. Observe that $Hf_k(a_k)=f_k[n_k]=b'$ and $Hg_k(a_k)=g_k[n_k]=b$. Then $s=\big((a_i,f_i,g_i)\big){}_{i=0}^k$ is an $(a,b)$-linking chain such that $\Sigma d^1_X(s)=\Sigma d^1(s')+d_X(u,v)\le\ell(\Gamma')+d_X(u,v)=\ell(\Gamma)$.
\smallskip

2) The case $v\in a\setminus b$ can be considered by analogy with the case 1). 
\smallskip

3) It remains to consider the third case $v\in a\cap b$. Let $\Gamma_v=(V_v,E_v)$ be the connected component of the graph $\Gamma$ that contains the point $v$.
The third case has 4 subcases.
\smallskip

3a)  The sets $V_v\cap a\setminus\{v\}$ and $V_v\cap b\setminus\{v\}$ are not empty. In this case put $a'=a\setminus\{v\}$ and $b'=b\setminus\{v\}$ and observe that the  graph $\Gamma'$ belongs to the family $\mathbf \Gamma(a',b')$. Applying the statement $(*_n)$ to the graph $\Gamma'$, we can find an $(a',b')$-linking chain $\big((a_i',f'_i,g'_i)\big){}_{i\in k}\in L_{FX}(a',b')$ such that $\Sigma d^1_X(s)\le\ell(\Gamma')$. For every $i\in k$ let $n'_i=\dom(f_i')=\dom(g_i')$,  $n_i:=n_i'+1$, $a_i:=a_i'\cup\{n_i'\}\subseteq n_i$ and $f_i,g_i:n_i\to X$ be two maps such that $f_i{\restriction}_{n_i'}=f_i'$, $g_i{\restriction}_{n_i'}=g_i'$, and $f_i(n'_i)=g_i'(n_i)=v$. Then $s=\big((a_i,f_i,g_i)\big){}_{i\in k}\in L_{FX}(a,b)$ is an $(a,b)$-linking chain such that $\Sigma d^1_X(s)=\Sigma d^1_X(s')\le\ell(\Gamma')\le \ell(\Gamma)$.
\smallskip

3b) The sets $V_v\cap a\setminus\{v\}$ and $V_v\cap b\setminus\{v\}$ are empty.
In this case $a\cap V_v=\{v\}=b\cap V_v$. Consider the subgraph $\Gamma''=(V'',E'')$ of $\Gamma$ such that $V''=V\setminus V_v$ and $E''=\{e\in E:e\subseteq V'')$ and let $a'=a\setminus\{v\}$, $b'=b\setminus \{v\}$. It follows from $a\ne b$ that $a'\cup b'$ is not empty and so is the set $V''\supseteq a'\cup b'$. Observe that each connected component of the graph $\Gamma''$ is a connected component $C$ of the graph $\Gamma$ and hence it intersects both sets $a$ and $b$. Taking into account that $C\cap V_v=\emptyset$, we conclude that $C\cap a'\ne\emptyset\ne C\cap b'$, which means that $\Gamma''\in\mathbf \Gamma(a',b')$. 
Applying the inductive assumption to the graph $\Gamma''$, we can find an $(a',b')$-linking chain $s'=\big((a_i',f'_i,g_i')\big){}_{i\in k}\in L_{FX}(a',b')$ such that $\Sigma d^1_X(s')\le \ell(\Gamma'')$. For  $i\in k$ let $n'_i=\dom(f_i')=\dom(g_i')$,  $n_i:=n_i'+1$, $a_i:=a_i'\cup\{n_i'\}\subseteq n_i$ and $f_i,g_i:n_i\to X$ be two maps such that $f_i{\restriction}_{n_i'}=f_i'$, $g_i{\restriction}_{n_i'}=g_i'$, and $f_i(n'_i)=g_i'(n_i)=v$. Then $s=\big((a_i,f_i,g_i)\big){}_{i\in k}\in L_{FX}(a,b)$ is an $(a,b)$-linking chain such that $\Sigma d^1_X(s)=\Sigma d^1_X(s')\le\ell(\Gamma'')\le \ell(\Gamma)$.
 \smallskip
 
3c) The set $V_v\cap a\setminus\{v\}$ is not empty and $V_v\cap b\setminus\{v\}$ is  empty. Then for the sets $a'=a\setminus\{v\}$ and $b'=(b\setminus \{v\})\cup\{u\}$, the connected graph $\Gamma'$ belongs to the family $\mathbf\Gamma(a',b')$. By the inductive assumption, there exists an $(a',b')$-linking chain $s'=\big((a_i',f_i',g_i')\big){}_{i\in k}\in L_{FX}(a',b')$ such that $\Sigma d^1_X(s')\le\ell(\Gamma')$. For every $i\in k$ let $n_i'=\dom(f_i')=\dom(g'_i)$ and put $n_i:=n_i'+1$, $a_i:=a_i'\cup\{n_i'\}\in H n_i$. Let $f_i,g_i:n_i\to X$ be the functions such that $f_i{\restriction}_{n_i'}=f_i'$, $g_i{\restriction}_{n_i'}=g_i$, and $f_i(n_i')=v$, $g_i(n_i')=v$. Let $n_k=|b'|=|b|$, $a_k=n_k$, and $f_k:n_k\to b'$ be any bijection such that $f_k(n_k-1)=u$.
Let $g_k:n_k\to b$ be the bijective function such that $g_k(i)=f_k(i)$ for $i<n_k-1$ and $g_k(n_k-1)=v$. 
It is easy to see that $s:=\big((a_i,f_i,g_i)\big){}_{i=0}^k$ is an $(a,b)$-linking chain such that $\Sigma d^1_{FX}(s)=\Sigma d^1_{FX}(s')+d_X(u,v)\le \ell(\Gamma')+d_X(u,v)=\ell(\Gamma)$.
\smallskip

3d) The case $V_v\cap a\setminus\{v\}=\emptyset\ne V_v\cap b\setminus\{v\}$ can be considered by analogy with the case 3c).

Thus we have handled all possible cases and proved the statement $(*)_{n+1}$.
\end{proof}

\begin{remark}Let $M$ be a commutative monoid, $F:\Set\to\Set$ be the functor of $M$-valued finitely supported functions, and $(X,d_X)$ be a distance space.
\begin{enumerate}
\item  If $M$ is a two-element group, then $\check d^1_{FX}=d^1_{FX}$ on the group $FX=F(X,M)$, see Theorem~\ref{t:Gra}(2).
\item  If $M$ is a two-element semilattice, then $\check d^\infty_{FX}=d^\infty _{FX}$ on the semilattice $FX=F(X,M)$, see Theorem~\ref{t:hyper}(2).
\end{enumerate}
\end{remark}

\begin{problem} Find more functors $F:\Set\to\Set$ such that $\check d^p_{FX}=d^p_{FX}$ for some $p\in[1,\infty]$ and all distance spaces $(X,d_X)$.
\end{problem}

\begin{problem}\label{prob:AH} Are there (efficient) algorithms for calculation of the distances  $d^p_{HX}$ and $\check d^p_{HX}$.
\end{problem}

\begin{remark} The answer to Problem~\ref{prob:AH} strongly depends on the geometry of the distance space $(X,d_X)$. For subsets of the Euclidean plane, the problem of calculating the distance $\hat d_{HX}$ is related to the classical Steiner's problem \cite{Steiner} of finding a tree of the smallest length that contains a given finite set. This problem is known \cite{Holby} to be computationally very difficult. On the other hand, for finite subsets of the real line,  there exists an algorithm \cite{MO2} of complexity $O(n\ln n)$, calculating the distance $d^1_{HX}(a,b)$ between two sets $a,b\subset\mathbb R$ of cardinality $|a|+|b|\le n$. Also there exists an algorithm of the same complexity $O(n\ln n)$ calculating the Hausdorff distance $d_{HX}(a,b)$ between two subsets of the real line. Finally, let us remark that the evident brute force algorithm for calculating the Hausdorff distance $d_{HX}(a,b)$ between finite subsets of an arbitrary distance space $(X,d_X)$  has complexity $O(|a|{\cdot}|b|)$.
\end{remark}
\newpage



\begin{thebibliography}{}
\parskip3pt

\bibitem{AT} A.~Arhangel'skii, M.~Tkachenko, {\em  Topological groups and related structures}, Atlantis Press, Paris; World Sci. Publ., Hackensack, NJ, 2008. xiv+781 pp. 

\bibitem{Ass} P.~Assouad, {\em \'Etude d'une dimension m\'etrique li\'ee \`a la possibilit\'e de plongements dans ${\bf R}\sp{n}$}, C. R. Acad. Sci. Paris Sér. A-B {\bf 288}:15 (1979), {\rm A}731--{\rm A}734.

\bibitem{MO} T.~Banakh, {\em A new $\ell^p$-metric on the hyperspace of finite sets?},\newline (https://mathoverflow.net/questions/277604), 30.07.2017.

\bibitem{MO2} T.~Banakh, {\em A quick algorithm for calculating the $\ell^1$-distance between two finite sets on the real line?}, (https://mathoverflow.net/a/277928/61536), 04.08.2017.

\bibitem{BGW} I.~Banakh, T.~Banakh, J.~Garbuli\'nska-W\c egrzyn, {\em The completion of the hyperspace of finite subsets, endowed with the $\ell^1$-metric}, Colloq. Math. {\bf 166} (2021), 251--266.

\bibitem{BMZ} T.~Banakh, M.~Martynenko, M.~Zarichnyi, {\em On monomorphic topological functors with finite supports}, Carpathian Math. Publ. {\bf 4}:1 (2012) 4--12; ({\tt arxiv.org/abs/1004.0457}).

\bibitem{BZ} T.~Banakh, I.~Zarichnyi, {\em Characterizing the Cantor bi-cube in asymptotic categories}, Groups, Geometry, and Dynamics, {\bf 5}:4 (2011) 691--728.

\bibitem{Bas1} V.N.~Basmanov, {\em Dimension and some functors with finite support}, Vestnik Moskov. Univ. Ser. I Mat. Mekh. 6 (1981) 48--50.

\bibitem{Bas2} V.N.~Basmanov, {\em Covariant functors, retracts, and dimension},  Dokl. Akad. Nauk SSSR {\bf 271}:5 (1983), 1033--1036.

\bibitem{Bas3} V.N.~Basmanov, {\em Covariant functors of finite degrees on categories of Hausdorff compact spaces}, Fundam. Prikl. Mat. {\bf 2}:3 (1996), 637--654. 


\bibitem{BellDran} G.~Bell, A.~Dranishnikov, {\em Asymptotic dimension}, Topology Appl. {\bf 155}:12 (2008), 1265--1296.

\bibitem{BL} Y.~Benyamini, J.~Lindenstrauss, {\em Geometric nonlinear functional analysis}, Amer. Math. Soc., Providence, RI, 2000. xii+488 pp.

\bibitem{Holby} M.W.~Bern, R.~Graham, {\em The shortest-network problem},  Scientific American. {\bf 260}:1 (1989) 84--89.


\bibitem{Steiner} M.~Brazil, R.~Graham, D.A.~Thomas, M.~Zachariasen, {\em On the history of the Euclidean Steiner tree problem}, Arch. Hist. Exact Sci. {\bf 68}:3 (2014),  327--354.

\bibitem{BL} S.V.~Buyalo, N.D.~Lebedeva, {\em Dimensions of locally and asymptotically self-similar spaces}, St. Petersburg Math. J. {\bf 19}:1 (2008), 45--65.

\bibitem{Dress} A.~Dress, {\em Trees, tight extensions of metric spaces, and the cohomological dimension of certain groups: a note on combinatorial properties of metric spaces}, Adv.  Math. {\bf 53}:3 (1984),  321--402.

\bibitem{FA} M.~Fabian, P.~Habala,P.~H\'ajek, V.S.~Montesinos, J.~Pelant, V.~Zizler, {\em Functional analysis and infinite-dimensional geometry},  Springer-Verlag, New York, 2001. x+451 pp.

\bibitem{Fal} K.~Falconer, {\em Fractal Geometry. Mathematical foundations and applications}, John Wiley \& Sons, Ltd., Chichester, 2003. 

\bibitem{Fed} V.V.~Fedorchuk, {\em Triples of infinite iterations of metrizable functors}, Izv. AN SSSR. Ser. Mat. {\bf 54}:3 (1990), 396--417.

\bibitem{Graev} M.I.~Graev, {\em Free topological groups}, Izvestiya Akad. Nauk SSSR. Ser. Mat. 12 (1948) 279--324.

\bibitem{Gromov} M.~Gromov, {\em Asymptotic invariants of infinite groups, Geometric group theory}, Vol. 2 (Sussex, 1991), London Math. Soc. Lecture Note Ser., vol. 182, Cambridge Univ. Press, Cambridge, 1993, pp. 1–295. 

\bibitem{Her} H.~Herrlich, {\em Hyperconvex hulls of metric spaces}, Topology Appl. {\bf 44}:1-3 (1992),  181--187


\bibitem{Isbell-book} J.R.~Isbell, {\em Uniform spaces}, Mathematical Surveys, No. 12 Amer. Math. Soc., Providence, R.I. 1964.

\bibitem{Isbell} J.R.~Isbell, {\em Six theorems about injective metric spaces}, Comment. Math. Helv. {\bf 39} (1964), 65–76.


\bibitem{LS} U.~Lang, T.~Schlichenmaier, {\em Nagata dimension, quasisymmetric embeddings, and Lipschitz extensions}, International Mathematics Research Notices, {\bf 58} (2005), 3625--3655.

\bibitem{Lang} U.~Lang, {\em Injective hulls of certain discrete metric spaces and groups}, J. Topol. Anal. {\bf 5}:3 (2013), 297--331.


\bibitem{RS} T.~Radul. O.~Shukel, {\em Functors of finite degree and asymptotic dimension}, Mat. Stud. {\bf 29}:2 (2009),  204--206. 

\bibitem{Sipa} O.~Sipacheva, {\em Free Boolean topological groups}, Axioms {\bf 4}:4 (2015), 492--517.

\bibitem{Shukel} O.~Shukel', {\em Functors of finite degree and asymptotic dimension zero}, Mat. Stud. {\bf 29}:1 (2008), 101--107; corrections in Mat. Stud. {\bf 32}:1 (2009), 112.

\bibitem{SZ} O. Shukel', M. Zarichnyi, {\em Asymptotic dimension of symmetric powers}, Math. Bull. Shev\-chenko Sci. Soc. {\bf 5} (2008), 304--311. 

\bibitem{Tod} S.~Todorcevic, {\em Introduction to Ramsey spaces}, Princeton Univ. Press, 2010.

\bibitem{TZ} A.~Teleiko, M.~Zarichnyi, {\em Categorical topology of compact Hausdorff spaces}, VNTL Publishers, Lviv, (1999), 263~pp.
\end{thebibliography}
\end{document}